\newtheorem{theorem}{Theorem}[section]
\newtheorem{proposition}[theorem]{Proposition}
\newtheorem{lemma}[theorem]{Lemma}
\newtheorem{claim}[theorem]{Claim}
\newtheorem*{claim*}{Claim}
\newtheorem{corollary}[theorem]{Corollary}
\newtheorem{Main Conjecture}[theorem]{Main Conjecture}
\theoremstyle{definition}
\newtheorem{definition}[theorem]{Definition}
\theoremstyle{remark}
\newtheorem{example}[theorem]{Example}
\theoremstyle{plain}
\newcommand\integers{{\mathbb Z}}
\newcommand{\cellsize}{12}
\newlength{\cellsz} \setlength{\cellsz}{\cellsize\unitlength}
\newsavebox{\cell}
\sbox{\cell}{\begin{picture}(\cellsize,\cellsize)
\put(0,0){\line(1,0){\cellsize}}
\put(0,0){\line(0,1){\cellsize}}
\put(\cellsize,0){\line(0,1){\cellsize}}
\put(0,\cellsize){\line(1,0){\cellsize}}
\end{picture}}
\newcommand\cellify[1]{\def\thearg{#1}\def\nothing{}
\ifx\thearg\nothing
\vrule width0pt height\cellsz depth0pt\else
\hbox to 0pt{\usebox{\cell} \hss}\fi
\vbox to \cellsz{
\vss
\hbox to \cellsz{\hss$#1$\hss}
\vss}}
\newcommand\tableau[1]{\vtop{\let\\\cr
\baselineskip -16000pt \lineskiplimit 16000pt \lineskip 0pt
\ialign{&\cellify{##}\cr#1\crcr}}}
\newcommand{\excise}[1]{}
\newcommand{\RomanNumeralCaps}[1] 
    {\MakeUppercase{\romannumeral #1}}
\begin{document}
\pagestyle{plain}
\title{Multiplicity-free skew Schur polynomials}
\author{Shiliang Gao}
\author{Reuven Hodges}
\author{Gidon Orelowitz}
\address{Dept.~of Mathematics, U.~Illinois at Urbana-Champaign, Urbana, IL 61801, USA} 
\email{sgao23@illinois.edu, rhodges@illinois.edu, gidono2@illinois.edu}
\date{\today}

\begin{abstract}
We provide a non-recursive, combinatorial classification of multiplicity-free skew Schur polynomials. These polynomials are $GL_n$, and $SL_n$, characters of the skew Schur modules. Our result extends work of H.~Thomas--A.~Yong, and C.~Gutschwager, in which they classify the
multiplicity-free skew Schur functions.   \end{abstract}

\maketitle

\section{Introduction}\label{sec:intro}

The \emph{skew Schur polynomials} are a fundamental family of symmetric polynomials whose connection to representation theory was first studied by I.~Schur. This family is indexed by \emph{skew partitions} $\lambda / \mu$, where $\lambda = (\lambda_1 \geq \lambda_2 \geq \hdots \geq \lambda_n > 0)$ and $\mu = (\mu_1 \geq \mu_2 \geq \hdots \geq \mu_m > 0)$ are \emph{partitions} with $\mu \subseteq \lambda$ (that is, $m \leq n$ and $\mu_i \leq \lambda_i$ for $1\leq i \leq m$). The skew Schur polynomial of \emph{shape} $\lambda / \mu$ is the generating function
\begin{equation}
\label{eqn:skewpolydef}
s_{\lambda / \mu}(x_1,\ldots,x_n) = \sum_T x^{\eta(T)}, \qquad x^{\eta(T)}:=x_1^{\eta_1(T)}x_2^{\eta_2(T)}\cdots x_n^{\eta_n(T)},
\end{equation}
where the sum is over all semistandard tableaux $T$ of shape $\lambda / \mu$ with entries in $[n]$, and $\eta_i(T)$ is the number of entries in $T$ equal to $i$.

Though not immediately apparent from \eqref{eqn:skewpolydef}, $s_{\lambda / \mu}(x_1,\ldots,x_n)$ is an element of $\Lambda_n$, the ring of symmetric polynomials in variables $x_1,\ldots,x_n$. For $\lambda = (\lambda_1, \lambda_2, \hdots ,\lambda_n )$, the \emph{length} of the partition is $\ell(\lambda) = n$. Defining $s_{\lambda}(x_1,\ldots,x_n) := s_{\lambda / \emptyset}(x_1,\ldots,x_n)$ we recover the \emph{Schur polynomials}; the $s_{\lambda}(x_1,\ldots,x_n)$ with $\ell(\lambda) \leq n$ are a $\mathbb{Z}$-linear basis of $\Lambda_n$. This implies
\begin{equation}
\label{eqn:skewdecomp}
s_{\lambda / \mu}(x_1,\ldots,x_n) = \sum_\nu c_{\mu,\nu}^{\lambda} s_{\nu}(x_1,\ldots,x_n)
\end{equation}
where the sum is over partitions $\nu$ with $\ell(\nu) \leq n$ and the coefficients $c_{\mu,\nu}^{\lambda}$ are the celebrated Littlewood-Richardson coefficients. The expansion \eqref{eqn:skewdecomp} is \emph{multiplicity-free} if $c_{\mu,\nu}^{\lambda} \in \{0,1\}$ for all $\nu$ with $\ell(\nu) \leq n$.  A natural question is then: \[\text{When is $s_{\lambda / \mu}(x_1,\ldots,x_n)$ multiplicity-free?}\] 
Our Theorem \ref{theorem:mainResult} gives a complete, non-recursive answer to this question.

\subsection{Multiplicity-free representation theory} Multiplicity questions, along the same line as the one posed above, have both representation theoretic and geometric import. The Schur polynomials are characters of the irreducible polynomial representations of $GL_n$ (and of $SL_n$, but we focus here on $GL_n$). Expressing a $GL_n$ representation character in the basis of Schur polynomials is equivalent to decomposing the representation into a sum of irreducible representations. 

Knowing that a representation is multiplicity-free has a number of applications (see, \emph{e.g.}, the survey \cite{H95}). Pivoting to a geometric perspective, a group action on a projective algebraic variety induces an action on its homogeneous coordinate ring. The representations that arise from this induced action can often illuminate the orbit structure of the group in the variety \cite{P14,MWZ01,S03,AP14}. In \cite{HL19}, the second author and V.~Lakshmibai classified \emph{spherical Schubert varieties} by showing that certain infinite sets of skew Schur polynomials always contain elements that are not multiplicity-free.  

Developing multiplicity-free criterion for important families of representations has seen considerable interest. In \cite{S01}, the prototypical example, J.~Stembridge classified products of Schur functions that are multiplicity-free. In \emph{ibid.}, he extended this to a classification of the multiplicity-free products of Schur polynomials (equivalently multiplicity-free tensor products of irreducible $GL_n$ representations). The skew Schur functions whose coefficients are all equal to $1$ over an entire dominance order interval, and equal to $0$ otherwise, are characterized in ~\cite{ACM19}. Other examples of multiplicity-free classifications include \cite{TY10,G10,FMS19}. As the skew Schur polynomials are $GL_n$ characters of the skew Schur modules~\cite{FH91}, this paper represents our contribution to this body of work.  

\subsection{Main theorem} Altering definition \eqref{eqn:skewpolydef} so that it becomes a sum over all semistandard tableaux of shape $\lambda / \mu$ yields the \emph{skew Schur function} $s_{\lambda / \mu}$. In \cite{TY10}, H.~Thomas and A.~Yong classified multiplicity-free products of Schubert classes in the cohomology ring of the Grassmannian. As the authors note, this is equivalent to classifying multiplicity-free skew Schur functions (see also \cite{G10}). 

Implicitly, $s_{\lambda / \mu}(x_1,\ldots,x_n)$ is a specialization of $s_{\lambda / \mu}$ that arises by setting $x_m=0$ for $m > n$. In particular, if $s_{\lambda / \mu}$ is multiplicity-free, then so is $s_{\lambda / \mu}(x_1,\ldots,x_n)$, but the converse does not hold. In this sense, our classification is a generalization of \cite{TY10}, and we follow much of their terminology and notation.

 Our result relies on four reductions. The first is central to \cite{TY10}. A partition $\lambda$ is visualized by its \emph{Young diagram}, also denoted $\lambda$; it is a collection of left justified boxes, with $\lambda_i$ boxes in row $i$. Analogously, given a skew partition $\lambda / \mu$, the \emph{skew (Young) diagram} $\lambda / \mu$ equals the Young diagram $\lambda$ with the leftmost $\mu_i$ boxes deleted in each row $i$.

\begin{example} Let $\lambda = (5,4,1,1)$ and $\mu = (2,1,1)$. The Young diagrams $\lambda$, $\mu$ and the skew diagram $\lambda / \mu$ are listed below, left to right.
\begin{center}
\ytableausetup{boxsize=1em}
\begin{ytableau}
\; & \; & \; & \; & \; \\
\; & \; & \; & \; \\
\; \\
\; \\
\end{ytableau}
\qquad \qquad \qquad
\begin{ytableau}
\; & \;  \\
\; \\
\; \\
\end{ytableau}
\qquad \qquad \qquad
\begin{ytableau}
\none & \none & \; & \; & \; \\
\none & \; & \; & \; \\
\none \\
\; \\
\end{ytableau}
\end{center}
\end{example}

If there is a box $B$ in row $r$ and column $c$ of the skew diagram $\lambda / \mu$ we will write $B = (r,c) \in \lambda / \mu$. Then ${\sf col}(B)=c$ and ${\sf row}(B)=r$. For any set $S \subset [\lambda_1]$, we refer to column $k$ as an $S$-column if $k\in S$.

\begin{definition}
A skew partition is \emph{basic} if it's skew diagram does not contain any empty rows or columns. The skew partition that arises from the deletion of all empty rows and columns of $\lambda / \mu$ is called the \emph{basic demolition} of the skew partition, and is denoted $(\lambda / \mu)^{{\sf ba}}$.
\end{definition}

Let ${\sf CS}_k(\lambda / \mu)=|\{r : (r,k) \in \lambda / \mu \}|$. Define
\begin{equation}
\rho(\lambda / \mu) := \max\{{\sf CS}_k(\lambda / \mu) : 1 \leq k \leq \lambda_1\}.
\end{equation} 

\begin{definition} \label{def:nsharpandnsharpdemo} A skew partition is $n$-\emph{sharp} if  ${\sf CS}_k(\lambda / \mu) < n$ for all $k$. The $n$-\emph{sharp demolition} of a skew partition is equal to $\emptyset / \emptyset$ if there exists a $k$ such that ${\sf CS}_k(\lambda / \mu) > n$. Otherwise
it equals the skew partition that remains after deleting each column $k$ such that ${\sf CS}_k(\lambda / \mu) = n$. We denote the $n$-sharp demolition by $(\lambda / \mu)^{n\sharp}$.
\end{definition}

When convenient, a partition will be presented as $(\lambda_1^{l_1},\lambda_2^{l_2},\ldots,\lambda_p^{l_p})$, corresponding to the partition with the first $l_1$ entries equal to $\lambda_1$, the next $l_2$ entries equal to $\lambda_2$, and so on. The {\em number of parts} of $\lambda$ is ${\sf np}(\lambda) = p$. If ${\sf np}(\lambda)=1$, the partition is called a \emph{rectangle}. A partition $\lambda$ with ${\sf np}(\lambda)=2$ is called a \emph{fat hook}. 

For a skew partition $\lambda/\mu$ where $\lambda = (\lambda_1^{l_1},\lambda_2^{l_2},\ldots,\lambda_p^{l_p})$ and $\mu = (\mu_1^{k_1},\mu_2^{k_2},\ldots,\mu_q^{k_q})$, define $\tau(\lambda / \mu) = \sigma(\lambda/\mu) = 0$ if $\mu = \emptyset$ or $\lambda$ is a rectangle.
Otherwise, define
\begin{equation}
\tau(\lambda / \mu) = l_1 - \min(l_1,\ell(\mu))\text{ and }
\sigma(\lambda/\mu) = \lambda_p - \min(\mu_1, \lambda_p).
\end{equation}

\begin{definition} \label{def:tightandtightdemo} A basic skew partition $\lambda/\mu$ is \emph{tight} if $\tau(\lambda / \mu) = \sigma(\lambda/\mu) = 0$. The \emph{tight demolition} of  $\lambda / \mu$ is \[(\lambda / \mu)^{{\sf ti}} = (((\lambda_1-\sigma(\lambda/\mu))^{l_1 - \tau(\lambda / \mu)},(\lambda_2-\sigma(\lambda/\mu))^{l_2},\ldots,(\lambda_p-\sigma(\lambda/\mu))^{l_p}) / \mu)^{\sf ba}. \] 
Note that if $\tau(\lambda / \mu) = \sigma(\lambda/\mu) = 0$, then $(\lambda / \mu)^{{\sf ti}} = (\lambda / \mu)^{\sf{ba}}$.
Visually, if $\mu \neq \emptyset$ and $\lambda$ is not a rectangle, then $(\lambda / \mu)^{{\sf ti}}$ is the deletion of all rows with $\lambda_1$ boxes and all columns with $\ell(\lambda)$ boxes from $\lambda / \mu$ followed by the deletion of all empty rows and columns.
\end{definition}

If $\lambda \subseteq (a^b)$, the $(a^b)$-\emph{complement} of $\lambda$ is
\begin{equation}
\lambda^{\vee(a^b)} = (a - \lambda_b, a - \lambda_{b-1},\ldots,a - \lambda_1),
\end{equation}
where trailing zeros are removed, and for simplicity of notation, we set $\lambda_i = 0$ if $i > \ell(\lambda)$. Visually, $\lambda^{\vee(a^b)}$ is the complement of $\lambda$ inside $(a^b)$, rotated by 180 degrees. There is a unique shortest lattice path, from the southwest corner to the northeast corner of $(a^b)$, separating $\lambda$ and its complement. A \emph{segment} of this lattice path is a maximal consecutive sequence of north, or east, steps. The $(a^b)$-\emph{shortness} of a partition $\lambda$, denoted $\sf{short}_{(a^b)}(\lambda)$, is the length of the shortest segment in the associated lattice path in $(a^b)$. Once a rectangle is fixed, we omit the respective $(a^b)$ from the notation.

\begin{example} Fix the rectangle $(5^4)$. If $\lambda = (5,4,1,1) \subseteq (5^4)$, then $\lambda^{\vee} = (4,4,1)$. The shortness of $\lambda$ equals $1$ as seen by inspecting the lattice path below.
\begin{center}
\begin{tikzpicture}[inner sep=0in,outer sep=0in]
\node (n) {
\begin{ytableau}
\; & \; & \; & \; & \; \\
\; & \; & \; & \;  \\
\;  \\
\;  \\
\end{ytableau}};
\draw[black, dotted] (n.south west)--++(2.2,0)--++(0,1.75);
\draw[very thick,orange] (n.south west)--++(.365*1.21,0)--++(0,.73*1.21)--++(1.095*1.21,0)--++(0,.365*1.21)--++(.365*1.21,0)--++(0, .365*1.21);
\end{tikzpicture}
\end{center}
\end{example}

\begin{definition} A basic skew partition $\lambda / \mu$ is \emph{ordinary} if ${\sf np}(\lambda)-1 \leq {\sf np}(\mu)$ or $\mu = \emptyset$. The \emph{ordinary reduction} of a skew partition is equal to $\mu^{\vee(\lambda_1^{\ell(\lambda)})} / \lambda^{\vee(\lambda_1^{\ell(\lambda)})}$ if ${\sf np}(\lambda)-1 > {\sf np}(\mu)$ and $\mu\neq \emptyset$, otherwise it equals $\lambda / \mu$. We denote the ordinary reduction by $(\lambda / \mu)^{\sf or}$. 
\end{definition}

\begin{definition}
A skew Schur polynomial $s_{\lambda / \mu}(x_1,\ldots,x_n)$ is basic (resp. $n$-sharp, tight, ordinary) if $\lambda / \mu$ is basic (resp. $n$-sharp, tight, ordinary). 
\end{definition}

\begin{theorem} 
\label{theorem:mainReduction}
Let $s_{((((\lambda / \mu)^{n\sharp})^{{\sf ba}})^{{\sf ti}})^{{\sf or}}}(x_1,\ldots,x_{n'})$ with $n' = n - \tau(((\lambda / \mu)^{n\sharp})^{{\sf ba}})$ be the $n$-sharp, basic, tight, ordinary demolition of $s_{\lambda / \mu}(x_1,\ldots,x_n)$. Then $s_{((((\lambda / \mu)^{n\sharp})^{{\sf ba}})^{{\sf ti}})^{{\sf or}}}(x_1,\ldots,x_{n'})$ is basic, $n$-sharp, tight, and ordinary. Further, $s_{\lambda / \mu}(x_1,\ldots,x_n)$ is multiplicity-free if and only if $s_{((((\lambda / \mu)^{n\sharp})^{{\sf ba}})^{{\sf ti}})^{{\sf or}}}(x_1,\ldots,x_{n'})$ is multiplicity-free.
\end{theorem}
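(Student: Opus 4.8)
The plan is to treat the four operations one at a time and prove, for each, a preservation lemma asserting that the operation leaves multiplicity-freeness invariant under the indicated change of variable count; the theorem then follows by composing them in the fixed order $n\sharp,\ {\sf ba},\ {\sf ti},\ {\sf or}$, together with a separate check that the fully reduced shape lies in all four classes at once. Throughout I use that multiplicity-freeness of $s_{\lambda/\mu}(x_1,\dots,x_n)$ means $c_{\mu\nu}^{\lambda}\le 1$ for every $\nu$ with $\ell(\nu)\le n$. Thus for each operation it suffices to produce a coefficient-preserving bijection $\nu\leftrightarrow\nu'$ between the relevant index sets that matches the two cut-offs $\ell(\nu)\le n$ and $\ell(\nu')\le n'$ term by term.

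Two of the reductions are essentially free. Basic demolition leaves the generating function \eqref{eqn:skewpolydef} literally unchanged, since deleting empty rows and columns is a weight-preserving bijection on semistandard fillings, so $s_{\lambda/\mu}=s_{(\lambda/\mu)^{\sf ba}}$ with $n$ unchanged. Ordinary reduction is a $180^{\circ}$ rotation of the skew diagram inside $(\lambda_1^{\ell(\lambda)})$: reading a semistandard tableau upside down with entries complemented by $i\mapsto n+1-i$ is a weight-reversing bijection, giving $s_{\lambda/\mu}(x_1,\dots,x_n)=s_{\mu^{\vee}/\lambda^{\vee}}(x_1,\dots,x_n)$, again with $n$ unchanged. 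For the $n$-sharp demolition, a column with ${\sf CS}_k(\lambda/\mu)>n$ admits no filling by $[n]$, so $s_{\lambda/\mu}=0$ is vacuously multiplicity-free, matching $(\lambda/\mu)^{n\sharp}=\emptyset/\emptyset$. Otherwise each column with ${\sf CS}_k(\lambda/\mu)=n$ is contiguous and its entries are forced to be $1<2<\dots<n$; factoring out the induced monomial yields $s_{\lambda/\mu}(x_1,\dots,x_n)=(x_1\cdots x_n)^{c}\,s_{(\lambda/\mu)^{n\sharp}}(x_1,\dots,x_n)$, where $c$ is the number of such columns. Multiplication by $(x_1\cdots x_n)^c$ realizes the bijection $\nu\mapsto\nu+(c^n)$ on partitions of length $\le n$, which preserves Littlewood--Richardson coefficients, so multiplicity-freeness is preserved with $n$ fixed. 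Note that after this step $\rho(\lambda/\mu)<n$, a fact I use below.

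The substantive step is tight demolition, which I would split into the removal of the $\sigma$ full columns (height $\ell(\lambda)$) and the $\tau$ full rows (length $\lambda_1$). For full columns the key observation is that, since $\rho(\lambda/\mu)<n$ after the $n$-sharp step, every $\nu$ with $c_{\mu\nu}^{\lambda}\ne 0$ satisfies $\ell(\nu)\le\ell(\lambda)\le\rho<n$, so no term is annihilated by the $n$-variable cut-off on either side. Deleting $\sigma$ full columns is the classical identity $c_{\mu\nu}^{\lambda}=c_{\mu,\,\nu-(\sigma^{\ell(\lambda)})}^{\lambda-(\sigma^{\ell(\lambda)})}$, i.e.\ the weight shift $\nu\mapsto\nu-(\sigma^{\ell(\lambda)})$ (well defined because such $\nu$ contain the $\ell(\lambda)\times\sigma$ rectangle cut out by the full columns); as all terms survive the unchanged cut-off, multiplicity-freeness is preserved with $n$ fixed. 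For full rows the analogous identity $c_{\mu\nu}^{\lambda}=c_{\mu\nu^{-}}^{\lambda^{-}}$ deletes the $\tau$ rows of length $\lambda_1$ from $\lambda$ and simultaneously $\tau$ rows of length $\lambda_1$ from $\nu$; here the decisive feature is $\ell(\nu^{-})=\ell(\nu)-\tau$, which is exactly why the variable count must drop to $n'=n-\tau$ so that the cut-offs $\ell(\nu)\le n$ and $\ell(\nu^{-})\le n'$ coincide term by term.

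I expect the main obstacle to be precisely this full-row identity: one must show that every $\nu$ contributing to $s_{\lambda/\mu}$ carries at least $\tau$ rows of length $\lambda_1$, so that $\nu\mapsto\nu^{-}$ is a well-defined, coefficient-preserving bijection dropping the length by exactly $\tau$; this is where the hypothesis $\tau=l_1-\min(l_1,\ell(\mu))$ (the full rows lie below $\mu$) and the $n$-sharpness enter. Granting the four preservation lemmas, multiplicity-freeness of the input and output are equivalent by composition. It then remains to verify closure, namely that $((((\lambda/\mu)^{n\sharp})^{\sf ba})^{\sf ti})^{\sf or}$ is simultaneously basic, $n$-sharp, tight, and ordinary. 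This is a monotonicity argument: deleting empty rows and columns changes no column height; deleting the $\tau$ full rows lowers every column height by exactly $\tau$ (each full row meets every column), so $\rho<n$ becomes $\rho-\tau<n'$; and the final $180^{\circ}$ rotation only permutes column heights and carries the (now empty) collections of full rows and full columns to themselves. Hence $n$-sharpness, basicness, and tightness survive each subsequent operation, while tightness and ordinariness hold by construction of the last two steps.
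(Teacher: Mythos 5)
Your overall architecture --- four preservation lemmas composed in the order $n\sharp$, ${\sf ba}$, ${\sf ti}$, ${\sf or}$, plus a closure check that the fully reduced shape lies in all four classes --- is exactly the paper's, and your treatment of the multiplicity-freeness half is sound. The one genuinely different choice is handling the ordinary reduction by the rotate-and-complement bijection on semistandard tableaux rather than by the Littlewood--Richardson symmetry \eqref{eq:LRsymmetry-rotate}; both work. Your identification of the full-row identity as the crux of the tight step, and of the fact that every contributing $\nu$ must contain the rectangles $(\lambda_1^{\tau})$ and $(\sigma^{\ell(\lambda)})$ cut out by the full rows and columns, is precisely the paper's Lemma~\ref{lemma:rectanglecontent}. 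One small caveat: your chain $\ell(\nu)\le\ell(\lambda)\le\rho<n$ is only valid because a full column exists when $\sigma>0$, which forces $\ell(\lambda)={\sf CS}_{\mu_1+1}(\lambda/\mu)\le\rho$; you should say so, since $\ell(\lambda)\le\rho$ is false for general skew shapes.

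The genuine gap is in the closure check: you assert that tightness of $(((\lambda/\mu)^{n\sharp})^{\sf ba})^{\sf ti}$ ``holds by construction,'' but it does not. The tight demolition deletes the full rows and columns and then applies a basic demolition, and that final basic demolition can delete columns that became empty only once the full rows were removed; deleting those columns changes which rows have $\lambda_1-\sigma$ boxes and which columns have $\ell(\lambda)-\tau$ boxes, so one must rule out that new full rows or columns appear in the demolished shape. This is exactly the content of Case 2 of the paper's proof (and its transpose, Case 3): if a column $z$ becomes empty after removing the $\tau$ full rows, the remaining boxes split into two rectangular blocks occupying disjoint row sets and disjoint column sets, whence no row or column of the result can be full. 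Your proposal needs this argument (or an equivalent one); as written, ``tightness holds by construction'' is an unproved claim, and it is the one part of the theorem the paper cannot dispatch in a line. The analogous check that the $180^{\circ}$ rotation of a non-ordinary, basic, tight shape is ordinary also deserves a sentence (for basic $\alpha/\beta$ one has ${\sf np}(\beta^{\vee})={\sf np}(\beta)+1$ and ${\sf np}(\alpha^{\vee})={\sf np}(\alpha)-1$), though that one really is immediate.
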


\begin{definition}
\label{def:r1andr2}
Let $\lambda/\mu$ be a basic, tight, ordinary skew partition, and say $\lambda = (\lambda_1^{l_1},\lambda_2^{l_2},\ldots,\lambda_p^{l_p})$ and $\mu = (\mu_1^{k_1},\mu_2^{k_2},\ldots,\mu_q^{k_q})$.  Fix the rectangle $(\lambda_1^{\ell(\lambda)})$.  Denote
\begin{equation}
r_1(\lambda/\mu) :=  \begin{cases}
0 & \sf{np}(\lambda)>2, \sf{np}(\mu)> 1\\
1 & \sf{np}(\lambda)=2, \sf{np}(\mu)>2, \sf{short}(\lambda)\geq 2\\
2 & \sf{np}(\lambda)=2, \sf{np}(\mu)=2, \sf{short}(\lambda)\geq 3, \sf{short}(\mu)\geq 2\\
\infty & \text{Otherwise}
\end{cases}
\end{equation}
and
\begin{equation}
r_2(\lambda/\mu) := \begin{cases}
1 & \lambda_2 = \mu_q, l_2 \geq \ell(\mu)\\
1 & \lambda_2 = \mu_1, k_1 \geq l_2\\
0 & \text{Otherwise}
\end{cases}.
\end{equation}
\end{definition}

Now we may state our main result.

\begin{theorem}
\label{theorem:mainResult}
Let $s_{\lambda / \mu}(x_1,\ldots,x_n)$ be a basic, $n$-sharp, tight, ordinary skew Schur polynomial.  Then $s_{\lambda / \mu}(x_1,\ldots,x_n)$ is multiplicity-free if and only if
\begin{equation}
\rho(\lambda/\mu) < n < \rho(\lambda/\mu) + r_1(\lambda/\mu)+ r_2(\lambda/\mu)
\end{equation}
\end{theorem}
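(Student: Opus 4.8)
The plan is to reduce multiplicity-freeness of the polynomial to a single integer invariant of the shape and then to compute that invariant by a case analysis driven by ${\sf np}(\lambda)$, ${\sf np}(\mu)$ and the shortness statistics. Because the $s_\nu(x_1,\ldots,x_n)$ with $\ell(\nu)\le n$ form a basis of $\Lambda_n$, the expansion \eqref{eqn:skewdecomp} shows that $s_{\lambda/\mu}(x_1,\ldots,x_n)$ is multiplicity-free if and only if $c^{\lambda}_{\mu\nu}\le 1$ for every $\nu$ with $\ell(\nu)\le n$. A column of height ${\sf CS}_k$ forces ${\sf CS}_k$ strictly increasing entries in any Littlewood--Richardson (LR) filling, so $c^{\lambda}_{\mu\nu}\neq 0$ already forces $\ell(\nu)\ge\rho(\lambda/\mu)$; since the $n$-sharp hypothesis is exactly $\rho(\lambda/\mu)<n$, the left inequality of the theorem is automatic and only the upper bound is at stake. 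Writing $M(\lambda/\mu):=\min\{\ell(\nu):c^{\lambda}_{\mu\nu}\ge 2\}$ with $\min\emptyset=\infty$, the basis observation gives that the polynomial is multiplicity-free if and only if $n<M(\lambda/\mu)$. Hence Theorem \ref{theorem:mainResult} is equivalent to the purely combinatorial identity $M(\lambda/\mu)=\rho(\lambda/\mu)+r_1(\lambda/\mu)+r_2(\lambda/\mu)$, which I would establish as two inequalities.

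For the upper bound $M\le\rho+r_1+r_2$ I would, in each family of Definition \ref{def:r1andr2} with $r_1<\infty$, exhibit one content $\nu$ with $\ell(\nu)=\rho+r_1+r_2$ carrying two distinct LR fillings of $\lambda/\mu$. The ordinary hypothesis restricts $({\sf np}(\lambda),{\sf np}(\mu))$ to the three finite-$r_1$ families: the case ${\sf np}(\lambda)\ge 3$ (where $r_1=0$ and a multiplicity must be produced almost immediately, at $\ell(\nu)=\rho+r_2$), and the two fat-hook families ${\sf np}(\lambda)=2$. The constructions should be local: the shortness hypotheses (${\sf short}(\lambda)\ge 2$ or $\ge 3$, and ${\sf short}(\mu)\ge 2$) furnish straight boundary segments long enough to host a content-preserving ``swap'' of two entries that alters the filling, and the $r_2$-alignment conditions $\lambda_2=\mu_q$ or $\lambda_2=\mu_1$ supply the single extra row that such a swap costs. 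One then verifies semistandardness, the ballot condition, and that the two fillings genuinely differ.

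For the lower bound $M\ge\rho+r_1+r_2$ I must show that every $\nu$ with $\ell(\nu)<\rho+r_1+r_2$ has $c^{\lambda}_{\mu\nu}\le 1$, i.e.\ that its LR filling is unique. The governing idea is that having few available values makes the filling rigid: columns of height near $\rho$ are forced to carry the initial segment $1,2,3,\ldots$, and the ballot condition propagates this forced structure across the at most two distinct row-lengths of $\lambda$ and the at most two column-heights of $\lambda/\mu$ permitted by the tight and ordinary hypotheses. I would make this precise by peeling the shape one column (or one Pieri strip) at a time and showing that any two LR fillings of a fixed short-length content must agree at each step. In the remaining $r_1=\infty$ cases the shape coincides with those in the Thomas--Yong classification of multiplicity-free skew Schur \emph{functions} \cite{TY10}, so rigidity holds for all $\nu$, giving $M=\infty$ in agreement with the theorem.

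The main obstacle is this lower bound. Producing one witnessing pair for the upper bound is comparatively routine, but the lower bound must exclude \emph{every} possible second filling simultaneously, over all contents of bounded length, and the interaction among the two row-lengths of $\lambda$, the two column-heights of $\lambda/\mu$, and the $r_2$-alignments makes a clean uniform rigidity argument delicate. I expect the bookkeeping there to require the most care, and to be organized precisely by the shortness and alignment statistics that already appear in the definitions of $r_1$ and $r_2$.
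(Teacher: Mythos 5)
Your architecture is exactly the paper's: reduce to the single invariant $M(\lambda/\mu)$ (the paper's ${\sf m}(\lambda/\mu)$), observe that $n$-sharpness makes the left inequality automatic, dispose of the $r_1=\infty$ cases by matching them with the Thomas--Yong/Gutschwager classification (Theorem~\ref{thm:TYGUT}), prove $M\le\rho+r_1+r_2$ by exhibiting two ballot tableaux with a common content obtained from one another by a local swap, and prove $M\ge\rho+r_1+r_2$ by a rigidity argument. So there is no disagreement of strategy. The issue is that the proposal stops at the strategy: everything after ``I would'' is precisely the content of Sections~3 and~4 of the paper, which occupy the bulk of it. For the upper bound the paper needs roughly two dozen explicit case constructions (organized by the relative sizes of the column heights $a,b,c,d$ and by the alignment of $\lambda_2$ with $\mu_1$ or $\mu_q$), each requiring a separate semistandardness and ballotness check; asserting that such constructions ``should be local'' is not a proof that they exist in every case, and indeed several of them (e.g.\ Cases 4 and 7 of Proposition~\ref{prop:4col}) fail exactly on the shortness-$1$ boundary and have to be rescued separately.

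The more substantive gap is in the lower bound, which you correctly identify as the hard part but for which your proposed mechanism differs from, and is weaker than, the paper's. You suggest ``peeling the shape one column (or one Pieri strip) at a time.'' The paper's rigidity argument instead first applies the symmetry $c^{\lambda}_{\mu,\nu}=c^{\lambda}_{\nu,\mu}$ so that the \emph{content} is the fat hook or rectangle $\mu$; the repeated parts of $\mu$ then force the entire bottom row of any ballot tableau (Lemmas~\ref{Lemma:basicballot} and~\ref{Lemma:rowofT}), which permits deleting bottom \emph{rows} one at a time (Corollary~\ref{Cor:rmvlastrow}) until the residual shape lands in a Thomas--Yong multiplicity-free class. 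It also needs the column-reversal identity of Lemma~\ref{Lemma:switchingcol} and the complementation \eqref{eq:LRsymmetry-transpose} to put several of the cases into a form where this row-forcing applies. Column-by-column peeling does not enjoy the analogous forcing (the leftmost column of $\lambda/\nu$ is not determined by the content for general $\nu$), so as stated your rigidity step would not go through; some version of the row-forcing idea is the missing ingredient.
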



\section{The reductions}\label{sec:reductions}

\subsection{The Littlewood-Richardson coefficients, rules and identities}\label{subsec:LRrule}

There are myriad rules for computing the Littlewood-Richardson coefficients, one of them being the Littlewood-Richardson rule.

A \emph{filling} of shape $\lambda / \mu$ is an assignment of values from $[n]$ to each box in the Young diagram $\lambda / \mu$. A filling is called a \emph{tableau} if the entries in each column strictly increase top to bottom. A tableau where the entries in each row weakly increase left to right is \emph{semistandard tableau}. The \emph{content} of a filling $T$ is $\eta(T) = (\eta_1(T), \ldots, \eta_n(T))$, where $\eta_i(T)$ is equal to the number of entries in $T$ equal to $i$.  For a filling $T$ of $\lambda / \mu$, and $(r,c) \in \lambda / \mu$, $T(r,c)=a$ indicates that the box in row $r$ and column $c$ of $T$ contains $a$.

The \emph{reverse reading word} of a filling is the sequence obtained by concatenating the entries of each row from right to left, top to bottom. A word $a_1 a_2 \ldots a_t$ is \emph{ballot} if in every initial factor $a_1 a_2 \ldots a_s$ the number of $i$'s is greater than or equal to the number of $i+1$'s, for all $i$. A \emph{ballot tableau} is a semistandard tableau whose reverse reading word is ballot.

\begin{theorem}[the Littlewood-Richardson rule~\cite{S99}] 
\label{theorem:lrrule}
The Littlewood-Richardson coefficient $c_{\mu, \nu}^{\lambda}$ is equal to the number of ballot tableaux of shape $\lambda / \mu$ and content $\nu$.
\end{theorem}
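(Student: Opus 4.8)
The plan is to prove the rule by Schützenberger's \emph{jeu de taquin}, following \cite{S99}. Since $c_{\mu,\nu}^{\lambda}$ is defined through the skew expansion \eqref{eqn:skewdecomp} and the Schur functions are linearly independent, it suffices to prove the identity $s_{\lambda/\mu}=\sum_\nu k_{\mu,\nu}^{\lambda}\,s_\nu$, where $k_{\mu,\nu}^{\lambda}$ denotes the number of ballot tableaux of shape $\lambda/\mu$ and content $\nu$. First I would set up the machinery of slides: an (inner) \emph{jeu de taquin slide} vacates an empty inner corner of a skew semistandard tableau by repeatedly sliding in the smaller of its right and lower neighbours (breaking ties in favour of the lower cell, to preserve column-strictness), until the hole exits the diagram; applying such slides until the shape is straight produces the \emph{rectification} $\mathrm{rect}(T)$. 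Every slide preserves semistandardness and the content $\eta(T)$.

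The structural input I would invoke from \cite{S99} is the interaction of slides with Knuth (plactic) equivalence: a single slide changes the reading word of a tableau by a Knuth transposition, and each Knuth class contains the reading word of exactly one straight-shape semistandard tableau. Together these give \emph{confluence}: $\mathrm{rect}(T)$ is the unique straight-shape tableau in the Knuth class of $T$, hence is well defined independently of the chosen sequence of slides. The verification that one slide is a Knuth transposition is a short local case analysis across the two cells adjacent to the hole; uniqueness of the tableau in a Knuth class is the statement that semistandard tableaux are canonical representatives of the plactic monoid.

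With rectification in hand I would run the grouping argument. Because $\mathrm{rect}$ preserves content, partitioning the semistandard tableaux of shape $\lambda/\mu$ by their rectification gives
\begin{equation}
s_{\lambda/\mu}=\sum_{V} d_{\lambda/\mu}(V)\,x^{\eta(V)},\qquad d_{\lambda/\mu}(V):=\#\{\,T:\ \text{shape}(T)=\lambda/\mu,\ \mathrm{rect}(T)=V\,\},
\end{equation}
the sum ranging over straight-shape semistandard tableaux $V$. The decisive point is the \emph{symmetry lemma}: $d_{\lambda/\mu}(V)$ depends only on the shape $\nu$ of $V$, not on $V$ itself. I would prove it by exhibiting, for any two tableaux $V,V'$ of a common shape $\nu$, a content-preserving bijection between the two fibres, constructed from the reversibility of jeu de taquin slides (equivalently via tableau switching). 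Granting this and writing $k_{\mu,\nu}^{\lambda}:=d_{\lambda/\mu}(V)$ for any $V$ of shape $\nu$, collecting the display by the shape $\nu$ yields $s_{\lambda/\mu}=\sum_\nu k_{\mu,\nu}^{\lambda}\sum_{\text{shape}(V)=\nu}x^{\eta(V)}=\sum_\nu k_{\mu,\nu}^{\lambda}\,s_\nu$.

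Finally I would identify $k_{\mu,\nu}^{\lambda}$ with the ballot count by choosing $V$ to be the superstandard tableau $U_\nu$ of shape $\nu$ whose $i$-th row is filled entirely with $i$'s. Its reverse reading word $1^{\nu_1}2^{\nu_2}\cdots$ is ballot, and $U_\nu$ is the unique straight-shape semistandard tableau of content $\nu$ with ballot reverse reading word. Since each slide preserves the ballot property of the reverse reading word, a skew tableau $T$ of content $\nu$ satisfies $\mathrm{rect}(T)=U_\nu$ exactly when $T$ is itself ballot; hence $k_{\mu,\nu}^{\lambda}=d_{\lambda/\mu}(U_\nu)$ counts precisely the ballot tableaux of shape $\lambda/\mu$ and content $\nu$. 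The two places I expect to do real work are confluence and, above all, the symmetry lemma — the latter is the genuine combinatorial heart, while the remaining steps are bookkeeping once slides and their reversibility are available.
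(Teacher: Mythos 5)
The paper gives no proof of Theorem~\ref{theorem:lrrule} at all—it is imported as a known result from~\cite{S99}—so your sketch can only be measured against the cited source, and it is in fact a correct outline of exactly that standard jeu de taquin argument: slides preserve content and semistandardness, confluence of rectification via Knuth equivalence, the symmetry lemma proved by reversibility of slides, and collection by shape to get $s_{\lambda/\mu}=\sum_\nu k^{\lambda}_{\mu,\nu}s_\nu$. The only point stated a little loosely is the final ``exactly when'': deducing that $\mathrm{rect}(T)=U_\nu$ forces $T$ to be ballot requires ballotness to be invariant under \emph{reverse} slides as well (equivalently, that it is a Knuth-class invariant, i.e.\ a word is ballot of content $\nu$ precisely when its insertion tableau is $U_\nu$), which is standard and closes the gap.
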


The column reading word of a filling is the sequence obtained by concatenating the entries of each column of the filling, top to bottom, right to left. The following lemma is a well known result whose proof we leave to the reader.

\begin{lemma}
\label{lemma:readingwordequiv}
Let $T$ be a semistandard tableau. The reverse reading word of $T$ is ballot if and only if the column reading word of $T$ is ballot.
\end{lemma}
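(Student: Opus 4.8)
The plan is to prove the two ballot conditions coincide by reducing each to a single two-dimensional matching condition on the cells of $\lambda/\mu$, which makes the semistandard and skew-shape geometry do the work.

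First I would reduce to a two-letter alphabet and locate where each word can first fail. A word is ballot if and only if, for every $i$, its subword on the letters $\{i,i+1\}$ is \emph{dominated}, meaning every prefix has at least as many $i$'s as $(i+1)$'s; indeed the defining inequalities only compare consecutive values, and letters other than $i,i+1$ do not affect the relevant counts. So fix $i$ and keep only the cells containing $i$ or $i+1$: by column-strictness each column has at most one $i$ and one $i+1$ (with the $i$ above), and in each row all $i$'s precede all $(i+1)$'s. In the column reading word a column is read top to bottom, so its $i$ precedes its $(i+1)$ and the running difference $\#i-\#(i+1)$ attains its prefix minima at column boundaries; hence the column word is dominated if and only if for every column $c$ the number of $i$'s in columns $\ge c$ is at least the number of $(i+1)$'s in columns $\ge c$. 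In the reverse reading word a row is read right to left, so its $(i+1)$'s are read before its $i$'s and the minima occur just after each row's block of $(i+1)$'s; hence the reverse word is dominated if and only if for every row $r$ the number of $i$'s in rows $<r$ is at least the number of $(i+1)$'s in rows $\le r$.

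Next I introduce the bridge. Define condition $(\star)$: there is an injection from the $(i+1)$-cells to the $i$-cells sending each $(i+1)$-cell $b$ to an $i$-cell in a row of weakly smaller index and column of weakly larger index (weakly to the northeast of $b$). Since such a cell precedes $b$ in both reading orders, $(\star)$ immediately implies both dominations. For the converses the key tool is the skew-shape corner property: if $(r,c)$ and $(r',c')$ lie in $\lambda/\mu$ with $r\le r'$ and $c\le c'$, then $(r,c')\in\lambda/\mu$, whence $T(r,c)\le T(r,c')\le T(r',c')$, with strict inequality on the right when $r<r'$. A first consequence is that any $i$-cell lying weakly east of an $(i+1)$-cell $b$ must in fact lie strictly north of $b$, since otherwise the corner property would force $i+1\le i$. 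Therefore the greedy matching underlying column-domination, which sends each $(i+1)$-cell to an earlier and hence weakly eastern $i$-cell, is automatically a northeast matching; this gives column-dominated $\Rightarrow (\star)$, and composing with the easy implication above gives column-dominated $\Rightarrow$ reverse-dominated.

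The remaining implication, reverse-dominated $\Rightarrow (\star)$, is the main obstacle. The greedy matching furnished by row-domination sends each $(i+1)$-cell only to an $i$-cell in a strictly higher (smaller-index) row, which may sit to the \emph{northwest} rather than the northeast; the corner property shows that such a partner can always be replaced by an $i$-cell lying directly above $b$ in $b$'s own column, but preserving injectivity across all cells requires a global argument. I would finish this by applying Hall's theorem to the bipartite graph joining each $(i+1)$-cell to the $i$-cells weakly to its northeast, using the corner property to verify Hall's condition from the row inequalities above. Combining the three implications shows that the column and reverse words are simultaneously dominated for each $i$, and summing over $i$ yields the lemma. I expect the verification of Hall's condition — that every down-set of $(i+1)$-cells sees at least as many northeast $i$-cells — to be the one genuinely two-dimensional step, since local adjacent-transposition arguments are bound to fail: the two reading orders disagree exactly on northwest–southeast pairs, across which the entries strictly increase, so transposing such a pair changes whether the word is ballot.
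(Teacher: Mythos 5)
The paper offers no proof of this lemma (it is declared well known and left to the reader), so there is nothing to compare against; judged on its own terms, your proposal is sound and complete in one direction but has a genuine gap in the other. The two-letter reduction, the prefix-minimum characterizations of the two domination conditions, the bridge condition $(\star)$, and the chain (column dominated) $\Rightarrow (\star) \Rightarrow$ (reverse dominated) are all correct. The gap is the implication (reverse dominated) $\Rightarrow (\star)$: you reduce it to verifying Hall's condition for the bipartite graph joining each $(i+1)$-cell to the $i$-cells weakly to its northeast, and then stop, saying you \emph{expect} this verification to be the genuinely two-dimensional step. It is --- and it is essentially the entire content of that direction, so deferring it leaves the harder half of the lemma unproved. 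Moreover, the Hall reformulation buys you little: since the cells carrying a fixed value form a horizontal strip, one can list the $i$-cells $(a_1,x_1),\dots,(a_m,x_m)$ and the $(i+1)$-cells $(b_1,y_1),\dots,(b_n,y_n)$ with columns increasing and rows weakly decreasing; each neighborhood $N(b_j)=\{k: a_k<b_j,\ x_k\ge y_j\}$ is then a suffix of $\{1,\dots,m\}$, the neighborhoods are nested, and Hall's condition for this graph is equivalent to the column-domination inequalities $x_{m-n+j}\ge y_j$ you are trying to prove. So ``apply Hall'' restates the problem rather than solving it.

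The missing step can be closed with the tools you already introduced. Reverse-domination applied at the row $r=b_j$ gives $|\{k:a_k<b_j\}|\ge n-j+1$; since this set is a suffix of indices, $a_{m-n+j}<b_j$. Now induct downward on $j$: if $x_{m-n+j}<y_j$, then $(a_{m-n+j},x_{m-n+j})$ lies strictly northwest of $(b_j,y_j)$, and your corner-property observation produces an $i$-cell in column $y_j$ itself, above $(b_j,y_j)$; its column index exceeds $x_{m-n+j}$, so it is $x_k$ for some $k\ge m-n+j+1$, forcing $x_{m-n+j+1}\le y_j<y_{j+1}$ and contradicting the inductive hypothesis $x_{m-n+j+1}\ge y_{j+1}$ (for the base case $j=n$ it contradicts the maximality of $x_m$). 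Hence $x_{m-n+j}\ge y_j$ for all $j$, and the explicit assignment $(b_j,y_j)\mapsto(a_{m-n+j},x_{m-n+j})$ is the injection required by $(\star)$, with no appeal to Hall's theorem. Adding this argument would make your proof complete.
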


The Littlewood-Richardson coefficients satisfy the following two symmetries:
\begin{equation}
\label{eq:LRsymmetrybasic}
c_{\mu, \nu}^{\lambda} = c_{\nu, \mu}^{\lambda}\,\, ,
\end{equation}
\begin{equation}
\label{eq:LRsymmetry-transpose}
c_{\mu, \nu}^{\lambda} = c_{\mu, \nu^{\vee(a^b)}}^{\lambda^{\vee(a^b)}}\,\, ,
\end{equation}
for $\lambda \subseteq (a^b)$. Applying the two above identities yields
\begin{equation}
\label{eq:LRsymmetry-rotate}
c^\lambda_{\mu,\nu} = c^\lambda_{\nu,\mu} = c^{\mu^{\vee(a^b)}}_{\nu, \lambda^{\vee(a^b)}} = c^{\mu^{\vee(a^b)}}_{\lambda^{\vee(a^b)}, \nu}\, \,.
\end{equation}

Given a sequence $\nu = (\nu_1,\ldots,\nu_z)\in \mathbb{N}_{\geq 0}^z$, define ${\sf sort}(\nu)$ to be the partition that arises from sorting the entries of $\nu$ so that they are weakly decreasing. If $\lambda = (\lambda_1,\ldots,\lambda_a)$ and $\mu = (\mu_1,\ldots,\mu_b)$ are two partitions, then $\lambda \cup \mu = {\sf sort}(\lambda_1,\ldots,\lambda_a,\mu_1,\ldots,\mu_b)$. Set $\lambda + (c^d) = (\lambda_1 + c, \lambda_2 + c,\ldots,\lambda_d + c, \lambda_{d+1}, \ldots)$ where $\lambda_i = 0$ for $i > a$.

\begin{theorem}[{~\cite[Theorem 3.1]{G10}}] \label{theorem:gutReduction} Let $\lambda, \mu, \nu$ be partitions with $a,b \in \mathbb{N}_{\geq 0}$ and $a \geq b$. Then 
\begin{equation}
\label{eq:LRsymmetryswap}
c_{\mu,\nu}^{\lambda} \leq c_{\mu + (1^b),\nu + (1^{a-b})}^{\lambda + (1^a)}\, \, ,
\end{equation}
\begin{equation}
c_{\mu,\nu}^{\lambda} \leq c_{\mu \cup (b),\nu \cup (a-b)}^{\lambda \cup (a)}\, \,.
\end{equation}

\end{theorem}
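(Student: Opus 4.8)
The two displayed inequalities are conjugate to one another, so the plan is to prove only one of them and deduce the other for free. The classical symmetry $c^{\lambda}_{\mu,\nu}=c^{\lambda'}_{\mu',\nu'}$ (coming from the involution $\omega$ on symmetric functions, $\omega(s_\lambda)=s_{\lambda'}$) combines with the elementary conjugation identities $(\pi+(1^k))'=\pi'\cup(k)$ and $(\pi\cup(k))'=\pi'+(1^k)$ to show that \eqref{eq:LRsymmetryswap}, applied to the conjugate triple $(\lambda',\mu',\nu')$ and then conjugated back, is exactly the second (union) inequality for $(\lambda,\mu,\nu)$. Thus I would focus entirely on \eqref{eq:LRsymmetryswap}: $c^{\lambda}_{\mu,\nu}\le c^{\lambda+(1^a)}_{\mu+(1^b),\nu+(1^{a-b})}$ with $a\ge b$.

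For this I would invoke the Littlewood--Richardson rule (Theorem \ref{theorem:lrrule}): the left side counts ballot semistandard tableaux of shape $\lambda/\mu$ and content $\nu$, and the right side counts ballot tableaux of shape $(\lambda+(1^a))/(\mu+(1^b))$ and content $\nu+(1^{a-b})$. The goal is to construct an injection $\Phi$ between these two sets. A direct cell count shows the target skew shape has exactly $a-b$ more boxes than $\lambda/\mu$, matching the $a-b$ extra cells in $\nu+(1^{a-b})$ (one additional copy of each of $1,\dots,a-b$); this balance is forced by the arithmetic $b+(a-b)=a$, which is what makes the statement sharp.

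The hard part is that $\Phi$ cannot merely reposition the entries of a tableau $T$ and drop the new entries into the new cells. Passing from $\mu$ to $\mu+(1^b)$ shifts rows $1,\dots,b$ one column to the right while leaving rows $b+1,\dots,a$ fixed and only appending a cell to their right ends, so the naive map destroys column-strictness across the boundary between row $b$ and row $b+1$; worse, those appended cells are rightmost in their rows, where semistandardness forces \emph{large} entries, while the content increment demands the \emph{small} values $1,\dots,a-b$. I therefore expect $\Phi$ to be realized not as a static relabeling but via an insertion/rectification procedure --- jeu de taquin or tableau switching --- that slides the augmenting column into position and automatically restores semistandardness and, through Lemma \ref{lemma:readingwordequiv}, the ballot (lattice) condition. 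Proving that this procedure is well defined and injective is the principal obstacle; the structural input I would lean on is that in any ballot tableau every entry in row $r$ is at most $r$, which constrains how the new cells can interact with the existing filling and pins down where the inserted values must land.

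As a conceptual check that the monotonicity is genuine rather than accidental, I would observe that both inequalities are instances of the weak monotonicity of $c^{\lambda}_{\mu,\nu}$ under coordinated enlargement of the boundary data. In the Knutson--Tao hive model, $c^{\lambda}_{\mu,\nu}$ counts integer hives with prescribed boundary, and adjoining a row (respectively column) of the stated heights --- with $a\ge b$ and $a\ge a-b$ guaranteeing that the enlarged boundary stays rhombus-feasible --- yields a transparent injection on hives. This recovers the inequality without the tableau bookkeeping, at the cost of invoking the hive machinery, and I would use it to verify the combinatorial construction above.
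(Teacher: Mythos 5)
First, a point of comparison: the paper does not prove this statement at all --- it is imported verbatim from Gutschwager \cite{G10} --- so your attempt can only be measured against that external proof, which constructs an explicit injection on Littlewood--Richardson fillings. The parts of your proposal that are actually carried out are correct: the reduction of the second inequality to \eqref{eq:LRsymmetryswap} via $c^{\lambda}_{\mu,\nu}=c^{\lambda'}_{\mu',\nu'}$ together with $(\pi+(1^k))'=\pi'\cup(k)$ is valid, and the box count $|(\lambda+(1^a))/(\mu+(1^b))| = |\lambda/\mu|+(a-b)$ matching $|\nu+(1^{a-b})|-|\nu|$ is right.

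The genuine gap is that \eqref{eq:LRsymmetryswap} itself is never proved. You correctly diagnose why the naive cell-for-cell transport fails (the appended cells sit at the right ends of rows $b+1,\dots,a$, where semistandardness forces large entries, while the content increment supplies the small values $1,\dots,a-b$), but the proposed remedy --- ``an insertion/rectification procedure, jeu de taquin or tableau switching'' --- is exactly the content of the theorem and is not defined, let alone shown to be well defined, content-correct, ballot-preserving, and injective. In particular, the suggestion that jeu de taquin ``automatically restores \dots the ballot condition'' is not available here: JDT preserves ballotness because it preserves the rectification, whereas your map must change the content from $\nu$ to $\nu+(1^{a-b})$, so the target is a \emph{different} Yamanouchi rectification and no off-the-shelf invariance applies; one must prove that the inserted entries land so as to produce a ballot word of the new content. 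The hive-model paragraph has the same status: asserting that the enlarged boundary ``yields a transparent injection on hives'' without exhibiting the extension of the hive and verifying the rhombus inequalities is a restatement of the claim, not a verification. As written, the proposal establishes the equivalence of the two inequalities and an accounting of boxes, but the substantive combinatorial construction that Gutschwager's proof supplies is missing.
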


\begin{corollary}\label{Cor:GUT}
Let $\lambda/\mu$ and $\lambda^{*}/\mu^{*}$ be skew partitions such that $\lambda^{*}/\mu^{*}$ is obtained from $\lambda/\mu$ by removing a subset of columns. If $\rho(\lambda/\mu) = \rho(\lambda^{*}/\mu^{*})$ then for all $n\in \integers$, $s_{\lambda^{*}/\mu^{*}}(x_1,\ldots,x_n)$ is not multiplicity-free implies $s_{\lambda/\mu}(x_1,\ldots,x_n)$ is not multiplicity-free.
\end{corollary}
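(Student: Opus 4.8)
The plan is to reduce to the deletion of a single column and then invoke the monotonicity of Littlewood--Richardson coefficients supplied by Theorem~\ref{theorem:gutReduction}. By iterating, it suffices to treat the case in which $\lambda^{*}/\mu^{*}$ is obtained from $\lambda/\mu$ by deleting one column $k$; the general statement then follows by removing the columns one at a time, the point being that every intermediate skew shape still has maximal column height equal to the common value $\rho(\lambda/\mu)=\rho(\lambda^{*}/\mu^{*})$, so Theorem~\ref{theorem:gutReduction} may be applied at each stage and the inequalities chained.

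First I would record the effect of deleting column $k$ on the bounding partitions. Writing $t=|\{r:\lambda_r\ge k\}|$ and $s=|\{r:\mu_r\ge k\}|$, a direct inspection of the skew diagram shows that deleting column $k$ and sliding the columns to its right one step left produces exactly the skew shape $(\lambda-(1^{t}))/(\mu-(1^{s}))$, where $\lambda-(1^{t})$ means subtracting $1$ from each of the first $t$ parts; moreover the deleted column has height $t-s={\sf CS}_k(\lambda/\mu)\le \rho(\lambda/\mu)$. Thus $\lambda=\lambda^{*}+(1^{t})$ and $\mu=\mu^{*}+(1^{s})$ with $t\ge s\ge 0$ (the inequality holds because $\mu\subseteq\lambda$), so Theorem~\ref{theorem:gutReduction}, applied with $a=t$ and $b=s$, gives for every partition $\nu^{*}$
\begin{equation*}
c^{\lambda^{*}}_{\mu^{*},\nu^{*}}\ \le\ c^{\lambda^{*}+(1^{t})}_{\mu^{*}+(1^{s}),\,\nu^{*}+(1^{t-s})}\ =\ c^{\lambda}_{\mu,\,\nu^{*}+(1^{t-s})}.
\end{equation*}
Hence any $\nu^{*}$ witnessing a multiplicity $\ge 2$ for $\lambda^{*}/\mu^{*}$ lifts to the candidate witness $\nu^{*}+(1^{t-s})$ for $\lambda/\mu$. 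Iterating over the reinstated columns yields a single partition $\nu$ with $c^{\lambda}_{\mu,\nu}\ge c^{\lambda^{*}}_{\mu^{*},\nu^{*}}\ge 2$, where $\ell(\nu)=\max\bigl(\ell(\nu^{*}),\max_i h_i\bigr)$ and the $h_i\le\rho(\lambda/\mu)$ are the heights of the reinstated columns.

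The remaining point, and the one forcing the hypothesis $\rho(\lambda/\mu)=\rho(\lambda^{*}/\mu^{*})$, is to ensure that this lifted witness is visible to the $n$-variable polynomial, i.e.\ that $\ell(\nu)\le n$. Here I would use that a semistandard tableau must fill its tallest column with strictly increasing entries from $[n]$, so the skew Schur polynomial in $n$ variables of any skew shape whose maximal column height exceeds $n$ vanishes; such a zero polynomial is trivially multiplicity-free. Consequently, if $n<\rho(\lambda^{*}/\mu^{*})$ the hypothesis that $s_{\lambda^{*}/\mu^{*}}(x_1,\dots,x_n)$ is not multiplicity-free is vacuous, and we may assume $n\ge\rho(\lambda^{*}/\mu^{*})=\rho(\lambda/\mu)$. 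Then each reinstated column satisfies $h_i\le\rho(\lambda/\mu)\le n$, while $\ell(\nu^{*})\le n$, so $\ell(\nu)\le n$ and $s_{\nu}$ genuinely occurs in $s_{\lambda/\mu}(x_1,\dots,x_n)$ with coefficient $c^{\lambda}_{\mu,\nu}\ge 2$, proving it is not multiplicity-free. I expect the main obstacle to be precisely this bookkeeping: verifying that column deletion corresponds cleanly to the partition operations $\lambda\mapsto\lambda-(1^{t})$, $\mu\mapsto\mu-(1^{s})$ so that Theorem~\ref{theorem:gutReduction} applies verbatim, and then tracking the length of the accumulated content so that the $\rho$-equality is exactly what keeps $\ell(\nu)\le n$. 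Indeed, were a tallest column deleted (so that $\rho(\lambda^{*}/\mu^{*})<\rho(\lambda/\mu)$), reinstating it could force $\ell(\nu)>n$, and the conclusion can then fail, confirming that the hypothesis is used in an essential way.
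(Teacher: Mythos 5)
Your proposal is correct and follows essentially the same route as the paper: reinstate the deleted columns one at a time, observe that adding a column of height $h$ corresponds to $\lambda^{*}\mapsto\lambda^{*}+(1^{t})$, $\mu^{*}\mapsto\mu^{*}+(1^{s})$ with $t-s=h\le\rho$, lift the witness via the first inequality of Theorem~\ref{theorem:gutReduction}, and use $n\ge\rho(\lambda^{*}/\mu^{*})=\rho(\lambda/\mu)$ to keep the accumulated content of length at most $n$. Your write-up is if anything slightly more explicit than the paper's about why the hypothesis is vacuous when $n<\rho$ and about the bookkeeping $\ell(\nu)=\max(\ell(\nu^{*}),\max_i h_i)$, but the argument is the same.
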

\begin{proof} There is a sequence of skew diagrams $\lambda^{*}/\mu^{*}=\lambda^1/\mu^1,\ldots,\lambda^z/\mu^z=\lambda/\mu$ with $\lambda_i / \mu_i = \lambda_{i-1} + (1^{a_i}) / \mu_{i-1} + (1^{b_i})$ and $b_i < a_i$, for $1<i\leq z$. By construction
\begin{equation}
\label{eq:addlength}
a_i - b_i \leq \rho(\lambda/\mu).
\end{equation}
If $s_{\lambda^{*}/\mu^{*}}(x_1,\ldots,x_n)$ is not multiplicity-free, then $n \geq \rho(\lambda/\mu)$ and there is a partition $\nu^{*}$ with $\ell(\nu^{*}) \leq n$ such that $c^{\lambda^{*}}_{\mu^{*},\nu^{*}} > 1$. By (\ref{eq:LRsymmetryswap}), $c^{\lambda^2}_{\mu^2,\nu^{*} + (1^{a_2 - b_2}) } \geq c^{\lambda^{*}}_{\mu^{*},\nu^{*}} > 1$. Then \eqref{eq:addlength} implies $\ell(\nu^{*} + (1^{a_i - b_i})) \leq n$. Thus $s_{\lambda^2/\mu^2}(x_1,\ldots,x_n)$ is not multiplicity-free. Our desired result follows by inductively repeating this argument.
\end{proof}
.
\begin{corollary}\label{Cor:GUT2}
Let $\lambda/\mu$ and $\lambda^{*}/\mu^{*}$ be skew partitions such that $\lambda^{*}/\mu^{*}$ is obtained from $\lambda/\mu$ by removing a subset of columns. If $s_{\lambda^{*}/\mu^{*}}(x_1,\ldots,x_{\rho(\lambda^{*}/\mu^{*}) + 1})$ is not multiplicity-free, then $s_{\lambda/\mu}(x_1,\ldots,x_{\rho(\lambda/\mu)+1})$ is not multiplicity-free.
\end{corollary}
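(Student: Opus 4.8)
The plan is to follow the proof of Corollary~\ref{Cor:GUT} almost verbatim, the only new ingredient being a slightly more careful bookkeeping of partition lengths in order to absorb the possible gap between $\rho(\lambda^{*}/\mu^{*})$ and $\rho(\lambda/\mu)$. First I would record the elementary monotonicity $\rho(\lambda^{*}/\mu^{*}) \leq \rho(\lambda/\mu)$: deleting a subset of columns leaves each surviving value ${\sf CS}_k$ unchanged (up to reindexing) and removes the rest, so the maximum column height cannot increase. Write $\rho := \rho(\lambda/\mu)$ and $\rho^{*} := \rho(\lambda^{*}/\mu^{*})$, so that $\rho^{*} \leq \rho$.

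Next, exactly as in Corollary~\ref{Cor:GUT}, I would invoke the chain of skew diagrams $\lambda^{*}/\mu^{*} = \lambda^1/\mu^1, \ldots, \lambda^z/\mu^z = \lambda/\mu$ that re-inserts the deleted columns one at a time, where $\lambda^i/\mu^i = (\lambda^{i-1}+(1^{a_i}))/(\mu^{i-1}+(1^{b_i}))$ with $a_i \geq b_i$. As in the derivation of \eqref{eq:addlength}, the column inserted at step $i$ is a genuine column of the final diagram $\lambda/\mu$, so its height obeys $a_i - b_i \leq \rho$. By hypothesis there is a partition $\nu^1 = \nu^{*}$ with $\ell(\nu^{*}) \leq \rho^{*} + 1$ and $c^{\lambda^{*}}_{\mu^{*},\nu^{*}} > 1$. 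Setting $\nu^i := \nu^{i-1} + (1^{a_i - b_i})$, the inequality \eqref{eq:LRsymmetryswap} gives $c^{\lambda^i}_{\mu^i, \nu^i} \geq c^{\lambda^{i-1}}_{\mu^{i-1}, \nu^{i-1}}$, so $c^{\lambda}_{\mu, \nu^z} > 1$ by induction. Since $\ell(\sigma + (1^t)) = \max(\ell(\sigma), t)$, a parallel induction yields $\ell(\nu^z) \leq \max\bigl(\ell(\nu^{*}), \max_i (a_i - b_i)\bigr) \leq \max(\rho^{*} + 1, \rho) \leq \rho + 1$, where the last step uses $\rho^{*} \leq \rho$. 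Hence $\nu^z$ certifies that $s_{\lambda/\mu}(x_1, \ldots, x_{\rho+1})$ is not multiplicity-free.

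I do not expect a genuine obstacle here; the one point requiring care—and the only place the specific variable counts enter—is the length estimate, where the starting witness lives in $\rho^{*}+1$ variables but the target bound is $\rho+1$. Reconciling these amounts precisely to combining $\ell(\nu^{*}) \leq \rho^{*}+1 \leq \rho+1$ with $a_i - b_i \leq \rho \leq \rho+1$, so that the constructed $\nu^z$ never overshoots length $\rho+1$. Everything else is a direct transcription of Corollary~\ref{Cor:GUT}, and in particular relies on no properties of $\lambda/\mu$ beyond those already used there.
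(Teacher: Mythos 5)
Your proposal is correct and follows exactly the route the paper intends: the paper's own proof simply says to repeat the argument of Corollary~\ref{Cor:GUT} while tracking that the final content has length at most $\rho(\lambda/\mu)+1$, and your length bookkeeping via $\ell(\sigma+(1^t))=\max(\ell(\sigma),t)$ together with $\rho(\lambda^{*}/\mu^{*})\leq\rho(\lambda/\mu)$ and $a_i-b_i\leq\rho(\lambda/\mu)$ is precisely that check.
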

\begin{proof}
This follows by a nearly identical argument to the proof of Corollary~\ref{Cor:GUT}, taking care at each step to show that the final content $\nu$ will have $\ell(\nu) \leq \rho(\lambda/\mu) + 1$.
\end{proof}

\begin{lemma}\label{Lemma:switchingcol}
    Let $\lambda/\mu$ be a basic skew partition such that ${\sf{np}}(\lambda),{\sf{np}}(\mu)\geq 2$. Suppose that $\lambda_2 = \mu_q$ and $\ell(\mu) = l_1$. Set $\tilde\lambda = (\tilde\lambda_1^{\tilde l_1}, \ldots, \tilde\lambda_{p+q-1}^{\tilde l_{p+q-1}})$ and $\tilde\mu = (\mu_q^{l_1})$ where 
    \[\tilde\lambda_i^{\tilde l_i} = 
    \begin{cases}
        (\lambda_1+\mu_q-\mu_{q-i+1})^{k_{q-i+1}} & \text{if }i\in [q]\\
        \lambda_{i-q+1}^{l_{i-q+1}} & \text{else}
    \end{cases},
    \] then
    \[c^{\lambda}_{\mu,\nu} = c^{\tilde\lambda}_{\tilde\mu,\nu} \text{ for all partition $\nu$}.\]
    
\end{lemma}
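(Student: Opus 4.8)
The plan is to show that, under the two hypotheses $\lambda_2=\mu_q$ and $\ell(\mu)=l_1$, both $\lambda/\mu$ and $\tilde\lambda/\tilde\mu$ are \emph{disconnected} skew diagrams whose two pieces agree up to a rotation, so that the two skew Schur functions are literally equal as symmetric functions; comparing Schur expansions via \eqref{eqn:skewdecomp} then yields the claimed equality $c^{\lambda}_{\mu,\nu}=c^{\tilde\lambda}_{\tilde\mu,\nu}$ for every $\nu$.

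First I would record the geometry forced by the hypotheses. Since $\ell(\mu)=l_1$, the inner shape $\mu$ occupies exactly rows $1,\dots,l_1$ of $\lambda$, all of width $\lambda_1$; hence the restriction of $\lambda/\mu$ to these rows is the skew shape $A:=(\lambda_1^{l_1})/\mu$, whose boxes all lie in columns $\geq\mu_q+1$ (the smallest removal being $\mu_q$). The remaining rows form the straight shape $B:=(\lambda_2^{l_2},\dots,\lambda_p^{l_p})$, whose boxes lie in columns $\leq\lambda_2=\mu_q$. Thus $A$ sits strictly to the northeast of $B$, sharing no row and no column, so $\lambda/\mu=A\sqcup B$. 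Because two pieces of a skew diagram sharing no row and no column impose no cross-constraints on a semistandard filling, a filling of the whole is precisely an independent pair of fillings of $A$ and $B$, and the generating function factors: $s_{\lambda/\mu}=s_A\,s_B$. Here I will also invoke that $\lambda/\mu$ is basic to conclude $\lambda_1>\mu_1$, which keeps the analogous splitting below clean.

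Next I would identify $\tilde\lambda/\tilde\mu$ the same way. By construction $\tilde\mu=(\mu_q^{l_1})$ occupies rows $1,\dots,l_1$, and the first $q$ blocks of $\tilde\lambda$ also total $\sum_j k_j=\ell(\mu)=l_1$ rows, of widths $\lambda_1,\,\lambda_1+\mu_q-\mu_{q-1},\dots,\lambda_1+\mu_q-\mu_1$; deleting the first $\mu_q$ columns leaves rows of widths $\lambda_1-\mu_q,\dots,\lambda_1-\mu_1$ (with multiplicities $k_q,\dots,k_1$), left-justified at column $\mu_q+1$. Since $\lambda_1>\mu_1$, each such box lies in a column $\geq\mu_q+1$, while the lower rows $l_1+1,\dots$ of $\tilde\lambda/\tilde\mu$ are exactly the straight shape $B$ again (columns $\leq\mu_q$). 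Hence $\tilde\lambda/\tilde\mu=\tilde A\sqcup B$ with $s_{\tilde\lambda/\tilde\mu}=s_{\tilde A}\,s_B$, where $\tilde A$ is the diagram of the partition $\mu^{\vee(\lambda_1^{l_1})}=\big((\lambda_1-\mu_q)^{k_q},\dots,(\lambda_1-\mu_1)^{k_1}\big)$ translated right by $\mu_q$ columns.

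Finally I would match $s_A$ with $s_{\tilde A}$. A horizontal translation of a skew diagram preserves its skew Schur function, so $s_{\tilde A}=s_{\mu^{\vee(\lambda_1^{l_1})}}$. On the other side, applying \eqref{eq:LRsymmetry-rotate} inside the rectangle $(\lambda_1^{l_1})$ gives $c^{(\lambda_1^{l_1})}_{\mu,\nu}=c^{\mu^{\vee(\lambda_1^{l_1})}}_{\emptyset,\nu}=\delta_{\mu^{\vee(\lambda_1^{l_1})},\,\nu}$, since $(\lambda_1^{l_1})^{\vee(\lambda_1^{l_1})}=\emptyset$; that is, $s_A=s_{(\lambda_1^{l_1})/\mu}=s_{\mu^{\vee(\lambda_1^{l_1})}}$. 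Therefore $s_A=s_{\tilde A}$, whence $s_{\lambda/\mu}=s_A s_B=s_{\tilde A}s_B=s_{\tilde\lambda/\tilde\mu}$, and comparing coefficients in the (linearly independent) Schur basis via \eqref{eqn:skewdecomp} gives $c^{\lambda}_{\mu,\nu}=c^{\tilde\lambda}_{\tilde\mu,\nu}$ for all $\nu$. I expect the only delicate point to be the bookkeeping behind the two disconnected decompositions—specifically verifying, from $\lambda_2=\mu_q$, $\ell(\mu)=l_1$, and basicness, that the northeast and southwest pieces genuinely share no column—after which the product factorization and the rotation identity are routine.
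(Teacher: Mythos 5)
Your proof is correct, but it packages the argument differently from the paper. The paper works directly with ballot tableaux: it observes that in any ballot tableau of shape $\lambda/\mu$ (resp.\ $\tilde\lambda/\tilde\mu$) each column $c\in[\mu_q+1,\lambda_1]$ is forced to contain exactly $1,2,\ldots,{\sf CS}_c$, notes that the multiset of column lengths in that range is the same for the two shapes (via $c\mapsto\lambda_1+\mu_q+1-c$), and concludes there is a content-preserving bijection between the two sets of ballot tableaux, fixing the columns $1,\ldots,\mu_q$. You instead exploit the same geometric fact --- that under $\lambda_2=\mu_q$ and $\ell(\mu)=l_1$ the diagram splits into a northeast piece in columns $\geq\mu_q+1$ and a southwest piece in columns $\leq\mu_q$ with no shared row or column --- at the level of symmetric functions: the product formula $s_{A\sqcup B}=s_A s_B$ for disconnected skew shapes, translation invariance, and the rectangle-complement identity $s_{(\lambda_1^{l_1})/\mu}=s_{\mu^{\vee(\lambda_1^{l_1})}}$ (which you correctly extract from \eqref{eq:LRsymmetry-rotate}), then compare Schur expansions as in \eqref{eqn:skewdecomp}. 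Your route buys a cleaner justification of the ballotness bookkeeping: the paper's bijection tacitly requires checking that reordering the forced columns in the column reading word preserves the ballot property (which it does, but is only asserted), whereas your factorization sidesteps this entirely. The cost is reliance on the multiplicativity of skew Schur functions over disconnected components, a standard fact not stated in the paper, which you should either cite (e.g.\ Stanley) or prove in one line from the definition as you sketch. Your verification of the identifications ($\tilde A$ being the translate of $\mu^{\vee(\lambda_1^{l_1})}$, basicness forcing $\lambda_1>\mu_1$ so the column ranges are genuinely disjoint) is accurate.
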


\begin{example} Let $\lambda = (6^3,2^2,1)$ and $\mu = (5,3,2)$. By Lemma~\ref{Lemma:switchingcol}, we obtain $\tilde\lambda = (6,5,3,2^2,1)$, $\tilde\mu = (2^3)$ and $c^{\lambda}_{\mu,\nu} = c^{\tilde\lambda}_{\tilde\mu,\nu} $ for all partition $\nu$. The skew diagram $\lambda / \mu$ (on the left) and $\tilde\lambda/\tilde\mu$ (on the right) are listed below.
\begin{center}
\ytableausetup{boxsize=1em}
\begin{ytableau}
\none & \none & \none & \none & \none & \; \\
\none & \none & \none & \; & \; & \; \\
\none & \none & \; & \; & \; & \; \\
\; & \; \\
\; & \; \\
\; \\
\end{ytableau}
\qquad \qquad
\begin{ytableau}
\none & \none & \; & \; & \; & \; \\
\none & \none & \; & \; & \; \\
\none & \none & \; \\
\; & \; \\
\; & \; \\
\; \\
\end{ytableau}
\end{center}
Visually, we obtain $\tilde\lambda/\tilde\mu$ from $\lambda/\mu$ by reversing the left to right order of column $3$ through $6$ while maintaining a valid skew partition.
\end{example}

\noindent \emph{Proof of Lemma~\ref{Lemma:switchingcol}:} Since $\lambda_2 = \mu_q$ and $\ell(\mu) = l_1$, all ballot tableaux of shape $\lambda/\mu$, of any content, are identical in columns $\mu_q+1$ through $\lambda_1$. Notice that $\tilde\lambda_{q+1} = \lambda_2$ and $\ell(\mu) = \sum_{i=1}^{q}k_i = \sum_{j=i}^{q}\tilde l_j$. Therefore all ballot tableaux of shape $\tilde\lambda/\tilde\mu$, of any content, are also identical in columns $\mu_q+1$ through $\lambda_1$. Let $T$ be any ballot tableau of shape $\lambda/\mu$ and of any content, then column $c$ of $T$ is filled by $1$ through $\sf{CS}_c(\lambda/\mu)$ for $c\in [\mu_q+1,\lambda_1]$. Similarly, let $\tilde T$ be any ballot tableau of shape $\tilde\lambda/\tilde\mu$ and of any content, then column $c$ of $\tilde T$ is filled by $1$ through $\sf{CS}_c(\tilde\lambda/\tilde\mu)$ for $c\in[\mu_q+1,\lambda_1]$. Since ${\sf{CS}}_c(\lambda/\mu) = {\sf{CS}}_{\lambda_1+\mu_q+1-c}(\tilde\lambda/\tilde\mu)$ for all $c\in[\mu_q+1,\lambda_1]$, the content of $T$ and $\tilde T$ are identical in columns $\mu_q+1$ through $\lambda_1$. Therefore we can find a content preserving involution from the set of ballot tableaux of shape $\lambda/\mu$ to the set of ballot tableaux of shape $\tilde\lambda/\tilde\mu$. As a result,
    \[c^{\lambda}_{\mu,\nu} = c^{\tilde\lambda}_{\tilde\mu,\nu} \text{ for all partition $\nu$}.\] \qed

\subsection{A quartet of reductions}\label{subsec:treductions}

\begin{proposition}[{\cite[Lemma 2]{TY10}}]
\label{prop:reductionbasic}
If $\lambda / \mu$ is a skew partition, then \[s_{\lambda / \mu}(x_1,\ldots,x_n) = s_{(\lambda / \mu)^{\sf ba}}(x_1,\ldots,x_n).\]
\end{proposition}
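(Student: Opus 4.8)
The plan is to exhibit a content-preserving bijection between the semistandard tableaux of shape $\lambda/\mu$ and those of shape $(\lambda/\mu)^{\sf ba}$; by the defining sum \eqref{eqn:skewpolydef} this immediately forces the two polynomials to coincide. Since $(\lambda/\mu)^{\sf ba}$ is obtained from $\lambda/\mu$ by repeatedly deleting empty rows and empty columns, it suffices to show that a single such deletion induces a content-preserving bijection on semistandard tableaux, and then to iterate.

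First I would record the elementary structural fact that in any skew diagram each column occupies a \emph{contiguous} interval of rows: the boxes of column $c$ are exactly those $(r,c)$ with $\mu_r < c \le \lambda_r$, and since both $(\lambda_r)$ and $(\mu_r)$ are weakly decreasing, the set of such $r$ is an interval. Consequently, if row $i$ is empty, i.e.\ $\lambda_i = \mu_i$, then no column contains boxes both strictly above and strictly below row $i$ (such a column would straddle row $i$ and violate contiguity). Hence the map that deletes row $i$ and slides rows $i+1, i+2, \dots$ up by one is a bijection onto the semistandard tableaux of the resulting shape: it preserves the relative vertical order within every column, so the strict-increase-down-columns condition is unaffected; it leaves every row intact, so the weak-increase-along-rows condition, the entries themselves, and hence both the content and the bound that all entries lie in $[n]$ are preserved.

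The empty-column case is the mirror image. Each \emph{row} of a skew diagram is likewise a contiguous interval of columns, namely $\{(r,c) : \mu_r < c \le \lambda_r\}$; so if a column $k$ is empty (${\sf CS}_k(\lambda/\mu) = 0$), no row contains boxes both strictly left and strictly right of column $k$. Deleting column $k$ and sliding the columns to its right leftward by one is then a content-preserving bijection for the same reasons: no row is split or merged, so row-weakness is retained, and no column's contents change, so column-strictness is retained.

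Iterating these two moves carries $\lambda/\mu$ to $(\lambda/\mu)^{\sf ba}$ through a chain of content-preserving bijections, whence $s_{\lambda/\mu}(x_1,\dots,x_n) = s_{(\lambda/\mu)^{\sf ba}}(x_1,\dots,x_n)$. The one point I would be most careful about is precisely the contiguity claim for rows and columns, since it is exactly what guarantees that an empty row (respectively column) cleanly separates the diagram, so that deletion never forces two previously non-adjacent entries into a common column (respectively row) where they might break strictness (respectively weak monotonicity). The remaining verifications are routine bookkeeping: one checks that deleting an empty row never creates a newly empty column and vice versa, and that the deletions commute, so the procedure is well defined and terminates at the basic demolition.
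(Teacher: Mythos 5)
Your proof is correct. The paper itself gives no argument for this proposition---it is quoted directly from \cite[Lemma 2]{TY10}---so there is no in-paper proof to compare against, but your bijection is the standard one and it is complete: the contiguity of each column's row-set (an initial segment from $c\le\lambda_r$ intersected with a final segment from $\mu_r<c$) is exactly the point that guarantees an empty row or column cleanly disconnects the diagram, so deletion is box-by-box and order-preserving on every row and every column, hence restricts to a content-preserving bijection on semistandard tableaux. You also correctly flag the one refinement needed beyond the skew Schur \emph{function} statement of \cite{TY10}, namely that the bijection preserves the entries themselves and therefore respects the restriction to entries in $[n]$, which is what makes the identity hold for the polynomials $s_{\lambda/\mu}(x_1,\ldots,x_n)$ and not just in infinitely many variables.
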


\begin{proposition}
\label{prop:reductionnsharp}
If $\lambda / \mu$ is a skew partition, then 
\begin{equation}
\label{eq:nsharp}
s_{\lambda / \mu}(x_1,\ldots,x_n) = (x_1 \cdots x_n)^k s_{(\lambda / \mu)^{\sf n\sharp}}(x_1,\ldots,x_n),
\end{equation}
where $k = \# \{ d : {\sf CS}_d(\lambda / \mu) = n \}$. In particular, $s_{\lambda / \mu}(x_1,\ldots,x_n)$ is multiplicity-free if and only if $s_{(\lambda / \mu)^{\sf n\sharp}}(x_1,\ldots,x_n)$ is multiplicity-free.
\end{proposition}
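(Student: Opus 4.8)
The plan is to produce a content-preserving bijection on semistandard tableaux that peels off the columns of height exactly $n$. First I would dispose of the degenerate case: if ${\sf CS}_d(\lambda/\mu) > n$ for some column $d$, then filling column $d$ would require ${\sf CS}_d(\lambda/\mu)$ strictly increasing entries drawn from $[n]$, which is impossible, so $s_{\lambda/\mu}(x_1,\ldots,x_n)=0$. This is vacuously multiplicity-free, and since $(\lambda/\mu)^{n\sharp}=\emptyset/\emptyset$ gives $s_{\emptyset/\emptyset}=s_\emptyset=1$, which is also multiplicity-free, the final equivalence holds in this case. Hence I may assume ${\sf CS}_d(\lambda/\mu)\le n$ for every $d$, so that $(\lambda/\mu)^{n\sharp}$ is obtained by deleting exactly the $k$ columns of height $n$.

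The heart of the argument is the observation that each height-$n$ column is \emph{forced}. Fix a column $c$ with ${\sf CS}_c(\lambda/\mu)=n$. In any semistandard tableau $T$ of shape $\lambda/\mu$ with entries in $[n]$, the boxes of column $c$ form a contiguous run (because for fixed $c$ the rows $r$ with $\mu_r < c \le \lambda_r$ form an interval) carrying $n$ strictly increasing values from $[n]$; these must be $1,2,\ldots,n$ read top to bottom. I would then check that deleting column $c$ and sliding the columns to its right one step to the left produces a valid semistandard tableau of the skew shape with column $c$ removed: columns are untouched internally, so they remain strictly increasing, and in each row the newly adjacent entries satisfy $T(r,c-1)\le T(r,c)\le T(r,c+1)$, so weak increase persists across the splice. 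Removing this forced column lowers the content by $(1^n)$.

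Iterating over all $k$ height-$n$ columns (the set of such columns, and hence $k$, is unaffected by the deletions, since sliding does not change the height of any other column) yields a content-preserving bijection between semistandard tableaux of $\lambda/\mu$ and of $(\lambda/\mu)^{n\sharp}$ under which every tableau of $\lambda/\mu$ has content equal to that of its image plus $(k^n)$. Summing $x^{\eta(T)}$ over the two tableau sets then gives exactly $s_{\lambda/\mu}(x_1,\ldots,x_n) = (x_1\cdots x_n)^k\, s_{(\lambda/\mu)^{n\sharp}}(x_1,\ldots,x_n)$, which is the displayed identity.

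Finally, for the multiplicity-free statement I would expand $s_{(\lambda/\mu)^{n\sharp}}(x_1,\ldots,x_n)=\sum_\nu c_\nu\, s_\nu(x_1,\ldots,x_n)$ in the Schur basis (with $\ell(\nu)\le n$) and use $x_1\cdots x_n=e_n=s_{(1^n)}$ together with the standard fact $e_n\cdot s_\nu = s_{\nu+(1^n)}$, so that $(x_1\cdots x_n)^k s_\nu = s_{\nu+(k^n)}$. Since $\nu\mapsto \nu+(k^n)$ is injective on partitions with at most $n$ parts, the coefficients are merely relabeled: the coefficient of $s_{\nu+(k^n)}$ in $s_{\lambda/\mu}$ equals $c_\nu$, and no other Schur polynomials occur. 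Thus the two polynomials carry the same multiset of Schur coefficients, and one is multiplicity-free if and only if the other is. The main obstacle is the bijection step: verifying carefully that excising a forced full column preserves both the row weak-increase across the splice and the validity of the resulting skew shape; once that is in place the generating-function identity and the $e_n$-shift argument are purely formal.
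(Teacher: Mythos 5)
Your route is genuinely different from the paper's and, where complete, it is the cleaner one for the identity \eqref{eq:nsharp}: you build a content-shifting bijection on \emph{all} semistandard tableaux and read off the generating-function identity directly from \eqref{eqn:skewpolydef}, whereas the paper restricts the same column-deletion map to \emph{ballot} tableaux, proves the Littlewood--Richardson identity $c^{\lambda^{n\sharp}}_{\mu^{n\sharp},\nu}=c^{\lambda}_{\mu,\nu+(k^n)}$, and only then assembles the polynomial identity via the Schur expansion. Your closing step (injectivity of $\nu\mapsto\nu+(k^n)$ on partitions of length at most $n$, so the Schur coefficients are merely relabelled) matches the paper's \eqref{eq:schurpolyfulllenrows} and \eqref{eq:nsharpdemomain}. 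The combinatorial core --- a height-$n$ column must be filled $1,\dots,n$, and such columns can be excised --- is identical in both arguments.

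There is, however, a gap at exactly the point the paper works hardest. You verify only the forward direction of your map (deleting a forced column preserves semistandardness) and then assert bijectivity. Injectivity is indeed immediate from forcedness, but surjectivity is not: you must show that \emph{reinserting} a column filled $1,\dots,n$ into an arbitrary semistandard tableau of $(\lambda/\mu)^{n\sharp}$ never breaks row-weak-increase against its new left and right neighbours, and this does not follow from your splice inequality $T(r,c-1)\le T(r,c)\le T(r,c+1)$, which presupposes the full tableau. The paper's argument (in its ``$T_0$ is semistandard'' step) is the model: if some entry of the left neighbour column exceeded its partner $r-U(c)+1$ in the reinserted column, then strict column-increase would propagate the violation downward to the bottom row of the reinserted column, forcing an entry $\ge n+1$, a contradiction; a symmetric argument handles the right neighbour. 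Without this check your bijection, and hence the identity, is unproved. Separately, in the degenerate case ${\sf CS}_d(\lambda/\mu)>n$ your convention $s_{\emptyset/\emptyset}=1$ makes the displayed identity \eqref{eq:nsharp} false (left side $0$, right side $(x_1\cdots x_n)^k$); the multiplicity-freeness equivalence survives, but you should either adopt the paper's convention that the $n$-sharp demolition contributes $0$ there or note that only the ``in particular'' clause is being claimed in that case.
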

\begin{proof}
If $\rho(\lambda / \mu) > n$, then $(\lambda / \mu)^{\sf n\sharp} = \emptyset / \emptyset$ and $s_{(\lambda / \mu)^{\sf n\sharp}}(x_1,\ldots,x_n) = 0$. Let $\nu$ be a partition with $\ell(\nu) \leq n$. Since there is a column in $\lambda / \mu$ with more than $n$ boxes, there are no tableau of shape $\lambda / \mu$ and content $\nu$. Hence, Theorem~\ref{theorem:lrrule} implies $c_{\mu, \nu}^{\lambda} = 0$. By \eqref{eqn:skewpolydef}, $s_{\lambda / \mu}(x_1,\ldots,x_n) =  0$. Thus \eqref{eq:nsharp} holds when $\rho(\lambda / \mu) > n$.

If $\rho(\lambda / \mu) \leq n$, then $(\lambda / \mu)^{\sf n\sharp}$ is the skew partition that arises from deleting all columns with exactly $n$ boxes. If $k = 0$, then $(\lambda / \mu)^{\sf n\sharp}=\lambda / \mu$ and \eqref{eq:nsharp} is trivial. Hence, we assume $k > 0$. Let $\lambda^{n\sharp}$ and $\mu^{n\sharp}$ be the partitions such that $(\lambda / \mu)^{\sf n\sharp} = \lambda^{n\sharp} / \mu^{n\sharp}$. If $\nu$ is a partition such that $\ell(\nu) \leq n$, then we claim
\begin{equation}
\label{eq:nsharpLR}
c_{\mu^{n\sharp}, \nu}^{\lambda^{n\sharp}} =  c_{\mu, \nu + (k^n)}^{\lambda}.
\end{equation}

Denote by $S_1$ the set of ballot tableaux of shape $\lambda/\mu$ with content $\nu + (k^n)$, and let $S_2$ denote the set of ballot tableaux of shape $(\lambda / \mu)^{\sf n\sharp}$ with content $\nu$. Consider the map $f:S_1\xrightarrow{}S_2$ that sends a $T \in S_1$ to a $T^* \in S_2$ by removing all columns in $T$ of length $n$. We show that $f$ is a bijection.

\emph{$f$ is well-defined:} Since each column of $T$ is strictly increasing downwards, a column with $n$ boxes has to be filled with $1$ through $n$. Removing the subword corresponding to the boxes in such a column will result in a column reading word that is ballot. Thus by Lemma~\ref{lemma:readingwordequiv} the reverse reading word is ballot. Each column remains strictly increasing and each row remains weakly increasing after removing columns of length $n$. Hence $f(T)$ is a ballot tableau of shape $(\lambda / \mu)^{\sf n\sharp}$ and content $\nu$.

\emph{$f$ is injective:} Since all the columns of length $n$ must be filled with $1$ through $n$, all ballot tableaux in $S_1$ are identical in these columns. Therefore if $T_1,T_2 \in S_1$ with $T_1 \neq T_2$, then they must differ in a column of length less than $n$. As a result, their image, $f(T_1)$ and $f(T_2)$, differ in a column of length less than $n$.

\emph{$f$ is surjective:} Let $\{c^*_1,\ldots,c^*_k\}$ be the columns in $\lambda/\mu$ of length less than $n$ and let $\{c_1,\ldots,c_l\}$ be the columns of length equal to $n$. For $T^* \in S_2$, consider a tableau $T_0$ of shape $\lambda/\mu$ where we fill column $c^*_i$ with the same entries as the $i$\textsuperscript{th} column in $T^*$, for all $i\in \{1,\ldots,k\}$. By construction the two columns have the same size. Fill in the remaining columns of $T_0$ with $1$ through $n$ in each column. It is clear that if $T_0\in S_1$, then $f(T_0) = T^*$. 

\emph{$T_0$ is semistandard:} It is clear from the construction that each column of $T_0$ is strictly increasing. Since $T^*$ is semistandard, it suffices to verify that the entries in each row are weakly increasing at the boxes in $c_i$ for all $i\in \{1,\ldots,l\}$. Since $\lambda/\mu$ is a skew-shape, the column directly to the left of $c_i$, denoted as $c_i^-$, ends in the same row or lower than $c_i$ ends. Therefore if an entry in $c_i^-$ is greater than its right neighbor in $c_i$, then every entries in $c_i^-$ below it have to be greater than their right neighbors (if exist). As a result, the entry in $c_i^-$ that has the bottom entry of $c_i$ as its right neighbor has to filled by at least $n+1$ which is impossible by our construction. Similarly, the column directly to the right of $c_i$, denoted as $c_i^+$ starts in the same row or higher than $c_i$. Therefore if an entry in $c_i^+$ is smaller than its left neighbor in $c_i$, then every entry above it in $c_i^+$ is smaller than than their left neighbors, if they exist. However, there will not be a valid entry to the right of the top entry in $c_i$. Therefore $T_0$ is semistandard.

\emph{$T_0$ is a ballot tableau:} By adding columns $\{c_1,\ldots,c_l\}$ filled by $1$ through $n$, one insert $l$ subwords into the column reading word of $T^*$ where each subword is $1,2,\ldots,n$. Therefore the column reading word of $T_0$ is also ballot, and by Lemma~\ref{lemma:readingwordequiv} the reverse reading word is ballot.

Thus \eqref{eq:nsharpLR} holds. The final equality we need to complete the proof is
\begin{equation}
\label{eq:schurpolyfulllenrows}
s_{\nu + (k^n)}(x_1,\ldots,x_n) = (x_1 \cdots x_n)^k s_{\nu}(x_1,\ldots,x_n),
\end{equation}
which follows from \eqref{eqn:skewpolydef}. Thus
\begin{equation}
\begin{array}{rlr}
\label{eq:nsharpdemomain}
s_{\lambda / \mu}(x_1,\ldots,x_n) & = \displaystyle \sum_{\alpha \text{ s.t. }\ell(\alpha) \leq n} c_{\mu, \alpha}^{\lambda} s_{\alpha}(x_1,\ldots,x_n) & \eqref{eqn:skewpolydef}\\
& = \displaystyle \sum_{\nu\text{ s.t. }\ell(\nu) \leq n} c_{\mu, \nu + (k^n)}^{\lambda} s_{\nu + (k^n)}(x_1,\ldots,x_n) & \\
& = \displaystyle \sum_{\nu\text{ s.t. }\ell(\nu) \leq n} c_{\mu^{n\sharp}, \nu}^{\lambda^{n\sharp}} (x_1,\ldots,x_n)^k s_{\nu}(x_1,\ldots,x_n) & \eqref{eq:nsharpLR}\text{, }\eqref{eq:schurpolyfulllenrows}\\
& = \displaystyle (x_1 \cdots x_n)^k s_{(\lambda / \mu)^{\sf n\sharp}}(x_1,\ldots,x_n) & \eqref{eqn:skewpolydef}\\
\end{array}
\end{equation}
The second equality in \eqref{eq:nsharpdemomain} follows from the fact that any nonzero $c_{\mu, \alpha}^{\lambda}$ must have $(k^n) \subseteq \alpha$. If $s_{\lambda / \mu}(x_1,\ldots,x_n)$ is multiplicity-free then all the Littlewood-Richardson coefficients that appear in \eqref{eq:nsharpdemomain} are equal to $0$ or $1$, and hence $s_{(\lambda / \mu)^{\sf n\sharp}}(x_1,\ldots,x_n)$ is multiplicity-free. The converse holds by the same argument.
\end{proof}
\begin{lemma}
\label{lemma:rectanglecontent}
If $\lambda / \mu$ is a skew partition that contains a $b \times k$ rectangle and $c_{\mu, \nu}^{\lambda} \neq 0$, then $(k^b) \subseteq \nu$.
\end{lemma}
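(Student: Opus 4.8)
The plan is to produce a single ballot tableau witnessing the nonvanishing of the coefficient and then argue that the rectangle forces its content to contain $(k^b)$. By Theorem~\ref{theorem:lrrule}, since $c^{\lambda}_{\mu,\nu}\neq 0$ there is a ballot tableau $T$ of shape $\lambda/\mu$ and content $\nu$; fix the $b\times k$ rectangle $R$ occupying rows $r,r+1,\dots,r+b-1$ and columns $c,c+1,\dots,c+k-1$ of $\lambda/\mu$. The key input I will record first is an upper bound: in any ballot tableau the entry in each box of row $i$ is at most $i$. This follows from the lattice property of the reverse reading word, which persists for skew shapes: writing $P^{a}_{j}$ for the number of entries equal to $a$ in the first $j$ rows, ballotness gives $P^{a}_{j}\le P^{a-1}_{j-1}$ for all $a,j$, and iterating yields $P^{a}_{j}\le P^{a-1}_{j-1}\le\cdots\le P^{a-j}_{0}=0$ whenever $a>j$, so no entry can exceed its row index.

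The obstacle is that this bound is only useful when the rectangle sits in the top rows, whereas $R$ may be buried in the interior of $\lambda/\mu$. I will therefore normalize its position using two content-preserving moves. First, restrict $T$ to its top $r+b-1$ rows, obtaining a filling $T_1$ of outer shape $\lambda^{(1)}=(\lambda_1,\dots,\lambda_{r+b-1})$: it is semistandard, its reverse reading word is an initial segment of that of $T$ and hence still ballot, its content $\nu^{(1)}$ satisfies $\nu^{(1)}_i\le \nu_i$ for all $i$, and $R$ still lies inside $T_1$, now occupying its bottom $b$ rows (so $\ell(\lambda^{(1)})=r+b-1$). Next, since $T_1$ certifies $c^{\lambda^{(1)}}_{\mu^{(1)},\nu^{(1)}}\neq 0$, apply the rotation symmetry \eqref{eq:LRsymmetry-rotate} inside the box $(\lambda_1^{\,r+b-1})$: it exhibits a ballot tableau $T_2$ of the complementary (rotated) skew shape with the same content $\nu^{(1)}$, and because the $(a^b)$-complement rotates the diagram by $180^\circ$ it carries $R$ to a $b\times k$ rectangle now occupying the top $b$ rows.

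With $R$ in rows $1,\dots,b$ of the ballot tableau $T_2$, each of its $k$ columns is present in rows $1,\dots,b$ with nothing above row $1$, so column-strictness forces the entry in row $t$ of such a column to be at least $t$; combined with the row bound above (entry in row $t$ is at most $t$) this pins every entry in row $t$ of $R$ to be exactly $t$. Hence each of the $k$ rectangle-columns of $T_2$ reads $1,2,\dots,b$ from top to bottom, so the value $t$ occurs at least $k$ times for every $t\in\{1,\dots,b\}$, i.e.\ $\nu^{(1)}_t\ge k$. Since $\nu_t\ge \nu^{(1)}_t\ge k$ for $t=1,\dots,b$, we conclude $(k^b)\subseteq\nu$. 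I expect the main subtlety to be bookkeeping rather than ideas: carefully checking that the row-restriction preserves ballotness and that \eqref{eq:LRsymmetry-rotate} preserves both the content $\nu^{(1)}$ and the rectangular block's position, after which the forcing argument is immediate.
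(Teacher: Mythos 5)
Your proof is correct, but it takes a genuinely different route from the paper's. The paper works directly with the reverse reading word of the original tableau $T$: column-strictness inside the rectangle forces the $k$ entries in its bottom row to all be at least $b$, these entries occur consecutively and weakly decreasing in the reading word, and ballotness is then used to extract $k$ pairwise disjoint increasing chains $1,2,\ldots$ of length at least $b$ among the earlier letters, which immediately gives $\nu_t \geq k$ for $t \leq b$. You instead normalize the position of the rectangle --- truncating to the top $r+b-1$ rows (preserving ballotness, since the reading word is an initial factor) and then invoking the rotation symmetry \eqref{eq:LRsymmetry-rotate} in the box $(\lambda_1^{\,r+b-1})$ to obtain \emph{some} ballot tableau of the $180^\circ$-rotated shape with the same content, in which the rectangle sits in rows $1,\ldots,b$ --- and then combine column-strictness with the standard bound that a ballot semistandard tableau has no entry exceeding its row index to force the rectangle to be filled superstandardly. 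Two small points of care: your inequality $P^a_j \leq P^{a-1}_{j-1}$ needs semistandardness as well as ballotness (to rule out an $a-1$ appearing to the right of an $a$ in the same row), which is fine here since ballot tableaux are semistandard by definition; and \eqref{eq:LRsymmetry-rotate} only guarantees the \emph{existence} of $T_2$ via equality of counts rather than producing it by literally rotating $T_1$, but existence is all you use. The paper's argument is more self-contained and stays inside the original tableau; yours trades the slightly fiddly disjoint-subsequence induction for an appeal to LR symmetry plus a clean and reusable row-bound lemma, and as a bonus it pins down the rectangle's filling exactly rather than just bounding the content.
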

\begin{proof}
If $c_{\mu, \nu}^{\lambda} \neq 0$, then there must exist a ballot tableau $T$ of shape $\lambda / \mu$ and content $\nu$. Let $(r,c) \in \lambda / \mu$ be the bottom, left corner of the $b \times k$ rectangle. Then $T(r,c) \geq b$ since $T$ is a tableau, and $T(r,c+i) \geq b$ for $0 \leq i \leq k$ since $T$ is a semistandard tableau. Thus the reverse reading word $a_1,\ldots,a_m$ of $T$ contains a consecutive subsequence $a_{z+1},\ldots,a_{z+k}$ with 
\begin{equation}
\label{eq:bottomofrectanglevals}
a_{z+j} \geq b\text{ for }1 \leq j \leq k,
\end{equation}
\begin{equation}
\label{eq:bottomofrectanglesemistandard}
a_{z+j} \geq a_{z+j+1}\text{ for }1 \leq j < k.
\end{equation}

Since $a_1,\ldots,a_m$ is a ballot sequence, we must have that there is a subsequence of $a_1,\ldots,a_z$ that equals $1,2,\ldots,a_{z+1}-1$. By \eqref{eq:bottomofrectanglesemistandard}, $a_{z+1} \geq a_{z+2}$, so there must be a second subsequence of $a_1,\ldots,a_z$ that equals $1,2,\ldots,a_{z+2}-1$, and it must be disjoint from the first sequence. Continuing inductively we arrive at the conclusion that there are $k$ disjoint subsequences of $a_{1},\ldots,a_{z+k}$ of the form $1,2,\ldots,a_{z+j}$ for $1 \leq j \leq k$. Each $a_{z+j} \geq b$ by \eqref{eq:bottomofrectanglevals}, and thus we arrive at our desired result that $(k^b) \subseteq \nu$.
 \end{proof}

\begin{proposition}
\label{prop:reductiontight}
If $\lambda / \mu$ is a basic skew partition and $n' = n - \tau(\lambda / \mu)$, then $s_{\lambda / \mu}(x_1,\ldots,x_n)$ is multiplicity-free if and only if $s_{(\lambda / \mu)^{\sf ti}}(x_1,\ldots,x_n')$ is multiplicity-free.
\end{proposition}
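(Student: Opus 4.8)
The plan is to realize the tight demolition as two successive deletions and to treat each by a forced-filling bijection on ballot tableaux, in the spirit of the proof of Proposition~\ref{prop:reductionnsharp}. Write $h=\ell(\lambda)$, $\sigma=\sigma(\lambda/\mu)$, $\tau=\tau(\lambda/\mu)$. By the visual description of $(\lambda/\mu)^{\sf ti}$ (delete all rows with $\lambda_1$ boxes and all columns with $\ell(\lambda)$ boxes, then delete empty rows and columns) together with the fact that basic demolition does not change the polynomial (Proposition~\ref{prop:reductionbasic}), it suffices to prove two things, in order: (i) deleting the $\sigma$ full-height columns (those $k$ with ${\sf CS}_k(\lambda/\mu)=h$) preserves multiplicity-freeness with the same number $n$ of variables; (ii) deleting the $\tau$ full-length rows of the resulting shape preserves multiplicity-freeness while lowering the number of variables to $n-\tau$.

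For (i) the core lemma is that in any ballot tableau $T$ of shape $\lambda/\mu$ each full-height column is filled by $1,2,\dots,h$. Indeed a nonzero $c^{\lambda}_{\mu,\nu}$ forces $\nu\subseteq\lambda$, so no entry of $T$ exceeds $h$; a full-height column is then a strictly increasing sequence of length $h$ drawn from $[h]$, hence equal to $1,2,\dots,h$. Deleting the $\sigma$ such columns removes from the column reading word $\sigma$ complete blocks $1,2,\dots,h$, and since no value exceeds $h$ this preserves the ballot property at every prefix: taking prefixes that end at a surviving letter, the deleted blocks lying to the left are complete, so the counts of $1,\dots,h$ each drop by the same number and the inequalities $\#i\ge\#(i+1)$ are maintained. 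By Lemma~\ref{lemma:readingwordequiv} the image is again ballot, and the map inverts by re-inserting $1,\dots,h$ columns, giving $c^{\lambda}_{\mu,\nu}=c^{\lambda^{\flat}}_{\mu,\nu-(\sigma^{h})}$ when $(\sigma^{h})\subseteq\nu$ (and $c^{\lambda}_{\mu,\nu}=0$ otherwise, by Lemma~\ref{lemma:rectanglecontent}), where $\lambda^{\flat}$ denotes $\lambda$ with the full columns removed. The two sides carry identical coefficients, and the forced rectangle pins $\ell(\nu)=h$ on the $\lambda/\mu$ side, so matching the constraint $\ell(\nu)\le n$ across the bijection yields the equivalence (the range $n<h$, where $s_{\lambda/\mu}$ vanishes because $\rho(\lambda/\mu)=h>n$, is checked directly).

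For (ii) I pass to the conjugate shape. Using the standard symmetry $c^{\lambda}_{\mu,\nu}=c^{\lambda^{\mathsf T}}_{\mu^{\mathsf T},\nu^{\mathsf T}}$ together with $\ell(\nu)\le n\iff (\nu^{\mathsf T})_1\le n$, multiplicity-freeness of $s_{\lambda/\mu}(x_1,\dots,x_n)$ is equivalent to $c^{\lambda^{\mathsf T}}_{\mu^{\mathsf T},\rho}\le 1$ for all $\rho$ with $\rho_1\le n$. The $\tau$ full-length rows of the (column-reduced) shape are exactly the conjugates of its $\sigma(\lambda^{\mathsf T}/\mu^{\mathsf T})=\tau$ full-height columns, the identity $\tau(\lambda/\mu)=\sigma(\lambda^{\mathsf T}/\mu^{\mathsf T})$ being immediate from the definitions. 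Applying the bijection of (i) to $\lambda^{\mathsf T}/\mu^{\mathsf T}$ removes those columns and replaces $\rho$ by $\rho-(\tau^{\lambda_1})$; crucially the largest part drops by exactly $\tau$, so the constraint $\rho_1\le n$ becomes $(\rho-(\tau^{\lambda_1}))_1\le n-\tau$. Conjugating back turns this into the length constraint $\ell(\nu)\le n-\tau$ for the row-deleted shape, which is precisely the polynomial $s_{(\lambda/\mu)^{\sf ti}}(x_1,\dots,x_{n-\tau})$, and this produces the shift from $n$ to $n-\tau$ in the statement.

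The main obstacle is the core lemma of step (i): establishing both the rigidity of the full-height columns and, more delicately, that deleting them keeps the reading word ballot. The decisive structural input is the bound $\ell(\nu)\le\ell(\lambda)=h$ for nonzero coefficients, which simultaneously pins the columns to $1,\dots,h$ and guarantees that no value $h+1$ can break the ballot inequality at the top when a $1,\dots,h$ block is removed. A secondary point requiring care is the bookkeeping of the length and largest-part constraints under the two deletions: the rectangle $(\sigma^{h})$ pins $\ell(\nu)=h$ in (i) while the rectangle $(\tau^{\lambda_1})$ lowers the largest part by exactly $\tau$ in (ii), and the degenerate ranges of $n$ in which one side vanishes identically must be verified to match separately.
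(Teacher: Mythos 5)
Your two-step decomposition (delete the full-height columns by a forced-filling bijection on ballot tableaux, then delete the full-width rows by conjugating and reusing step (i)) is a genuinely different route from the paper's argument. The paper does not build a tableau bijection for this reduction: it applies the symmetry $c^{\lambda}_{\mu,\nu}=c^{\lambda}_{\nu,\mu}$ and Lemma~\ref{lemma:rectanglecontent}, which forces every $\nu$ with $c^{\lambda}_{\mu,\nu}\neq 0$ to contain the rectangles $(\lambda_1^{\tau})$ and $(\sigma^{\ell(\lambda)})$; the skew shape $\lambda/\nu$ then has empty rows and columns precisely where the full rows and columns of $\lambda/\mu$ sit, and Proposition~\ref{prop:reductionbasic} collapses $c^{\lambda}_{\nu,\mu}$ to $c^{\lambda^{\sf ti}}_{\mu^{\sf ti},\alpha}$ in one stroke, treating rows and columns simultaneously. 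Your version instead mirrors the proof of Proposition~\ref{prop:reductionnsharp}, and you correctly isolate the key rigidity ($\nu\subseteq\lambda$ caps all entries at $\ell(\lambda)$, pinning each full column to $1,\dots,\ell(\lambda)$); you do omit the semistandardness check for the re-inserted columns, but that is routine and parallels the paper's existing argument.

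The one genuine gap is exactly the point you relegate to a parenthesis: the range $\sigma>0$ and $n<\ell(\lambda)$. There $s_{\lambda/\mu}(x_1,\dots,x_n)=0$ is vacuously multiplicity-free, but your bijection gives no control over the right-hand side: a content $\alpha$ with $\ell(\alpha)\le n-\tau$ and $c^{\lambda^{\sf ti}}_{\mu^{\sf ti},\alpha}\ge 2$ corresponds to $\nu$ with $\ell(\nu)=\ell(\lambda)>n$, which is invisible on the left. The promised ``direct check'' cannot succeed, because the statement is in fact false in that range: take $\lambda=(6,5,4,3,3)$, $\mu=(2,2,1)$, so $\tau=0$, $\sigma=1$ and $(\lambda/\mu)^{\sf ti}=(5,4,3,2,2)/(2,2,1)$; one checks $c^{(5,4,3,2,2)}_{(2,2,1),(4,4,2,1)}=2$ with $\ell((4,4,2,1))=4$, so $s_{(\lambda/\mu)^{\sf ti}}(x_1,\dots,x_4)$ is not multiplicity-free, while $s_{\lambda/\mu}(x_1,\dots,x_4)=0$ because column $3$ of $\lambda/\mu$ has $5$ boxes. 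The proposition therefore carries the implicit hypothesis $n>\rho(\lambda/\mu)$ (equivalently, $n$-sharpness), which holds wherever it is invoked in Theorem~\ref{theorem:mainReduction}; the paper's own proof elides the same verification in its backward direction, so this is a defect of the statement rather than of your method --- but your assertion that the boundary case is ``checked directly'' is the one claim in your write-up that is wrong as stated.
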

\begin{proof}
Let $\tau = \tau(\lambda / \mu)$ and $\sigma = \sigma(\lambda/\mu)$. If $\tau = \sigma = 0$, the result is immediate, so we assume $\tau > 0$ or $\sigma > 0$. Let $\lambda^{\sf ti}$ and $\mu^{\sf ti}$ be the partitions such that $(\lambda / \mu)^{\sf ti} = \lambda^{\sf ti} / \mu^{\sf ti}$. By Definition~\ref{def:tightandtightdemo}, 
\begin{equation}
\label{eq:tightform}
\lambda = \lambda^{\sf ti} \cup ((\lambda_1-\sigma)^\tau) + (\sigma^{\ell(\lambda)}) \text{ and }\mu = \mu^{\sf ti}. 
\end{equation}
Suppose that $c_{\mu,\nu}^{\lambda} \neq 0$ for some $\nu$ such that $\ell(\nu) \leq n$. Since $\lambda / \mu$ contains a $\tau \times \lambda_1$ rectangle and a $\ell(\lambda)\times \sigma$ rectangle, Lemma~\ref{lemma:rectanglecontent} implies $(\lambda_1^\tau) \subseteq \nu$ and $(\sigma^{\ell(\lambda)})\subseteq \nu$. Therefore both $\lambda$ and $\nu$ contains the fat hook $(\lambda_1^{\tau}, \sigma^{\ell(\lambda)-\tau})$. Further, $\nu \subseteq \lambda$, and so $\nu_i = \lambda_1\text{ for }i \in \{1,\ldots,\tau \}$ and $\nu_{\ell(\lambda)} = \lambda_p$. This implies
\begin{equation}
\label{eq:tightrect}
\nu = \alpha \cup ((\lambda_1-\sigma)^\tau) + (\sigma^{\ell(\lambda)}),
\end{equation}
for some partition $\alpha$ with $\ell(\alpha) \leq n'$ and $\alpha_1 \leq \lambda_1$.
\begin{equation}
\begin{array}{rlr}
\label{eq:tightmain}
c_{\mu,\nu}^{\lambda} & = c_{\mu^{\sf ti},\alpha \cup ((\lambda_1-\sigma)^\tau) + (\sigma^{\ell(\lambda)})}^{\lambda^{\sf ti} \cup ((\lambda_1-\sigma)^\tau) + (\sigma^{\ell(\lambda)})} \qquad\qquad\qquad &\eqref{eq:tightform}\text{, } \eqref{eq:tightrect}\\
& = c_{\alpha \cup ((\lambda_1-\sigma)^\tau) + (\sigma^{\ell(\lambda)}),\mu^{\sf ti}}^{\lambda^{\sf ti} \cup ((\lambda_1-\sigma)^\tau) + (\sigma^{\ell(\lambda)})} & \eqref{eq:LRsymmetrybasic}\\
& = c_{\alpha,\mu^{\sf ti}}^{\lambda^{\sf ti}} & \text{Proposition }~\ref{prop:reductionbasic}\text{, }\alpha_1 \leq \lambda_1-\sigma\text{ and }\ell(\alpha)\leq \ell(\lambda)-\tau \\
& = c_{\mu^{\sf ti}, \alpha}^{\lambda^{\sf ti}} & \eqref{eq:LRsymmetrybasic}\\
\end{array}
\end{equation}
Notice that the above equality holds for all $\nu$ such that $\ell(\nu) \leq n$ and $c_{\mu,\nu}^{\lambda} \neq 0$, and all $\alpha$ such that $\ell(\alpha) \leq n'$ and $c_{\mu^{\sf ti}, \alpha}^{\lambda^{\sf ti}} \neq 0$. Therefore if $s_{(\lambda/\mu)^{\sf ti}}(x_1,\ldots,x_{n'})$ is not multiplicity-free, by (\ref{eq:tightmain}), we can find $\nu$ such that $c^{\lambda}_{\mu,\nu}> 1$ as well. By \eqref{eqn:skewdecomp} we are done.
\end{proof}
\begin{proposition}
\label{prop:reductionordinary}
If $\lambda / \mu$ is a skew partition, then \[s_{\lambda / \mu}(x_1,\ldots,x_n) = s_{(\lambda / \mu)^{\sf or}}(x_1,\ldots,x_n).\]
\end{proposition}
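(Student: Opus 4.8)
\textbf{Proof proposal for Proposition~\ref{prop:reductionordinary}.}

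The plan is to show that the skew Schur polynomials $s_{\lambda/\mu}$ and $s_{(\lambda/\mu)^{\sf or}}$ have identical Schur expansions, by establishing that their Littlewood-Richardson coefficients agree coefficient-by-coefficient. Recall from the definition of ordinary reduction that there are two cases. If ${\sf np}(\lambda)-1 \leq {\sf np}(\mu)$ or $\mu = \emptyset$, then $(\lambda/\mu)^{\sf or} = \lambda/\mu$ and the statement is trivially an equality. So the entire content of the proposition lies in the remaining case, where ${\sf np}(\lambda) - 1 > {\sf np}(\mu)$ and $\mu \neq \emptyset$; here $(\lambda/\mu)^{\sf or} = \mu^{\vee(\lambda_1^{\ell(\lambda)})} / \lambda^{\vee(\lambda_1^{\ell(\lambda)})}$, the $180^\circ$ rotation of the complement of $\lambda/\mu$ inside the bounding rectangle $(a^b)$ with $a = \lambda_1$ and $b = \ell(\lambda)$.

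The key tool is the rotation symmetry \eqref{eq:LRsymmetry-rotate}, which states $c^\lambda_{\mu,\nu} = c^{\mu^{\vee(a^b)}}_{\lambda^{\vee(a^b)},\nu}$ for $\lambda \subseteq (a^b)$. First I would fix $a = \lambda_1$ and $b = \ell(\lambda)$, so that $\lambda \subseteq (a^b)$ and the complement is well-defined. Writing $\tilde\lambda = \mu^{\vee(a^b)}$ and $\tilde\mu = \lambda^{\vee(a^b)}$ for the outer and inner shapes of $(\lambda/\mu)^{\sf or}$, the symmetry \eqref{eq:LRsymmetry-rotate} gives immediately that
\begin{equation}
\label{eq:ordinaryLReq}
c^\lambda_{\mu,\nu} = c^{\tilde\lambda}_{\tilde\mu,\nu}\quad\text{for every partition }\nu.
\end{equation}
Since both $s_{\lambda/\mu}(x_1,\ldots,x_n)$ and $s_{(\lambda/\mu)^{\sf or}}(x_1,\ldots,x_n)$ are obtained from \eqref{eqn:skewdecomp} by summing $c^\bullet_{\bullet,\nu}\, s_\nu(x_1,\ldots,x_n)$ over partitions $\nu$ with $\ell(\nu) \leq n$, the coefficientwise equality \eqref{eq:ordinaryLReq} forces the two polynomials to coincide term by term, which is the desired conclusion.

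The one point requiring genuine care, and the main obstacle, is bookkeeping around the bounding rectangle and the normalization of the complement. One must verify that $(\lambda/\mu)^{\sf or}$ as written in the definition is genuinely the rotated complement inside $(a^b)$ with the specific choice $a = \lambda_1$, $b = \ell(\lambda)$, and that the trailing-zero conventions in the definition of $\lambda^{\vee(a^b)}$ match what \eqref{eq:LRsymmetry-rotate} requires; in particular one should confirm $\tilde\mu = \lambda^{\vee(a^b)} \subseteq \tilde\lambda = \mu^{\vee(a^b)}$ so that $(\lambda/\mu)^{\sf or}$ is a legitimate skew shape, which follows from $\mu \subseteq \lambda$ upon complementing. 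I would also note that the range of summation is unaffected: since \eqref{eq:ordinaryLReq} is an identity of coefficients indexed by the same $\nu$, restricting to $\ell(\nu) \leq n$ on both sides is consistent, so no adjustment to $n$ is needed (unlike in Proposition~\ref{prop:reductiontight}). Once these conventions are pinned down, the proof is a direct invocation of \eqref{eq:LRsymmetry-rotate} followed by \eqref{eqn:skewdecomp}.
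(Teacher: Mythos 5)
Your proposal is correct and is essentially the paper's own proof: both split into the trivial ordinary case and the non-ordinary case, and in the latter invoke the rotation symmetry \eqref{eq:LRsymmetry-rotate} coefficientwise in the expansion \eqref{eqn:skewdecomp}. The extra bookkeeping you flag about the bounding rectangle and trailing zeros is sound but routine, and the paper dispenses with it in one line.
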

\begin{proof}
If $\lambda / \mu$ is ordinary, then $(\lambda / \mu)^{\sf or}=\lambda / \mu$ and the proof is trivial. If $\lambda / \mu$ is not ordinary, then $(\lambda / \mu)^{\sf or}= \mu^{\vee((\lambda_1)^{\ell(\lambda)})} / \lambda^{\vee((\lambda_1)^{\ell(\lambda)})}$. Our result now follows from \eqref{eq:LRsymmetry-rotate}.
\end{proof}

\noindent \textit{Proof of Theorem~\ref{theorem:mainReduction}:} 
We first show that $\lambda^{\sf red} / \mu^{\sf red} := ((((\lambda / \mu)^{n\sharp})^{{\sf ba}})^{{\sf ti}})^{{\sf or}}$ is $n$-sharp, basic, tight, and ordinary. It follows from Definition~\ref{def:nsharpandnsharpdemo} that $(\lambda / \mu)^{n\sharp}$ is $n$-sharp. The basic demolition of $(\lambda / \mu)^{n\sharp}$ does not increase the number of boxes in any column, and hence is $n$-sharp and, by construction, basic.

Let $\alpha / \beta = (\alpha_1^{c_1},\ldots,\alpha_r^{c_r}) / (\beta_1^{d_1},\ldots,\beta_s^{d_s})$ be a $n$-sharp and basic skew partition. The tight demolition of $\alpha / \beta$ does not increase the number of boxes in any row or column, and hence is $n$-sharp. By definition the tight demolition of $\alpha / \beta$ deletes all empty rows and columns and hence is basic. It remains to show that the tight demolition of $\alpha / \beta$ is tight. Let us consider the following four cases:

\noindent \emph{Case 1 ($\tau(\alpha/\beta) = \sigma(\alpha/\beta) = 0$):} By definition $(\alpha/\beta)^{\sf{ti}} = (\alpha/\beta)^{\sf{ba}} = \alpha/\beta$. Since $\alpha/\beta$ is tight, the tight demolition is tight. 

\noindent \emph{Case 2 ($\tau(\alpha/\beta) \neq 0, \sigma(\lambda/\beta) = 0$):} By definition, $(\alpha_1^{c_1 - \tau(\alpha / \beta)},\ldots,\alpha_r^{c_r}) / (\beta_1^{d_1},\ldots,\beta_s^{d_s})$ is tight and $\ell(\beta) = c_1-\tau(\alpha/\beta)$. Therefore $(\alpha_1^{c_1 - \tau(\alpha / \beta)},\ldots,\alpha_r^{c_r}) / (\beta_1^{d_1},\ldots,\beta_s^{d_s})$ does not have empty rows. If there are also no empty columns, then $(\alpha/\beta)^{\sf{ti}} = (\alpha_1^{c_1 - \tau(\alpha / \beta)},\ldots,\alpha_r^{c_r}) / (\beta_1^{d_1},\ldots,\beta_s^{d_s})$ is tight. Suppose column $z$ is empty, then we have $\beta_r \geq z$ and $\alpha_2<z$. As a result, $(r,c)\in (\alpha_1^{c_1 - \tau(\alpha / \beta)},\ldots,\alpha_r^{c_r}) / (\beta_1^{d_1},\ldots,\beta_s^{d_s})$ implies that $(r,c)\in [1,c_1-\tau(\alpha/\beta)]\times[z+1,\alpha_1]\cup [c_1-\tau(\alpha/\beta)+1,\ell(\alpha)-\tau(\alpha/\beta)]\times[1,z-1]$. Since the row sets and column sets of the two rectangular boxes are disjoint, there are no rows with $\alpha_1$ boxes or columns with $\ell(\alpha)-\tau(\alpha/\beta)$ boxes and thus $(\alpha/\beta)^{\sf{ti}}$ is tight.

\noindent \emph{Case 3 ($\tau(\alpha/\beta) = 0, \sigma(\lambda/\beta) \neq 0$):} Since $(\alpha/\beta)^{T}$ satisfies the condition in Case 2, we know $((\alpha/\beta)^{T})^{\sf{ti}}$ is tight. Since taking the transpose does not affect tightness, $(\alpha/\beta)^{\sf{ti}} = (((\alpha/\beta)^{T})^{\sf{ti}})^{T}$ is also tight.

\noindent \emph{Case 4 ($\tau(\alpha/\beta) \neq 0, \sigma(\lambda/\beta) \neq 0$):} Since $\beta \neq \emptyset$, we know $\alpha_r-\sigma(\alpha/\beta)>0$ and $c_1-\tau(\alpha/\beta)>0$. Since $\alpha$ is not a rectangle, $\alpha_1>\alpha_r$ and $c_2>0$. Combining $c_1-\tau(\alpha/\beta)>0$ and $\alpha_1>\alpha_r$, we get $[1,c_1-\tau(\alpha/\beta)]\times[\alpha_r+1,\alpha_1]\subseteq ((\alpha_1-\sigma(\alpha/\beta))^{c_1 - \tau(\alpha / \beta)},\ldots,(\alpha_r-\sigma(\alpha/\beta))^{c_r}) / (\beta_1^{d_1},\ldots,\beta_s^{d_s})$. Combining $\alpha_r-\sigma(\alpha/\beta)>0$ and $c_2>0$, we get $[c_1-\tau(\alpha/\beta)+1,\ell(\alpha)-\tau(\alpha/\beta)]\times[1,\alpha_r-\sigma(\alpha/\beta)]\subseteq ((\alpha_1-\sigma(\alpha/\beta))^{c_1 - \tau(\alpha / \beta)},\ldots,(\alpha_r-\sigma(\alpha/\beta))^{c_r}) / (\beta_1^{d_1},\ldots,\beta_s^{d_s})$. Notice that there are no empty column or row in $((\alpha_1-\sigma(\alpha/\beta))^{c_1 - \tau(\alpha / \beta)},\ldots,(\alpha_r-\sigma(\alpha/\beta))^{c_r}) / (\beta_1^{d_1},\ldots,\beta_s^{d_s})$. Therefore $(\alpha/\beta)^{\sf{ti}}$ is tight by definition.

Since we have exhaust all possible cases of $\alpha/\beta$, we conclude that the tight demolition of $\alpha/\beta$ is tight.

Let $\alpha / \beta = (\alpha_1^{c_1},\ldots,\alpha_r^{c_r}) / (\beta_1^{d_1},\ldots,\beta_s^{d_s})$ be a $n$-sharp, basic, and tight skew partition that is not ordinary. Then $(\alpha / \beta)^{\sf or}$ is the skew partition with the same number of rows and columns as $\alpha / \beta$ that results from rotating $\alpha / \beta$ by $180$ degrees. Thus $(\alpha / \beta)^{\sf or}$ is $n$-sharp and basic. Fixing the rectangle $((\alpha_1)^{\ell(\alpha)})$, we have $(\alpha / \beta)^{\sf or} = \beta^{\vee} / \alpha^{\vee}$ with $\beta^{\vee} = ((\alpha_1)^{\ell(\alpha) - \ell(\beta)},(\alpha_1-\beta_s)^{d_s},\ldots, (\alpha_1-\beta_1)^{d_1})$. Since $\alpha / \beta$ is basic, $\ell(\alpha) > \ell(\beta)$ and hence the first row of $\beta^{\vee}$ has $\alpha_1$ boxes. Suppose that $(\alpha / \beta)^{\sf or}$ is not tight. This means $(\alpha / \beta)^{\sf or}$ contains a row with $\alpha_1$ boxes. Since $(\alpha / \beta)^{\sf or}$ is the 180 degree rotation of $\alpha / \beta$ this implies that $\alpha / \beta$ contains a row with $\alpha_1$ boxes. That is, $\alpha / \beta$ is not tight, a contradiction.

Since $\alpha / \beta$ is not ordinary, this implies 
\begin{equation}
\label{eq:orreducisor1}
{\sf np}(\alpha) - 1 > {\sf np}(\beta).
\end{equation}
If we fix the rectangle $((\alpha_1)^{\ell(\alpha)})$, then $(\alpha / \beta)^{\sf or} = \beta^{\vee} / \alpha^{\vee}$. Then, since $\alpha / \beta$ is basic, ${\sf np}(\beta^{\vee}) = {\sf np}(\beta) + 1$ and ${\sf np}(\alpha^{\vee}) = {\sf np}(\alpha) - 1$. Hence, \eqref{eq:orreducisor1} implies
\begin{equation}
{\sf np}(\beta^{\vee}) - 1 = {\sf np}(\beta) < {\sf np}(\alpha) - 1 = {\sf np}(\alpha^{\vee}),
\end{equation}
which means $(\alpha / \beta)^{\sf or}$ is ordinary.

The multiplicity-freeness claim is a corollary of Propositions ~\ref{prop:reductionbasic},~\ref{prop:reductionnsharp},~\ref{prop:reductiontight}, and~\ref{prop:reductionordinary}.
\qed
\section{Multiplicity-free upper bounds}

We reformulate Theorem \ref{theorem:mainResult} in the following equivalent way:

\begin{theorem}
\label{theorem:refomulatedResult}
Let $s_{\lambda / \mu}(x_1,\ldots,x_n)$ be a basic, $n$-sharp, tight, ordinary skew Schur polynomial where $\lambda = (\lambda_1^{l_1},\lambda_2^{l_2},\ldots,\lambda_p^{l_p})$ and $\mu = (\mu_1^{k_1},\mu_2^{k_2},\ldots,\mu_q^{k_q})$. Fix the rectangle $(\lambda_1^{\ell(\lambda)})$ and set $\rho = \rho(\lambda / \mu)$. Then $s_{\lambda / \mu}(x_1,\ldots,x_n)$ is multiplicity-free if and only if $\lambda$, $\mu$ satisfy at least one of:
\begin{enumerate}[label=(\Roman{*})]
	\item $\lambda^\vee$ is a rectangle of shortness $1$ or an empty partition.
	\item $\lambda^\vee$ is a rectangle of shortness $2$ and $\mu$ is a fat hook.
	\item $\lambda^\vee$ is a rectangle and $\mu$ is a fat hook of shortness $1$.
	\item $\lambda^\vee$ and $\mu$ are both rectangles.
	\item $\lambda^\vee$ is a rectangle of shortness at least $3$, $\mu$ is a fat hook of shortness at least $2$, $\lambda_2 = \mu_1$, $l_2>k_1$ and $n = \rho+1$.
	\item $\lambda^\vee$ is a rectangle of shortness at least $3$, $\mu$ is a fat hook of shortness at least $2$, $\lambda_2 = \mu_1$, $l_2\leq k_1$ and $n = \rho+1$ or $\rho+2$.
	\item $\lambda^\vee$ is a rectangle of shortness at least $3$, $\mu$ is a fat hook of shortness at least $2$, $\mu_1>\lambda_2>\mu_2$, $k_1\geq l_2$ and $n = \rho+1$.
	\item $\lambda^\vee$ is a rectangle of shortness at least $3$, $\mu$ is a fat hook of shortness at least $2$, $l_2\geq l_1$, $\mu_2=\lambda_2$ and $n = \rho+1$ or $\rho+2$.
	\item $\lambda^\vee$ is a rectangle of shortness at least $3$, $\mu$ is a fat hook of shortness at least $2$, $l_1> l_2$, $\mu_2=\lambda_2$ and $n = \rho+1$.
	\item $\lambda^\vee$ is a rectangle of shortness at least $2$, $\sf{np}(\mu)>2$, $\lambda_2 = \mu_q$, $l_2\geq l_1$ and $n = \rho+1$.
	\item $\lambda^\vee$ is a rectangle of shortness at least $2$, $\sf{np}(\mu) > 2$, $\mu_1 = \lambda_2$, $k_1 \geq l_2$ and $n = \rho+1$.
\end{enumerate}
\end{theorem}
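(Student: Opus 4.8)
The plan is to prove Theorem~\ref{theorem:refomulatedResult} by showing it is literally the restatement it is advertised to be: for a basic, $n$-sharp, tight, ordinary $\lambda/\mu$, I will verify that the disjunction of conditions (I)--(XI) is equivalent to the numerical criterion $\rho < n < \rho + r_1(\lambda/\mu) + r_2(\lambda/\mu)$ of Theorem~\ref{theorem:mainResult}. Since that criterion already decides multiplicity-freeness, everything reduces to a finite, if intricate, translation between the complementation/shortness language of (I)--(XI) and the piecewise definitions of $r_1, r_2$ in Definition~\ref{def:r1andr2}.

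First I would record the dictionary. Fixing the rectangle $(\lambda_1^{\ell(\lambda)})$, the $180^\circ$ rotation carrying $\lambda$ to its complement preserves the separating lattice path, so $\lambda^\vee$ is a rectangle (respectively empty) exactly when $\lambda$ is a fat hook (respectively a rectangle), i.e. $\mathsf{np}(\lambda)=2$ (respectively $1$), and $\mathsf{short}(\lambda^\vee)=\mathsf{short}(\lambda)=\min(\lambda_2,\lambda_1-\lambda_2,l_1,l_2)$; an analogous reading of the path for $\mu$ inside the same rectangle expresses the fat-hook condition $\mathsf{np}(\mu)=2$ and the value of $\mathsf{short}(\mu)$ through the $\mu_i$ and $k_i$ (with the boundary segments $\ell(\lambda)-\ell(\mu)$ and $\lambda_1-\mu_1$ included). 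With this in hand the four branches of $r_1$ line up with the four regimes of the theorem: $r_1=\infty$ is exactly the union of the $n$-independent cases (I)--(IV); $r_1=2$ is the shared hypothesis ``$\lambda^\vee$ a rectangle of shortness $\geq 3$ and $\mu$ a fat hook of shortness $\geq 2$'' of (V)--(IX); $r_1=1$ is the shared hypothesis of (X)--(XI); and $r_1=0$, whose range $\rho<n<\rho+r_2$ with $r_2\leq 1$ contains no integer, produces multiplicity-freeness for no $n$ and hence is absent from the list.

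Next I would run the $r_2$-and-$n$ analysis inside each finite-$r_1$ regime, where basicness does the decisive work. For $\mathsf{np}(\lambda)=2$, tightness gives $\ell(\mu)\geq l_1$ and $\mu_1\geq\lambda_2$; and whenever the smallest part $\mu_q$ equals $\lambda_2$, the prohibition on empty rows forces $\ell(\mu)\leq l_1$ as well, so that $\ell(\mu)=l_1$. It is this identity that turns the first $r_2$-clause $\lambda_2=\mu_q,\ l_2\geq\ell(\mu)$ into the stated inequality $l_2\geq l_1$ (cases (VIII), (IX), (X)); the second clause $\lambda_2=\mu_1,\ k_1\geq l_2$ is read off directly (cases (V), (VI), (XI)). Substituting the resulting $r_1+r_2$ into $\rho<n<\rho+r_1+r_2$ then pins $n$: in the $r_1=2$ regime one gets $n=\rho+1$ when $r_2=0$ and $n\in\{\rho+1,\rho+2\}$ when $r_2=1$, and in the $r_1=1$ regime multiplicity-freeness forces $r_2=1$ and $n=\rho+1$. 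Here one must also recompute $\rho$ from the $\mathsf{CS}_k$-profile, since each sub-case places the maximal column differently.

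I expect the crux to be the verification that the part-inequalities separating (V)--(XI) are a faithful, pairwise-disjoint, and exhaustive image of the two $r_2$-clauses under all the standing constraints. The delicate steps are: (a) deploying the basicness-forced equalities such as $\ell(\mu)=l_1$ to align $l_2\geq\ell(\mu)$ with the listed $l_2\geq l_1$ and $l_1>l_2$; (b) checking that the trichotomy $\lambda_2=\mu_1$, $\mu_1>\lambda_2>\mu_2$, $\lambda_2=\mu_2$ exhausts the fat-hook regime (using $\mu_2\leq\lambda_2\leq\mu_1$ from $\mu\subseteq\lambda$ and tightness); and (c) folding every seemingly unlisted sub-configuration---most notably $\mu_1>\lambda_2>\mu_2$ paired with the ``wrong'' inequality between $k_1$ and $l_2$---either into a contradiction with basicness/tightness or, via the rotation symmetry~\eqref{eq:LRsymmetry-rotate} and Lemma~\ref{Lemma:switchingcol}, into a case already on the list. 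Confirming this exhaustive, overlap-free bookkeeping across all eleven cases is the bulk of the labor; the geometric dictionary and the basicness forcings are the levers that make each individual comparison routine.
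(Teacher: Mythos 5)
Your proposal does not prove the theorem: it is circular within the logic of the paper. You reduce Theorem~\ref{theorem:refomulatedResult} to Theorem~\ref{theorem:mainResult} by translating conditions (I)--(XI) into the inequality $\rho < n < \rho + r_1(\lambda/\mu) + r_2(\lambda/\mu)$, and you explicitly lean on the premise that ``that criterion already decides multiplicity-freeness.'' But Theorem~\ref{theorem:mainResult} is never proved independently anywhere; the paper's proof of it \emph{is} Theorem~\ref{theorem:refomulatedResult} together with exactly the dictionary you describe. The direction of implication in the paper runs the other way: one proves the case-by-case classification (I)--(XI) directly, and the $r_1,r_2$ formulation is then read off from it. So the entire mathematical content of the statement --- the constructions in Theorems~\ref{thm:lambdamunotrectangle}, \ref{thm:suffnotfathook}, \ref{thm:suffshort3} of explicit pairs of distinct ballot tableaux of the same content (establishing multiplicity once $n \geq \rho + r_1 + r_2$), and the arguments in Theorems~\ref{thm:necessaryshort3}, \ref{thm:necessarynotfathook} bounding $c^{\lambda}_{\mu,\nu}$ by $1$ for $\ell(\nu) < \rho + r_1 + r_2$ via Lemma~\ref{Lemma:rowofT}, Corollary~\ref{Cor:rmvlastrow}, and reduction to the Thomas--Yong/Gutschwager classification (Theorem~\ref{thm:TYGUT}) --- is absent from your proposal.

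The translation you outline is a genuine, if routine, ingredient: the paper itself invokes it (for instance at the opening of the proof of Theorem~\ref{thm:necessaryshort3}, where the statements (I)--(VI) are identified with ${\sf m}(\lambda/\mu) = \rho + r_1 + r_2$ via Definition~\ref{def:r1andr2}), and your observations about basicness forcing $\ell(\mu) = l_1$ when $\mu_q = \lambda_2$, and about the trichotomy $\lambda_2 = \mu_1$, $\mu_1 > \lambda_2 > \mu_2$, $\lambda_2 = \mu_2$, are the right bookkeeping for that identification. But a correct proof of the statement as posed must either carry out the tableau constructions and the upper-bound arguments, or cite an independent source for the numerical criterion; as written, your argument proves only that the two formulations of the main theorem say the same thing, not that either of them is true.
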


Note that we don't need to consider the case where $n\leq \rho$ since the skew Schur polynomial would not be $n$-sharp.

\begin{theorem}[\cite{G10}, \cite{TY10}]
\label{thm:TYGUT}
    Let $\lambda/\mu$ be a basic, ordinary skew partition and fix the rectangle $(\lambda_1^{\ell(\lambda)})$. The skew Schur function $s_{\lambda/\mu}$ is multiplicity-free if and only if one of the following holds:
    \begin{enumerate}[label=(\Roman{*})]
        \item either $\lambda^\vee$ is a rectangle of shortness $1$ or an empty partition.
        \item $\lambda^\vee$ is a rectangle of shortness $2$ and $\mu$ is a fat hook.
        \item $\lambda^\vee$ is a rectangle and $\mu$ is a fat hook of shortness $1$.
	    \item $\lambda^\vee$ and $\mu$ are both rectangles.
    \end{enumerate}
\end{theorem}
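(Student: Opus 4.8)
The plan is to recognize Theorem~\ref{thm:TYGUT} as a reformulation, in the language of $\lambda^\vee$, shortness, and fat hooks, of the Thomas--Yong and Gutschwager classifications, and to supply the dictionary between the two. The first step is a reduction from skew Schur functions to \emph{products} of Schur functions in the cohomology of a Grassmannian. Fix the rectangle $R=(\lambda_1^{\ell(\lambda)})$, so that $\mu\subseteq\lambda\subseteq R$. Since $c^\lambda_{\mu,\nu}\neq 0$ forces $\nu\subseteq\lambda\subseteq R$, applying \eqref{eq:LRsymmetry-rotate} with the roles of $\mu$ and $\nu$ interchanged gives
\[
c^\lambda_{\mu,\nu}=c^\lambda_{\nu,\mu}=c^{\nu^{\vee}}_{\mu,\lambda^{\vee}},
\]
all complements being taken in $R$. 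As $\nu$ ranges over subdiagrams of $R$, its complement $\kappa=\nu^{\vee}$ ranges over the same set, so the multiset $\{c^\lambda_{\mu,\nu}\}_{\nu}$ of coefficients of $s_{\lambda/\mu}$ coincides with $\{c^{\kappa}_{\mu,\lambda^{\vee}}\}_{\kappa\subseteq R}$. Consequently $s_{\lambda/\mu}$ is multiplicity-free if and only if the Grassmannian product $s_\mu\cdot s_{\lambda^{\vee}}$, expanded in the basis of Schubert classes indexed by subdiagrams of $R$, is multiplicity-free.

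Second, I would invoke the Thomas--Yong classification of multiplicity-free Grassmannian products (equivalently Gutschwager's classification) for the ordered pair $(\mu,\lambda^{\vee})$. Their list is symmetric under interchanging the two factors and under complementation in $R$ --- the dihedral symmetries reflected in \eqref{eq:LRsymmetrybasic} and \eqref{eq:LRsymmetry-rotate} --- and the classification forces at least one of the two factors to be a rectangle whenever the product is multiplicity-free, already a feature of Stembridge's product classification \cite{S01}. The hypothesis that $\lambda/\mu$ is \emph{ordinary} (so that ${\sf np}(\lambda)\le 2$ in every surviving case, i.e.\ $\lambda^{\vee}$ is a rectangle) is precisely what pins down the canonical representative in which $\lambda^{\vee}$, rather than $\mu^{\vee}$, is the rectangular factor. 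Thus every multiplicity-free instance has $\lambda^{\vee}$ a rectangle, and the remaining freedom lies in $\mu$.

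Finally, I would translate each surviving Thomas--Yong case into one of (I)--(IV) by reading off the segment lengths of the bounding lattice path. With $\lambda^{\vee}=(d^{e})\subseteq R$ the four segments have lengths $\ell(\lambda)-e,\ d,\ e,\ \lambda_1-d$, so ${\sf short}(\lambda^{\vee})$ records how thin the rectangular factor is: ${\sf short}(\lambda^{\vee})=1$ (or $\lambda^{\vee}=\emptyset$) is the Pieri-type regime in which the product is multiplicity-free for \emph{every} $\mu$, giving (I); ${\sf short}(\lambda^{\vee})=2$ together with $\mu$ a fat hook gives (II); $\lambda^{\vee}$ rectangular with $\mu$ a fat hook of shortness $1$ gives (III); and both factors rectangular gives (IV). The main obstacle is this last bookkeeping step: one must check that the complementation and factor-swap symmetries carry Thomas--Yong's shortness thresholds to exactly the values $1$ and $2$ appearing in (I)--(III), and that the degenerate instances (empty partitions, single rows or columns, and rectangles viewed as fat hooks) are each assigned to one and only one of (I)--(IV), with no omissions or overlaps.
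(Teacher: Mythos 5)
The paper offers no proof of Theorem~\ref{thm:TYGUT}: it is imported directly from \cite{TY10} and \cite{G10}, so there is no internal argument to compare yours against. Your reconstruction is the correct and, in fact, the standard derivation (it is the route Thomas--Yong themselves describe): the identity $c^{\lambda}_{\mu,\nu}=c^{\nu^{\vee}}_{\mu,\lambda^{\vee}}$ obtained from \eqref{eq:LRsymmetry-rotate} identifies the coefficient multiset of $s_{\lambda/\mu}$ with that of the product $s_{\mu}\cdot s_{\lambda^{\vee}}$ in the Grassmannian indexed by $(\lambda_1^{\ell(\lambda)})$, the product classification then applies, and since basicness gives ${\sf np}(\lambda^{\vee})={\sf np}(\lambda)-1$, ordinariness is precisely the inequality ${\sf np}(\lambda^{\vee})\le {\sf np}(\mu)$ that forces the rectangular factor in every surviving case to be $\lambda^{\vee}$ rather than $\mu$; your segment lengths $\ell(\lambda)-e,\ d,\ e,\ \lambda_1-d$ for $\lambda^{\vee}=(d^{e})$ are also right. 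The only unexecuted portion is the final case-by-case matching of shortness thresholds and degenerate instances, which you correctly identify as the remaining work; do note when carrying it out that the conventions for the empty partition matter (e.g.\ for $\mu=\emptyset$ the function $s_{\lambda}$ is trivially multiplicity-free even when none of (I)--(IV) literally holds, so the statement is really being applied in this paper with $\mu\ne\emptyset$, the empty case being absorbed elsewhere).
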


In order to classify all multiplicity-free skew Schur polynomials, it is enough to find, for each basic, tight, ordinary skew partition $\lambda/\mu$ the minimal integer $\sf{m}(\lambda/\mu)$ such that $\lambda/\mu$ is $\sf{m}(\lambda/\mu)$-sharp and $s_{\lambda/\mu}(x_1,\ldots,x_{\sf{m}(\lambda/\mu)})$ is not multiplicity-free. As a result of Theorem~\ref{thm:TYGUT}, we only need to consider the basic, tight, ordinary skew partitions $\lambda/\mu$ that satisfies any of the three following conditions:
\begin{enumerate}[label=(\Roman{*})]
    \item $\lambda^\vee$ is a rectangle of shortness at least $3$ and $\mu$ is a fat hook of shortness at least $2$;
    \item $\lambda^\vee$ is a rectangle of shortness at least $2$ and $\sf{np} (\mu) \geq 3$;
    \item $\sf{np} (\lambda^\vee), \sf{np}(\mu) \geq 2$.
\end{enumerate}
 In this section, we will find upper bounds on $\sf{m}(\lambda/\mu)$ for $\lambda/\mu$ satisfying each of the three conditions. For the rest of this section, we fix $\lambda = (\lambda_1^{l_1},\lambda_2^{l_2},\ldots,\lambda_p^{l_p})$ and $\mu = (\mu_1^{k_1},\mu_2^{k_2},\ldots,\mu_q^{k_q})$ with $p = \sf{np}(\lambda)$ and $q = \sf{np}(\mu)$.

\subsection{Both \texorpdfstring{$\sf{np} (\lambda^\vee) \text{ and }\sf{np}(\mu)$}{} are at least \texorpdfstring{2}{}}

\begin{theorem}
\label{thm:lambdamunotrectangle}
Let $s_{\lambda/\mu}(x_1,\ldots,x_n)$ be a basic, n-sharp, tight skew Schur polynomial such that neither $\lambda^\vee$ nor $\mu$ is a rectangle, then $s_{\lambda/\mu}(x_1,\ldots,x_n)$ is not multiplicity-free (or equivalently, $\sf{m}(\lambda/\mu) = \rho+1$).
\end{theorem}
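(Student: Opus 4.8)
The claim ${\sf m}(\lambda/\mu)=\rho+1$ is equivalent to the single statement that there exists a partition $\nu$ with $\ell(\nu)\le \rho+1$ and $c^{\lambda}_{\mu,\nu}\ge 2$. Indeed, such a $\nu$ witnesses non-multiplicity-freeness of $s_{\lambda/\mu}(x_1,\dots,x_n)$ for every $n$-sharp $n$ at once: $n$-sharpness forces $n\ge \rho+1\ge \ell(\nu)$, so $s_\nu(x_1,\dots,x_n)\neq 0$ and $\nu$ contributes with coefficient $c^\lambda_{\mu,\nu}\ge2$ in the Schur-basis expansion \eqref{eqn:skewdecomp}; conversely $\lambda/\mu$ is not $n$-sharp for $n\le\rho$, so ${\sf m}(\lambda/\mu)\ge\rho+1$ automatically. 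Thus the whole theorem reduces to producing one such short $\nu$ together with two distinct ballot tableaux of shape $\lambda/\mu$ and content $\nu$, realizing $c^\lambda_{\mu,\nu}\ge2$ through Theorem~\ref{theorem:lrrule}.

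First I would shrink the shape using Corollary~\ref{Cor:GUT2}. The hypotheses are exactly ${\sf np}(\lambda^\vee)\ge 2$ and ${\sf np}(\mu)\ge 2$, equivalently ${\sf np}(\lambda)\ge 3$ and ${\sf np}(\mu)\ge 2$ (here $\lambda^\vee$ fails to be a rectangle iff ${\sf np}(\lambda)\ge3$, and $\mu$ fails to be a rectangle iff ${\sf np}(\mu)\ge2$). Deleting columns preserves being a skew shape, and by Corollary~\ref{Cor:GUT2} it suffices to exhibit the witness on any subshape $\lambda^*/\mu^*$ obtained by column deletion, working at its own bound $\rho(\lambda^*/\mu^*)+1$. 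I would therefore delete all but a minimal set of columns chosen to retain three distinct row-lengths of $\lambda$ and two distinct positive row-lengths of $\mu$, leaving a thin skew shape of bounded width on which the witness can be written down explicitly. Column deletion does not change the rows, but this is harmless: a long column only makes $\rho$ larger and hence the requirement $\ell(\nu)\le\rho+1$ weaker, so the binding case is already captured by short columns.

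On the thin shape the construction starts from the superstandard filling $T_0$, in which column $c$ carries $1,2,\dots,{\sf CS}_c(\lambda^*/\mu^*)$; this is ballot and has content of length $\rho(\lambda^*/\mu^*)$. To reach a content of length $\rho(\lambda^*/\mu^*)+1$, I would introduce a single entry equal to $\rho(\lambda^*/\mu^*)+1$ at the foot of a suitable column and, if necessary, float one neighbouring entry to an adjacent staircase step, so that the same content is achieved in two inequivalent ways. The freedom to do this in two ways is precisely what the three distinct $\lambda$-steps and two distinct $\mu$-steps provide: they create at least two valid ``homes'' for the relocated entries along the staircase of column lengths, each leaving a ballot column-reading word by Lemma~\ref{lemma:readingwordequiv}. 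The rotation symmetry \eqref{eq:LRsymmetry-rotate}, which interchanges the roles of $\lambda^\vee$ and $\mu$ while preserving both ${\sf np}$ conditions, can be used to halve the case analysis.

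The main obstacle is this last step: verifying that both constructed fillings are genuinely semistandard and ballot, and that they carry a common content of length at most $\rho+1$. As the small models $(3,2,1)/(2,1)$ and $(3,2,1,1)/(2,1)$ already show, the precise relocation depends on how $\lambda_2$ sits relative to the distinct parts $\mu_1>\mu_2$ of $\mu$ --- whether the maximal entry can simply be bumped at the foot of one of several equal-length columns, or whether an intermediate entry must also be moved between columns --- so a short case analysis on these relative sizes appears unavoidable. Checking ballotness in each case via the column reading word is routine once the cases are set up; the real work lies in organizing the cases so that the two tableaux always share a content of length exactly $\rho+1$.
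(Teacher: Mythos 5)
Your framing is correct and matches the paper's: it suffices to exhibit one partition $\nu$ with $\ell(\nu)\le\rho+1$ and two distinct ballot tableaux of shape $\lambda/\mu$ and content $\nu$, and the reduction to a few columns via Corollary~\ref{Cor:GUT2} is exactly the paper's first move. But the proposal stops where the actual proof begins, and the part you defer as ``routine once the cases are set up'' is essentially all of the content. Two concrete gaps. First, the selection of columns is not as simple as ``retain three distinct row-lengths of $\lambda$ and two of $\mu$.'' The paper needs a structural dichotomy (its Lemma~\ref{lem:nonrect}): either there are three columns $c_1,c_2,c_3$ with $U(c_1),U(c_2),U(c_3)$ pairwise distinct \emph{and} $L(c_1),L(c_2),L(c_3)$ pairwise distinct, or else one is forced into a four-column configuration where two columns share a top and two share a bottom. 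Proving that one of these two configurations always exists when neither $\lambda^\vee$ nor $\mu$ is a rectangle requires its own case analysis on where the relevant corners of $\mu$ and $\lambda$ sit, and the three-column and four-column situations then demand genuinely different constructions (six cases of fillings for the former, twelve for the latter), not a single ``bump the maximal entry at the foot of a column'' recipe; several of the paper's fillings delete an interior entry $\rho_1$ or $\rho_2$ from a column rather than only appending $\rho+1$ at the bottom.

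Second, the four-column construction has a failure mode your plan does not anticipate: after deleting all but the four chosen columns, the restricted shape $\lambda^*/\mu^*$ can have $(\lambda^*)^\vee$ or $\mu^*$ equal to a rectangle of shortness $1$, and in exactly that situation two of the twelve fillings violate semistandardness (the paper's conditions \eqref{eq:zz1010} and \eqref{eq:zz2020}). The paper escapes by observing that if the \emph{original} $\lambda^\vee$ and $\mu$ are not shortness-one rectangles, one can either rule out the bad configuration or find a fifth column reducing to the three-column case. Without identifying this obstruction and the workaround, the column-deletion step is not sound as stated. So the proposal is a correct outline of the same route the paper takes, but it is not yet a proof: the witness tableaux, their semistandardness and ballotness, and the degenerate four-column case all remain to be supplied.
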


For $1 \leq c \leq \lambda_1$, let $U(c) := \min\{r:(r,c)\in \lambda / \mu\}$ and $L(C) := \max\{r:(r,c)\in \lambda / \mu\}$. For the rest of this section, we fix $\lambda = (\lambda_1^{l_1},\dots, \lambda_p^{l_p})$ and $\mu = \{\mu_1^{k_1},\dots, \mu_q^{k_q})$ with $p = {\sf np}(\lambda)$ and $q = {\sf np}(\mu)$.

\begin{lemma}
\label{lem:nonrect}
    If neither $\lambda^\vee$ nor $\mu$ is a rectangle, then at least one of the following is true:
    \begin{enumerate}[label=(\Roman{*})]
        \item There exist three columns indexed by $c_1,c_2,c_3$ in $\lambda/\mu$ such that $U(c_1),U(c_2),U(c_3)$ are all distinct, and $L(c_1),L(c_2),L(c_3)$ are all distinct.
        \item There exist four columns indexed by $c_1,c_2,c_3,c_4$ in $\lambda/\mu$ such that $U(c_1)=U(c_2)$,$L(c_3)=L(c_4)$, $U(c_1),U(c_3),U(c_4)$ are all distinct,  and $L(c_1),L(c_2),L(c_3)$ are all distinct.
    \end{enumerate}
\end{lemma}

\begin{figure}
    \begin{center}
    \begin{tikzpicture}[scale = 1.6]
        \draw[black, thick] (0,0) -- (4,0) -- (4,4) -- (0,4) -- (0,0);
        \draw[black, thick] (0,1) -- (1.5,1) -- (1.5,2) -- (2.5,2) -- (2.5,3) -- (3.5,3) -- (3.5,4);
        \draw (1,2.75) node{$\mu$};
        \draw[gray, dashed] (1.5,4) -- (1.5,0);
        \draw[gray, dashed] (2.5,4) -- (2.5,0);
        \draw[gray, dashed] (3.5,4) -- (3.5,0);
        \draw[black, thick] (1,0) -- (1,0.5) -- (2,0.5) -- (2,1) -- (3.6,1) -- (3.6,2.4) -- (4,2.4);
        \draw (1.15,0.75) node{$a_1$};
        \draw[very thin, gray] (1,0.5)--(1,1);
        \draw[very thin, gray] (1.25,0.5)--(1.25,1);
        \draw (3.75, 3.25) node{$a_2$};
        \draw[very thin, gray] (3.6,2.4)--(3.6,4);
        \draw[very thin, gray] (3.9,2.4)--(3.9,4);
        \draw (3,0.5) node{$\lambda^c$};
        \draw (0.75,4.2) node{$S_4$};
        \draw (2,4.2) node{$S_3$};
        \draw (3,4.2) node{$S_2$};
        \draw (3.75,4.2) node{$S_1$};

    \end{tikzpicture}
\end{center}
    \caption{}
    \label{Case1}
\end{figure}

\begin{proof}
    Since $\mu$ and $\lambda^\vee$ are not rectangles, $p\geq 3$ and $q\geq 2$. For $1\leq i\leq q+1$, set $S_i = \{\mu_i+1,\dots, \mu_{i-1}\}$, where $\mu_0 = \lambda_1$ and $\mu_{q+1} = 0$.

    By construction, 
    \begin{equation}
    \label{eq:samestart}
    U(c) = U(d)\text{ for }c, d \in S_i
    \end{equation}
    and
    \begin{equation}
    \label{eq:greaterstart}
    U(c) < U(d)\text{ for }c \in S_i, d \in S_j\text{ with }i > j
    \end{equation}
    
    Denote
    \begin{equation}
    a_1 = \min\{k: L(k) < \ell(\lambda)\} \text{ and } a_2 = \min\{k:L(k) = l_1\}.
    \end{equation}
    Note that by construction $L(1)>L(a_1)>L(a_2)=L(\lambda_1)$, so in particular $1<a_1<a_2$. If $a_1\not\in S_1\cup S_{q+1}$, then columns 1, $a_1$ and $\lambda_1$ satisfy Lemma~\ref{lem:nonrect} (\RomanNumeralCaps{1}). 
    
    If $a_1 \in S_1$, then $a_1 < a_2$ implies $a_2 \in S_1$. Set $c_1 = a_1,c_2 = a_2$, and $c_3 = \max(S_2), c_4 = \max(S_3)$. Notice that $U(c_1)=U(c_2)$ by \eqref{eq:samestart}, with $U(c_1),U(c_3),U(c_4)$ all distinct by \eqref{eq:greaterstart}. Since $c_1$ is the minimum index such that $L(c_1)<\ell(\lambda)$ and $c_1 \in S_1$, we have $L(c_3) = L(c_4) = \ell(\lambda)$. Finally, $L(c_1)=\ell(\lambda)-l_p,L(c_2)=l_1,L(c_3)=\ell(\lambda)$ are all distinct. Thus these four columns satisfy Lemma~\ref{lem:nonrect} (\RomanNumeralCaps{2}). 
    
    If $a_1 \in S_{q+1}$ and $a_2 \leq \min(S_{q})$, then let $c_1 = 1$, $c_2 =a_1$, $c_3 = \min(S_{q})$, and $c_4 = \lambda_1$. Then $U(c_1) = U(c_2)$ by \eqref{eq:samestart}, and $U(c_1),U(c_3),U(c_4)$ are all distinct by \eqref{eq:greaterstart}. Since $a_2 = \min\{k:L(k) = l_1\}$ and $a_2 \leq c_3$, then $L(c_3) = L(c_4) = l_1$. Furthermore, $L(c_1) = \lambda_1$, $L(c_2)=\ell(\lambda)-l_p$, and $L(c_3) =l_1$ are all distinct. Hence $c_1,c_2,c_3,$ and $c_4$ satisfy Lemma~\ref{lem:nonrect}(\RomanNumeralCaps{2}).

    If $a_1 \in S_{q+1}$ and $a_2 > \min(S_{q})$, then columns 1, $\min(S_{q})$, and $\lambda_1$ satisfy Lemma~\ref{lem:nonrect}(\RomanNumeralCaps{1}). Since we have exhausted all possibilities for $a_1$ and $a_2$, we conclude that either Lemma~\ref{lem:nonrect} (\RomanNumeralCaps{1}) or (\RomanNumeralCaps{2}) always hold.
\end{proof}

\begin{proposition}
\label{prop:3col}
If there exist columns $c_1,c_2,c_3$ in $\lambda/\mu$ such that $U(c_1),U(c_2),U(c_3)$ are all distinct and $L(c_1),L(c_2),L(c_3)$ are all distinct, then $s_{\lambda/\mu}(x_1,\ldots,x_{\rho+1})$ is not multiplicity-free.
\end{proposition}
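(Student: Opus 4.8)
The plan is to reduce to a three–column skew partition and then pin down a single content that admits (at least) two ballot tableaux.

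\textbf{Reduction to three columns.} First I would delete from $\lambda/\mu$ every column other than $c_1<c_2<c_3$. For a skew diagram both $U$ and $L$ are weakly decreasing functions of the column index, so their restriction to $\{c_1,c_2,c_3\}$ stays weakly decreasing, and the distinctness hypotheses upgrade this to $U(c_1)>U(c_2)>U(c_3)$ and $L(c_1)>L(c_2)>L(c_3)$. Hence the retained boxes reassemble into a genuine skew partition $\lambda^{*}/\mu^{*}$ with exactly three columns, of heights ${\sf CS}_{c_1},{\sf CS}_{c_2},{\sf CS}_{c_3}$. Writing $a\geq b\geq c\geq 1$ for these heights sorted, we have $\rho(\lambda^{*}/\mu^{*})=a$. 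By Corollary~\ref{Cor:GUT2} it suffices to show $s_{\lambda^{*}/\mu^{*}}(x_1,\ldots,x_{a+1})$ is not multiplicity-free.

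\textbf{Choosing the witness content.} I claim $\nu=(a+1,b,c-1)'$ (conjugate; omit the part $c-1$ when $c=1$) is the right content: it is a partition since $a+1\geq b\geq c-1$, it has $\ell(\nu)=a+1$ (so it is admissible in $a+1$ variables), and $|\nu|=a+b+c$ matches the box count. The motivation is that the coefficient of $s_\nu$ in the product $s_{1^a}s_{1^b}s_{1^c}=e_ae_be_c$ is the Kostka number $K_{(a+1,b,c-1),(a,b,c)}$, and this number equals $2$: the two semistandard tableaux of shape $(a+1,b,c-1)$ and content $(a,b,c)$ have rows
\[
1^a\,2 \,\big/\, 2^{\,b-1}\,3 \,\big/\, 3^{\,c-1}
\qquad\text{and}\qquad
1^a\,3 \,\big/\, 2^{\,b} \,\big/\, 3^{\,c-1},
\]
differing only in whether the last box of the first row is a $2$ or a $3$.

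\textbf{Realizing multiplicity two in the skew shape.} The heart of the argument is to prove $c^{\lambda^{*}}_{\mu^{*},\nu}\geq 2$ for the staggered three-column shape itself, which by Theorem~\ref{theorem:lrrule} counts ballot tableaux of shape $\lambda^{*}/\mu^{*}$ and content $\nu$. Here both distinctness hypotheses are used. Reading columns right-to-left (Lemma~\ref{lemma:readingwordequiv}), the rightmost column is read first; being strictly increasing and an initial ballot factor, it is forced to be $1,2,\ldots$ from the top, and distinct tops then rigidify the uppermost boxes of the other columns as well. Distinct bottoms produce a solitary box at the very bottom of the lowest column. I would then build two fillings corresponding to the two tableaux above: the rightmost column is filled by an initial segment $1,2,\dots$, the extra top value $a+1$ is absorbed into the tallest column, and the two remaining columns receive the two complementary value sets in the two possible orders; the solitary bottom box supplies exactly the room to interchange these two assignments while keeping every column strictly increasing and every overlapping row weakly increasing.

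\textbf{Main obstacle.} The difficulty is carrying out and verifying this construction uniformly: the three heights $a,b,c$ may sit in the three column positions in any order, and adjacent columns may share several rows. The quantitative fact that makes it work is that distinct tops and bottoms bound the number of shared rows of two adjacent columns by one less than the smaller of their heights, and force the $i$-th entry of a left column to be compared only with the $(i{+}1)$-st (or later) entry of its right neighbor; with only $a+1$ available values this slack is precisely enough for both of the proposed fillings to be semistandard and ballot. Once the two tableaux are exhibited, $c^{\lambda^{*}}_{\mu^{*},\nu}\geq 2$ with $\ell(\nu)\leq a+1$ shows $s_{\lambda^{*}/\mu^{*}}(x_1,\ldots,x_{a+1})$ is not multiplicity-free, and Corollary~\ref{Cor:GUT2} transports this back to $s_{\lambda/\mu}(x_1,\ldots,x_{\rho+1})$.
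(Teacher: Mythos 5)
Your high-level strategy is the same as the paper's: restrict to the three columns $c_1,c_2,c_3$, use Corollary~\ref{Cor:GUT2} to transport the conclusion back to $\lambda/\mu$, and produce two ballot tableaux of the three-column shape sharing a single content of length $\rho+1$. Your identification of the witness content as the conjugate of $(a+1,b,c-1)$ is correct --- it agrees with the content of the fillings the paper actually constructs in every one of its cases --- and the observation that $K_{(a+1,b,c-1),(a,b,c)}=2$ is a clean way to see why two is the right target.

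The gap is exactly the step you label the main obstacle. The Kostka number governs $e_ae_be_c$, i.e.\ the shape in which the three columns are pairwise disjoint; for the overlapping shape $\lambda^*/\mu^*$ the coefficient of $s_\nu$ could a priori drop to $1$ or $0$, and showing that it does not is the entire content of the proposition. You do not carry out the construction, and the sketch you give is not correct as stated: the value $a+1$ is not always placed in the tallest column (in the configuration where the middle column is tallest, one of the two required fillings must put $a+1$ at the bottom of the \emph{shortest} column), and the two fillings are not obtained by ``interchanging'' the value sets of the two non-rightmost columns --- those columns generally have different heights, so one instead moves a single, carefully chosen value between them, and which value can move while preserving column-strictness, row-weak-increase, and ballotness depends on the relative order of the three heights and on which column lies to the left. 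The paper resolves this with an explicit six-case construction of the fillings, a separate claim establishing semistandardness from exactly the overlap bound you mention, and a case-by-case ballotness check. Until some equivalent of that analysis is supplied, the assertion that ``this slack is precisely enough'' is a restatement of what must be proved rather than a proof of it.
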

\begin{proof}

First consider the case where $\lambda/\mu$ consists of only those three columns, and without loss of generality say that $c_1=3$, $c_2=2$, and $c_3=1$.  Let $\rho_i := {\sf CS}_{c_i}(\lambda/\mu)$ for $i \in \{1,2,3\}$.

Our goal is to construct two distinct ballot fillings of $\lambda/\mu$ with the same content.  In both fillings, we fill column $c_1$ with the numbers from $1$ to $\rho_1$.  The entries of column $c_2$ and column $c_3$ will depend on the relative sizes of $\rho_1$, $\rho_2$, and $\rho_3$:

\noindent \emph{Case $1$ $(\rho_1\leq \rho_2 < \rho_3)$:} For Filling 1, fill column $c_2$ with $[\rho_2]$ and column $c_3$ with $[\rho_3+1]\setminus \{\rho_1\}$. For Filling 2, fill column $c_2$ with $[\rho_2+1]\setminus \{\rho_1\}$ and column $c_3$ with $[\rho_3+1]\setminus \{\rho_2+1\}$.

\noindent \emph{Case $2$ $(\rho_1< \rho_3 \leq \rho_2)$:} For Filling 1, fill column $c_2$ with $[\rho_2]$ and column $c_3$ with $([\rho_3]\setminus \{\rho_1\})\cup \{\rho_2+1\}$. For Filling 2, fill column $c_2$ with $[\rho_2+1]\setminus \{\rho_1\}$ and column $c_3$ with $[ \rho_3]$.

\noindent \emph{Case $3$ $(\rho_2\leq \rho_1 < \rho_3)$:} For Filling 1, fill column $c_2$ with $[\rho_2]$ and column $c_3$ with $[\rho_3+1]\setminus \{\rho_2\}$. For Filling 2, fill column $c_2$ with $[\rho_2-1]\cup\{\rho_1+1\}$ and column $c_3$ with $[\rho_3+1]\setminus\{\rho_1+1\}$.

\noindent \emph{Case $4$ $(\rho_2< \rho_3 \leq \rho_1)$:} For Filling 1, fill column $c_2$ with $[\rho_2]$ and column $c_3$ with $([\rho_3]\setminus \{\rho_2\})\cup \{\rho_1+1\}$. For Filling 2, fill column $c_2$ with $[\rho_2-1]\cup\{\rho_1+1\}$ and column $c_3$ with $[\rho_3]$.

\noindent \emph{Case $5$ $(\rho_3\leq \rho_1 \leq \rho_2)$:} For Filling 1, fill column $c_2$ with $[\rho_2]$ and column $c_3$ with $[\rho_3-1]\cup \{ \rho_2+1\}$. For Filling 2, fill column $c_2$ with $[\rho_2+1]\setminus \{\rho_1\}$ and column $c_3$ with $[\rho_3-1]\cup \{\rho_1\}$.

\noindent \emph{Case $6$ $(\rho_3\leq \rho_2 \leq \rho_1)$:} For Filling 1, fill column $c_2$ with $[\rho_2]$ and column $c_3$ with $[\rho_3-1]\cup \{\rho_1+1\}$. For Filling 2, fill column $c_2$ with $[\rho_2-1]\cup \{\rho_1+1\}$ and column $c_3$ with $[\rho_3-1]\cup\{ \rho_2\}$.

In all of the above cases, both fillings have the same content, and they are strictly increasing within columns.  Similarly, in all cases the column reading word is ballot. It remains to show that the fillings are weakly increasing within rows.

\begin{claim}
\label{claim:c2c3}
Let $T$ be any one of the fillings defined in the six cases above. 
\begin{enumerate}[label=(\Roman{*})]
\item If $(r,c_2),(r,c_3)\in \lambda/\mu$, then $T(r,c_3) \leq T(r,c_2)$.
\item If $(r,c_1),(r,c_2)\in \lambda/\mu$, then $T(r,c_2) \leq T(r,c_1)$.
\end{enumerate}
\end{claim}
\begin{proof}
We first prove (I). Since $U(c_3)>U(c_2)$, at most the top $\rho_2-1$ boxes of column $c_3$ are to the left of a box in  column $c_2$. In addition, there is at least one box in column $c_2$ in a higher row than the highest box in column $c_3$.  Similarly, because $L(c_3)>L(c_2)$, at most the top $\rho_3-1$ labels of $c_3$ are to the left of a label of column $c_2$.

Combining these two facts, we know that at most the top $\min(\rho_2,\rho_3)-1$ boxes in column $c_3$ have a box in column $c_2$ directly to the right.  Notice that in all of the cases of the fillings above, for $1\leq k\leq \min(\rho_2,\rho_3)-1$, the $k^{th}$ box from the top of column $c_3$ has a value of at most $k+1$, and its right neighbor is at least the $(k+1)^{th}$ box from the top of column $c_2$.  Thus even if column $c_2$ was minimally filled, its entries would still be larger than the corresponding entries in column $c_3$, which proves the claim.

The proof of (II) follows by the same logic as the proof of (I).
\end{proof}

This completes the proof in the case where $\lambda/\mu$ consists of exactly three columns. If $\lambda/\mu$ has more than three columns, then let $\lambda^*/\mu^*$ be the skew shape consisting of only the three columns indexed by $c_1$, $c_2$, and $c_3$ in the statement of the theorem. By Corollary~\ref{Cor:GUT2}, $s_{\lambda^*/\mu^*}(x_1,\dots, x_{\rho(\lambda^*/\mu^*) + 1})$ is not multiplicity-free implies $s_{\lambda/\mu}(x_1,\dots, x_{\rho(\lambda/\mu) + 1})$ is not multiplicity-free.
\end{proof}

\begin{proposition}
\label{prop:4col}
Suppose there exist columns $c_1,c_2,c_3,c_4$ in $\lambda/\mu$ such that either $U(c_1)=U(c_2)$ or $L(c_1)=L(c_2)$ but not both, either $U(c_1)=U(c_2)$ or $L(c_1)=L(c_2)$ but not both, and $U(c_i)\not=U(c_j)$ and $L(c_i)\not=L(c_j)$ for all $i\in \{1,2\}$ and $j\in \{3,4\}$.  If neither $\lambda^\vee$ nor $\mu$ are rectangles of shortness 1, then $s_{\lambda/\mu}(x_1,\ldots,x_{\rho+1})$ is not multiplicity-free.
\end{proposition}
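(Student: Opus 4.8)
The plan is to follow the template of Proposition~\ref{prop:3col}. First I would reduce to the case that $\lambda/\mu$ consists of exactly the four columns $c_1,c_2,c_3,c_4$; for any larger shape, Corollary~\ref{Cor:GUT2} transfers the failure of multiplicity-freeness at $n=\rho+1$ from the four-column sub-shape back up to $\lambda/\mu$. The goal is then to exhibit two distinct ballot fillings of the four-column shape with a common content $\nu$, so that $c^{\lambda}_{\mu,\nu}\geq 2$ with $\ell(\nu)\leq \rho+1$ and Theorem~\ref{theorem:lrrule} gives the claim. Before constructing the fillings I would normalize the configuration: since the hypothesis is symmetric in the two pairs, relabeling $\{c_1,c_2\}\leftrightarrow\{c_3,c_4\}$ reduces the sign patterns, and the $180^\circ$ rotation \eqref{eq:LRsymmetry-rotate} --- which fixes $\rho$, preserves multiplicity-freeness, interchanges ``shares a top'' with ``shares a bottom'', and swaps the two hypotheses on $\lambda^\vee$ and $\mu$ while preserving their conjunction --- lets me reduce to the canonical configuration in which $\{c_1,c_2\}$ shares a top and $\{c_3,c_4\}$ shares a bottom, which is exactly the configuration produced by Lemma~\ref{lem:nonrect}(II). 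The remaining possibility, in which both pairs share their endpoints on the same side, is interchanged with its mirror under $180^\circ$ rotation and is handled by the analogous construction.

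Writing $\rho_i:={\sf CS}_{c_i}(\lambda/\mu)$ and $n=\rho+1$, I would build the two fillings by the same mechanism as Proposition~\ref{prop:3col}: each column of height $\rho_i$ is filled with $\rho_i$ of the values $\{1,\dots,\rho+1\}$, omitting exactly $\rho+1-\rho_i$ of them, and the two fillings differ only by transposing a pair of values $x<y$ between two of the columns, which preserves content while producing distinct tableaux. As in Proposition~\ref{prop:3col} I would fill the rightmost column canonically as $1,2,\dots,\rho_i$ so that the column reading word begins in a ballot fashion, and then specify, case by case on the relative order of $\rho_1,\rho_2,\rho_3,\rho_4$, which values each remaining column omits and where the swap occurs. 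These cases run parallel to (but are more numerous than) the six cases in Proposition~\ref{prop:3col}, the extra complexity coming from the two-pair structure rather than a single linear order of tops and bottoms.

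The main obstacle is verifying, in each case, that both fillings are semistandard. Strict increase down columns is built into the construction, and ballotness of the column reading word transfers to the reverse reading word by Lemma~\ref{lemma:readingwordequiv}; the delicate point is that rows are weakly increasing. Here I would reuse the local estimate proved in Claim~\ref{claim:c2c3}: for a left column $c_i$ and a right column $c_j$, at most the top $\min(\rho_i,\rho_j)-1$ boxes of $c_j$ have a neighbor in $c_i$, and in each filling the $k$-th entry from the top of $c_j$ is at most $k+1$ while the entry to its left is at least the $k$-th smallest entry of $c_i$, so the two columns compare correctly row by row. Applying this to all relevant left/right pairs among the four columns, rather than to a single pair, is the bulk of the work.

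Finally, this is exactly where I expect the hypothesis that neither $\lambda^\vee$ nor $\mu$ is a rectangle of shortness $1$ to enter. Shortness at least $2$ should guarantee that the columns carrying the swapped values are tall enough that the swap places $x$ and $y$ strictly inside the columns rather than forcing one of them onto an extreme (top or bottom) box; this is precisely the room needed for the value-exchange to keep every row weakly increasing while leaving the two fillings genuinely distinct. When a shortness-$1$ rectangle occurs this room collapses, the construction degenerates, and the shape is in fact multiplicity-free --- consistent with the boundary cases (I)--(IV) of Theorem~\ref{theorem:refomulatedResult} that this proposition must avoid. Assembling the cases yields two distinct ballot fillings of common content $\nu$ with $\ell(\nu)\leq\rho+1$, so $s_{\lambda/\mu}(x_1,\dots,x_{\rho+1})$ is not multiplicity-free.
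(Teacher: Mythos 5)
Your overall strategy---restrict to the four named columns, build two ballot fillings of common content that differ by exchanging a pair of values, case-split on the relative heights $\rho_1,\dots,\rho_4$, and invoke the shortness hypothesis to rescue row-weak-increase in the degenerate adjacencies---is essentially the paper's strategy. But there is a genuine gap in your very first step, the reduction to the four-column sub-shape. Corollary~\ref{Cor:GUT2} does transfer failure of multiplicity-freeness upward, but only once you know the four-column sub-shape $\lambda^*/\mu^*$ actually fails to be multiplicity-free, and the shortness hypothesis of the proposition concerns the \emph{full} shape $\lambda/\mu$, not the sub-shape. It can happen that $(\lambda^*)^\vee$ or $\mu^*$ is a rectangle of shortness $1$ even though neither $\lambda^\vee$ nor $\mu$ is (the four chosen columns may form a staircase-like configuration as in Figure~\ref{Figure2a} or~\ref{Figure2d}); such a sub-shape can be multiplicity-free by Theorem~\ref{thm:TYGUT}(I), and the two-filling construction genuinely breaks there---this is precisely the obstruction in the paper's Cases 4 and 7, where a box of value $\rho_2+1$ (resp.\ $\rho_1+1$) would sit immediately left of a column starting (resp.\ ending) one row away and no semistandard swap exists. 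The paper closes this hole with a fallback your proposal lacks: when the sub-shape degenerates, the hypothesis on the full shape forces the existence of a further column $c$ forming, with two of $c_1,\dots,c_4$, a triple with pairwise distinct $U$'s and pairwise distinct $L$'s, and Proposition~\ref{prop:3col} then finishes the argument.

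Two smaller points. First, you locate the use of the shortness hypothesis in keeping the swapped values ``strictly inside'' their columns; in the paper it instead rules out a specific row-adjacency between a left and a right column (namely $U(c_2)=U(c_3)-1$ or $L(c_1)=L(c_4)-1$), i.e.\ it constrains the geometry of the shape, not the placement of the swapped entries. Second, your proposed $180^\circ$-rotation normalization is not in the paper (which handles the four arrangements directly through twelve cases); it is plausible but would need its own verification that rotation exchanges the roles of $\lambda^\vee$ and $\mu$ and preserves the hypotheses, and it still leaves the same-side configurations to be treated by a separate construction, so it does not reduce the case analysis as much as it first appears.
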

\begin{proof}
Consider the case where $\lambda/\mu$ consists of only those four columns $c_1,c_2,c_3,c_4$. Without loss of generality say $\{c_1,c_2\} = \{3,4\}$ and ${\sf CS}_{c_1}(\lambda/\mu)<{\sf CS}_{c_2}(\lambda/\mu)$, and $\{c_3,c_4\} = \{1,2\}$ with ${\sf CS}_{c_3}(\lambda/\mu)<{\sf CS}_{c_4}(\lambda/\mu)$. There are four possible arrangements of $c_1$ through $c_4$ as shown in Figure~\ref{only4col}. Let $\rho_i := {\sf CS}_{c_i}(\lambda/\mu)$ for $i \in \{1,2,3, 4\}$.

\begin{figure}
    \begin{center}
    \begin{subfigure}{0.2\textwidth}
    \hspace{0.5cm}
    \begin{tikzpicture}[scale = 0.75]
        \draw[black, thick] (0,0) -- (4,0) -- (4,4) -- (0,4) -- (0,0);
        \draw[black, thick] (1,0) -- (1,1) -- (2,1) -- (2,2) -- (3,2) -- (3,3) -- (4,3);
        \draw[black, thick] (0,2.5) -- (2,2.5) -- (2,4);
        \draw[gray, dashed] (1,1) -- (1,2.5);
        \draw[gray, dashed] (3,3) -- (3,4);
		\draw[gray, dashed] (2,2) -- (2,2.5);
        \draw (3.5,3.5) node{$c_1$};
        \draw (2.5,3.05) node{$c_2$};
        \draw (1.5, 1.75) node{$c_3$};
        \draw (0.5, 1.25) node{$c_4$};
    \end{tikzpicture}
    \caption{}
	\label{Figure2a}
    \end{subfigure}
    \begin{subfigure}{0.2\textwidth}
    \hspace{0.6cm}
        \begin{tikzpicture}[scale=0.75]
            \draw[black, thick] (0,0) -- (4,0) -- (4,4) -- (0,4) -- (0,0);
            \draw[black, thick] (1,0) -- (1,1) -- (2,1) -- (2,2) -- (4,2);
            \draw[black, thick] (0,2.5) -- (2,2.5) -- (2,3.5) -- (3,3.5) -- (3,4);
            \draw[gray, dashed] (1,1) -- (1,2.5);
            \draw[gray, dashed] (3,2) -- (3,3.5);
			\draw[gray, dashed] (2,2) -- (2,2.5);
            \draw (3.5,3.05) node{$c_2$};
            \draw (2.5,2.8) node{$c_1$};
            \draw (1.5,1.75) node{$c_3$};
            \draw (0.5,1.25) node{$c_4$};
        \end{tikzpicture}
        \caption{}
    \end{subfigure}
    \begin{subfigure}{0.2\textwidth}
    \hspace{0.7cm}
        \begin{tikzpicture}[scale=0.75]
            \draw[black, thick] (0,0) -- (4,0) -- (4,4) -- (0,4) -- (0,0);
            \draw[black, thick] (2,0) -- (2,2) -- (3,2) -- (3,3) -- (4,3);
            \draw[black, thick] (0,1.5) -- (1,1.5) -- (1,2.5) -- (2,2.5) -- (2,4);
            \draw[gray, dashed] (1,0) -- (1,1.5);
            \draw[gray, dashed] (3,3) -- (3,4);
			\draw[gray, dashed] (2,2) -- (2,2.5);
            \draw (3.5,3.5) node{$c_1$};
            \draw (2.5,3.05) node{$c_2$};
            \draw (1.5,1.25) node{$c_4$};
            \draw (0.5,0.75) node{$c_3$};
        \end{tikzpicture}
        \caption{}
    \end{subfigure}
    \begin{subfigure}{0.2\textwidth}
    \hspace{0.75cm}
        \begin{tikzpicture}[scale=0.75]
            \draw[black, thick] (0,0) -- (4,0) -- (4,4) -- (0,4) -- (0,0);
            \draw[black, thick] (2,0) -- (2,2) -- (4,2);
            \draw[black, thick] (0,1.5) -- (1,1.5) -- (1,2.5) -- (2,2.5) -- (2,3.5) -- (3,3.5) -- (3,4);
            \draw[gray, dashed] (1,0) -- (1,1.5);
            \draw[gray, dashed] (3,2) -- (3,3.5);
			\draw[gray, dashed] (2,2) -- (2,2.5);
            \draw (3.5,3.05) node{$c_2$};
            \draw (2.5,2.8) node{$c_1$};
            \draw (1.5,1.25) node{$c_4$};
            \draw (0.5,0.75) node{$c_3$};
        \end{tikzpicture}
        \caption{}
		\label{Figure2d}
    \end{subfigure}
    \caption{}
    \label{only4col}    
    \end{center}
\end{figure}

We want to construct two distinct ballot fillings of $\lambda/\mu$ with the same content.  In both fillings, we fill column $c_1$ and $c_2$ with $[\rho_1]$ and $[\rho_2]$ respectively.  The entries of column $c_3$ and $c_4$ depend on the relative sizes of $\rho_1,\rho_2,\rho_3,\rho_4$ and the relative sizes of $c_3$ and $c_4$.

\noindent \emph{Case $1$ $(c_3 > c_4, \rho_2\geq \rho_4, \rho_4-1\geq \rho_1+1,\rho_3\geq \rho_1)$:} For Filling 1, fill column $c_3$ with $[\rho_3+1]\setminus \{\rho_1\}$ and column $c_4$ with $([\rho_4]\setminus\{\rho_3+1\})\cup\{\rho_2+1\}$.
For Filling 2, fill column $c_3$ with $([\rho_3]\setminus \{\rho_1\})\cup \{\rho_2+1\}$ and column $c_4$ with $[\rho_4]$.

\noindent \emph{Case $2$ $(c_3 > c_4, \rho_2\geq \rho_4, \rho_4-1\geq \rho_1+1,\rho_3< \rho_1)$:} For Filling 1, fill column $c_3$ with $[\rho_3-1]\cup \{\rho_1 + 1\}$ and column $c_4$ with $([\rho_4]\setminus\{\rho_1+1\})\cup\{\rho_2+1\}$.
For Filling 2, fill column $c_3$ with $[\rho_3-1]\cup \{\rho_2+1\}$ and column $c_4$ with $[\rho_4]$.

\noindent \emph{Case $3$ $(c_3 > c_4, \rho_2\geq \rho_4, \rho_4-1< \rho_1+1)$:} For Filling 1, fill column $c_3$ with $[\rho_3-1]\cup \{\rho_1+1\}$ and column $c_4$ with $[\rho_4-1]\cup \{\rho_2+1\}$. For Filling 2, fill column $c_3$ with $[\rho_3-1]\cup \{\rho_2+1\}$ and column $c_4$ with $[\rho_4-1]\cup \{\rho_1+1\}$.

\noindent \emph{Case $4$ $(c_3> c_4, \rho_2< \rho_4, \rho_3\geq \rho_2)$:} For Filling 1, fill column $c_3$ with $[\rho_3+1]\setminus\{\rho_1\}$ and column $c_4$ with $[\rho_4+1]\setminus \{\rho_2\}$. For Filling 2, fill column $c_3$ with $[\rho_3+2]\setminus \{\rho_1,\rho_2\}$ and column $c_4$ with $[\rho_4+1]\setminus \{\rho_3+2\}$.

\noindent \emph{Case $5$ $(c_3 > c_4, \rho_2< \rho_4, \rho_3< \rho_2,\rho_1\geq \rho_3)$:} For Filling 1, fill column $c_3$ with $[\rho_3 -1 ]\cup \{\rho_1+1\}$ and column $c_4$ with $[\rho_4+1]\setminus \{\rho_1+1\}$. For Filling 2, fill column $c_3$ with $[\rho_3 -1 ]\cup \{\rho_2+1\}$ and column $c_4$ with $[\rho_4+1]\setminus \{\rho_2+1\}$.

\noindent \emph{Case $6$ $(c_3 > c_4, \rho_2< \rho_4, \rho_3< \rho_2,\rho_1< \rho_3)$:} For Filling 1, fill column $c_3$ with $[\rho_3+1]\setminus \{\rho_1\}$ and column $c_4$ with $[\rho_4+1]\setminus \{\rho_3+1\}$. For Filling 2, fill column $c_3$ with $([\rho_3]\setminus \{\rho_1\})\cup\{\rho_2+1\}$ and column $c_4$ with $[\rho_4+1]\setminus \{\rho_2+1\}$.

\noindent \emph{Case $7$ $(c_3 < c_4, \rho_2\geq \rho_4, \rho_4-1 < \rho_1+1)$:} For Filling 1, fill column $c_3$ with $[\rho_3-1]\cup \{\rho_1+1\}$ and column $c_4$ with $[\rho_4-1]\cup \{\rho_2+1\}$. For Filling 2, fill column $c_3$ with $c_3$ with $[\rho_3-1]\cup \{\rho_4-1\}$ and column $c_4$ with $[\rho_4-2]\cup \{\rho_1+1,\rho_2+1\}$.

\noindent \emph{Case $8$ $(c_3 < c_4, \rho_2\geq \rho_4, \rho_4-1 \geq \rho_1+1, \rho_1\geq \rho_3)$:} For Filling 1, fill column $c_3$ with $[\rho_3-1]\cup \{\rho_1\}$ and column $c_4$ with $([\rho_4]\setminus\{\rho_1\}\cup \{\rho_2+1\}$. For Filling 2, fill column $c_3$ with $[\rho_3-1]\cup \{\rho_4\}$ and column $c_4$ with $[\rho_4-1]\cup \{\rho_2+1\}$.

\noindent \emph{Case $9$ $(c_3 < c_4, \rho_2\geq \rho_4, \rho_4-1 \geq \rho_1+1, \rho_1< \rho_3)$:} For Filling 1, fill column $c_3$ with $([\rho_3]\setminus \{\rho_1\})\cup \{\rho_4\}$ and column $c_4$ with $[\rho_4-1]\cup \{\rho_2+1\}$.
For Filling 2, fill column $c_3$ with $[\rho_3]$ and column $c_4$ with $([\rho_4]\setminus\{\rho_1\})\cup \{\rho_2+1\}$.

\noindent \emph{Case $10$ $(c_3 < c_4, \rho_2< \rho_4, \rho_2 \leq \rho_3)$:} For Filling 1, fill column $c_3$ with $[\rho_3+1]\setminus \{\rho_1\}$ and column $c_4$ with $[\rho_4+1]\setminus \{\rho_2\}$. For Filling 2, fill column $c_3$ with $[\rho_3+1]\setminus \{\rho_2\}$ and column $c_4$ with $[\rho_4+1]\setminus \{\rho_1\}$.

\noindent \emph{Case $11$ $(c_3 < c_4, \rho_2< \rho_4, \rho_2 > \rho_3, \rho_3>\rho_1)$:} For Filling 1, fill column $c_3$ with $([\rho_3]\setminus \{\rho_1\})\cup \{\rho_2\}$ and column $c_4$ with $[\rho_4+1]\setminus \{\rho_2\}$. For Filling 2, fill column $c_3$ with $[\rho_3]$ and column $c_4$ with $[\rho_4+1]\setminus \{\rho_1\}$.

\noindent \emph{Case $12$ $(c_3 < c_4, \rho_2< \rho_4, \rho_2 > \rho_3, \rho_3\leq\rho_1)$:} For Filling 1, fill column $c_3$ with $[\rho_3-1]\cup \{\rho_1\}$ and column $c_4$ with $[\rho_4+1]\setminus \{\rho_2\}$. For Filling 2, fill column $c_3$ with $[\rho_3-1]\cup \{\rho_2\}$ and column $c_4$ with $[\rho_4+1]\setminus \{\rho_1\}$.

In all of the above cases, both fillings have the same content, and they are strictly increasing within columns. Similarly, in all cases the column reading word is ballot. It remains to show that the fillings are weakly increasing within rows.

For each pair of indices $(c_1,c_2)$ or $(c_3,c_4)$, the fillings described above are weakly increasing within rows for those column pairs. Let $c'' = \max\{c_3,c_4\}$ and $c' = \min \{c_1,c_2\}$. It remains to show that the fillings described above are weakly increasing within rows for the column pair $c'$ and $c''$.

\begin{claim}
\label{claim:col2col3}
Let $T$ be any one of the fillings defined in the twelve cases above. If $(r,c''),(r,c')\in \lambda/\mu$, then $T(r,c'') \leq T(r,c')$.
\end{claim}

\begin{proof}
Let $\rho'' = {\sf CS}_{c''}(\lambda/\mu)$.  Because $U(c'')>U(c')$ and ${\sf CS}_{c'}(\lambda/\mu)\leq \rho_2$, at most the top $\rho_2-1$ boxes of column $c''$ are to the left of a box in column $c'$.  In addition, the boxes in column $c'$ have at least one more box above them than the corresponding box in column $c''$.  Similarly, because $L(c'')>L(c')$, at most the top $\rho''-1$ boxes of column $c''$ are to the left of a box in column $c'$.  Combining these two facts, we know that at most the top $\min(\rho_2,\rho'')-1$ boxes in column $c''$ have a box in $c'$ directly to the right of it.

Observe that in nearly all of the cases above, for $1\leq k\leq \min(\rho_2,\rho'')-1$, the $k^{th}$ box from the top of column $c''$ has an entry of at most $k+1$, and its right neighbor has at least $k$ boxes above it.  Even if column $c'$ was minimally filled, its entries would still be at least the value of the corresponding entries in column $c''$. This proves the claim in these cases.

There are only two exceptions to the above observation: Case 4 and Case 7.

In Case 4, the only box contradicting the observation is the box that is $\rho_2-1$ from the top of column $c''=c_3$, which has value $\rho_2+1$.  This would violate semistandardness if and only if
\begin{equation}
\label{eq:zz1010}
c'=c_2\text{ and }U(c_2) = U(c_3)-1.
\end{equation}
In this case $\mu$ is a rectangle of shortness 1, and thus \eqref{eq:zz1010} can not be satisfied.

In Case 7, the only box contradicting the observation is the box that is $\rho_4-1$ from the top of column $c''=c_4$, which has value $\rho_1+1$.  This would violate semistandardness if and only if
\begin{equation}
\label{eq:zz2020}
c'=c_1\text{ and }L(c_1) = L(c_4)-1.
\end{equation}
In this case $\lambda^\vee$ is a rectangle of shortness 1, and thus \eqref{eq:zz2020} can not be satisfied.

As a result, in all cases, both fillings of $\lambda/\mu$ are semistandard.
\end{proof}

This completes the proof in the case where $\lambda/\mu$ consists of exactly four columns. If $\lambda/\mu$ has more than four columns, then let $\lambda^*/\mu^*$ be the skew shape consisting of the four columns in the statement of the theorem.  If neither $(\lambda^*)^\vee$ nor $\mu^*$ is a rectangle of shortness $1$, then we have shown that $s_{\lambda^*/\mu^*}(x_1,\dots, x_{\rho(\lambda^*/\mu^*) + 1})$ is not multiplicity-free. By Corollary~\ref{Cor:GUT2}, $s_{\lambda^*/\mu^*}(x_1,\dots, x_{\rho(\lambda^*/\mu^*) + 1})$ is not multiplicity-free implies $s_{\lambda/\mu}(x_1,\dots, x_{\rho(\lambda/\mu) + 1})$ is not multiplicity-free.

On the other hand, suppose either $(\lambda^*)^\vee$ or $\mu^*$ is a rectangle of shortness $1$ as in Figure~\ref{Figure2a} or Figure~\ref{Figure2d}. Since neither $\lambda^\vee$ nor $\mu$ is a rectangle of shortness $1$, there must exist some other column $c$ such that $U(c),U(c_i),U(c_j)$ are all distinct, and $L(c),L(c_i),L(c_j)$ are all distinct for some $1\leq i<j\leq 4$.  By Proposition~\ref{prop:3col}, $s_{\lambda/\mu}(x_1,\dots, x_{\rho(\lambda/\mu)+1})$ is not multiplicity-free.
\end{proof}

\noindent \emph{Proof of Theorem ~\ref{thm:lambdamunotrectangle}:} The four columns defined in Lemma~\ref{lem:nonrect}(II) satisfy the hypotheses in Proposition~\ref{prop:4col}. Combining Lemma~\ref{lem:nonrect}, Proposition~\ref{prop:3col} and Proposition~\ref{prop:4col}, we conclude that $s_{\lambda/\mu}(x_1,\ldots,x_{\rho+1})$ is not multiplicity-free. Thus all basic, n-sharp, tight skew Schur polynomials are not multiplicity-free if neither $\lambda^\vee$ nor $\mu$ is not a rectangle.

\subsection{\texorpdfstring{$\lambda^\vee$}{} is a rectangle of shortness at least \texorpdfstring{$2$}{} and \texorpdfstring{$\sf{np}(\mu) \geq 3$}{}}

\begin{theorem}
\label{thm:suffnotfathook}
Let $s_{\lambda/\mu}(x_1,\ldots,x_n)$ be a basic, n-sharp, tight skew Schur polynomial such that $\lambda^\vee$ is a rectangle of shortness at least $2$ and ${\sf np}(\mu) > 2$, then $s_{\lambda/\mu}(x_1,\ldots,x_n)$ is not multiplicity-free if one of the following conditions hold:
\begin{enumerate}[label=(\Roman{*})]
    \item $\lambda_2 = \mu_q, l_2\geq l_1$ and $n\geq \rho+2$ 
    \item $\lambda_2 = \mu_q, l_2< l_1$ 
    \item $\lambda_2 = \mu_1, k_1\geq l_2$ and $n\geq \rho+2$ 
    \item $\lambda_2 = \mu_1, k_1< l_2$.
    \item $\mu_q<\lambda_2<\mu_1$ 
\end{enumerate}
\end{theorem}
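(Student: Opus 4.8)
The plan is to prove Theorem~\ref{thm:suffnotfathook} by reducing each of the five cases to a situation already handled by Proposition~\ref{prop:3col} or Proposition~\ref{prop:4col}, together with the column-symmetry tool of Lemma~\ref{Lemma:switchingcol} and the monotonicity provided by Corollary~\ref{Cor:GUT} and Corollary~\ref{Cor:GUT2}. The overarching strategy is: in each case, exhibit a small sub-collection of columns of $\lambda/\mu$ whose $U$- and $L$-profiles force either three columns with all distinct tops and all distinct bottoms (so Proposition~\ref{prop:3col} applies and $\mathsf{m}=\rho+1$), or a four-column configuration meeting the hypotheses of Proposition~\ref{prop:4col}. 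Since $\mathsf{np}(\mu) > 2$ and $\lambda^\vee$ is a rectangle, the skew diagram $\lambda/\mu$ has several distinct values of $\mu_i$, which immediately produces columns with distinct top positions via \eqref{eq:samestart} and \eqref{eq:greaterstart}; the work is in controlling the bottom positions $L(c)$, which are governed by $\lambda$. Because $\lambda^\vee$ is a rectangle, $\lambda$ is a fat hook, so the only distinct values of $L$ come from the two ``steps'' of $\lambda$, and this is exactly why the hypotheses single out the relations between $\lambda_2$ and the $\mu_i$.

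First I would dispose of case (V), where $\mu_q < \lambda_2 < \mu_1$. Here the column where $\mu$ drops to $\mu_1$ and the column where $\lambda$ drops from $\lambda_1$-height occur at genuinely different positions straddling $\lambda_2$, so one can select three columns realizing the three distinct top values coming from $\mu_1, \mu_2, \ldots$ together with the distinct bottom values coming from the $\lambda$-step at $\lambda_2$; verifying these three columns have pairwise distinct $U$ and pairwise distinct $L$ then invokes Proposition~\ref{prop:3col} directly, giving non-multiplicity-freeness already at $n = \rho+1$. For cases (II) and (IV), where $\lambda_2 = \mu_q$ with $l_2 < l_1$, respectively $\lambda_2 = \mu_1$ with $k_1 < l_2$, the strict inequality on the multiplicities is what guarantees a fourth distinct row boundary, so that after picking the forced three columns from the $\mu$-profile one still has a column witnessing a distinct $L$-value; again Proposition~\ref{prop:3col} should finish these at $n = \rho+1$.

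The cases (I) and (III), with the equalities $\lambda_2=\mu_q$, $l_2 \geq l_1$ (resp. $\lambda_2=\mu_1$, $k_1 \geq l_2$) and the weaker bound $n \geq \rho+2$, are the genuinely harder ones and I expect them to be the main obstacle. When $l_2 \geq l_1$ and $\lambda_2 = \mu_q$, several columns share the same bottom boundary $L(c) = \ell(\lambda)$, so three columns with all-distinct $L$ need not exist and Proposition~\ref{prop:3col} is unavailable; one must instead build an explicit four-column configuration $c_1,c_2,c_3,c_4$ with a coincidence among the tops or bottoms on each side and apply Proposition~\ref{prop:4col}, checking that neither relevant sub-shape is a rectangle of shortness $1$ (here the hypothesis $\mathsf{np}(\mu) > 2$ and shortness $\geq 2$ of $\lambda^\vee$ are what rule that out). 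The reason the bound weakens to $n = \rho+2$ is that Proposition~\ref{prop:4col} only produces a multiplicity at $x_1,\ldots,x_{\rho+1}$ for the four-column sub-shape, and passing back to $\lambda/\mu$ via Corollary~\ref{Cor:GUT2} controls the length of the witnessing content $\nu$; the extra row of slack is needed because the four-column fillings use an entry as large as $\rho+1$, so the resulting $\nu$ can have length $\rho+2$. Lemma~\ref{Lemma:switchingcol} may be invoked in the $\lambda_2 = \mu_q$ subcases to rearrange columns into the standard left-to-right order assumed in the statement of Proposition~\ref{prop:4col}.

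The final assembly is a short case-check: for each of (I)--(V) I would name the explicit columns, tabulate their $(U,L)$ pairs to confirm the distinctness pattern required by the invoked proposition, confirm the shortness/multiplicity side-conditions that forbid the degenerate rectangle-of-shortness-$1$ exceptions flagged in the proof of Proposition~\ref{prop:4col}, and then cite Corollary~\ref{Cor:GUT2} to lift non-multiplicity-freeness from the chosen sub-shape to all of $\lambda/\mu$ at the stated value of $n$. The most delicate bookkeeping will be matching the $n \geq \rho+2$ threshold in (I) and (III) to the length of the content produced by the four-column construction, so I would track $\ell(\nu)$ carefully through the application of \eqref{eq:LRsymmetryswap}.
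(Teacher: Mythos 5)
There is a genuine gap, and it is structural rather than a matter of bookkeeping. Your plan is to push every case through Proposition~\ref{prop:3col} or Proposition~\ref{prop:4col}, but in the setting of Theorem~\ref{thm:suffnotfathook} the partition $\lambda$ is a fat hook (since $\lambda^\vee$ is a rectangle and $p=2$), so $L(c)$ takes at most \emph{two} distinct values, namely $l_1$ and $\ell(\lambda)$. Consequently three columns with pairwise distinct $L$-values never exist, and Proposition~\ref{prop:3col} is simply unavailable here --- this sinks your proposed treatment of cases (II), (IV), and (V) via that proposition. Proposition~\ref{prop:4col} does rescue case (V): when $\mu_q<\lambda_2<\mu_1$ one finds two columns with $L=\ell(\lambda)$ and distinct $U$, and two with $L=l_1$ and distinct $U$, which is exactly what the paper does. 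But in the boundary cases $\lambda_2=\mu_q$ and $\lambda_2=\mu_1$ the hypotheses of Proposition~\ref{prop:4col} also fail: when $\lambda_2=\mu_q$ every column with $L=\ell(\lambda)$ has index at most $\mu_q$ and hence the \emph{same} top $U=\ell(\mu)+1$, and when $\lambda_2=\mu_1$ every column with $L=l_1$ has the same top $U=1$; in either situation one cannot assemble two pairs $(c_1,c_2)$, $(c_3,c_4)$ with the required pattern of coincidences while keeping $L(c_i)\neq L(c_j)$ across the pairs. So cases (I)--(IV) cannot be reduced to either proposition, and your argument has no mechanism to produce the multiplicity there.

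What the paper actually does for cases (I)--(IV) is quite different: after using Corollary~\ref{Cor:GUT} to shrink to a representative shape with one or two columns per $U$-class (e.g.\ $\lambda=(5^{l_1},2^{l_2})$, $\mu=(4^{k_1},3^{k_2},2^{k_3})$) and Lemma~\ref{Lemma:krowreduction} to trim column heights, it writes down, in each of eight subcases, two explicit semistandard fillings with equal content and verifies ballotness by hand via Lemma~\ref{Lemma:ballot}. That explicit construction is the bulk of the proof and is entirely absent from your proposal. Relatedly, your explanation of the $n\geq\rho+2$ threshold in (I) and (III) (attributing it to slack in Corollary~\ref{Cor:GUT2}) is not the real mechanism: the threshold is forced because the explicit witnesses in those subcases genuinely need a content of length $\rho+2$, and Theorem~\ref{thm:necessarynotfathook} later shows $\rho+1$ variables remain multiplicity-free there.
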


Here we omit the cases where $\lambda_2<\mu_q$ or $\lambda_2>\mu_1$ since the skew partition $\lambda/\mu$ would be not basic if $\lambda_2<\mu_q$ and $l_1 = \ell(\mu)$, and not tight otherwise. 

\begin{definition}
For a basic skew-shape $\lambda/\mu$, let $(\lambda/\mu)^{(-k)}$ be the skew-shape obtained by removing the top $k$ boxes in each column of $\lambda/\mu$ and then applying a basic reduction.
\end{definition}
\begin{lemma}\label{Lemma:krowreduction}
Let $\lambda / \mu$ be a basic skew diagram. If $k\leq \min\{ {\sf CS}_d(\lambda/\mu):1\leq d\leq \lambda_1\}$, then $s_{(\lambda/\mu)^{(-k)}}(x_1,\ldots,x_{n-k})$ is not multiplicity-free implies $s_{\lambda/\mu}(x_1,\ldots,x_n)$ is not multiplicity-free.
\end{lemma}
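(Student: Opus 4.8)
The plan is to produce, from any witness of non-multiplicity-freeness for the reduced shape, a witness for $\lambda/\mu$ itself, via an explicit injection on ballot tableaux. By Proposition~\ref{prop:reductionbasic} I may work with the ``un-reduced'' picture: since $k \le \min_d {\sf CS}_d(\lambda/\mu)$, every one of the $\lambda_1$ columns of $\lambda/\mu$ has at least $k$ boxes, and $(\lambda/\mu)^{(-k)}$ is, up to deleting empty rows and columns, the skew shape $\lambda/\alpha$ obtained by enlarging the inner shape so that column $c$ now begins in row $U(c)+k$. The number of ballot tableaux of a given content is unchanged by basic reduction, so it suffices to build an injection from ballot tableaux of $\lambda/\alpha$ to ballot tableaux of $\lambda/\mu$.

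Given a ballot tableau $T'$ of shape $\lambda/\alpha$ and content $\nu'$, I would define $T$ on $\lambda/\mu$ by filling the $k$ reinstated boxes at the top of each column $c$ (rows $U(c),\dots,U(c)+k-1$) with $1,\dots,k$ from top to bottom, and by setting $T(r,c) = T'(r,c) + k$ on every box of $\lambda/\alpha$. First I would check that $T$ is semistandard. Strict increase down columns is immediate, since the block $1 < \dots < k$ is followed by an entry $T'(\cdot,c)+k \ge k+1$. For weak increase along rows, the only dangerous adjacency is an old box immediately to the left of a new box; but this would force $U(c) < U(c+1)$, contradicting that $\mu$ is a partition (so $U(c) \ge U(c+1)$ for every $c$). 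The remaining horizontal adjacencies (new--new, new--old, old--old) are handled directly, using $U(c)\ge U(c+1)$ together with the fact that old entries exceed $k$ while new entries are at most $k$.

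The main work is verifying that $T$ is ballot, which I would do through the column reading word and Lemma~\ref{lemma:readingwordequiv}. Reading columns right to left, each column of $T$ contributes the block $1,\dots,k$ followed by its shifted old entries, so the column reading word of $T$ is the column reading word of $T'$ (shifted up by $k$) with a block $1,\dots,k$ interleaved once per column. For $j<k$ the inequality $\#j \ge \#(j+1)$ in every prefix holds because each column emits $j$ before $j+1$; for $j=k$ it holds because within each column the new entry $k$ precedes every old entry (and a $k+1$ can only arise from a shifted old $1$); and for $j>k$ the subsequence of entries exceeding $k$ in any prefix of $T$'s word is exactly a prefix of $T'$'s shifted column word, so the ballotness of $T'$ transfers directly. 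The content of $T$ is $\nu = (\lambda_1^{\,k}, \nu'_1, \nu'_2,\dots)$, which is a partition since $\nu'_1 \le \lambda_1$, and the map is injective because deleting the top $k$ boxes of each column and subtracting $k$ recovers $T'$.

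Finally I would assemble the conclusion. If $s_{(\lambda/\mu)^{(-k)}}(x_1,\dots,x_{n-k})$ is not multiplicity-free, Theorem~\ref{theorem:lrrule} gives a content $\nu'$ with $\ell(\nu') \le n-k$ admitting at least two ballot tableaux of the reduced shape; the injection sends these to at least two distinct ballot tableaux of $\lambda/\mu$ of the common content $\nu = (\lambda_1^{\,k},\nu')$, which has $\ell(\nu) = k + \ell(\nu') \le n$. Hence $c^\lambda_{\mu,\nu} \ge 2$ with $\ell(\nu)\le n$, so $s_{\lambda/\mu}(x_1,\dots,x_n)$ is not multiplicity-free. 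The principal obstacle is the ballotness check: it is what forces the specific filling (small values $1,\dots,k$ on top, old values pushed up by $k$), and it is the step where the prefix bookkeeping across the three value-ranges must be carried out with care.
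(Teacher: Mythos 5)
Your construction is exactly the one the paper uses: fill the top $k$ reinstated boxes of each column with $1,\dots,k$ and shift the remaining entries up by $k$, then verify semistandardness and ballotness via the column reading word and Lemma~\ref{lemma:readingwordequiv}. The argument is correct; you simply spell out the ballotness and content bookkeeping that the paper's proof leaves as ``trivial.''
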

\begin{proof}
If $s_{(\lambda/\mu)^{(-k)}}(x_1,\ldots,x_{n-k})$ is not multiplicity-free, then there exists at least two ballot tableaux of shape $(\lambda/\mu)^{(-k)}$ with the same content $\nu$ with $\ell(\nu)\leq n-k$. Consider the following filling of $\lambda/\mu$: fill in the top $k$ boxes in each column with $1$ through $k$, with values increasing downwards, and fill in the remaining shape by adding $k$ to the corresponding entries in $(\lambda/\mu)^{(-k)}$. It is trivial, using Lemma 2.2, to show that the resulting two tableaux are ballot. Therefore $s_{\lambda/\mu}(x_1,\ldots,x_n)$ is not multiplicity-free.
\end{proof}
\begin{lemma}\label{Lemma:ballot}
If $w = w_1w_2\ldots w_m$ is a ballot word, then the word $w^{(k)}$ obtained by concatenating $w_m+1,\ldots w_m+k$ to the end of $w$ is a ballot word.
\end{lemma}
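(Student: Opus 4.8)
The plan is to isolate the single-append step and then iterate. Recall that a word is ballot exactly when, in every prefix, the number of occurrences of each letter $i$ is at least the number of occurrences of $i+1$. Taking the whole of $w$ as a prefix of itself, it follows that if $N_i$ denotes the total number of $i$'s in $w$, then $N_i \geq N_{i+1}$ for every $i$. This is the elementary bookkeeping I would rely on throughout.

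First I would prove the case $k=1$: if $w$ is ballot and $v := w_m$ is its final letter, then $w\,(v+1)$ is ballot. Appending a single $v+1$ leaves every prefix of $w$ untouched and alters only the new, full prefix $w\,(v+1)$, in which the count of $v+1$ rises from $N_{v+1}$ to $N_{v+1}+1$ while all other counts are unchanged. Consequently the ballot condition can only be threatened at $i=v$, where it now demands $N_v \geq N_{v+1}+1$, and at $i=v+1$, where it demands $N_{v+1}+1 \geq N_{v+2}$; the latter is immediate from $N_{v+1}\geq N_{v+2}$, so everything reduces to the strict inequality $N_v > N_{v+1}$.

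The one genuinely load-bearing observation is precisely this strictness, and it is exactly where the hypothesis that the appended letters begin at $w_m+1$ enters. To obtain it, write $w = w'\,v$ with $w' = w_1\cdots w_{m-1}$. Being a prefix of the ballot word $w$, the word $w'$ is itself ballot, so the number of $v$'s in $w'$ is at least the number of $(v+1)$'s in $w'$. Restoring the final letter $v$ raises the count of $v$ by one but leaves the count of $v+1$ fixed, yielding $N_v \geq N_{v+1}+1$, i.e. $N_v > N_{v+1}$. This completes the $k=1$ case.

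For general $k$ I would induct. The word $w\,(v+1)$ is ballot by the case just proved, and it terminates in the letter $v+1$; hence the very same statement applies to append $v+2$, then $v+3$, and so on. After $k$ iterations one has built precisely $w^{(k)} = w\,(v+1)(v+2)\cdots(v+k)$, which is therefore ballot. There is no serious obstacle here; the only subtlety worth flagging is that ballotness of $w$ alone gives $N_v \geq N_{v+1}$, whereas the argument needs the \emph{strict} inequality $N_v > N_{v+1}$, and it is exactly the fact that $w$ ends in $v$ that upgrades the inequality.
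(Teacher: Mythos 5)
Your proposal is correct and follows essentially the same route as the paper: reduce to the $k=1$ case, observe that the only threatened inequality is $N_{w_m} > N_{w_m+1}$, derive that strict inequality from the ballotness of the prefix $w_1\cdots w_{m-1}$, and then induct on $k$. The only difference is cosmetic — the paper phrases the key step as a contradiction while you argue it directly, and you additionally spell out the (automatic) check at $i = w_m+1$.
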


\begin{proof}
For $k=1$, it suffices to show that the number of occurrences of $w_m$ in $w$ is strictly greater than the number of occurrences of $w_m+1$. Indeed, if not, then the sequence $w_1\ldots w_{m-1}$ is not ballot, contradicting our assumption that $w$ is ballot. The lemma follows by induction on $k$
\end{proof}

\noindent \emph{Proof of Theorem~\ref{thm:suffnotfathook}:} Since $\lambda^\vee$ is a rectangle and ${\sf np}(\mu) \geq 2$, we have $q\geq 3$, $p=2$.

\begin{figure}
    \begin{center}
    
        \begin{tikzpicture}[scale=1.3]
        \draw[black, thick] (0,0) -- (4,0) -- (4,4) -- (0,4) -- (0,0);
        \draw[black, thick] (0,1) -- (1,1) -- (1,1.8) -- (2,1.8) -- (2,2.6) -- (2.7,2.6) -- (2.7,3.4) -- (3.4,3.4) -- (3.4,4);
        \draw[black, thick] (1,0) -- (1,1) -- (4,1);
        \draw (0.75,3) node{$\mu$};
        \draw (2.4, 0.5) node{$\lambda^c$};
        \draw (0.5, 0.5) node{$d$};
        \draw (1.5, 1.4) node{$c$};
        \draw (3.7, 2.5) node{$a$};
        \draw (2.35, 1.8) node{$b$};
        
        \draw[gray, dashed] (2.7,1) -- (2.7,2.6);
        \draw[gray, dashed] (2,1) -- (2,1.8);
        \draw[gray, dashed] (3.4,1) -- (3.4,3.4);
        \end{tikzpicture}
        
    \caption{Case 1 in Theorem~\ref{thm:suffnotfathook}, with column lengths $a,b,c,$ and $d$.}
    \label{case1necessary}    
    \end{center}
\end{figure}
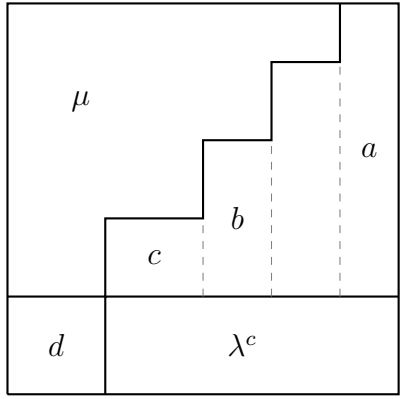

\noindent \emph{Case 1 ($\lambda_2 = \mu_q$):} As illustrated by Figure~\ref{case1necessary}, set
\begin{align*}
A & = \{ \mu_1 + 1,\ldots,\lambda_1 \} & a & = l_1 = {\sf CS}_k(\lambda / \mu)\text{ for }k \in A \\
B & = \{ \mu_{q-1} + 1,\ldots,\mu_{q-2} \} & b & = k_{q-1} + k_q = {\sf CS}_k(\lambda / \mu)\text{ for }k \in B \\
C & = \{ \mu_{q} + 1,\ldots,\mu_{q-1} \} & c & = k_q = {\sf CS}_k(\lambda / \mu)\text{ for }k \in C \\
D & = \{ 1,\ldots,\mu_{q} \} & d & = l_2 = {\sf CS}_k(\lambda / \mu)\text{ for }k \in D
\end{align*}
Either $a$ or $d$ equals $\rho(\lambda/\mu)$, and by Corollary~\ref{Cor:GUT}, it suffices for us to establish multiplicity in the case where
\[\lambda = (5^{l_1},2^{l_2})\text{ and }\mu = (4^{k_1},3^{k_2},2^{k_3}).\]

In $\lambda / \mu$ there are exactly two $D$-columns and one each of the $A,B$ and $C$-columns. In order to construct a ballot tableau, there is a unique way to fill the $A$, $B$ and $C$-columns (fill each column with $1$ through the length of that column). 

We consider four subcases: Case 1.1 corresponds to Theorem~\ref{thm:suffnotfathook} (\RomanNumeralCaps{1}) while cases 1.2, 1.3 and 1.4 correspond to Theorem~\ref{thm:suffnotfathook} (\RomanNumeralCaps{2}). 

\noindent \emph{Case 1.1 ($d\geq a$):} In this case $\rho = d$. By Lemma~\ref{Lemma:krowreduction}, we reduce to the case where $c=1$ by setting $k = c-1$ in the lemma. For filling 1, fill in the right $D$-column with $[2,d+2]\setminus\{a\}$ and the left $D$-column with $[d+1]\setminus \{b\}$ as in Figure~\ref{case1a(i)}. For filling 2, fill in the right $D$-column with $[2,d+2]\setminus \{b\}$ and the left $D$-column with $[d+1]\setminus \{a\}$ as in Figure~\ref{case1a(ii)}. The second filling is obtained from the first by swapping the red entries in the figure with the blue entries, maintaining their vertical order.

It remains to show these are ballot tableaux. The two tableaux are semistandard by construction. Let $u$ and $v$ be the column reading words of filling 1 and 2, respectively. To show that $u$ and $v$ are ballot, it suffices, by Lemma~\ref{Lemma:ballot}, to show that the initial factors of $u$ and $v$ that terminate at the underlined entries in Figure~\ref{case1a} are ballot.

In the first filling, Figure~\ref{case1a(i)}, since $c=1$, there are three entries equal to $1$ and two entries equal to $2$ before the underlined $2$ in $u$. There is one entry equal to $a$, and none equal to $a+1$, before the underlined $a+1$ in $u$. Finally, there are three entries equal to $b$, and two equal to $b+1$, before the underlined $b+1$ in u. Therefore this filling is ballot. By similar reasoning, we conclude that the second filling, Figure~\ref{case1a(ii)}, is ballot. As a result, $s_{\lambda/\mu}(x_1,\ldots,x_{\rho+2})$ is not multiplicity-free in this case.

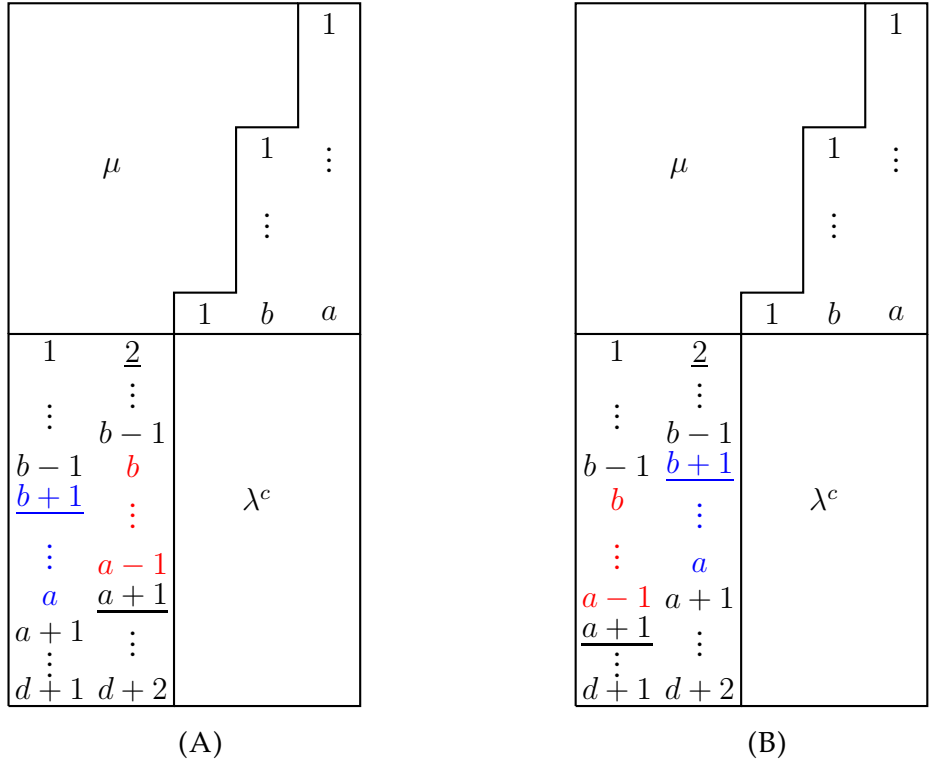
\begin{figure}
    \begin{center}
    \begin{subfigure}{0.45\textwidth}
    \hspace{1cm}
    \begin{tikzpicture}[scale = 0.55]
        \draw[black, thick] (2,-2) -- (6,-2) -- (6,7) -- (2,7) -- (2,-2);
        \draw[black, thick] (6,7) -- (10.5,7) -- (10.5,15) -- (9,15) -- (9,12) -- (7.5,12) -- (7.5,8) -- (6,8) -- (6,7);
        \draw[black, thick] (2,7) -- (2,15) -- (9,15);
        \draw[black, thick] (6,-2) -- (10.5,-2) -- (10.5,7);
        
        \draw (4.5,11) node{$\mu$};
        \draw (8,3) node{$\lambda^c$};
        
        \draw (5,6.5) node{\underline{$2$}};
        \draw (5,5.7) node{$\vdots$};
        \draw (5,4.6) node{$b-1$};
        \draw (5,3.8) node[text=red]{$b$};
        \draw (5,2.8) node[text=red]{$\vdots$};
        \draw (5,1.4) node[text=red]{$a-1$};
        \draw (5,0.6) node{\underline{$a+1$}};
        \draw (5,-0.3) node{$\vdots$};
        \draw (5,-1.6) node{$d+2$};
        
        \draw (3,6.6) node{$1$};
        \draw (3,5.2) node{$\vdots$};
        \draw (3,3.8) node{$b-1$};
        \draw (3,3) node[text=blue]{\underline{$b+1$}};
        \draw (3,1.8) node[text=blue]{$\vdots$};
        \draw (3,0.6) node[text=blue]{$a$};
        \draw (3,-0.2) node{$a+1$};
        \draw (3,-0.8) node{$\vdots$};
        \draw (3,-1.6) node{$d+1$};
        
        \draw (9.75, 14.5) node{$1$};
        \draw (9.75, 11.4) node{$\vdots$};
        \draw (9.75, 7.5) node{$a$};
        
        \draw (8.25, 11.5) node{$1$};
        \draw (8.25, 9.8) node{$\vdots$};
        \draw (8.25, 7.5) node{$b$};
        
        \draw (6.75, 7.5) node{$1$};
        
    \end{tikzpicture}
    \caption{}
    \label{case1a(i)}
    \end{subfigure}
    \begin{subfigure}{0.45\textwidth}
    \hspace{1cm}
        \begin{tikzpicture}[scale=0.55]
        \draw[black, thick] (2,-2) -- (6,-2) -- (6,7) -- (2,7) -- (2,-2);
        \draw[black, thick] (6,7) -- (10.5,7) -- (10.5,15) -- (9,15) -- (9,12) -- (7.5,12) -- (7.5,8) -- (6,8) -- (6,7);
        \draw[black, thick] (2,7) -- (2,15) -- (9,15);
        \draw[black, thick] (6,-2) -- (10.5,-2) -- (10.5,7);
        
        \draw (4.5,11) node{$\mu$};
        \draw (8,3) node{$\lambda^c$};
        
        \draw (5,6.5) node{\underline{$2$}};
        \draw (5,5.7) node{$\vdots$};
        \draw (5,4.6) node{$b-1$};
        \draw (5,3.8) node[text=blue]{\underline{$b+1$}};
        \draw (5,2.8) node[text=blue]{$\vdots$};
        \draw (5,1.4) node[text=blue]{$a$};
        \draw (5,0.6) node{$a+1$};
        \draw (5,-0.3) node{$\vdots$};
        \draw (5,-1.6) node{$d+2$};
        
        \draw (3,6.6) node{$1$};
        \draw (3,5.2) node{$\vdots$};
        \draw (3,3.8) node{$b-1$};
        \draw (3,3) node[text=red]{$b$};
        \draw (3,1.8) node[text=red]{$\vdots$};
        \draw (3,0.6) node[text=red]{$a-1$};
        \draw (3,-0.2) node{\underline{$a+1$}};
        \draw (3,-0.8) node{$\vdots$};
        \draw (3,-1.6) node{$d+1$};
        
        \draw (9.75, 14.5) node{$1$};
        \draw (9.75, 11.4) node{$\vdots$};
        \draw (9.75, 7.5) node{$a$};
        
        \draw (8.25, 11.5) node{$1$};
        \draw (8.25, 9.8) node{$\vdots$};
        \draw (8.25, 7.5) node{$b$};
        
        \draw (6.75, 7.5) node{$1$};
        \end{tikzpicture}
        \caption{}
        \label{case1a(ii)}
    \end{subfigure}
    \caption{Two fillings in Case 1.1}
    \label{case1a}    
    \end{center}
\end{figure}

\noindent \emph{Case 1.2 ($d\leq c+1$):} In this case $\rho = a$. By Lemma~\ref{Lemma:krowreduction}, we reduce to $d=2$. Consider the filling of the skew partition in Figure~\ref{case1b} and a second filling obtained by swapping the red $b+1$-entry with the blue $c+1$-entry in the figure. Both tableaux are semistandard. Let $u$ and $v$ be the column reading words of filling 1 and 2, respectively.  The initial factors of $u$ and $v$, ending at the $c$ in the $C$ column, are both ballot. There is one entry equal to $a$, zero entries equal to $a+1$, two entries equal to $b$, one entry equal to $b+1$, three entries equal to $c$, and two entries equal to $c+1$ in each of the two initial factors. As a result both $u$ and $v$ are ballot and thus $s_{\lambda/\mu}(x_1,\ldots,x_{\rho+1})$ is not multiplicity-free.

\begin{figure}
    \begin{center}
    \begin{tikzpicture}[scale = 0.6]
        \draw[black, thick] (2,0) -- (6,0) -- (6,2) -- (2,2) -- (2,0);
        \draw[black, thick] (6,2) -- (10.5,2) -- (10.5,10) -- (9,10) -- (9,7) -- (7.5,7) -- (7.5,5) -- (6,5) -- (6,2);
        
        \draw[black, thick] (2,2) -- (2,10) -- (9,10);
        \draw[black, thick] (6,0) -- (10.5,0) -- (10.5,2);
        
        \draw (4,7) node{$\mu$};
        \draw (8,1) node{$\lambda^c$};
        
        \draw (5,1.5) node[text=red]{\underline{$b+1$}};
        \draw (5,0.5) node{\underline{$a+1$}};
        \draw (3,1.5) node{$1$};
        \draw (3,0.5) node[text=blue]{\underline{$c+1$}};
        
        \draw (9.75, 9.5) node{$1$};
        \draw (9.75, 6.4) node{$\vdots$};
        \draw (9.75, 2.5) node{$a$};
        
        \draw (8.25, 6.5) node{$1$};
        \draw (8.25, 4.8) node{$\vdots$};
        \draw (8.25, 2.5) node{$b$};
        
        \draw (6.75, 4.5) node{$1$};
        \draw (6.75, 3.7) node{$\vdots$};
        \draw (6.75, 2.5) node{$c$};

    \end{tikzpicture}
\end{center}
    \caption{Filling in Case 1.2}
    \label{case1b}
\end{figure}
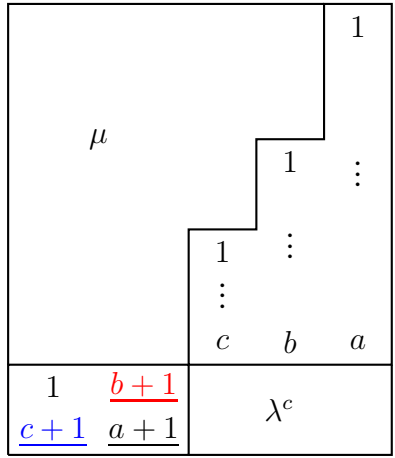

\noindent \emph{Case 1.3 ($c+1<d\leq b$):} In this case $\rho=a$. By Lemma~\ref{Lemma:krowreduction}, we reduce to $c=1$. For filling 1, fill in the right $D$-column with $[2,d]\cup \{a+1\}$ and the left $D$-column with $[d-1]\cup \{b+1\}$ as in Figure~\ref{case1c}. For filling 2, fill in the right $D$-column with $[2,d-1]\cup\{b+1,a+1\}$ and the left $D$-column with $[d]$.
Filling 2 arises from filling 1 by swapping the blue $b+1$ with the red $d$ in the figure. It is again clear that the two tableaux are semistandard. Let $u$ and $v$ be the column reading words of filling 1 and 2, respectively. The initial factors of $u$ and $v$, ending at the $1$ in the $C$-column, are both ballot. There is one entry equal to $a$, zero entries equal to $a+1$, two entries equal to $b$, and one entry equal to $b+1$ in each of the two initial factors. This, combined with Lemma~\ref{Lemma:ballot}, implies that $u$ and $v$ are ballot. Thus $s_{\lambda/\mu}(x_1,\ldots,x_{\rho+1})$ is not multiplicity-free. 

\begin{figure}
    \begin{center}
    \begin{tikzpicture}[scale = 0.6]
        \draw[black, thick] (2,0) -- (6,0) -- (6,4) -- (2,4) -- (2,0);
        \draw[black, thick] (6,4) -- (10.5,4) -- (10.5,12) -- (9,12) -- (9,9) -- (7.5,9) -- (7.5,5) -- (6,5) -- (6,4);
        \draw[black, thick] (2,4) -- (2,12) -- (9,12);
        \draw[black, thick] (6,0) -- (10.5,0) -- (10.5,4);
        
        \draw (4.2,8.8) node{$\mu$};
        \draw (8.4,2) node{$\lambda^c$};
        
        \draw (5,3.5) node{\underline{$2$}};
        \draw (5,2.7) node{$\vdots$};
        \draw (5,1.5) node[text=red]{$d$};
        \draw (5,0.5) node{\underline{$a+1$}};
        \draw (3,3.5) node{$1$};
        \draw (3,2.7) node{$\vdots$};
        \draw (3,1.5) node{$d-1$};
        \draw (3,0.5) node[text=blue]{\underline{$b+1$}};
        
        \draw (9.75, 11.5) node{$1$};
        \draw (9.75, 8.4) node{$\vdots$};
        \draw (9.75, 4.5) node{$a$};
        
        \draw (8.25, 8.5) node{$1$};
        \draw (8.25, 6.8) node{$\vdots$};
        \draw (8.25, 4.5) node{$b$};
        
        \draw (6.75, 4.5) node{$1$};
        
    \end{tikzpicture}
\end{center}
    \caption{Filling in Case 1.3}
    \label{case1c}
\end{figure}
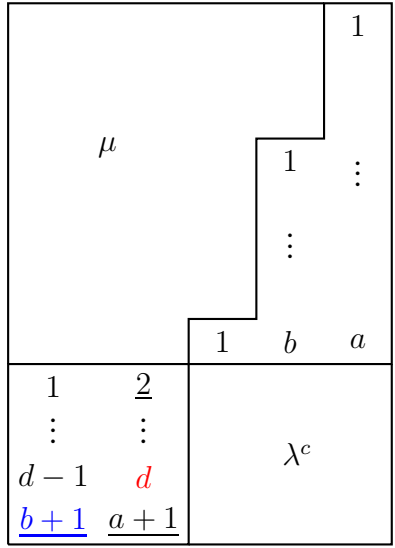

\noindent \emph{Case 1.4 ($b<d<a$):} In this case $\rho = a$. By Lemma\ref{Lemma:krowreduction}, we reduce to $c=1$. For filling 1, fill in the right $D$-column with $[2,d]\cup\{a+1\}$ and the left $D$-column with $[d+1]\setminus\{b\}$ as in Figure~\ref{case1d(i)}. For filling 2, fill in the right $D$-column with $[2,d+1]\cup\{a+1\}\setminus\{b\}$ and the left $D$-column with $[d]$ as in Figure~\ref{case1d(ii)}. The second fillings can be obtained from first by swapping the red entries with the blue entries, maintaining their vertical order. Both tableaux are semistandard. The ballot condition follows by a similar argument to Case 1.1. Therefore $s_{\lambda/\mu}(x_1,\ldots,x_{\rho+1})$ is not multiplicity-free.

\begin{figure}
    \begin{center}
    \begin{subfigure}{0.35\textwidth}
    \hspace{0.4cm}
    \begin{tikzpicture}[scale = 0.55]
    
        \draw[black, thick] (2,0) -- (6,0) -- (6,7) -- (2,7) -- (2,0);
        \draw[black, thick] (6,7) -- (10.5,7) -- (10.5,15) -- (9,15) -- (9,11) -- (7.5,11) -- (7.5,8) -- (6,8) -- (6,7);
        
        \draw[black, thick] (2,7) -- (2,15) -- (9,15);
        \draw[black, thick] (6,0) -- (10.5,0) -- (10.5,7);
        
        \draw (4.5,12) node{$\mu$};
        \draw (8.5, 3.5) node{$\lambda^c$};
        
        \draw (5,6.6) node{\underline{$2$}};
        \draw (5,5.7) node{$\vdots$};
        \draw (5,4.6) node{$b-1$};
        \draw (5,3.8) node[text=red]{$b$};
        \draw (5,2.8) node[text=red]{$\vdots$};
        \draw (5,1.3) node[text=red]{$d$};
        \draw (5,0.5) node{\underline{$a+1$}};
        
        \draw (3,6.6) node{$1$};
        \draw (3,5.2) node{$\vdots$};
        \draw (3,3.8) node{$b-1$};
        \draw (3,3) node[text=blue]{\underline{$b+1$}};
        \draw (3,1.8) node[text=blue]{$\vdots$};
        \draw (3,0.5) node[text=blue]{$d+1$};
        
        \draw (9.75, 14.5) node{$1$};
        \draw (9.75, 11.4) node{$\vdots$};
        \draw (9.75, 7.5) node{$a$};
        
        \draw (8.25, 10.5) node{$1$};
        \draw (8.25, 9.3) node{$\vdots$};
        \draw (8.25, 7.5) node{$b$};
        
        \draw (6.75, 7.5) node{$1$};

    \end{tikzpicture}
    \caption{}
    \label{case1d(i)}
    \end{subfigure}
    \hspace{0.5cm}
    \begin{subfigure}{0.35\textwidth}
    \hspace{0.5cm}
        \begin{tikzpicture}[scale=0.55]
        \draw[black, thick] (2,0) -- (6,0) -- (6,7) -- (2,7) -- (2,0);
        \draw[black, thick] (6,7) -- (10.5,7) -- (10.5,15) -- (9,15) -- (9,11) -- (7.5,11) -- (7.5,8) -- (6,8) -- (6,7);
        
        \draw[black, thick] (2,7) -- (2,15) -- (9,15);
        \draw[black, thick] (6,0) -- (10.5,0) -- (10.5,7);
        
        \draw (4.5,12) node{$\mu$};
        \draw (8.5, 3.5) node{$\lambda^c$};
        
        \draw (5,6.6) node{\underline{$2$}};
        \draw (5,5.7) node{$\vdots$};
        \draw (5,4.6) node{$b-1$};
        \draw (5,3.8) node[text=blue]{\underline{$b+1$}};
        \draw (5,2.8) node[text=blue]{$\vdots$};
        \draw (5,1.3) node[text=blue]{$d+1$};
        \draw (5,0.5) node{\underline{$a+1$}};
        
        \draw (3,6.6) node{$1$};
        \draw (3,5.2) node{$\vdots$};
        \draw (3,3.8) node{$b-1$};
        \draw (3,3) node[text=red]{$b$};
        \draw (3,1.8) node[text=red]{$\vdots$};
        \draw (3,0.5) node[text=red]{$d$};
        \draw (9.75, 14.5) node{$1$};
        \draw (9.75, 11.4) node{$\vdots$};
        \draw (9.75, 7.5) node{$a$};
        
        \draw (8.25, 10.5) node{$1$};
        \draw (8.25, 9.3) node{$\vdots$};
        \draw (8.25, 7.5) node{$b$};
        
        \draw (6.75, 7.5) node{$1$};
        
        \end{tikzpicture}
        \caption{}
        \label{case1d(ii)}
    \end{subfigure}
    \caption{Two fillings in Case 1.4}
    \label{case1d}    
    \end{center}
\end{figure}
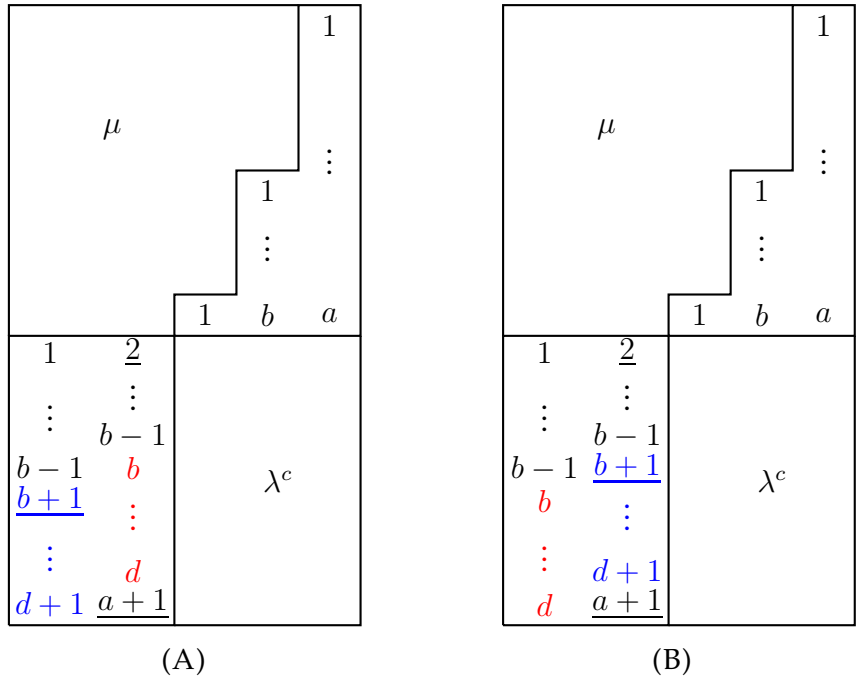

\noindent \emph{Case 2 ($\lambda_2 = \mu_1$):} As illustrated by Figure~\ref{case2}, set 
\begin{align*}
A & = \{ \mu_1 + 1,\ldots,\lambda_1 \} & a & = l_1 = {\sf CS}_k(\lambda / \mu)\text{ for }k \in A \\
B & = \{ \mu_{2} + 1,\ldots,\mu_{1} \} & b & = l_1+l_2-k_1 = {\sf CS}_k(\lambda / \mu)\text{ for }k \in B \\
C & = \{ \mu_{3} + 1,\ldots,\mu_{2} \} & c & = l_1+l_2-k_1-k_2 = {\sf CS}_k(\lambda / \mu)\text{ for }k \in C \\
D & = \{ \mu_{4} + 1,\ldots,\mu_{3} \} & d & = l_1+l_2-k_1-k_2-k_3 = {\sf CS}_k(\lambda / \mu)\text{ for }k \in D
\end{align*}
where $\mu_{4}=0$ if $q=3$.

By Corollary~\ref{Cor:GUT}, it suffices to consider the case where $\lambda / \mu$ consists of exactly two $A$-columns and one each of the $B$, $C$, and $D$-columns. Thus we consider
\[\lambda = (5^{l_1},2^{l_2})\ \text{and } \mu = (3^{k_1},2^{k_2},1^{k_3})\]

We divide into four subcases: Case 2.1 corresponds to Theorem~\ref{thm:suffnotfathook} (\RomanNumeralCaps{3}) while cases 2.2, 2.3 and 2.4 correspond to Theorem~\ref{thm:suffnotfathook} (\RomanNumeralCaps{4}). In all subcases we construct two distinct ballot fillings of $\lambda/\mu$ with the same content.

\begin{figure}
    \begin{center}
        \begin{tikzpicture}[scale = 1.3]
            \draw[black, thick] (0,0) -- (4,0) -- (4,4) -- (0,4) -- (0,0);
        \draw[black, thick] (0,1.4) -- (1,1.4) -- (1,2) -- (2,2) -- (2,2.7) -- (2.7,2.7) -- (2.7,3.4) -- (3.4,3.4) -- (3.4,4);
        \draw[black, thick] (3.4,0) -- (3.4,1.4) -- (4,1.4);
        \draw (0.75,3) node{$\mu$};
        \draw (3.7, 0.5) node{$\lambda^c$};
        \draw[gray, dashed] (3.4,1) -- (3.4,3.4);
        \draw[gray, dashed] (2.7,2.7) -- (2.7,0);
        \draw[gray, dashed] (2,2) -- (2,0);
        \draw[gray, dashed] (1,1.4) -- (1,0);
        \draw (3.7,2.5) node{$a$};
        \draw (3,1.9) node{$b$};
        \draw (2.35,1.5) node{$c$};
        \draw (1.5,1) node{$d$};
        
        \end{tikzpicture}
      \caption{Case 2 in Theorem~\ref{thm:suffnotfathook}, with column lengths $a,b,c,$ and $d$.}
    \label{case2}  
    \end{center}

\end{figure}
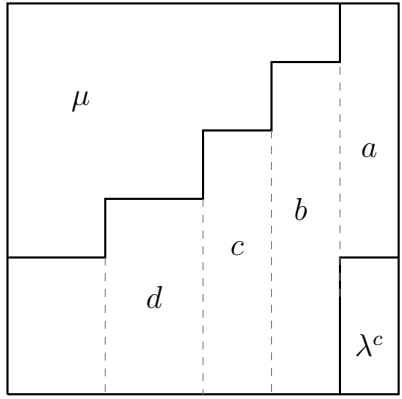

\noindent \emph{Case 2.1 ($a\geq b$):} In this case $\rho = a$. By Lemma~\ref{Lemma:krowreduction}, we reduce to $d=1$. For filling 1, fill the $B$-column with $[b-2]\cup\{a+1,a+2\}$, the $C$-column with $[c-1]\cup\{a+1\}$, and the $D$-column with $\{b-1\}$ as shown in Figure~\ref{case2a}. Filling 2 is defined to be the result of swapping the blue $b-1$ and red $c-1$ in Figure~\ref{case2a}. Both tableaux are semistandard. Let $u$ and $v$ be the column reading words of filling 1 and 2, respectively. By Lemma~\ref{Lemma:ballot}, in order to verify that $u$ and $v$ are ballot, it suffices to check that the initial factors of $u$ and $v$ that terminate at the underlined entries in Figure~\ref{case2a} are ballot. The following remarks hold for both $u$ and $v$. There are two entries equal to $a$ and two entries equal to $a+1$. Both $a$ entries appear before the $a+1$ entries. There are four entries equal to $c-2$, and three equal to $c-1$, before the underlined $c-1$. There are three or four entries equal to $b-2$, and two entries equal to $b-1$ before the underlined $b-1$. Thus $u$ and $v$ are ballot and $s_{\lambda/\mu}(x_1,\ldots,x_{\rho+2})$ is not multiplicity-free. 

\begin{figure}
    \begin{center}
    \begin{tikzpicture}[scale = 1.2]
        
        \draw[black, thick] (4,1.4) -- (4,0) -- (1,0) -- (1,0.5) -- (2,0.5) -- (2,2.25) -- (3,2.25) -- (3,3) -- (4,3);
        \draw[black, thick] (4,1.4) -- (5.5,1.4) -- (5.5,5.4) -- (4,5.4) -- (4,3);
        
        \draw[black, thick] (1,0.5) -- (1,5.4) -- (4,5.4);
        \draw[black, thick] (4,0) -- (5.5,0) -- (5.5,1.4);
        
        \draw (2,4) node{$\mu$};
        \draw (4.74, 0.65) node{$\lambda^c$};
        
        \draw (3.5,2.75) node{$1$};
        \draw (3.5,2) node{$\vdots$};
        \draw (3.5,1) node{$b-2$};
        \draw (3.5, 0.6) node{\underline{$a+1$}};
        \draw (3.5, 0.2) node{$a+2$};
        
        \draw (2.5, 2) node{$1$};
        \draw (2.5,1.4) node{$\vdots$};
        \draw (2.5, 0.6) node[text=red]{\underline{$c-1$}};
        \draw (2.5,0.2) node{\underline{$a+1$}};
        
        \draw (1.5,0.2) node[text=blue]{\underline{$b-1$}};
        
        \draw (5.1, 5.1) node{$1$};
        \draw (5.1, 3.6) node{$\vdots$};
        \draw (5.1, 1.7) node{$a$};
        
        \draw (4.4, 5.1) node{$1$};
        \draw (4.4, 3.6) node{$\vdots$};
        \draw (4.4, 1.7) node{$a$};
        
    \end{tikzpicture}
\end{center}
    \caption{Filling in Case 2.1}
    \label{case2a}
\end{figure}

\noindent \emph{Case 2.2 ($b>a\geq c$):} In this case $\rho=b$. By Lemma~\ref{Lemma:krowreduction}, we reduce to $d=1$. For filling 1, fill the $B$-column with $[b+1]\setminus\{a\}$, the $C$-column by $[c-2]\cup\{a,a+1\}$, and the $D$-column with $\{c-1\}$ as shown in Figure~\ref{case2b}. Filling 2 is defined to be the result of swapping the red $a$ and blue $c-1$ in Figure~\ref{case2b}. Since $b>a$, both tableaux are semistandard. Let $u$ and $v$ be the column reading words of filling 1 and 2, respectively. By Lemma~\ref{Lemma:ballot}, in order to verify that $u$ and $v$ are ballot, it suffices to check that the initial factors of $u$ and $v$ that terminate at the underlined entries in Figure~\ref{case2b} are ballot. The following remarks hold for both $u$ and $v$. There are three entries equal to $a$, and two equal to $a+1$, in $u$ and $v$, with at least two $a$ entries appearing prior to the $a+1$ entries. There are three entries equal to $a-1$, and two entries equal to $a$, before the underlined $a$. There are four entries equal to $c-2$, and three entries equal to $c-1$, before the underlined $c-1$. Hence $u$ and $v$ are ballot and  $s_{\lambda/\mu}(x_1,\ldots,x_{\rho+1})$ is not multiplicity-free.

\begin{figure}
    \begin{center}
    \begin{tikzpicture}[scale = 1.2]
        
        \draw[black, thick] (4,2.4) -- (4,0) -- (1,0) -- (1,0.45) -- (2,0.45) -- (2,2.5) -- (3,2.5) -- (3,4) -- (4,4) ;
        \draw[black, thick] (4,2.4) -- (5.5,2.4) -- (5.5,5.4) -- (4,5.4) -- (4,4);
        
        \draw[black, thick] (1,0.45) -- (1,5.4) -- (4,5.4);
        \draw[black, thick] (4,0) -- (5.5,0) -- (5.5,2.4);
        
        \draw (2,4) node{$\mu$};
        \draw (4.74,1.2) node{$\lambda^c$};
        
        \draw (3.5,3.75) node{$1$};
        \draw (3.5,2.8) node{$\vdots$};
        \draw (3.5,1.6) node{$a-1$};
        \draw (3.5,1.2) node{\underline{$a+1$}};
        \draw (3.5, 0.8) node{$\vdots$};
        \draw (3.5, 0.2) node{$b+1$};
        
        \draw (2.5, 2.25) node{$1$};
        \draw (2.5,1.7) node{$\vdots$};
        \draw (2.5,1) node{$c-2$};
        \draw (2.5, 0.6) node[text=red]{\underline{$a$}};
        \draw (2.5,0.2) node{\underline{$a+1$}};
        
        \draw (1.5,0.2) node[text=blue]{\underline{$c-1$}};
        
        \draw (5.1, 5.1) node{$1$};
        \draw (5.1, 4) node{$\vdots$};
        \draw (5.1, 2.6) node{$a$};
        
        \draw (4.4, 5.1) node{$1$};
        \draw (4.4, 4) node{$\vdots$};
        \draw (4.4, 2.6) node{$a$};

    \end{tikzpicture}
\end{center}
    \caption{Filling in Case 2.2}
    \label{case2b}
\end{figure}

\noindent \emph{Case 2.3 ($c>a>d$):} By Lemma~\ref{Lemma:krowreduction} we reduce to $d = 1$. For filling 1, fill the $B$-column with $[b+1]\setminus \{a\}$, the $C$-column with $[c+1]\setminus \{a-1\}$, and the $D$-column with $\{a-1\}$ as shown in Figure~\ref{case2c}. Filling 2 is defined to be the result of swapping the red $a$ in the $C$-column with the blue $a-1$ entry in the $D$-column in Figure~\ref{case2c}. Both tableaux are semistandard. By similar reasoning as in Case 2.2, both tableaux are ballot. As a result, $s_{\lambda/\mu}(x_1,\ldots,x_{\rho+1})$ is not multiplicity-free.

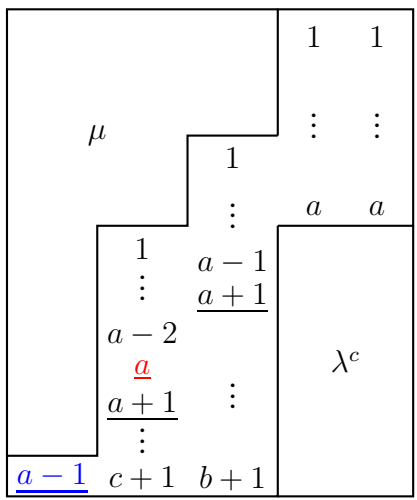
\begin{figure}
    \begin{center}
    \begin{tikzpicture}[scale = 1.2]
        
        \draw[black, thick] (4,3) -- (4,0) -- (1,0) -- (1,0.45) -- (2,0.45) -- (2,3) -- (3,3) -- (3,4) -- (4,4);
        \draw[black, thick] (4,3) -- (5.5,3) -- (5.5,5.4) -- (4,5.4) -- (4,4);
        
        \draw[black, thick] (1,0.45) -- (1,5.4) -- (4,5.4);
        \draw[black, thick] (4,0) -- (5.5,0) -- (5.5,3);
        
        \draw (2,4) node{$\mu$};
        \draw (4.75, 1.5) node{$\lambda^c$};
        
        \draw (3.5,3.75) node{$1$};
        \draw (3.5,3.2) node{$\vdots$};
        \draw (3.5,2.6) node{$a-1$};
        \draw (3.5,2.2) node{\underline{$a+1$}};
        \draw (3.5, 1.2) node{$\vdots$};
        \draw (3.5, 0.2) node{$b+1$};
        
        \draw (2.5, 2.75) node{$1$};
        \draw (2.5,2.4) node{$\vdots$};
        \draw (2.5,1.8) node{$a-2$};
        \draw (2.5, 1.4) node[text=red]{\underline{$a$}};
        \draw (2.5,1) node{\underline{$a+1$}};
        \draw (2.5, 0.7) node{$\vdots$};
        \draw (2.5,0.2) node{$c+1$};
        
        \draw (1.5,0.2) node[text=blue]{\underline{$a-1$}};
        
        \draw (5.1, 5.1) node{$1$};
        \draw (5.1, 4.2) node{$\vdots$};
        \draw (5.1, 3.2) node{$a$};
        
        \draw (4.4, 5.1) node{$1$};
        \draw (4.4, 4.2) node{$\vdots$};
        \draw (4.4, 3.2) node{$a$};

    \end{tikzpicture}
\end{center}
    \caption{Filling in Case 2.3}
    \label{case2c}
\end{figure}

\noindent \emph{Case 2.4 ($d\geq a$):} By Lemma~\ref{Lemma:krowreduction} we reduce to $a=2$. For filling 1, fill the $B$-column with $[b+1]\setminus\{2\}$, the $C$-column with $[c+1]\setminus\{2\}$, and the $D$-column with $[2,d+1]$ as shown in Figure~\ref{case2d}. Obtain filling 2 by swapping the blue $1$ in the $D$-column with the red $2$ in the $C$-column in Figure~\ref{case2d}. Both tableaux are semistandard. Their reduced words are easily verified to be ballot by considering the initial factors that terminate at the underlined entries in Figure~\ref{case2d}, and then applying Lemma~\ref{Lemma:ballot}. Therefore $s_{\lambda/\mu}(x_1,\ldots,x_{\rho+1})$ is not multiplicity-free.

\begin{figure}
    \begin{center}
    \begin{tikzpicture}[scale = 1.2]
        
        \draw[black, thick] (4,3.55) -- (4,0) -- (1,0) -- (1,2) -- (2,2) -- (2,3) -- (3,3) -- (3,4) -- (4,4);
        \draw[black, thick] (4,4) -- (4,4.5) -- (5.2,4.5) -- (5.2,3.55) -- (4,3.55);
        
        \draw[black, thick] (1,2) -- (1,4.5) -- (4,4.5);
        \draw[black, thick] (4,0) -- (5.2,0) -- (5.2,3.55);
        
        \draw (2,4.1) node{$\mu$};
        \draw (4.6,1.8) node{$\lambda^c$};
        
        \draw (3.5,3.75) node{$1$};
        \draw (3.5, 3.35) node{\underline{$3$}};
        \draw (3.5,1.85) node{$\vdots$};
        \draw (3.5, 0.2) node{$b+1$};
        
        \draw (2.5, 2.75) node[text=red]{\underline{$2$}};
        \draw (2.5,2.35) node{\underline{$3$}};
        \draw (2.5,1.35) node{$\vdots$};
        \draw (2.5,0.2) node{$c+1$};
        
        \draw (1.5, 1.75) node[text=blue]{$1$};
        \draw (1.5,1.35) node{\underline{$3$}};
        \draw (1.5, 0.9) node{$\vdots$};
        \draw (1.5,0.2) node{$d+1$};
        
        \draw (4.85,4.25) node{$1$};
        \draw (4.85,3.75) node{$2$};
        
        \draw (4.35,4.25) node{$1$};
        \draw (4.35,3.75) node{$2$};
        
    \end{tikzpicture}
\end{center}
    \caption{Filling in Case 2.4}
    \label{case2d}
\end{figure}
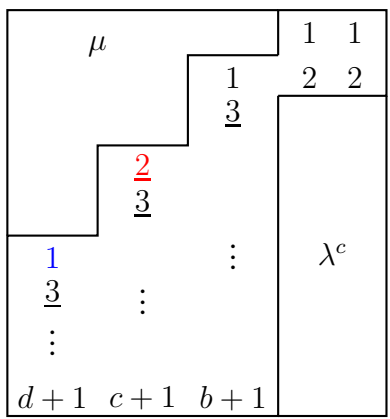

\noindent \emph{Case 3 ($\mu_q < \lambda_2 < \mu_1$):} This case corresponds to Theorem~\ref{thm:suffnotfathook} (\RomanNumeralCaps{5}). Because $\lambda_2 > \mu_q$, there exists at least two columns of different length that end in the last row of $\lambda$; namely those starting in row $(\sum_{i=1}^{q-1} k_i)  + 1$ and those starting in row $(\sum_{i=1}^{q} k_i)  + 1$. Because $\lambda_2 < \mu_1$, there exist at least two different length columns that end in row $l_1$; namely those starting in row $1$ and those starting in row $k_1+1$. All four of these columns start in different rows, since $q\geq 3$. We now apply Proposition~\ref{prop:4col} to these four columns to conclude $s_{\lambda/\mu}(x_1,\dots, x_{\rho+1})$ is not multiplicity-free. 
\qed

\subsection{\texorpdfstring{$\lambda^\vee$}{} is a rectangle of shortness at least \texorpdfstring{3}{} and \texorpdfstring{$\mu$}{} is a fat hook of shortness at least \texorpdfstring{2}{}}

\begin{theorem}
\label{thm:suffshort3}
If $s_{\lambda/\mu}(x_1,\ldots,x_n)$ is a basic, n-sharp, tight skew Schur polynomial such that $\lambda^\vee$ is a rectangle of shortness at least $3$ and $\mu$ is a fat hook of shortness at least $2$, then $s_{\lambda/\mu}(x_1,\ldots,x_n)$ is not multiplicity-free if one of the following conditions holds:
\begin{enumerate}[label=(\Roman{*})]
    \item $\lambda_2 = \mu_1, l_2 > k_1$ and $n\geq \rho+2$
    \item $\lambda_2 = \mu_1, l_2 \leq k_1$ and $n\geq \rho+3$
    \item $\mu_1 > \lambda_2 > \mu_2, k_1 \geq l_2$ and $n \geq \rho+2$
    \item $\mu_1 > \lambda_2 > \mu_2, l_2 > k_1$
    \item $\mu_2 = \lambda_2, l_2 \geq l_1$ and $n\geq \rho+3$
    \item $\mu_2 = \lambda_2, l_1 > l_2$ and $n\geq \rho+2$
\end{enumerate}
\end{theorem}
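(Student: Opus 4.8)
The plan is to follow the template established in the proofs of Theorems~\ref{thm:lambdamunotrectangle} and~\ref{thm:suffnotfathook}: in each case I exhibit a single content $\nu$ with $\ell(\nu)\leq n$ that admits two distinct ballot tableaux of shape $\lambda/\mu$, so that $c^\lambda_{\mu,\nu}\geq 2$ and Theorem~\ref{theorem:lrrule} forces non-multiplicity-freeness. First, since $\lambda^\vee$ is a rectangle, $\lambda$ is a fat hook, so $\lambda=(\lambda_1^{l_1},\lambda_2^{l_2})$ with $p=2$; and $\mu=(\mu_1^{k_1},\mu_2^{k_2})$ is a fat hook. The hypotheses then partition into three geometric configurations according to the position of $\lambda_2$: (a) $\lambda_2=\mu_1$ (conditions (I),(II)); (b) $\mu_1>\lambda_2>\mu_2$ (conditions (III),(IV)); (c) $\mu_2=\lambda_2$ (conditions (V),(VI)). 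In each configuration I read off the distinct column lengths of $\lambda/\mu$ from the block structure of $\lambda$ and $\mu$: configurations (a) and (c) have exactly three distinct column lengths, while (b) has four, mirroring Case~1 versus Case~3 of Theorem~\ref{thm:suffnotfathook}.

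For the reduction in configurations (a) and (c), I apply Corollary~\ref{Cor:GUT} to discard all but one representative column of each distinct length while preserving $\rho$, and then apply Lemma~\ref{Lemma:krowreduction} to strip the common top rows, shrinking the shortest column to length $1$ (or $2$) and lowering $n$ by the number of stripped rows. On the resulting small shape I write the two fillings explicitly: they agree outside the two or three shortest columns, where each is filled canonically by $1,2,\ldots$; in the short columns I insert one or two ``overflow'' values (the largest being $\rho+1$, $\rho+2$, or $\rho+3$) and delete matching values, so that the two fillings differ only by a transposition of entries between short columns and therefore share a content $\nu$. The largest entry used is exactly the source of the stated bound on $n$: an overflow up to $\rho+2$ forces $\ell(\nu)\leq \rho+2$, so $n\geq\rho+2$ suffices, while an overflow up to $\rho+3$ forces $n\geq\rho+3$. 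This is why conditions (II) and (V) carry the bound $\rho+3$ while (I), (III), (VI) carry $\rho+2$, and the split within each configuration is governed by whether $\rho$ is achieved by the top block of $\lambda$ or by a middle block (i.e.\ by comparing $l_2$ with $k_1$, respectively $l_1$ with $l_2$).

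For condition (IV), where $\mu_1>\lambda_2>\mu_2$ and $l_2>k_1$, I instead produce four columns---those bounding the two distinct column lengths ending in the last row of $\lambda$ and the two ending in row $l_1$---whose top rows $U$ and bottom rows $L$ realize the pattern required by Proposition~\ref{prop:4col}. Since $\lambda^\vee$ has shortness at least $3$ and $\mu$ shortness at least $2$, neither is a rectangle of shortness $1$, so Proposition~\ref{prop:4col} applies verbatim and yields non-multiplicity-freeness already at $n=\rho+1$, hence for every admissible $n$. This is the exact analogue of Case~3 of Theorem~\ref{thm:suffnotfathook}.

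In every case the verification has four ingredients, all routine except the third: (i) the two fillings have equal content by construction; (ii) columns strictly increase, again by construction; (iii) rows weakly increase, i.e.\ the fillings are semistandard; and (iv) each column reading word is ballot, checked on the initial factor ending at the overflow entries and then extended by Lemma~\ref{Lemma:ballot}, with Lemma~\ref{lemma:readingwordequiv} converting to the reverse reading word. The main obstacle is ingredient (iii): an overflow value placed in a short column could exceed its right neighbour and break semistandardness. Exactly as in the two exceptional cases (Case~4 and Case~7) of the proof of Proposition~\ref{prop:4col}, such a violation can occur only when $\lambda^\vee$ or $\mu$ degenerates to a rectangle of small shortness; the hypotheses that $\lambda^\vee$ has shortness at least $3$ and $\mu$ has shortness at least $2$ are precisely what rule these degeneracies out and guarantee enough vertical room in each short column to place the overflow entries below their left neighbours and above any right neighbour. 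Carefully choosing, within each subcase, which value is the overflow and in which short column it sits is where the bulk of the bookkeeping lives.
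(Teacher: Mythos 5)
Your plan for configurations (a) and (c) tracks the paper's strategy (reduce via Corollary~\ref{Cor:GUT} and Lemma~\ref{Lemma:krowreduction}, then exhibit two explicit ballot fillings differing by a transposition of entries between short columns), though the paper actually handles $\lambda_2=\mu_2$ not by direct fillings but by a column-shift reduction to the $\mu_2<\lambda_2<\mu_1$ case. The genuine gap is your treatment of configuration (b), i.e.\ conditions (III) and (IV). When $\mu$ is a fat hook the skew shape $\lambda/\mu$ with $\mu_1>\lambda_2>\mu_2$ has four column classes with $(U,L)$ equal to $(1,l_1)$, $(k_1+1,l_1)$, $(k_1+1,\ell(\lambda))$, and $(k_1+k_2+1,\ell(\lambda))$. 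The two ``middle'' classes share the starting row $k_1+1$, so no choice of four columns satisfies the cross-distinctness hypothesis $U(c_i)\neq U(c_j)$ of Proposition~\ref{prop:4col}, and no three columns have pairwise distinct $L$'s either, so Proposition~\ref{prop:3col} is also unavailable. The analogy with Case~3 of Theorem~\ref{thm:suffnotfathook} fails precisely because that argument uses ${\sf np}(\mu)\geq 3$ to separate the starting rows. Worse, the conclusion you would draw --- non-multiplicity-freeness already at $n=\rho+1$ for all of configuration (b) --- is false: by Theorem~\ref{thm:necessaryshort3}~(III), when $k_1\geq l_2$ the polynomial $s_{\lambda/\mu}(x_1,\ldots,x_{\rho+1})$ \emph{is} multiplicity-free, and ${\sf m}(\lambda/\mu)=\rho+2$. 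So any proof along these lines must be wrong, not merely incomplete.

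Condition (III) is also never addressed by your outline: you assign it to configuration (b) but describe only the Proposition~\ref{prop:4col} route for (IV). The paper's proof of (III) and (IV) is the bulk of the work: it splits on $k_1+k_2=l_1$ versus $k_1+k_2>l_1$, labels the column lengths $a,b,c$ (and $d$), and in each of roughly seven subcases (2.1, 2.2, 3.1.1--3.1.3, 3.2.1--3.2.2) writes down a bespoke pair of ballot fillings whose overflow entries reach $\rho+1$ or $\rho+2$ depending on whether $k_1\geq l_2$; the shortness hypotheses enter through explicit inequalities such as $c-b\geq 3$ and $a\geq b+2$ used to verify semistandardness and ballotness. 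That construction, not a citation of Proposition~\ref{prop:4col}, is the missing content of your argument.
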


Here we omit the cases where $\lambda_2<\mu_2$ or $\lambda_2>\mu_1$ since the skew partition $\lambda/\mu$ would be not basic if $\lambda_2<\mu_2$ and $l_1 = \ell(\mu)$, and not tight otherwise. 

\noindent \emph{Proof.} Let $c_1 = \lambda_1$. Set $c_i = \max(\{k : U(k) \neq U(c_{i-1})\text{ or }L(k) \neq L(c_{i-1})\})$ for $i=2,3,4$. Since $\lambda$ and $\mu$ are both fat hooks, either $c_4$ does not exist and $L(c_3)=L(1)$ with $U(c_3)=U(1)$, or $L(c_4)=L(1),U(c_4)=U(1)$. Finally, set
\begin{align*}
A & = \{ k : L(k) = L(c_1)\textit{ and }U(k) = U(c_1) \}, & a & = {\sf CS}_k(\lambda / \mu)\text{ for }k \in A, \\
B & = \{ k : L(k) = L(c_2)\textit{ and }U(k) = U(c_2) \}, & b & = {\sf CS}_k(\lambda / \mu)\text{ for }k \in B, \\
C & = \{ k : L(k) = L(c_3)\textit{ and }U(k) = U(c_3) \}, & c & = {\sf CS}_k(\lambda / \mu)\text{ for }k \in C,
\end{align*}
and if $c_4$ exists set
\begin{align*}
D & = \{ k : L(k) = L(c_4)\textit{ and }U(k) = U(c_4) \} & d & = {\sf CS}_k(\lambda / \mu)\text{ for }k \in D.
\end{align*}
We divide into $4$ cases. In each case, we construct two ballot tableaux with the same content to conclude that the skew Schur polynomial is not multiplicity-free.

\noindent \emph{Case 1 ($\lambda_2 = \mu_1$)}: In this case $c_4$ is not defined, and $a = l_1$, $b = \ell(\lambda)-k_1$, and $c = \ell(\lambda)-\ell(\mu)$. By Corollary~\ref{Cor:GUT}, it is enough to show multiplicity when there are three $A$-columns, two $B$-columns, and two $C$-columns. That is, when \[ \lambda = (7^{l_1}, 4^{l_2})\text{ and }\mu=(4^{k_1},2^{k_2}).\] 
Now we consider two sub-cases: Case 1.1 corresponds to Theorem~\ref{thm:suffshort3} (\RomanNumeralCaps{1}) while Case 1.2 corresponds to Theorem~\ref{thm:suffshort3} (\RomanNumeralCaps{2}).

\begin{figure}
    \begin{center}
    \begin{tikzpicture}
        \draw[black, thick] (0,0) -- (4,0) -- (4,4) -- (0,4) -- (0,0);
        \draw[black, thick] (0,1) -- (1.5,1) -- (1.5,2.5) -- (3,2.5) -- (3,4);
        \draw[black, thick] (3,0) -- (3,2) -- (4,2);
        \draw[gray, dashed] (3,2) -- (3,2.5);
        \draw[gray, dashed] (1.5,1) -- (3,1);
        \draw[gray, dashed] (1.5,1) -- (1.5,0);
        \draw (0.75,3) node{$\mu$};
        \draw (3.5, 1) node{$\lambda^c$};
        \draw (3.5,3) node{$a$};
        \draw (2.25, 1.5) node{$b$};
        \draw (0.75, 0.5) node{$c$};

    \end{tikzpicture}
\end{center}
    \caption{Case 1}
    \label{Case1short3}
\end{figure}

\begin{figure}
    \begin{center}
    \begin{tikzpicture}[scale =2]
        \draw[black, thick] (0,0) -- (4,0) -- (4,4) -- (0,4) -- (0,0);
        \draw[black, thick] (0,1.5) -- (2,1.5) -- (2,2.5) -- (3,2.5) -- (3,4);
        \draw[black, thick] (3,0) -- (3,2) -- (4,2);
        \draw (0.75,3) node{\large$\mu$};
        \draw (3.5, 1.2) node{\large$\lambda^c$};
        \draw (3.2,3.8) node{$1$};
        \draw (3.2,2.2) node{$a$};
        \draw (3.2,3) node{$\vdots$};
        \draw (3.5,3.8) node{$1$};
        \draw (3.5,2.2) node{$a$};
        \draw (3.5,3) node{$\vdots$};
        \draw (3.8,3.8) node{$1$};
        \draw (3.8,2.2) node{$a$};
        \draw (3.8,3) node{$\vdots$};
        
        \draw (2.8,2.3) node{$1$};
        \draw (2.8,2) node{$\vdots$};
        \draw (2.8,1.6) node{\footnotesize$a$-$2$};
        \draw (2.8, 1.3) node{\underline{\footnotesize$a$+$1$}};
        \draw (2.8,0.8) node{$\vdots$};
        \draw (2.8, 0.15) node{\footnotesize$b$+$2$};
        
        \draw (2.3,2.3) node{$1$};
        \draw (2.3,2) node{$\vdots$};
        \draw (2.3,1.6) node{\footnotesize$a$-$2$};
        \draw (2.3, 1.3) node{\footnotesize$a$-$1$};
        \draw (2.3, 1) node{\underline{\footnotesize$a$+$1$}};
        \draw (2.3,0.7) node{$\vdots$};
        \draw (2.3, 0.15) node{\footnotesize$b$+$1$};

        \draw (1.5, 1.3) node{$\vdots$};
        \draw (1.5, 1) node{\tiny$\min\{c$-$2,a$-$3\}$};
        \draw (1.5, 0.8) node[text=red]{\underline{\footnotesize$a$-$1$}};
        \draw (1.5,0.6) node{\underline{\footnotesize$a$+$1$}};
        \draw (1.5,0.4) node{$\vdots$};
        \draw (1.5, 0.15) node{\tiny$\max\{c$+$2,a$+$1\}$};
        
        \draw (0.5,1.4) node{$1$};
        \draw (0.5, 1.2) node{$\vdots$};
        \draw (0.5, 0.8) node{\tiny$\min\{c$-$1,a$-$2\}$};
        \draw (0.5, 0.6) node[text=blue]{\underline{\footnotesize$a$}};
        \draw (0.5,0.4) node{$\vdots$};
        \draw (0.5, 0.15) node{\tiny$\max\{c$+$1,a\}$};

    \end{tikzpicture}
\end{center}
    \caption{Ballot filling in Case 1.1}
    \label{Case1:a<b}
\end{figure}

\noindent \emph{Case 1.1 ($l_2 > k_1$):} In this case, $a<b$ and $\rho = b$. Let $T_1$ be the filling of $\lambda/\mu$ shown in Figure~\ref{Case1:a<b}. It is constructed by filling each $A$-column with $[a]$, the right $B$-column with $[b+2]\setminus \{a-1,a\}$, and the left $B$-column with $[b+1]\setminus \{a\}$. The right $C$-column is filled with $[\min\{c-2,a-3\}] \cup \{a-1\} \cup \{a+1,\ldots \max\{c+2,a+1\}\}$. When $\min\{c-2,a-3\}= 0$, the top entry in this column is $a-1$. If $\min\{c-2,a-3\} = c-2$, then $\max\{c+2,a+1\} = a+1$ and thus there are $c$ entries in this column. If $\min\{c-2,a-3\} = a-3$, then $\max\{c+2,a+1\} = c+2$ and again there are $c$ entries in this column. For the left $C$-column, fill it with $[\min\{c-1,a-2\}]\cup \{a,\ldots, \max\{c+1,a\}\}$. By the same reasoning, there are $c$ entries in this column. Let $T_2$ be the filling obtained by swapping the blue $a-1$ entry and the red $a$ entry within the $C$-columns in Figure~\ref{Case1:a<b}. 

\noindent ($T_1, T_2$ are semistandard): By construction, in both $T_1$ and $T_2$, each column is strictly increasing downwards while the rows are weakly increasing within the $A$, $B$, and $C$-columns. It suffices to check that the entries in the rightmost $B$ and $C$ columns, in both $T_1$ and $T_2$, are less than or equal to their right neighbors. If $a=k_1$, then the entries in the rightmost $B$ column have no right neighbors. If $a > k_1$, then the left neighbor of the bottom box in the leftmost $A$-column contains $a - k_1 \leq a - 2$, where the inequality follows from $\mu$ having shortness at least $2$. Thus in both $T_1$ and $T_2$ every entry in the rightmost $B$-column is less than its right neighbor (if their right neighbor exists). 

By hypothesis $b>a$, and $b \geq c+2$ since $\mu$ has shortness at least $2$. Thus 
\begin{equation}
\label{eq:blessthanmax}
b \geq \max\{c+2,a+1\}.
\end{equation}
Let $X_k$ be the $k$\textsuperscript{th} box from the bottom in the leftmost $B$-column. Then $X_k$ has a left neighbor if and only if $k \leq c$. If $k \leq b-a + 1$, then $X_k$ contains $b-k+2$, and the left neighbor of $X_k$ contains at most $\max\{c+2,a+1\}-k+1 \leq b-k+2$ (the inequality follows by \eqref{eq:blessthanmax}). If $b-a+1 < k \leq c$, then $X_k$ contains $b-k+1$, and the left neighbor of $X_k$ contains at most $\max\{c+2,a+1\}-k+1 \leq b-k+1$ (the inequality follows by \eqref{eq:blessthanmax}). We conclude that $T_1$ and $T_2$ are semistandard.

\noindent ($T_1, T_2$ are ballot): Let $u$ and $v$ be the column reading words of $T_1$ and $T_2$, respectively. By Lemma~\ref{Lemma:ballot}, the ballot condition only needs to be checked at the initial factors of $u$ and $v$ terminating at the underlined entries in Figure~\ref{Case1:a<b}. The following remarks hold for both $u$ and $v$. There are three entries equal to $a$ that appear before the three underlined $a+1$. There are at least five entries equal to $a-2$, and at most four equal to $a-1$, before the underlined $a-1$. There are at least four entries equal to $a-1$, and three entries equal to $a$, before the underlined $a$. Thus both $T_1$ and $T_2$ are ballot.

Therefore $s_{\lambda/\mu}(x_1,\ldots,x_{n})$ is not multiplicity-free. 

\begin{figure}
    \begin{center}
    \begin{tikzpicture}[scale =2]
        \draw[black, thick] (0,0) -- (4,0) -- (4,4) -- (0,4) -- (0,0);
        \draw[black, thick] (0,1.3) -- (1.55,1.3) -- (1.55,2) -- (3,2) -- (3,4);
        \draw[black, thick] (3,0) -- (3,1.5) -- (4,1.5);
        \draw (0.75,3) node{\large$\mu$};
        \draw (3.5, 0.8) node{\large$\lambda^c$};
        \draw (3.2,3.8) node{$1$};
        \draw (3.2,1.7) node{$a$};
        \draw (3.2,2.8) node{$\vdots$};
        \draw (3.5,3.8) node{$1$};
        \draw (3.5,1.7) node{$a$};
        \draw (3.5,2.8) node{$\vdots$};
        \draw (3.8,3.8) node{$1$};
        \draw (3.8,1.7) node{$a$};
        \draw (3.8,2.8) node{$\vdots$};
        
        \draw (2.6,1.8) node{$1$};
        \draw (2.6,1.4) node{$\vdots$};
        \draw (2.6, 0.9) node{$b-3$};
        \draw (2.6, 0.65) node{\underline{$a+1$}};
        \draw (2.6, 0.4) node{ $a+2$};
        \draw (2.6, 0.15) node{$a+3$};
        
        \draw (1.9,1.8) node{$1$};
        \draw (1.9,1.3) node{$\vdots$};
        \draw (1.9, 0.65) node{$b-2$};
        \draw (1.9, 0.4) node{\underline{$a+1$}};
        \draw (1.9, 0.15) node{$a+2$};
        
        \draw (1.2, 0.15) node{\underline{$a+1$}};
        
        \draw (1.2, 1) node{$\vdots$};
        \draw (1.2, 0.65) node{ $c-2$};
        \draw (1.2, 0.4) node[text=red]{\underline{$b-2$}};
        
        \draw (0.5,1.1) node{$1$};
        \draw (0.5, 0.8) node{$\vdots$};
        \draw (0.5, 0.4) node{$c-1$};
        \draw (0.5, 0.15) node[text=blue]{\underline{$b-1$}};

    \end{tikzpicture}
\end{center}
    \caption{Ballot filling in Case 1.2}
    \label{Case1:ageqb}
\end{figure}

\noindent \emph{Case 1.2 ($l_2 \leq k_1$):} Since $l_2 \leq k_1$, we have $a\geq b$ and thus $\rho = a$. Let $T_1$ be the filling of $\lambda/\mu$ shown in Figure~\ref{Case1:a<b}. For each $A$-column, fill it with $[a]$. Fill the left $B$-column with $[b-2]\cup \{a+1,a+2\}$, and the right $B$-column with $[b-3]\cup \{a+1,a+2,a+3\}$. Fill the left $C$-column with $[c-1]\cup \{b-1\}$, and the right $C$-column with $[c-2]\cup \{b-2,a+1\}$. Let $T_2$ be the filling obtained from $T_1$ by swapping the blue $b-1$ and red $b-2$ within the $C$-columns in Figure~\ref{Case1:a<b}.

\noindent ($T_1,T_2$ are semistandard): Since $a \geq b \geq c+2$ and $\lambda$ has shortness at least $3$, it is clear from the construction that both tableaux are semistandard. 

\noindent ($T_1,T_2$ are ballot): Let $u$ and $v$ be the column reading words of $T_1$ and $T_2$, respectively. Applying Lemma~\ref{Lemma:ballot}, we only need to check that the initial factors of $u$ and $v$ terminating at the underlined entries in Figure~\ref{Case1:a<b} are ballot. The following remarks hold for both $u$ and $v$. There are three entries equal to $a$ preceding the three underlined $a+1$ entries. There are four $b-2$, and at least five $b-3$, before the underlined $b-2$. There are three $b-1$, and at least four $b-2$, before the underlined $b-1$. Thus $T_1$ and $T_2$ are ballot.

Therefore $s_{\lambda/\mu}(x_1,\ldots,x_{n})$ is not multiplicity-free.

\begin{figure}
    \begin{center}
    \begin{tikzpicture}
        \draw[black, thick] (0,0) -- (4,0) -- (4,4) -- (0,4) -- (0,0);
        \draw[black, thick] (0,1) -- (1.5,1) -- (1.5,2.5) -- (3,2.5) -- (3,4);
        \draw[black, thick] (2.5,0) -- (2.5,1) -- (4,1);
        \draw[gray, dashed] (3,2.5) -- (3,1);
        \draw[gray, dashed] (1.5,0) -- (1.5,1) -- (2.5,1) -- (2.5,2.5);
        \draw (0.75,3) node{$\mu$};
        \draw (3.3, 0.5) node{$\lambda^c$};
        \draw (3.5, 2.5) node{$a$};
        \draw (2.75, 1.75) node{$b$};
        \draw (2, 1.25) node{$c$};
        \draw (0.75,0.5) node{$d$};

    \end{tikzpicture}
\end{center}
    \caption{Case 2}
    \label{Case2}
\end{figure}

\noindent \emph{Case 2 ($\mu_2 < \lambda_2 < \mu_1$ with $k_1+k_2 = l_1$):} In this case $a = l_1$, $b = k_2$ , $c = l_1+l_2-k_1$ and $d= \ell(\lambda) - \ell(\mu)$. By Corollary~\ref{Cor:GUT}, it is enough to show multiplicity when there are two $A$-columns, one $B$-column, one $C$-column, and two $D$-columns; that is, \[ \lambda = (6^{l_1},3^{l_2})\text{ and } \mu=(4^{k_1},2^{k_2}). \]

We consider two subcases: Case 2.1 (together with Case 3.1.1-3.1.3) corresponds to Theorem~\ref{thm:suffshort3} (\RomanNumeralCaps{4}) while Case 2.2 (together with Case 3.2.1 and 3.2.2) corresponds to Theorem~\ref{thm:suffshort3} (\RomanNumeralCaps{3}).

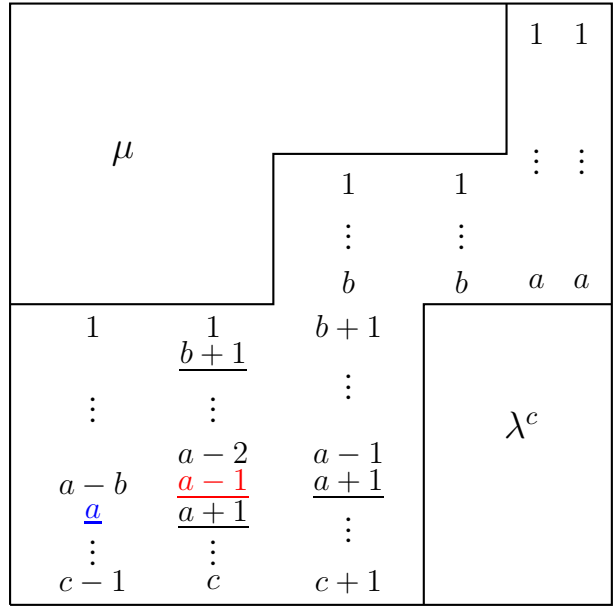
\begin{figure}
    \begin{center}
    \begin{tikzpicture}[scale =2]
        \draw[black, thick] (0,0) -- (4,0) -- (4,4) -- (0,4) -- (0,0);
        \draw[black, thick] (0,2) -- (1.75,2) -- (1.75,3) -- (3.3,3) -- (3.3,4);
        \draw[black, thick] (2.75,0) -- (2.75,2) -- (4,2);
        \draw (0.75,3) node{\large$\mu$};
        \draw (3.4, 1.2) node{\large$\lambda^c$};
        \draw (3.5,3.8) node{$1$};
        \draw (3.5,2.15) node{$a$};
        \draw (3.5,3) node{$\vdots$};
        \draw (3.8,3.8) node{$1$};
        \draw (3.8,2.15) node{$a$};
        \draw (3.8,3) node{$\vdots$};
        
        \draw (3,2.8) node{$1$};
        \draw (3, 2.5) node{$\vdots$};
        \draw (3,2.15) node{$b$};
        
        \draw (2.25,2.8) node{$1$};
        \draw (2.25, 2.5) node{$\vdots$};
        \draw (2.25,2.15) node{$b$};
        \draw (2.25,1.85) node{$b+1$};
        \draw (2.25,1.5) node{$\vdots$};
        \draw (2.25, 1) node{$a-1$};
        \draw (2.25, 0.8) node{\underline{$a+1$}};
        \draw (2.25,0.55) node{$\vdots$};
        \draw (2.25, 0.15) node{$c+1$};

        \draw (1.35, 1.85) node{$1$};
        \draw (1.35,1.65) node{\underline{$b+1$}};
        \draw (1.35, 1.35) node{$\vdots$};
        \draw (1.35, 1) node{$a-2$};
        \draw (1.35, 0.8) node[text=red]{\underline{$a-1$}};
        \draw (1.35,0.6) node{\underline{$a+1$}};
        \draw (1.35,0.4) node{$\vdots$};
        \draw (1.35, 0.15) node{$c$};
        
        \draw (0.55, 1.85) node{$1$};
        \draw (0.55, 1.35) node{$\vdots$};
        \draw (0.55, 0.8) node{$a-b$};
        \draw (0.55,0.6) node[text=blue]{\underline{$a$}};
        \draw (0.55,0.4) node{$\vdots$};
        \draw (0.55, 0.15) node{$c-1$};

    \end{tikzpicture}
\end{center}
    \caption{Ballot filling in Case 2.1}
    \label{Case2:a<b}
\end{figure}

\noindent \emph{Case 2.1 ($l_2 > k_1$):} In this case $c>a$ and thus $\rho = c$. Let $T_1$ be the filling shown in Figure~\ref{Case2:a<b}. Fill the $A$- and $B$-columns with $[a]$ and $[b]$, respectively. Fill in the $C$-column with $[c+1]\setminus \{a\}$. Fill in the left $D$-column with $[a-b]\cup \{a,\ldots,c-1\}$ and the right $D$-column with $[c]\setminus (\{2,\ldots,b\}\cup\{a\})$. Define $T_2$ to be the tableau obtained by swapping the blue $a$ and red $a-1$ within the $D$-columns in Figure~\ref{Case2:a<b}.

\noindent ($T_1,T_2$ are semistandard:) Since $\mu$ has shortness at least two, we know that $a-b \leq a-2$. It is trivial to check that both tableaux are semistandard.

\noindent ($T_1,T_2$ are ballot:) Let $u$ and $v$ be the column reading words of $T_1$ and $T_2$, respectively. Applying Lemma~\ref{Lemma:ballot}, we only need to check that the initial factors of $u$ and $v$ terminating at the underlined entries in Figure~\ref{Case2:a<b} are ballot. The following remarks hold for both $u$ and $v$. There are two entries equal to $a$, and none equal to $a+1$, preceding the two underlined $a+1$. Since $\mu$ has shortness at least two, we know that 
\begin{equation}
\label{eq:Case2:a<b}
b+1 \leq a-1.
\end{equation}
Thus there are four entries equal to $b$, and three entries equal to $b+1$, before the underlined $b+1$. By \eqref{eq:Case2:a<b}, there are at least four entries equal to $a-2$, and three entries equal to $a-1$, before the underlined $a-1$. Finally, we apply \eqref{eq:Case2:a<b} to conclude there are two entries equal to $a$, and at least three entries equal to $a-1$, before the underlined $a$. Thus both $T_1$ and $T_2$ are ballot tableaux.

As a result $s_{\lambda/\mu}(x_1,\ldots,x_{\rho+1})$ is not multiplicity-free.

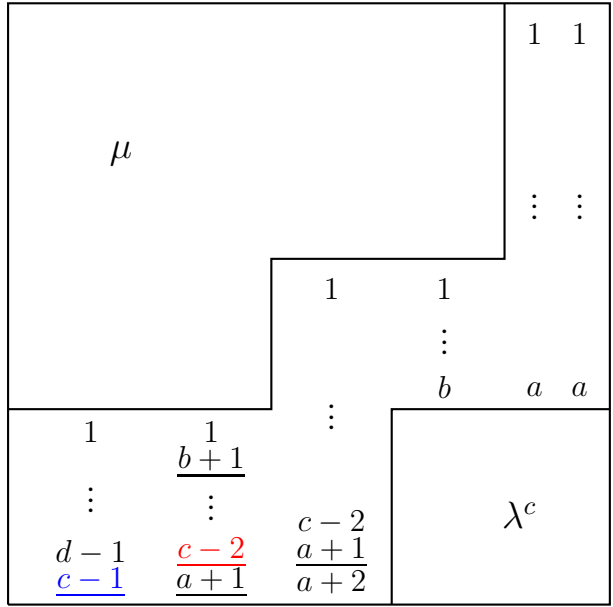
\begin{figure}
    \begin{center}
    \begin{tikzpicture}[scale =2]
        \draw[black, thick] (0,0) -- (4,0) -- (4,4) -- (0,4) -- (0,0);
        \draw[black, thick] (0,1.3) -- (1.75,1.3) -- (1.75,2.3) -- (3.3,2.3) -- (3.3,4);
        \draw[black, thick] (2.55,0) -- (2.55,1.3) -- (4,1.3);
        \draw (0.75,3) node{\large$\mu$};
        \draw (3.4, 0.6) node{\large$\lambda^c$};
        \draw (3.5,3.8) node{$1$};
        \draw (3.5,1.43) node{$a$};
        \draw (3.5,2.7) node{$\vdots$};
        \draw (3.8,3.8) node{$1$};
        \draw (3.8,1.43) node{$a$};
        \draw (3.8,2.7) node{$\vdots$};
        
        \draw (2.9,2.1) node{$1$};
        \draw (2.9,1.8) node{$\vdots$};
        \draw (2.9,1.45) node{$b$};
        
        \draw (2.15,2.1) node{$1$};
        \draw (2.15, 1.3) node{$\vdots$};
        \draw (2.15,0.55) node{$c-2$};
        \draw (2.15,0.35) node{\underline{$a+1$}};
        \draw (2.15, 0.15) node{$a+2$};

        \draw (1.35, 1.15) node{$1$};
        \draw (1.35,0.95) node{\underline{$b+1$}};
        \draw (1.35, 0.7) node{$\vdots$};
        \draw (1.35, 0.35) node[text=red]{\underline{$c-2$}};
        \draw (1.35, 0.15) node{\underline{$a+1$}};
        
        \draw (0.55, 1.15) node{$1$};
        \draw (0.55, 0.75) node{$\vdots$};
        \draw (0.55,0.35) node{$d-1$};
        \draw (0.55, 0.15) node[text=blue]{\underline{$c-1$}};
        
    \end{tikzpicture}
\end{center}
    \caption{Ballot filling in Case 2.2}
    \label{Case2:ageqb}
\end{figure}
\noindent \emph{Case 2.2 ($l_2\leq k_1$):}  In this case we have $a\geq c$ and thus $\rho = a$. Let $T_1$ be the filling shown in Figure~\ref{Case2:ageqb}. Fill in the $A$ and $B$-columns with $[a]$ and $[b]$, respectively. Fill in the $C$-column with $[c-2]\cup \{a+1,a+2\}$. The right $D$-column is filled with $\{1\}\cup ([c-2]\setminus [b])\cup \{a+1\}$. Since $\lambda^\vee$ has shortness at least $3$, $c-b\geq 3$ and thus $b < c-2$. Fill in the left $D$-column with $[d-1]\cup\{c-1\}$. Define $T_2$ to be the tableaux obtained by swapping the blue $c-1$ and red $c-2$ within the $D$-columns in Figure~\ref{Case2:ageqb}. 

\noindent ($T_1,T_2$ are semistandard): Since $a\geq c$, it is trivial to check that $T_1$ and $T_2$ are semistandard.

\noindent ($T_1,T_2$ are ballot): Let $u$ and $v$ be the column reading words of $T_1$ and $T_2$, respectively. Once again, it suffices, by Lemma~\ref{Lemma:ballot}, to check that the initial factors of $u$ and $v$ terminating at the underlined entries in Figure~\ref{Case2:ageqb} are ballot. The following remarks hold for both $u$ and $v$. There are exactly two entries equal to $a$, and none equal to $a+1$, before the two underlined $a+1$. There are four entries equal to $b$, and three entries equal to $b+1$, before the underlined $b+1$. If the $A$ columns contain $c-2$, then they must contain $c-3$. Since $\lambda^{\vee}$ has shortness $3$, we have $b < c-2$, and so $c-2$ can not appear in the $B$ column. Thus, there is at least one more entry equal $c-3$ than $c-2$, before the underlined $c-2$. As argued previously, $b < c-2$ implies that $c-2$ and $c-1$ do not appear in the $B$ column. If the $A$ columns contain $c-1$, then they must contain a $c-2$. Thus there is at least one more entry equal to $c-2$ than $c-1$, before the underlined $c-1$. Thus $T_1$ and $T_2$ are ballot. 

Therefore $s_{\lambda/\mu}(x_1,\ldots,x_{\rho+2})$ is not multiplicity-free.

\begin{figure}
    \begin{center}
    \begin{tikzpicture}
        \draw[black, thick] (0,0) -- (4,0) -- (4,4) -- (0,4) -- (0,0);
        \draw[black, thick] (0,1) -- (1.5,1) -- (1.5,2.5) -- (3,2.5) -- (3,4);
        \draw[black, thick] (2,0) -- (2,2) -- (4,2);
        \draw[gray, dashed] (3,2) -- (3,2.5);
        \draw[gray, dashed] (2,2) -- (2,2.5);
        \draw[gray, dashed] (1.5,1) -- (1.5,0);
        \draw (0.75,3) node{$\mu$};
        \draw (2.75, 1) node{$\lambda^c$};
        \draw (1.3,1.2) node{\footnotesize $A$};
        \draw (2.8,2.7) node{\footnotesize $B$};
        \draw (2.3,1.8) node{\footnotesize $C$};
        
        \draw (3.5,3) node{\footnotesize $a$};
        \draw (2.5,2.25) node{\footnotesize $b$};
        \draw (1.75,1.75) node{\footnotesize $c$};
        \draw (0.5,0.5) node{\footnotesize $d$};

    \end{tikzpicture}
\end{center}
    \caption{Case 3}
    \label{Case3}
\end{figure}

\noindent \emph{Case 3 ($\mu_2 < \lambda_2 < \mu_1$ with $k_1+k_2 > l_1$):} By Corollary~\ref{Cor:GUT}, it is enough to show multiplicity when there are two $A$-columns, one $B$-column, one $C$-column, and two $D$-columns; that is, \[ \lambda = (6^{l_1},4^{l_2})\text{ and } \mu=(4^{k_1},2^{k_2}). \]

Since $\mu$ has a shortness at least $2$, and $\lambda^c$ has shortness at least $3$, 
\begin{equation}
\label{shortineq1}
c-2\geq d \geq 2 \text{ and } a\geq b+2,
\end{equation} 
\begin{equation}
\label{shortineq2}
a\geq 3 \text{ and } c\geq b+3.
\end{equation} 

Define 
\begin{equation}
\label{shortineq3}
x = d+a-c.
\end{equation} 
If follows, from \eqref{shortineq1}, that
\begin{equation}
\label{shortineq4}
x\leq a-2.
\end{equation} 

We divide into four subcases: Case 3.1.1 - Case 3.1.3 (together with Case 2.1) correspond to Theorem~\ref{thm:suffshort3} (\RomanNumeralCaps{4}) and Case 3.2.1 and 3.2.2 (together with Case 2.2) correspond to Theorem~\ref{thm:suffshort3} (\RomanNumeralCaps{3}).

\begin{figure}
    \begin{center}
    \begin{tikzpicture}[scale =2]
        \draw[black, thick] (1.4,0) -- (4,0) -- (4,4) -- (1.4,4) -- (1.4,0);
        \draw[black, thick] (1.4,2.2) -- (2.5,2.2) -- (2.5,3) -- (3.3,3) -- (3.3,4);
        \draw[black, thick] (3,0) -- (3,2.3) -- (4,2.3);
        \draw (2,3.5) node{\large$\mu$};
        \draw (3.6, 0.9) node{\large $\lambda^c$};
        \draw (3.5,3.8) node{$1$};
        \draw (3.5,2.9) node{$\vdots$};
        \draw (3.5,2.4) node{$a$};
        \draw (3.8,3.8) node{ $1$};
        \draw (3.8,2.9) node{ $\vdots$};
        \draw (3.8,2.4) node{ $a$};
        \draw (3.2,2.85) node{$1$};
        \draw (3.2,2.7) node{$\vdots$};
        \draw (3.2,2.4) node{$b$};
		\draw (2.7,2.85) node{ $1$};
        \draw (2.7,2) node{ $\vdots$};
		\draw (2.7,1.09) node{ $a-1$}; 
		\draw (2.7,0.91) node{\underline{$a+1$}};
		\draw (2.7,0.6) node{ $\vdots$}; 
        \draw (2.7,0.11) node{ $c+1$};        
        \draw (2.2,2.1) node{ $1$};
        \draw (2.2,1.95) node{ $\vdots$}; 
        \draw (2.2,1.75) node{ $b-1$}; 
        \draw (2.2,1.55) node{ \underline{$b+1$}}; 
        \draw (2.2,1.35) node{ $\vdots$}; 
		\draw (2.2,1.08) node{ $x$}; 
		\draw (2.2,0.9) node[red]{ \underline{$a$}};  
		\draw (2.2,0.7) node{ \underline{$a+1$}};
		\draw (2.2,0.45) node{ $\vdots$}; 
        \draw (2.2,0.1) node{ $c$};       
        \draw (1.7,2.1) node{ $1$};
        \draw (1.7,1.5) node{ $\vdots$};
		\draw (1.7,0.9) node{ $x$}; 
		\draw (1.7,0.7) node[blue]{ \underline{$x+1$}}; 
		\draw (1.7,0.5) node{ \underline{$a+1$}};
		\draw (1.7,0.35) node{ $\vdots$}; 
        \draw (1.7,0.11) node{ $c-1$}; 
        
    \end{tikzpicture}
\end{center}
    \caption{Ballot filling in Case 3.1.1}
    \label{Case3a'}
\end{figure}
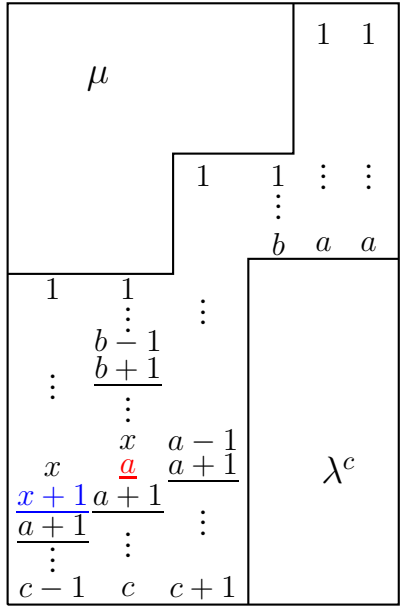

\noindent \emph{Case 3.1.1 ($a< c$, $b< x$):} In this case $\rho = c$. Let $T_1$ be the filling shown in Figure~\ref{Case3a'}. Fill both $A$-columns with $[a]$ and the $B$-column with $[b]$. Fill the $C$-column with $[c+1]\setminus \{a\}$. Fill the left $D$-column with $[x+1]\cup ([c-1]\setminus [a])$, and the right $D$-column with $([x]\setminus \{b\})\cup ([c]\setminus [a-1])$. By \eqref{shortineq2}, the $D$-columns have $d$ entries. Define $T_2$ to be the filling that is obtained by swapping the blue $x+1$ and red $a$ in Figure~\ref{Case3a'}.

\noindent ($T_1,T_2$ are semistandard): $T_1$ and $T_2$ are semistandard by \eqref{shortineq4}.

\noindent ($T_1,T_2$ are ballot:) Let $u$ and $v$ be the column reading words of $T_1$ and $T_2$, respectively. Applying Lemma~\ref{Lemma:ballot}, we only need to check that the initial factors of $u$ and $v$ that terminate at the underlined entries in Figure~\ref{Case3a'} are ballot. The following remarks hold for both $u$ and $v$. There are three entries equal to $a+1$. There are two entries equal to $a$ before the first underlined $a+1$. There are at least two entries equal to $a$ before the second underlined $a+1$. There are three entries equal to $a$ before the third underlined $a+1$. There are four entries equal to $b$, and three equal to $b+1$, before the underlined $b+1$. There are at least three entries equal to $a-1$, and two equal to $a$, before the underlined $a$. If the $B$-column contains $x+1$, then it must contain $x$. This, combined with the fact that there is no $x$ in the $C$-column, implies there is at least one more entry equal to $x$ than $x+1$, before the underlined $x+1$. Thus $T_1$ and $T_2$ are ballot.

We conclude $s_{\lambda/\mu}(x_1,\ldots,x_{\rho+1})$ is not multiplicity-free.

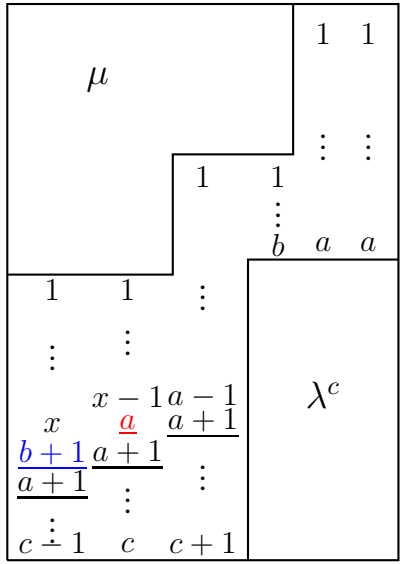
\begin{figure}
    \begin{center}
    \begin{tikzpicture}[scale =2]
        \draw[black, thick] (1.4,0) -- (4,0) -- (4,3.7) -- (1.4,3.7) -- (1.4,0);
        \draw[black, thick] (1.4,1.9) -- (2.5,1.9) -- (2.5,2.7) -- (3.3,2.7) -- (3.3,3.7);
        \draw[black, thick] (3,0) -- (3,2) -- (4,2);
        \draw (2,3.2) node{\large$\mu$};
        \draw (3.5, 1.1) node{\large $\lambda^c$};
        \draw (3.5,3.5) node{$1$};
        \draw (3.5,2.8) node{$\vdots$};
        \draw (3.5,2.1) node{$a$};
        \draw (3.8,3.5) node{ $1$};
        \draw (3.8,2.8) node{ $\vdots$};
        \draw (3.8,2.1) node{ $a$};
        \draw (3.2,2.55) node{$1$};
        \draw (3.2,2.35) node{$\vdots$};
        \draw (3.2,2.1) node{$b$};
		\draw (2.7,2.55) node{ $1$};
        \draw (2.7,1.8) node{ $\vdots$};
		\draw (2.7,1.09) node{ $a-1$}; 
		\draw (2.7,0.91) node{ \underline{$a+1$}};
		\draw (2.7,0.6) node{ $\vdots$}; 
        \draw (2.7,0.11) node{ $c+1$};        
        \draw (2.2,1.8) node{ $1$};
        \draw (2.2,1.5) node{ $\vdots$}; 
		\draw (2.2,1.08) node{ $x-1$}; 
		\draw (2.2,0.9) node[red]{ \underline{$a$}};  
		\draw (2.2,0.7) node{ \underline{$a+1$}};
		\draw (2.2,0.45) node{ $\vdots$}; 
        \draw (2.2,0.1) node{ $c$};       
        \draw (1.7,1.8) node{ $1$};
        \draw (1.7,1.4) node{ $\vdots$};
		\draw (1.7,0.9) node{ $x$}; 
		\draw (1.7,0.7) node[blue]{ \underline{$b+1$}}; 
		\draw (1.7,0.5) node{ \underline{$a+1$}};
		\draw (1.7,0.25) node{ $\vdots$}; 
        \draw (1.7,0.11) node{ $c-1$}; 
        
    \end{tikzpicture}
\end{center}
    \caption{Ballot filling in Case 3.1.2}
    \label{Case3a''}
\end{figure}

\noindent \emph{Case 3.1.2 ($a< c$, $b\geq x\geq 1$):} In this case $\rho = c$. Let $T_1$ be the filling shown in Figure~\ref{Case3a''}. Fill both $A$-columns with $[a]$ and the $B$-column with $[b]$. Fill the $C$-column with $[c+1]\setminus \{a\}$. Fill the left $D$-column with $[x]\cup \{b+1\} \cup ([c-1]\setminus [a])$ and the right $D$-column with $[x-1]\cup ([c]\setminus [a-1])$. If $x=1$, we will simply start the right $D$-column with $a$. It follows, by \eqref{shortineq3}, that there are $d$ entries in each $D$-column. Define $T_2$ to be the filling that arises by swapping the blue $b+1$ with the red $a$ in Figure~\ref{Case3a''}.

\noindent ($T_1,T_2$ are semistandard): By \eqref{shortineq1}, \eqref{shortineq4}, and the hypothesis of this case, $T_1$ and $T_2$ are semistandard.

\noindent ($T_1,T_2$ are ballot:) Let $u$ and $v$ be the column reading words of $T_1$ and $T_2$, respectively. By Lemma~\ref{Lemma:ballot}, it suffices to check that the initial factors of $u$ and $v$ terminating at the underlined entries in Figure~\ref{Case3a''} are ballot. This follows by a similar argument to Case 3.1.1. 

Thus $s_{\lambda/\mu}(x_1,\ldots,x_{\rho+1})$ is not multiplicity-free.

\noindent \emph{Case 3.1.3 ($a< c$, $x<1$):} In this case $\rho = c$. Let $T_1$ be the filling shown in Figure~\ref{Case3a'''}. Fill both $A$-columns with $[a]$ and the $B$-column with $[b]$. Fill the $C$-column with $[c+1]\setminus \{a\}$. Fill the left $D$-column with $\{1,b+1\} \cup ([a+d-2]\setminus [a])$ and the right $D$-column with $([a+d-1]\setminus [a-1])$. Define $T_2$ to be the filling that arises by swapping the blue $b+1$ with the red $a$ in Figure~\ref{Case3a'''}.

\noindent ($T_1,T_2$ are semistandard): By \eqref{shortineq3} we have $a+d-c = x \leq 0$ and thus $a\leq c-d$. Therefore the $a+1$ entry in $C$-column is in a row above the red $a$ entry in the right $D$-column. It also follows that $a+d-1<c+1$. As a result, $T_1$ and $T_2$ are semistandard.

\noindent ($T_1,T_2$ are ballot:) Let $u$ and $v$ be the column reading words of $T_1$ and $T_2$, respectively. By Lemma~\ref{Lemma:ballot}, it suffices to check that the initial factors of $u$ and $v$ ending at the underlined entries in Figure~\ref{Case3a''} are ballot. This follows by a similar argument to Case 3.1.1. 

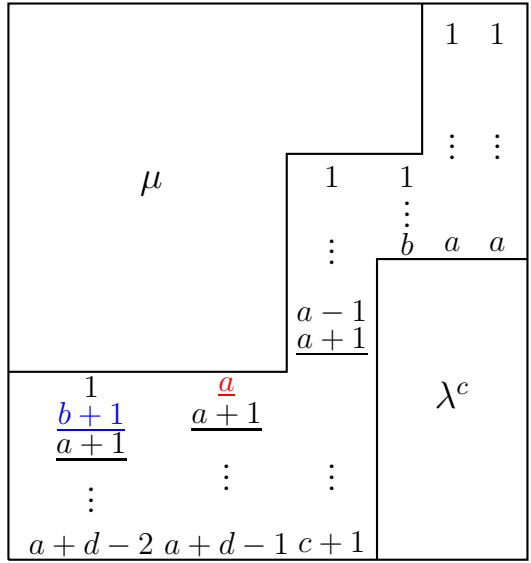
\begin{figure}
    \begin{center}
    \begin{tikzpicture}[scale =2]
        \draw[black, thick] (0.55,0) -- (4,0) -- (4,3.7) -- (0.55,3.7) -- (0.55,0);
        \draw[black, thick] (0.55,1.25) -- (2.4,1.25) -- (2.4,2.7) -- (3.3,2.7) -- (3.3,3.7);
        \draw[black, thick] (3,0) -- (3,2) -- (4,2);
        \draw (1.5,2.5) node{\large$\mu$};
        \draw (3.5, 1.1) node{\large $\lambda^c$};
        \draw (3.5,3.5) node{$1$};
        \draw (3.5,2.8) node{$\vdots$};
        \draw (3.5,2.1) node{$a$};
        \draw (3.8,3.5) node{ $1$};
        \draw (3.8,2.8) node{ $\vdots$};
        \draw (3.8,2.1) node{ $a$};
        
        \draw (3.2,2.55) node{$1$};
        \draw (3.2,2.35) node{$\vdots$};
        \draw (3.2,2.1) node{$b$};
        
		\draw (2.7,2.55) node{ $1$};
        \draw (2.7,2.1) node{ $\vdots$};
		\draw (2.7,1.65) node{ $a-1$}; 
		\draw (2.7,1.45) node{ \underline{$a+1$}};
		\draw (2.7,0.6) node{ $\vdots$}; 
        \draw (2.7,0.11) node{ $c+1$};  
        
		\draw (2,1.15) node[red]{ \underline{$a$}};  
		\draw (2,0.95) node{ \underline{$a+1$}};
		\draw (2,0.6) node{ $\vdots$}; 
        \draw (2,0.1) node{ $a+d-1$}; 
        
        \draw (1.1,1.15) node{$1$};
		\draw (1.1,0.95) node[blue]{ \underline{$b+1$}};
		\draw (1.1,0.75) node{ \underline{$a+1$}};
		\draw (1.1,0.45) node{ $\vdots$}; 
        \draw (1.1,0.1) node{$a+d-2$}; 
        
    \end{tikzpicture}
\end{center}
    \caption{Ballot filling in Case 3.1.3}
    \label{Case3a'''}
\end{figure}

Thus $s_{\lambda/\mu}(x_1,\ldots,x_{\rho+1})$ is not multiplicity-free.

\begin{figure}
    \begin{center}
    \begin{tikzpicture}[scale =2]
        \draw[black, thick] (1.4,0) -- (4,0) -- (4,3.2) -- (1.4,3.2) -- (1.4,0);
        \draw[black, thick] (1.4,1.1) -- (2.5,1.1) -- (2.5,1.9) -- (3.3,1.9) -- (3.3,3.2);
        \draw[black, thick] (3,0) -- (3,1.2) -- (4,1.2);
        \draw (2,2.7) node{\large$\mu$};
        \draw (3.6, 0.9) node{\large $\lambda^c$};
        \draw (3.5,3) node{$1$};
        \draw (3.5,2.1) node{$\vdots$};
        \draw (3.5,1.3) node{$a$};
        \draw (3.8,3) node{ $1$};
        \draw (3.8,2.1) node{ $\vdots$};
        \draw (3.8,1.3) node{ $a$};
        \draw (3.2,1.75) node{$1$};
        \draw (3.2,1.6) node{$\vdots$};
        \draw (3.2,1.3) node{$b$};
		\draw (2.7,1.75) node{ $1$};
        \draw (2.7,1.1) node{ $\vdots$}; 
		\draw (2.7,0.51) node{ $c-2$}; 
		\draw (2.7,0.31) node{ \underline{$a+1$}}; 
        \draw (2.7,0.11) node{ $a+2$};        
        \draw (2.2,1) node{ $1$};
        \draw (2.2,0.8) node{ $\vdots$}; 
		\draw (2.2,0.51) node{ $d-2$};  
		\draw (2.2,0.31) node[red]{ \underline{$c-1$}};
        \draw (2.2,0.11) node{ \underline{$a+1$}};       
        \draw (1.7,1) node{ $1$};
        \draw (1.7,0.6) node{ $\vdots$};
		\draw (1.7,0.31) node{ $d-1$};
        \draw (1.7,0.11) node[blue]{ \underline{$b+1$}}; 
    \end{tikzpicture}
\end{center}
    \caption{Ballot filling in Case 3.2.1}
    \label{Case3b'}
\end{figure}
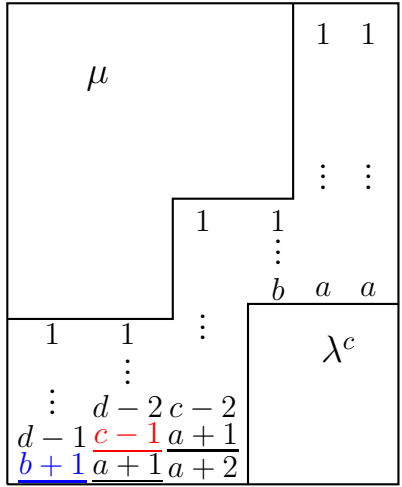

\noindent \emph{Case 3.2.1 ($a\geq c$, $d< b+2$):} In this case $\rho = c$. Let $T_1$ be the filling illustrated in Figure~\ref{Case3b'}. Fill the $A$-columns with $[a]$ and the $B$-column with $[b]$. Fill the $C$-column with $[ c-2]\cup \{a+1, a+2\}$. Fill the left $D$-column with $[d-1]\cup \{ b+1\}$ and the right $D$-column with $[d-2]\cup\{ c-1, a+1\}$. Let $T$ be the filling obtained by swapping the blue $b+1$ and red $c-1$ in Figure~\ref{Case3b'}.

\noindent ($T_1,T_2$ are semistandard): $T_1$ and $T_2$ are semistandard by the hypothesis of this case.

\noindent ($T_1,T_2$ are ballot:) Let $u$ and $v$ be the column reading words of $T_1$ and $T_2$, respectively. Once again, we only need to check that the initial factors of $u$ and $v$ that end at the underlined entries in Figure~\ref{Case3b'} are ballot. The following statements hold for both $u$ and $v$. There are exactly two entries equal to $a+1$, with at least two entries equal to $a$ prior to them. If there is a $c-1$ in each $A$-column, then there must be a $c-2$. Since $\lambda^{\vee}$ has shortness at least 3, there is no $c-2$ or $c-1$ in the $B$-column. Thus, since there is no $c-1$ in the $C$-column, there is at least one more entry equal to $c-2$ than $c-1$, before the underlined $c-1$. There is at least 4 entries equal to $b$, and exactly three equal to $b+1$, before the underlined $b+1$. Thus $T_1$ and $T_2$ are ballot.

Hence $s_{\lambda/\mu}(x_1,\ldots,x_{\rho+2})$ is not multiplicity-free.

\begin{figure}
    \begin{center}
    \begin{tikzpicture}[scale=2]
        \draw[black, thick] (1.4,0) -- (4,0) -- (4,4) -- (1.4,4) -- (1.4,0);
        \draw[black, thick] (1.4,1.7) -- (2.5,1.7) -- (2.5,2.5) -- (3.3,2.5) -- (3.3,4);
        \draw[black, thick] (3,0) -- (3,1.8) -- (4,1.8);
        \draw (2,3.5) node{\large$\mu$};
        \draw (3.6, 0.9) node{\large $\lambda^c$};
        \draw (3.5,3.8) node{$1$};
        \draw (3.5,2.9) node{$\vdots$};
        \draw (3.5,1.9) node{$a$};
        \draw (3.8,3.8) node{ $1$};
        \draw (3.8,2.9) node{ $\vdots$};
        \draw (3.8,1.9) node{ $a$};
        \draw (3.2,2.35) node{$1$};
        \draw (3.2,2.2) node{$\vdots$};
        \draw (3.2,1.9) node{$b$};
		\draw (2.7,2.35) node{ $1$};
        \draw (2.7,1.2) node{ $\vdots$}; 
		\draw (2.7,0.51) node{ $c-2$}; 
		\draw (2.7,0.31) node{ \underline{$a+1$}}; 
        \draw (2.7,0.11) node{ $a+2$};        
        \draw (2.2,1.6) node{ $1$};
        \draw (2.2,1.42) node{ $\vdots$}; 
        \draw (2.2,1.2) node{ $b-1$}; 
        \draw (2.2,1) node{ \underline{$b+1$}}; 
        \draw (2.2,0.77) node{ $\vdots$}; 
		\draw (2.2,0.51) node{ $d-1$};  
		\draw (2.2,0.31) node[red]{ \underline{$c-1$}};
        \draw (2.2,0.11) node{ \underline{$a+1$}};       
        \draw (1.7,1.6) node{ $1$};
        \draw (1.7,1) node{ $\vdots$};
        \draw (1.7,0.31) node{ $d-1$};
        \draw (1.7,0.11) node[blue]{ $d$}; 
    \end{tikzpicture}
\end{center}
    \caption{Ballot filling in Case 3.2.2}
    \label{Case3b''}
\end{figure}
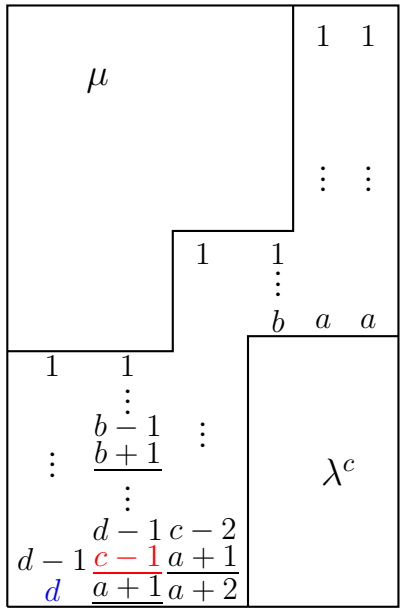

\noindent \emph{Case 3.2.2 ($a\geq c$, $d\geq b+2$):} In this case $\rho = c$. Let $T_1$ be the filling found in Figure~\ref{Case3b''}. Fill the $A$-columns with $[a]$ and the $B$-column with $[b]$. Fill the $C$-column with $[c-2]\cup \{ a+1, a+2\}$. Fill the left $D$-column with $[d]$, and right $D$-column with $([d-1]\setminus\{b\})\cup\{ c-1, a+1\}$. The filling $T_2$ is obtained by swapping the blue $d$ with the red $c-1$ in Figure~\ref{Case3b''}.

\noindent ($T_1,T_2$ are semistandard): $T_1$ and $T_2$ are semistandard by the hypothesis of this case.

\noindent ($T_1,T_2$ are ballot:) Let $u$ and $v$ be the column reading words of $T_1$ and $T_2$, respectively. Once again, we only need to check that the initial factors of $u$ and $v$ that end at the underlined entries in Figure~\ref{Case3b'} are ballot. This follows by a similar argument to Case 3.2.1.

We conclude that $s_{\lambda/\mu}(x_1,\ldots,x_{\rho+2})$ is not multiplicity-free.

\begin{figure}
    \begin{center}
    \begin{tikzpicture}
        \draw[black, thick] (0,0) -- (4,0) -- (4,4) -- (0,4) -- (0,0);
        \draw[black, thick] (0,1) -- (1.5,1) -- (1.5,2.5) -- (3,2.5) -- (3,4);
        \draw[black, thick] (1.5,0) -- (1.5,1) -- (4,1);
        \draw[gray, dashed] (3,1) -- (3,2.5);
        \draw (0.75,3) node{$\mu$};
        \draw (2.75, 0.5) node{$\lambda^c$};
        \draw (1.3,1.2) node{\footnotesize $A$};
        \draw (2.8,2.7) node{\footnotesize $B$};
        \draw (1.7,0.8) node{\footnotesize $C$};
        \draw (3.5,2) node{\footnotesize $a$};
        \draw (2.25,1.75) node{\footnotesize $b$};
        \draw (0.6,0.5) node{\footnotesize $c$};
    \end{tikzpicture}
\end{center}
    \caption{Case 4}
    \label{Case4}
\end{figure}
\noindent \emph{Case 4 ($\lambda_2 = \mu_2$ with $k_1 + k_2 = l_1$):} By Corollary~\ref{Cor:GUT}, it is enough to show multiplicity when there are two $A$-columns, two $B$-columns, and three $C$-columns. Equivalently, \[ \lambda = (7^{l_1},3^{l_2})\text{ and } \mu=(5^{k_1},3^{k_2}). \]

Since $\mu$ has shortness at least 2, and $\lambda^\vee$ has shortness at least 3,
\begin{align}\label{ineq1:case4}
    b\geq 2, c\geq 3, a\geq 4, \text{ and } a-b \geq 2.
\end{align}

Let $\tilde\lambda = (6^{l_1},3^{l_2})$ and $\tilde\mu = (4^{k_1},2^{k_2})$. Then $|\lambda| - |\mu| = |\tilde\lambda| - |\tilde\mu|$ and $\tilde\lambda/\tilde\mu$ satisfies the hypotheses of Case 2 in Theorem~\ref{thm:suffshort3}. Let $T$ be a ballot tableau of shape $\tilde\lambda/\tilde\mu$. Define $T^{\sf shift}$ to be the filling of shape $\lambda / \mu = (7^{l_1},3^{l_2}) / (5^{k_1},3^{k_2})$ where for all $(r,k) \in \lambda / \mu$,  
\begin{equation*}
T^{\sf shift}(r,k) = \begin{cases} T(r,k-1) & \text{for }r \leq l_1 \\ T(r,k) & \text{for }r > l_1  \end{cases}
\end{equation*}

\begin{claim}\label{Claim:Case4} If $T$ is a ballot tableau of shape $\tilde\lambda/\tilde\mu$ and content $\nu$, then $T^{\sf shift}$ is a ballot
tableau of shape $\lambda / \mu$ and content $\nu$.
\end{claim}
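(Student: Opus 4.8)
The plan is to exhibit $T\mapsto T^{\sf shift}$ as a value-preserving bijection of box sets (so that content $\nu$ is automatically inherited), to check that $T^{\sf shift}$ is semistandard, and then — the crux — to show that $T$ and $T^{\sf shift}$ have the \emph{same} column reading word, so that ballotness transfers verbatim through Lemma~\ref{lemma:readingwordequiv}. First I would record the column data of both shapes. Reading left to right, the columns of $\tilde\lambda/\tilde\mu=(6^{l_1},3^{l_2})/(4^{k_1},2^{k_2})$ have lengths $l_2,l_2,k_2+l_2,k_2,l_1,l_1$ (columns $1$ through $6$), while those of $\lambda/\mu=(7^{l_1},3^{l_2})/(5^{k_1},3^{k_2})$ have lengths $l_2,l_2,l_2,k_2,k_2,l_1,l_1$ (columns $1$ through $7$). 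Because $\ell(\mu)=k_1+k_2=l_1$, in both shapes the rows $r>l_1$ occupy exactly columns $1,2,3$; hence the map that sends every box in a row $r\le l_1$ one step to the right and fixes every box in a row $r>l_1$ is a well-defined bijection from the boxes of $\tilde\lambda/\tilde\mu$ onto those of $\lambda/\mu$, and it preserves entries, so $T^{\sf shift}$ has content $\nu$.

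Next I would verify semistandardness, which is nearly automatic. Every column of $T^{\sf shift}$ is a contiguous sub-column of a column of $T$: in particular column $3$ of $T$ — the unique column meeting both the region $r\le l_1$ and the region $r>l_1$ — is split at the row-$l_1$ boundary into column $4$ of $T^{\sf shift}$ (its top part) and column $3$ of $T^{\sf shift}$ (its bottom part), so all columns remain strictly increasing. Every row of $T^{\sf shift}$ is either a rigid one-column translate of a row of $T$ (when $r\le l_1$) or is unchanged (when $r>l_1$); in both cases the left-to-right order of the entries is preserved, so rows remain weakly increasing.

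The heart of the argument is the ballot condition, which I would treat through the column reading word (right to left, each column top to bottom). Under the shift, every column lying entirely in the region $r\le l_1$ (columns $4,5,6$ of $\tilde\lambda/\tilde\mu$) is relabelled as the next column to its right, every column lying entirely in $r>l_1$ (columns $1,2$) is fixed, and the single straddling column $3$ is split into its top part (becoming column $4$ of $T^{\sf shift}$) and its bottom part (staying as column $3$). Since in the right-to-left reading of $T^{\sf shift}$ the new column $4$ is read immediately before the new column $3$, and each is read top to bottom, their concatenation reproduces precisely column $3$ of $T$ read top to bottom; all remaining columns contribute the same blocks in the same order. Thus the column reading words of $T$ and $T^{\sf shift}$ agree letter for letter. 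As $T$ is ballot, its reverse reading word is ballot, so by Lemma~\ref{lemma:readingwordequiv} its column reading word is ballot; the identical word for $T^{\sf shift}$ is therefore ballot, and a second application of Lemma~\ref{lemma:readingwordequiv} shows the reverse reading word of $T^{\sf shift}$ is ballot. Combined with semistandardness and content $\nu$, this yields the claim.

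The main obstacle here is conceptual rather than computational: although the shift visibly transports boxes, I expect the decisive observation to be that it leaves the column reading word invariant, because only the column through $\mu_2=\lambda_2$ crosses the row-$l_1$ boundary and its split is invisible to the reading order. Once that is recognized, the rest reduces to the bookkeeping of column lengths and the routine checks of strict/weak monotonicity above.
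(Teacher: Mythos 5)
Your proof is correct and follows the same route as the paper's: the paper likewise observes that content is preserved, that semistandardness is inherited, and that the column reading words of $T$ and $T^{\sf shift}$ coincide (so ballotness transfers via Lemma~\ref{lemma:readingwordequiv}). The only difference is that the paper states these three facts without elaboration, whereas you supply the column-length bookkeeping and the splitting of the straddling column that justify them.
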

\noindent \emph{Proof of Claim~\ref{Claim:Case4}:}
It is trivial to check that $T$ and $T^{\sf shift}$ have the same content. Since $T$ is semistandard, $T^{\sf shift}$ will be semistandard. By construction the column reading words of $T$ and $T^{\sf shift}$ are identical, and thus both are ballot. \qed

By Lemma~\ref{Lemma:krowreduction} and \eqref{ineq1:case4}, we reduce to the case where either $c=3$ or $b=2$. We divide into $4$ subcases: Case 4.2.1 corresponds to Theorem~\ref{thm:suffshort3} (\RomanNumeralCaps{5}), and the other cases correspond to Theorem~\ref{thm:suffshort3} (\RomanNumeralCaps{6}).

The column lengths of the $A$,$B$,$C$, and $D$-columns of $\tilde\lambda/\tilde\mu$ are $a, b, b+c,$ and $c$, respectively.

\noindent \emph{Case 4.1.1 ($c=3,a-b<3$):}  By \eqref{ineq1:case4}, we have $a = b+2 = b+c-1$. This implies $\rho = a$. Since $a<b+c$, this implies that $\tilde\lambda/\tilde\mu$ satisfies the hypotheses of Case 2.1. Let $T_1$ and $T_2$ be the two ballot fillings of $\tilde\lambda/\tilde\mu$ constructed in Case 2.1. $T_1$ and $T_2$ had content with length $b+c+1$. Claim~\ref{Claim:Case4} implies that $T_1^{\sf shift}$ and $T_2^{\sf shift}$ are two ballot fillings of $\lambda / \mu$ with the same content as $T_1$ and $T_2$. Thus, we have two ballot tableaux of shape $\lambda / \mu$, with content of length $b+c+1 = a+2 = \rho + 2$. Thus $s_{\lambda/\mu}(x_1,\ldots,x_{\rho+2})$ is not multiplicity-free.

\noindent \emph{Case 4.1.2 ($c=3,a-b\geq 3$):} In this case, $a\geq b+c$. This implies $\rho = a$, and that $\tilde\lambda/\tilde\mu$ satisfies the hypotheses of Case 2.2. By the same argument as in Case 4.1.1, we can find two ballot tableaux of shape $\lambda / \mu$ with content of length $a + 2 = \rho + 2$. Thus $s_{\lambda/\mu}(x_1,\ldots,x_{\rho+2})$ is not multiplicity-free.

\noindent \emph{Case 4.2.1 ($b=2, a-c < 2$):} Since $a<b+c$, $\tilde\lambda/\tilde\mu$ satisfies the hypotheses of Case 2.2. By the same argument as in Case 4.1.1, we can find two ballot tableaux of shape $\lambda / \mu$ with content $\nu$ of length $b + c + 1 =  c + 3$. If $a-c=1$, we have $\rho = a$ and $\ell(\nu) = \rho + 2$. If $a = c$, we have $\rho = c$ and $\ell(\nu) = \rho + 3$. We conclude that for both $a-c=1$ and $a=c$, $s_{\lambda/\mu}(x_1,\ldots,x_{\rho+3})$ is not multiplicity-free.

\noindent \emph{Case 4.2.2 ($b=2,a-c\geq 2$):} In this case, $a\geq b+c$, which implies $\rho = a$. It also implies that $\tilde\lambda/\tilde\mu$ satisfies the hypotheses of Case 2.2. Thus we can find two ballot tableaux of shape $\lambda / \mu$ with content of length $a + 2 = \rho + 2$. Thus $s_{\lambda/\mu}(x_1,\ldots,x_{\rho+2})$ is not multiplicity-free.

\section{Tightness of upper bounds}

In this section, we will show that the upper bounds on $\sf{m}(\lambda/\mu)$ obtained in the previous section are all tight. We will use the symmetry (\ref{eq:LRsymmetrybasic}) and interpret $c^{\lambda}_{\mu,\nu}$ as the number of Littlewood-Richardson tableaux of shape $\lambda/\nu$ and content $\mu$ throughout this section. Note that the content $\mu$ is guaranteed to be a partition. Let us begin with three lemmas:

\begin{lemma}\label{Lemma:basicballot}
Let $T$ be a ballot tableau of shape $\lambda/\nu$ and content $\mu = (\mu_1,\mu_2,\ldots,\mu_k)$. If $1\leq i<j\leq k$ and $c = min\{ z : \text{column $z$ contains $j$} \}$, then 
\begin{center}
$|\{ z : z < c\text{ and column $z$ contains $i$} \}| \leq \mu_i-\mu_j$.
\end{center}
 \end{lemma}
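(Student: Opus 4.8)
The plan is to recast the inequality as a statement about a single prefix of the column reading word of $T$, and then read it off directly from the ballot condition. Since $T$ is a ballot tableau, its reverse reading word is ballot, so by Lemma~\ref{lemma:readingwordequiv} its column reading word is ballot as well. The column reading word concatenates the columns from right to left (each read top to bottom), so the set of all entries lying in columns of index $\geq c$ forms a genuine \emph{prefix} $P$ of this word --- it is exactly the portion read before we reach column $c-1$. This single structural observation is what the argument hinges on.

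Next I would translate the two quantities appearing in the statement into occurrence counts inside $P$. Because $T$ is semistandard, each column is strictly increasing downward and hence contains any fixed value at most once; thus for every value $v$, the number of entries equal to $v$ in $P$ equals the number of columns of index $\geq c$ that contain $v$. Two consequences follow. First, the number of columns of index $\geq c$ containing $i$ is exactly the count of $i$'s in $P$. Second, since $c=\min\{z:\text{column }z\text{ contains }j\}$, \emph{every} occurrence of $j$ in $T$ lies in a column of index $\geq c$, so the count of $j$'s in $P$ is the full content value $\mu_j$.

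The ballot condition on the prefix $P$ then finishes the proof. Applying the defining inequality of a ballot word to $P$ repeatedly yields the telescoping chain (count of $i$ in $P$) $\geq$ (count of $i+1$ in $P$) $\geq \cdots \geq$ (count of $j$ in $P$) $= \mu_j$. Hence at least $\mu_j$ of the $\mu_i$ columns containing $i$ have index $\geq c$, so the number of columns of index $<c$ that contain $i$ is at most $\mu_i-\mu_j$, which is precisely the claim.

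I do not expect a genuine obstacle here; the only points requiring care are bookkeeping. One must verify that the columns of index $\geq c$ constitute an honest prefix of the column reading word --- so that the ballot inequality may legitimately be applied to them --- and that $c$ being the \emph{leftmost} column containing $j$ is what forces all occurrences of $j$ into that prefix (giving the clean value $\mu_j$ rather than a mere lower bound). Once these two facts are in place, the telescoping ballot inequality does the rest with no computation, and the bound holds for arbitrary $i<j$, not only the adjacent case $j=i+1$.
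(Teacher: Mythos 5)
Your proof is correct and follows essentially the same route as the paper: the paper applies the ballot condition to the initial factor of the column reading word ending at the $j$ in column $c$, while you apply it to the (slightly longer) prefix consisting of all of columns $\geq c$; both prefixes contain all $\mu_j$ occurrences of $j$, and the same telescoping chain of ballot inequalities gives the bound.
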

\begin{proof}
Since $T$ is ballot, the initial factor of the column reading word ending at the $j$ entry in column $c$ must contain at least $\mu_j$ $i$'s. Thus, there are at most $\mu_i-\mu_j$ entries equal to $i$ in the column reading word that appear after this initial factor. This implies the columns $z$, with $z < c$ can contain at most $\mu_i-\mu_j$ $i$'s.
\end{proof}

\begin{lemma}\label{Lemma:rowofT}
For a ballot tableau $T$ with content $\mu$, if $\mu_i= \mu_j$ and $i<j$, then $i$ does not appear in the bottom row of $T$.
\end{lemma}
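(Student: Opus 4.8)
The plan is to reduce to a statement about consecutive values and then exhibit a single prefix of the reverse reading word that would violate ballotness if $i$ occupied a box in the bottom row. First I would observe that, since the content $\mu$ is a partition (as stressed at the start of this section) and $i<j$ with $\mu_i=\mu_j$, the intervening parts are squeezed between equal values, so $\mu_i=\mu_{i+1}$. It therefore suffices to prove the following: if $\mu_i=\mu_{i+1}$, then $i$ does not occur in the bottom (last nonempty) row $R$ of $T$. The case $\mu_i=\mu_{i+1}=0$ is vacuous, so I assume $\mu_i=\mu_{i+1}\ge 1$ and argue by contradiction, supposing $i$ appears somewhere in row $R$.

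Next I would record the local structure of row $R$ that is forced by semistandardness. Because entries weakly increase along a row and equal entries are consecutive, the boxes of row $R$ containing $i$ fill a block of consecutive columns $[p,\,p+a-1]$ with $a\ge 1$, where $p$ is the column of the leftmost $i$ in that row. Moreover every entry of row $R$ that exceeds $i$ lies strictly to the right of this block; in particular, every entry equal to $i+1$ in row $R$ sits in a column $>p+a-1\ge p$.

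The heart of the argument is the choice of prefix. I would take the prefix $w'$ of the reverse reading word of $T$ consisting of all rows above $R$ read completely, followed by row $R$ read from its rightmost box leftward and stopped immediately before the box $(R,p)$. Counting in $w'$: every $i$ except the one in $(R,p)$ has been read (those lying above row $R$, together with the $i$'s of row $R$ in columns $p+1,\dots,p+a-1$), so $w'$ contains exactly $\mu_i-1$ copies of $i$. On the other hand, every $i+1$ has been read, since the $(i+1)$'s above row $R$ are all included, and those in row $R$ lie in columns $>p+a-1\ge p$ and hence are reached before $(R,p)$; thus $w'$ contains all $\mu_{i+1}=\mu_i$ copies of $i+1$. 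Consequently $w'$ has strictly more $(i+1)$'s than $i$'s, contradicting the ballot condition. Hence $i$ cannot appear in row $R$, which is the desired conclusion.

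The main obstacle is the bookkeeping in this final step: one must verify that the chosen cut in the reverse reading word genuinely captures every $i+1$ while discarding exactly one $i$. This rests on two points, which I would state carefully: (a) the bottom row is read last, so cutting inside it leaves every earlier row fully counted; and (b) the semistandard placement of each $i+1$ strictly to the right of every $i$ in row $R$, so that reading row $R$ from the right encounters all of its $(i+1)$'s before reaching the leftmost $i$. I do not expect to need Lemma~\ref{Lemma:basicballot} here; the prefix comparison combined with the equality $\mu_i=\mu_{i+1}$ suffices.
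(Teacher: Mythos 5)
Your proof is correct, but it takes a genuinely different route from the paper's. The paper keeps the general pair $i<j$: it lets $x$ be the column of the leftmost $i$ in the bottom row and $y$ the column of the leftmost $j$ anywhere in $T$, deduces $x<y$ from semistandardness, and then invokes Lemma~\ref{Lemma:basicballot} to conclude $0 < |\{ z : z < y\text{ and column $z$ contains $i$} \}| \leq \mu_i-\mu_j = 0$, a contradiction. You instead first note that $\mu_i=\mu_j$ forces $\mu_i=\mu_{i+1}$ (the content of a ballot tableau is weakly decreasing), reduce to the consecutive pair $(i,i+1)$ --- the only pairs the ballot condition directly constrains, so this reduction is not optional in your setup --- and then exhibit an explicit bad prefix of the reverse reading word: everything read before the leftmost $i$ of the bottom row contains all $\mu_{i+1}=\mu_i$ copies of $i+1$ but only $\mu_i-1$ copies of $i$. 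Your bookkeeping checks out: the bottom row is read last, every $i+1$ in that row lies strictly to the right of the block of $i$'s, and the unique omitted $i$ is the one at $(R,p)$. What your approach buys is self-containedness, using only the definition of ballotness; what the paper's buys is brevity, since Lemma~\ref{Lemma:basicballot} is already in hand and is reused elsewhere in that section.
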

\begin{proof}
Suppose, for contradiction, that $i$ appears in the bottom row of $T$. Let $x$ be the column index of the left-most $i$ in the bottom row of $T$ and $y$ be the column index of the left-most $j$ in $T$ (not necessarily in the bottom row). Then the semistandardness of $T$ implies that $x<y$. Applying Lemma~\ref{Lemma:basicballot}, we conclude

\begin{center}
$0 < |\{ z : z < y\text{ and column $z$ contains $i$} \}| \leq \mu_i-\mu_j = 0$.
\end{center}

which is a contradiction. Therefore $i$ does not appear in the bottom row of $T$.
\end{proof}

\begin{definition} \label{def:skewbottomrowremoval} Let $T$ be a tableau of shape $\lambda / \nu$ and content $\mu$. Define $(\lambda/\nu)^{{\sf del}(N)}$ to be the skew diagram obtained from $\lambda/\nu$ by removing the last $N$ rows. Define $T^{{\sf del}(N)}$ to be the tableau of shape $(\lambda/\nu)^{{\sf del}(N)}$ obtained from $T$ by removing the last $N$ rows. 
\end{definition} 
\begin{lemma}\label{Lemma:rmvlastrow}
If $T$ is a ballot tableau, then $T^{{\sf del}(N)}$ is a ballot tableau. 
\end{lemma}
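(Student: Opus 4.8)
The plan is to reduce the statement to two observations about the reverse reading word, after first checking that $T^{{\sf del}(N)}$ is a well-defined semistandard tableau. Recall from Definition~\ref{def:skewbottomrowremoval} that $(\lambda/\nu)^{{\sf del}(N)}$ is obtained from $\lambda/\nu$ by deleting its bottom $N$ rows, and $T^{{\sf del}(N)}$ by deleting the corresponding entries. So the first thing I would note is that deleting entire rows from the bottom of a skew diagram again yields a skew diagram: the retained rows are rows $1$ through $\ell(\lambda)-N$ of $\lambda/\nu$, so $T^{{\sf del}(N)}$ has shape $\lambda'/\nu'$ where $\lambda',\nu'$ are the truncations of $\lambda,\nu$ to their first $\ell(\lambda)-N$ parts. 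Since every column of $T^{{\sf del}(N)}$ is a top segment of a column of $T$, strict increase down columns is inherited; and since each retained row is an unaltered row of $T$, weak increase along rows is inherited. Hence $T^{{\sf del}(N)}$ is semistandard.

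The key point is the relationship between the two reverse reading words. The reverse reading word concatenates the entries of each row right-to-left, proceeding top-to-bottom, so the entries of the bottom $N$ rows of $T$ form exactly the final (rightmost) segment of the reverse reading word of $T$. Deleting those rows therefore leaves precisely the prefix of the reverse reading word of $T$ consisting of the first $\ell(\lambda)-N$ rows, and this prefix is exactly the reverse reading word of $T^{{\sf del}(N)}$. I would state this as the central step: the reverse reading word of $T^{{\sf del}(N)}$ is an initial factor of the reverse reading word of $T$.

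Ballotness then follows immediately. Every initial factor of the reverse reading word of $T^{{\sf del}(N)}$ is, by the previous step, also an initial factor of the reverse reading word of $T$. Since $T$ is a ballot tableau, each of these initial factors already satisfies the ballot inequalities (in every prefix, the number of $i$'s is at least the number of $(i+1)$'s for all $i$). Consequently every initial factor of the reverse reading word of $T^{{\sf del}(N)}$ satisfies the ballot condition, so $T^{{\sf del}(N)}$ is a ballot tableau.

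I do not expect a genuine obstacle here; the entire content is the bookkeeping that the deleted rows occupy the tail of the reverse reading word, which is a direct consequence of the top-to-bottom reading convention. The only care required is to confirm that removing bottom rows keeps the shape a valid skew diagram and preserves the semistandard conditions, both of which are routine. (If one prefers, the same argument runs through the column reading word together with Lemma~\ref{lemma:readingwordequiv}, but the reverse-reading-word prefix argument is the most transparent.)
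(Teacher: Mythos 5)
Your proof is correct and follows exactly the paper's own argument: semistandardness is inherited by deleting bottom rows, and the reverse reading word of $T^{{\sf del}(N)}$ is an initial factor of that of $T$, so ballotness is preserved. You simply spell out the bookkeeping in more detail than the paper does.
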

\begin{proof}
Since $T$ is semistandard, $T^{{\sf del}(N)}$ must be semistandard. The reverse reading word of $T$ is ballot. Since the reverse reading word of $T^{{\sf del}(N)}$ is an initial factor of the reverse reading word of $T$, $T^{{\sf del}(N)}$ is also ballot.
\end{proof}

As a corollary of Lemma~\ref{Lemma:rmvlastrow}, we have the following result:

\begin{corollary}\label{Cor:rmvlastrow}
Suppose that all ballot tableaux of shape $\lambda/\nu$ and content $\mu$ are identical in the last $N$ rows. Let $\mu^{(N)}$ be the content of $T^{{\sf del}(N)}$, for all ballot tableaux $T$ of shape $\lambda/\nu$ and content $\mu$. If we set $\lambda^{(N)}/\nu^{(N)} := (\lambda/\nu)^{{\sf del}(N)}$, then
\[c^{\lambda}_{\mu,\nu}\leq c^{\lambda^{(N)}}_{\mu^{(N)},\nu^{(N)}},\]

\end{corollary}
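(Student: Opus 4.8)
The plan is to exhibit an injection from the set of ballot tableaux counted by $c^{\lambda}_{\mu,\nu}$ into the set counted by $c^{\lambda^{(N)}}_{\mu^{(N)},\nu^{(N)}}$. Following the convention fixed at the start of this section, I interpret $c^{\lambda}_{\mu,\nu} = c^{\lambda}_{\nu,\mu}$ via the symmetry \eqref{eq:LRsymmetrybasic} as the number of ballot tableaux of shape $\lambda/\nu$ and content $\mu$, and likewise $c^{\lambda^{(N)}}_{\mu^{(N)},\nu^{(N)}}$ as the number of ballot tableaux of shape $\lambda^{(N)}/\nu^{(N)} = (\lambda/\nu)^{{\sf del}(N)}$ and content $\mu^{(N)}$. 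The candidate map is $T \mapsto T^{{\sf del}(N)}$ from Definition~\ref{def:skewbottomrowremoval}.

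First I would check that the map is well defined as a map into the target set. By Lemma~\ref{Lemma:rmvlastrow}, $T^{{\sf del}(N)}$ is again a ballot tableau, and by construction its shape is exactly $(\lambda/\nu)^{{\sf del}(N)} = \lambda^{(N)}/\nu^{(N)}$. It then remains to verify that its content equals $\mu^{(N)}$ for \emph{every} $T$. This is where the hypothesis enters: since all ballot tableaux of shape $\lambda/\nu$ and content $\mu$ agree in their last $N$ rows, the multiset of entries deleted in forming $T^{{\sf del}(N)}$ is the same for every such $T$, so the content of $T^{{\sf del}(N)}$ is a fixed partition independent of $T$, namely $\mu$ with the content of the common last $N$ rows removed. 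This is precisely the definition of $\mu^{(N)}$, and so the map genuinely lands in the set of ballot tableaux of shape $\lambda^{(N)}/\nu^{(N)}$ and content $\mu^{(N)}$.

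Next I would establish injectivity, which again leans on the agreement hypothesis. Suppose $T_1$ and $T_2$ are ballot tableaux of shape $\lambda/\nu$ and content $\mu$ with $T_1^{{\sf del}(N)} = T_2^{{\sf del}(N)}$; then $T_1$ and $T_2$ coincide on every row outside the last $N$. But by hypothesis they also coincide on the last $N$ rows, whence $T_1 = T_2$. The map is therefore injective, and comparing cardinalities of the two sets yields $c^{\lambda}_{\mu,\nu} \le c^{\lambda^{(N)}}_{\mu^{(N)},\nu^{(N)}}$, as desired. There is no substantial obstacle here; the only points demanding care are the well-definedness of the target content $\mu^{(N)}$ and the injectivity of the deletion map, both of which rest squarely on the stated assumption that all relevant ballot tableaux agree in their last $N$ rows.
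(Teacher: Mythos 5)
Your proof is correct and is exactly the argument the paper intends: the paper states this as an immediate corollary of Lemma~\ref{Lemma:rmvlastrow} without writing out the details, and your injection $T \mapsto T^{{\sf del}(N)}$, with well-definedness and injectivity both resting on the hypothesis that all such ballot tableaux agree in the last $N$ rows, is the implicit proof. No gaps.
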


\subsection{\texorpdfstring{$\lambda^\vee$}{} is a rectangle of shortness at least \texorpdfstring{$3$}{} and \texorpdfstring{$\mu$}{} is a fat hook of shortness at least \texorpdfstring{$2$}{}}

\begin{theorem}
\label{thm:necessaryshort3}
Let $s_{\lambda/\mu}(x_1,\ldots,x_n)$ be a basic, n-sharp, tight skew Schur polynomial such that $\lambda^\vee$ is a rectangle of shortness at least $3$ and $\mu$ is a fat hook of shortness at least $2$. Then the following hold:
\begin{enumerate}[label=(\Roman*)]
    \item If $\lambda_2 = \mu_1, l_2 > k_1$ then $\sf{m}(\lambda/\mu) = \rho+2$
    \item If $\lambda_2 = \mu_1, l_2 \leq k_1$ then $\sf{m}(\lambda/\mu) = \rho+3$
    \item If $\mu_1 > \lambda_2 > \mu_2,  k_1 \geq l_2$ then $\sf{m}(\lambda/\mu) = \rho+2$
    \item If $\mu_1 > \lambda_2 > \mu_2, l_2 > k_1$ then $\sf{m}(\lambda/\mu) = \rho+1$
    \item If $\mu_2 = \lambda_2, l_2 \geq  l_1$ then $\sf{m}(\lambda/\mu) = \rho+3$
    \item If $\mu_2 = \lambda_2, l_1 > l_2$ then $\sf{m}(\lambda/\mu) = \rho+2$
\end{enumerate}
\end{theorem}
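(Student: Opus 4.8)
The plan is to combine the non-multiplicity-free statements of Theorem~\ref{thm:suffshort3} with matching \emph{multiplicity-free} statements proved here, so that the two squeeze $\sf{m}(\lambda/\mu)$ to a single value. Theorem~\ref{thm:suffshort3} already supplies every upper bound: in cases (I), (III), (VI) it gives $\sf{m}(\lambda/\mu)\le\rho+2$, in (II), (V) it gives $\sf{m}(\lambda/\mu)\le\rho+3$, and in (IV) it gives $\sf{m}(\lambda/\mu)\le\rho+1$. Since an $n$-sharp skew Schur polynomial forces $n>\rho$, we always have $\sf{m}(\lambda/\mu)\ge\rho+1$, so case (IV) is immediate and needs nothing further. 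For the remaining five cases it therefore suffices to prove the corresponding lower bound, namely that $s_{\lambda/\mu}(x_1,\dots,x_{\sf{m}(\lambda/\mu)-1})$ \emph{is} multiplicity-free. Multiplicity-freeness at one value of $n$ implies it at every smaller value (restricting variables only discards partitions $\nu$ with $\ell(\nu)\le n$), so it is enough to treat $n=\rho+1$ in (I), (III), (VI) and $n=\rho+2$ in (II), (V).

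Following the setup of this section, I would pass to the symmetry $c^{\lambda}_{\mu,\nu}=c^{\lambda}_{\nu,\mu}$ and count ballot tableaux of shape $\lambda/\nu$ and content $\mu$. The decisive feature is that $\mu=(\mu_1^{k_1},\mu_2^{k_2})$ is a fat hook, so as a content it uses only two multiplicity levels: the symbols $1,\dots,k_1$ each occur $\mu_1$ times and the symbols $k_1+1,\dots,k_1+k_2$ each occur $\mu_2$ times. The goal becomes: for every $\nu\subseteq\lambda$ with $\ell(\nu)\le n$ and $|\lambda/\nu|=|\mu|$, there is at most one ballot tableau of shape $\lambda/\nu$ and content $\mu$. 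Because $\lambda^{\vee}$ is a rectangle, $\lambda=(\lambda_1^{l_1},\lambda_2^{l_2})$ is itself a fat hook, and the column heights of $\lambda/\mu$ take only the few values $a,b,c$ (and $d$) recorded in the proof of Theorem~\ref{thm:suffshort3}; I would organize the argument along the same $\lambda_2$-versus-$\mu_i$ subcases used there, noting that Corollary~\ref{Cor:GUT} runs the wrong way for multiplicity-freeness, so these general fat-hook shapes must be handled directly.

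The engine of the lower bound is row peeling. By Lemma~\ref{Lemma:rowofT}, only the symbols $k_1$ and $k_1+k_2$ (the maximal symbol in each equal-multiplicity block) can appear in the bottom row of such a tableau; together with semistandardness and the explicit fat-hook column structure this pins down the bottom row completely and, descending, forces several further rows to agree across all tableaux. Corollary~\ref{Cor:rmvlastrow} then lets me delete the forced rows, passing to a strictly smaller shape $\lambda^{(N)}/\nu^{(N)}$ with reduced content $\mu^{(N)}$, and I would iterate until reaching a residual shape small enough to enumerate outright. The heart of the matter is that the two distinct fillings produced in Theorem~\ref{thm:suffshort3} were separated by a single swap of consecutive values (for instance $a-1\leftrightarrow a$, or $c-1\leftrightarrow b+1$) whose content had length exactly $\rho+k$; once $\ell(\nu)\le\rho+k-1$ the larger of the two swapped values is unavailable, so Lemma~\ref{Lemma:basicballot} (bounding how far a symbol may spread before the first occurrence of a larger one) forces the boundary between the two content blocks into a unique position, collapsing the degree of freedom and leaving a single tableau.

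The main obstacle is the bookkeeping of this rigidity across all subcases: one must verify, for each configuration of $a,b,c,d$ and each admissible $\nu$, exactly how many bottom rows are forced and that after peeling them the reduced problem admits a unique ballot filling. The delicate points are the boundary subcases (for example $l_2\le k_1$ versus $l_2>k_1$, or $l_1>l_2$ versus $l_2\ge l_1$), where the count of forced rows jumps and thereby decides whether the answer is $\rho+2$ or $\rho+3$; there the argument must show precisely that the single extra value available at $\rho+k$ is what enabled the second filling, and that removing it restores uniqueness. I expect Lemma~\ref{Lemma:basicballot} and Lemma~\ref{Lemma:rowofT} to carry out most of the forcing, with Corollary~\ref{Cor:rmvlastrow} reducing each case to a short, direct uniqueness check.
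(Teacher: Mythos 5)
Your skeleton matches the paper's: use Theorem~\ref{thm:suffshort3} for the upper bounds, dispose of case (IV) by $n$-sharpness, pass to $c^{\lambda}_{\nu,\mu}$ via \eqref{eq:LRsymmetrybasic}, force the bottom rows with Lemma~\ref{Lemma:rowofT}, and peel them with Corollary~\ref{Cor:rmvlastrow}. But there are two concrete gaps. First, the claim that Lemma~\ref{Lemma:rowofT} ``together with semistandardness and the explicit fat-hook column structure \ldots pins down the bottom row completely'' is only true when the bottom row of $\lambda/\nu$ has exactly $\mu_1$ boxes, i.e.\ when $\lambda_2=\mu_1$ (this is the paper's Claim~\ref{Claim:lastrowofT}, and it is why Case 1 goes through directly). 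In cases (V)--(VI) the bottom row has $\mu_2$ boxes and in case (III) it has $\lambda_2$ with $\mu_2<\lambda_2<\mu_1$; Lemma~\ref{Lemma:rowofT} then only restricts the entries to $\{k_1,\,k_1+k_2\}$ without determining how many of each, so the peeling does not start. The paper's fix is to first change the shape so that the \emph{content} becomes a rectangle: Lemma~\ref{Lemma:switchingcol} converts the $\mu_2=\lambda_2$ case into a shape with content $(\mu_2^{l_1})$, a horizontal shift reduces one subcase of (III) to that situation, and the complement symmetry \eqref{eq:LRsymmetry-transpose} turns the other subcase of (III) into counting tableaux with rectangular content $\lambda^\vee$. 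Your proposal contains no substitute for these transformations, and without them the forcing step fails.

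Second, your endgame --- ``iterate until reaching a residual shape small enough to enumerate outright,'' or a ``short, direct uniqueness check'' --- is not available: after peeling, the residual shapes still depend on $\lambda_1,\mu_1,\mu_2,k_1,k_2,l_1,l_2$ and form infinite families. The paper instead arranges the peeling so that the residual pair lands in the Thomas--Yong/Gutschwager classification (Theorem~\ref{thm:TYGUT}) --- e.g.\ a rectangle against a fat hook of shortness $1$, or a shape of shortness $2$ against a fat hook --- and cites multiplicity-freeness of the corresponding skew Schur \emph{function}. Relatedly, your heuristic that ``removing the single extra value restores uniqueness'' is an intuition about why the bound is sharp, not an argument; the actual uniqueness comes from the TYGUT endpoint, not from analyzing the swap. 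You would need to add the shape transformations and the appeal to Theorem~\ref{thm:TYGUT} to make this proof complete.
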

\begin{proof}
By Definition~\ref{def:r1andr2}, the statements in Theorem~\ref{thm:necessaryshort3} (\RomanNumeralCaps{1})-(\RomanNumeralCaps{6}) are equivalent to the claim that $\sf{m}(\lambda/\mu)=\rho + r_1(\lambda / \mu) + r_2 (\lambda / \mu)$. Thus, by Theorem~\ref{thm:suffshort3} and (\ref{eq:LRsymmetrybasic}), to prove Theorem~\ref{thm:necessaryshort3} (\RomanNumeralCaps{1})-(\RomanNumeralCaps{6}), it is enough to show that for any partition $\nu$ such that $\ell(\nu) < \rho + r_1(\lambda / \mu) + r_2 (\lambda / \mu)$, there is at most one ballot tableau of shape $\lambda/\nu$ and content $\mu$. Equivalently, we want to show that
\begin{equation}
\label{eq:equivtoshow-ns3}
c^{\lambda}_{\mu,\nu} = c^{\lambda}_{\nu,\mu}\leq 1\text{ for all }\nu\text{ such that } \ell(\nu) < \rho + r_1(\lambda / \mu) + r_2 (\lambda / \mu).
\end{equation}
Note that \eqref{eq:equivtoshow-ns3} holds trivially for Theorem~\ref{thm:necessaryshort3} (\RomanNumeralCaps{4}) since $\lambda/ \mu$ is $n$-sharp.

We consider the same four cases as in the proof of Theorem~\ref{thm:suffshort3}, though the order in which we consider the cases is different. In all four cases $\nu$ is a partition such that $\ell(\nu) < \rho + r_1(\lambda / \mu) + r_2 (\lambda / \mu)$.

\noindent \emph{Case 1 $(\lambda_2 = \mu_1)$:} This case will corresponds to Theorem~\ref{thm:necessaryshort3} (\RomanNumeralCaps{1}) and (\RomanNumeralCaps{2}). If the first $l$ rows of $\nu$ and $\lambda$ have the same length, for some $l\in \mathbb{Z}_{>0}$, then removing the top $l$ rows of both $\lambda$ and $\nu$ does not change the value of $c^\lambda_{\mu,\nu}$. Denote $\widehat \lambda$ to be the partition obtained by removing the top $l$ rows of $\lambda$. If $\widehat \lambda$ has shortness at most 2 or is a rectangle, then $s_{\widehat \lambda/\mu}$ is multiplicity-free and thus $c^{\widehat \lambda}_{\mu,\widehat \nu} = c^\lambda_{\mu,\nu} \leq 1$ for all $\nu$. Otherwise, $(\widehat \lambda,\mu)$ satisfies the hypotheses of Case 1 since $(\lambda,\mu)$ satisfied the hypotheses of Case 1.
Therefore, we assume, without loss of generality, that  
\begin{equation}
\label{eq:nu1<la1}
    \nu_1<\lambda_1.
\end{equation}

\begin{claim}
\label{Claim:l(la)>l(nu)}
In the case of Theorem~\ref{thm:necessaryshort3} (\RomanNumeralCaps{1}) and (\RomanNumeralCaps{2}), 
\[\ell(\nu)<\rho+r_1(\lambda/\mu)+r_2(\lambda/\mu)\leq \ell(\lambda).\]
\end{claim}

\noindent \emph{Proof of Claim~\ref{Claim:l(la)>l(nu)}:} In the case of Theorem~\ref{thm:necessaryshort3} (\RomanNumeralCaps{1}), we have $\rho = l_1+l_2-k_1$. Since $\mu$ has shortness at least $2$ and $r_1(\lambda/\mu)+r_2(\lambda/\mu) = 2$, we obtain 
\[\rho+r_1(\lambda/\mu)+r_2(\lambda/\mu)\leq \ell(\lambda)-k_1+2 \leq \ell(\lambda).\] 
In the case of Theorem~\ref{thm:necessaryshort3} (\RomanNumeralCaps{2}), we have $\rho = l_1$. Since $\lambda$ has shortness at least $3$ and $r_1(\lambda/\mu)+r_2(\lambda/\mu) = 2$, we get
\[\rho+r_1(\lambda/\mu)+r_2(\lambda/\mu)\leq l_1+3 \leq \ell(\lambda).\]
\qed

\begin{claim}
\label{Claim:lastrowofT}
Let $T$ be any ballot tableau of shape $\lambda/\nu$ and content $\mu$ where both $\lambda$ and $\mu$ are fat hooks. If $\ell(\nu)<\ell(\lambda)$ and $\lambda_2 = \mu_1$, then the bottom row of $T$ contains $\mu_1-\mu_2$ boxes filled by $k_1$ and $\mu_2$ boxes filled by $k_1+k_2$.
\end{claim}
\noindent \emph{Proof of Claim~\ref{Claim:lastrowofT}:} Since $\mu$ is a fat hook, by Lemma~\ref{Lemma:rowofT}, $T(\ell(\lambda),c) \in \{k_1,k_1+k_2\}$ for all $c\in [\mu_1]$. Let $z$ be the column index of the left-most $k_1+k_2$ in $T$. By Lemma~\ref{Lemma:basicballot}, 
\begin{center}
$ |\{ x : x < z\text{ and column $x$ contains $k_1$} \}| \leq \mu_1-\mu_2 $.
\end{center}
Therefore $z\leq \mu_1-\mu_2+1$ and $T(\ell(\lambda),c) = k_1+k_2$ for all $c\in[z,\mu_1]$. Since 
\[|\{(r,c):T(r,c) = k_1+k_2\}| = \mu_2,\]
we get $z\geq \mu_1-\mu_2+1$ hence $z = \mu_1-\mu_2+1$.
We can then conclude that $T(\ell(\lambda),c) = k_1$ for $c\in[\mu_1-\mu_2]$ and $T(\ell(\lambda),c) = k_1+k_2$ for $c\in[\mu_1-\mu_2+1,\mu_1]$.
\qed

We divide into three subcases.

\noindent \emph{Case 1.1 $(k_1+k_2 = l_1)$:}  By (\ref{eq:nu1<la1}), we know that ${\sf CS}_{\lambda_1}(\lambda / \nu) = k_1+k_2$. As a result, for any ballot tableaux $T$ of shape $\lambda/\nu$ and content $\mu$, we have $T(l_1,\lambda_1) = k_1+k_2$. 

By Claim~\ref{Claim:l(la)>l(nu)}, we know that any triple of partitions $(\lambda,\mu,\nu)$ in Theorem~\ref{thm:necessaryshort3} (\RomanNumeralCaps{1}) and (\RomanNumeralCaps{2}) satisfies the conditions in Claim~\ref{Claim:lastrowofT}. Therefore by Claim~\ref{Claim:lastrowofT}, $T(r,c) = k_1+k_2$ only if $r = \ell(\lambda)$ for any ballot tableau $T$ of shape $\lambda/\nu$ and content $\mu$. This contradicts $T(l_1,\lambda_1) = k_1+k_2$. Therefore no such ballot tableau exists and $c^{\lambda}_{\mu,\nu} = 0$.

\begin{figure}
    \begin{center}
    \begin{subfigure}{0.45\textwidth}
        \begin{tikzpicture}[scale = 1.3]
        \draw[black, thick] (0,0) -- (4,0) -- (4,4) -- (0,4) -- (0,0);
        \draw[black,thick] (4,2) -- (4.3,2);
        \draw[black,thick] (4,4) -- (4.3,4);
        \draw[black,thick] (4,0) -- (4.3,0);
        \draw[arrows=->, line width = .7pt] (4.2,4) -- (4.2,3.3);
        \draw[arrows=->, line width = .7pt] (4.2,2) -- (4.2,2.7);
        \draw[arrows=->, line width = .7pt] (4.2,0) -- (4.2,0.7);
        \draw[arrows=->, line width = .7pt] (4.2,2) -- (4.2,1.3);
        \draw[gray,dashed] (0,1.7) -- (3,1.7);
        \draw[black,thick] (0,0)--(-0.3,0);
        \draw[black,thick] (0,1.7)--(-0.3,1.7);
        \draw[arrows=->, line width = .7pt] (-0.2,0) -- (-0.2,0.6);
        \draw[arrows=->, line width = .7pt] (-0.2,1.7) -- (-0.2,1.2);
        \draw (-0.3,0.9) node{$k_1$-$1$};
        \draw (4.2,3) node{$l_1$};
        \draw (4.2,1) node{$l_2$};
        \draw[black, thick] (3,0) -- (3,2) -- (4,2);
        \draw (1.5,-0.3) node{$\lambda_2 = \mu_1$};
        \draw[arrows=->, line width = .7pt] (0,-0.3) -- (1,-0.3);
        \draw[arrows=->, line width = .7pt] (3,-0.3) -- (2,-0.3);
        \draw[black,thick] (0,-0.2)--(0,-0.4);
        \draw[black,thick] (3,-0.2)--(3,-0.4);
        
        \draw (3.5, 1) node{$\lambda^c$};
        
        \draw[black, thick] (0,1.7) -- (0.5,1.7) -- (0.5,2.3) -- (1.4,2.3) -- (1.4,3.2) -- (2.8,3.2) -- (2.8,3.6) -- (3.6,3.6) -- (3.6,4);
        
        \draw (0.7,3.2) node{$\nu$};
        \end{tikzpicture}
        \caption{Case 1.1}
    \label{Case1a}
    \end{subfigure}
    \hspace{1cm}
    \begin{subfigure}{0.45\textwidth}
        \begin{tikzpicture}[scale=1.3]
        \draw[black, thick] (0,0) -- (4,0) -- (4,4) -- (0,4) -- (0,0);
        \draw[black,thick] (4,2) -- (4.3,2);
        \draw[black,thick] (4,4) -- (4.3,4);
        \draw[black,thick] (4,0) -- (4.3,0);
        \draw[arrows=->, line width = .7pt] (4.2,4) -- (4.2,3.3);
        \draw[arrows=->, line width = .7pt] (4.2,2) -- (4.2,2.7);
        \draw[arrows=->, line width = .7pt] (4.2,0) -- (4.2,0.7);
        \draw[arrows=->, line width = .7pt] (4.2,2) -- (4.2,1.3);
        \draw[gray,dashed] (0,1.7) -- (3,1.7);
        \draw[black,thick] (0,0)--(-0.3,0);
        \draw[black,thick] (0,1.7)--(-0.3,1.7);
        \draw[arrows=->, line width = .7pt] (-0.2,0) -- (-0.2,0.6);
        \draw[arrows=->, line width = .7pt] (-0.2,1.7) -- (-0.2,1.2);
        \draw (-0.3,0.9) node{$l_2$-$2$};
        \draw (4.2,3) node{$l_1$};
        \draw (4.2,1) node{$l_2$};
        \draw[black, thick] (3,0) -- (3,2) -- (4,2);
        \draw (1.5,-0.3) node{$\lambda_2 = \mu_1$};
        \draw[arrows=->, line width = .7pt] (0,-0.3) -- (1,-0.3);
        \draw[arrows=->, line width = .7pt] (3,-0.3) -- (2,-0.3);
        \draw[black,thick] (0,-0.2)--(0,-0.4);
        \draw[black,thick] (3,-0.2)--(3,-0.4);
        
        \draw (3.5, 1) node{$\lambda^c$};
        
        \draw[black, thick] (0,1.7) -- (0.5,1.7) -- (0.5,2.3) -- (1.4,2.3) -- (1.4,3.2) -- (2.8,3.2) -- (2.8,3.6) -- (3.6,3.6) -- (3.6,4);
        
        \draw (0.7,3.2) node{$\nu$};
        \end{tikzpicture}
        \caption{Case 1.2}
        \label{Case1b}
    \end{subfigure}
    \caption{}
    \label{Case1necessary}    
    \end{center}
\end{figure}

\noindent \emph{Case 1.2 $(k_1+k_2 > l_1, l_2>k_1)$:} In this case we had $l_1<l_1+l_2-k_1=\rho$ and $s_{\lambda/\mu}(x_1,\ldots,x_{\rho+2})$ is not multiplicity-free. Let $\nu\subset \lambda$ be any partition such that $\ell(\nu) \leq \rho+1$. In this case
\begin{equation}
\label{eq:case1.2rho}
\rho+1 = l_1+l_2-k_1 + 1.
\end{equation}
Therefore $\nu$ lies in the region above the gray dashed line in Figure~\ref{Case1a}. By Claim~\ref{Claim:l(la)>l(nu)}, there is at least one row below the gray dashed line. Moreover, since $l_2 < k_1$ all rows in $\lambda/\nu$ below the gray line in Figure~\ref{Case1a} have $\lambda_2$ boxes. 

Let $T$ be a ballot tableau of shape $\lambda/\nu$ and content $\mu$. Then $T(l_1,\lambda_1)\neq k_1+k_2$ since $k_1+k_2 > l_1$. Define $\lambda^{(1)}/ \nu^{(1)} = (\lambda / \nu)^{{\sf del}(1)}$. By Claim~\ref{Claim:l(la)>l(nu)}, $\lambda^{(1)} = (\lambda_1^{l_1},\lambda_2^{l_2-1})$ and $\nu^{(1)} = \nu$. Thus, by Claim~\ref{Claim:lastrowofT}, $T^{{\sf del}(1)}$ is a ballot tableau of shape $\lambda^{(1)}/\nu$ and content $\mu^{(1)} = (\mu_1^{k_1-1},\mu_2^{k_2})$.

By Corollary~\ref{Cor:rmvlastrow}, we have
\begin{equation}
\label{eq:firstiterationbound}
c^{\lambda}_{\mu,\nu} \leq c^{\lambda^{(1)}}_{\mu^{(1)},\nu}
\end{equation}
We may continue to iterate the above process, constructing $\lambda^{(i+1)}$ and $\mu^{(i+1)}$ from $\lambda^{(i)}$ and $\mu^{(i)}$, so long as the hypotheses of Claim~\ref{Claim:lastrowofT} and Case 1.2 remain satisfied for $\lambda^{(i)}$ and $\mu^{(i)}$. The Case 1.2 hypotheses are satisfied if $\ell(\mu^{(i)}) > l_1$. If $\ell(\lambda^{(i)}) > \rho + 1$, then $\ell(\lambda^{(i)}) \geq \ell(\nu)$ since, in this case, $\rho + 1 \geq \ell(\nu)$. Further, by \eqref{eq:case1.2rho}, if $\ell(\lambda^{(i)}) > \rho + 1$, then $\ell(\lambda^{(i)}) > l_1$ and $\lambda^{(i)}$ is a fat hook. Thus, the hypotheses of Claim~\ref{Claim:lastrowofT} and Case 1.2 are satisfied if 
\begin{equation}\label{eqn:condition1}
    \mu^{(i)}\text{ is a fat hook, }\ell(\mu^{(i)}) > l_1
\end{equation}
and 
\begin{equation}\label{eqn:condition1a}
    \ell(\lambda^{(i)}) > \rho+1 
\end{equation}
If \eqref{eqn:condition1} and \eqref{eqn:condition1a} both hold, then Corollary~\ref{Cor:rmvlastrow} implies
\begin{equation}\label{eqn:increase}
     c^{\lambda^{(i)}}_{\mu^{(i)},\nu} \leq c^{\lambda^{(i+1)}}_{\mu^{(i+1)},\nu}
\end{equation}
Let $m$ be the minimal index where (\ref{eqn:condition1}) or (\ref{eqn:condition1a}) is violated. At least one of the three following statements holds: 
\[\ell(\lambda^{(m)}) = \rho+1, \mu^{(m)} = (\mu_2^{k_2}),\ \text{and } \ell(\mu^{(m)}) = l_1,\]

\noindent \emph{Case 1.2.1 ($\ell(\lambda^{(m)}) = \rho+1$):} Here $m = k_1-1$ since $\ell(\lambda^{(i+1)}) = \ell(\lambda^{(i)})-1$ and \eqref{eq:case1.2rho}. Therefore we have $\mu^{(m)} = (\mu_1,\mu_2^{k_2})$. Since $\lambda^{(m)\vee} = ((\lambda_1-\lambda_2)^{l_2-k_1+1})$ is a rectangle and $\mu^{(m)}$ is a fat hook of shortness $1$, by Theorem~\ref{thm:TYGUT} the skew Schur function $s_{\lambda^{(m)}/\mu^{(m)}}$ is multiplicity-free. Thus $c^{\lambda^{(m)}}_{\mu^{(m)},\nu} \leq 1$. Now, by \eqref{eq:firstiterationbound} and \eqref{eqn:increase}, we get
\[c^{\lambda}_{\mu,\nu} \leq c^{\lambda^{(1)}}_{\mu^{(1)},\nu}\leq c^{\lambda^{(m)}}_{\mu^{(m)},\nu}\leq 1\text{ for all }\nu\text{ such that }\ell(\nu)\leq \rho+1.\]

\noindent \emph{Case 1.2.2 ($\mu^{(m)} = (\mu_2^{k_2})$):} Here $m = k_1 > k_1-1$. This violates the minimality of $m$, since if $m = k_1-1$, then \eqref{eq:case1.2rho} implies $\ell(\lambda^{(m)}) = \rho+1$. Hence this case is not possible.

\noindent \emph{Case 1.2.3 ($\ell(\mu^{(m)}) = l_1$):} If $T$ is a ballot tableau $T$ of shape $\lambda^{(m)}/\nu$ and content $\mu^{(m)}$, then $T(l_1,\lambda_1) = k_1+k_2-m$. We assume the two previous cases do not hold, since if they did we could apply their arguments to get our desired result. Thus we assume $\mu^{(m)}$ is a fat hook, and $\ell(\lambda)^{(m)} < \rho+1$. Combining this with  $(\lambda^{(m)})_2 = (\mu^{(m)})_1$ we conclude via Claim~\ref{Claim:lastrowofT} that $T$ can not exist. Thus \eqref{eq:firstiterationbound} and \eqref{eqn:increase} imply
\[c^{\lambda}_{\mu,\nu} \leq c^{\lambda^{(1)}}_{\mu^{(1)},\nu}\leq c^{\lambda^{(m)}}_{\mu^{(m)},\nu}=0\text{ for all }\nu\text{ such that }\ell(\nu)\leq \rho+1.\]

Combining all subcases, we get ${\sf{m}}(\lambda/\mu) = \rho+2$ in Theorem~\ref{thm:necessaryshort3} (\RomanNumeralCaps{1}) as desired.

\noindent \emph{Case 1.3 $(k_1+k_2 > l_1, l_2 \leq k_1)$:} Here we have $l_1+l_2-k_1 \leq l_1 = \rho$ and $s_{\lambda/\mu}(x_1,\ldots,x_{\rho+3})$ is not multiplicity-free. Let $\nu\subset \lambda$ be any partition such that $\ell(\nu)\leq \rho+2$. In this case, we have
\begin{equation}
\label{eq:case1.3rho}
\rho+2 = l_1+2.
\end{equation}
Once again, $\nu$ lies in the region above the gray dashed line in Figure~\ref{Case1b}. By Claim~\ref{Claim:l(la)>l(nu)}, there is at least one row below the gray dashed line. Since $l_2-2 < l_2$, all rows in $\lambda/\nu$ below the gray line have $\lambda_2$ boxes.

Let $T$ be a ballot tableau of shape $\lambda/\nu$ and content $\mu$. Then $T(l_1,\lambda_1)\neq k_1+k_2$ since $k_1+k_2 > l_1$. As in Case 1.2, define $\lambda^{(1)}/ \nu^{(1)} = (\lambda / \nu)^{{\sf del}(1)}$. By Claim~\ref{Claim:l(la)>l(nu)}, $\lambda^{(1)} = (\lambda_1^{l_1},\lambda_2^{l_2-1})$ and $\nu^{(1)} = \nu$. Thus, Claim~\ref{Claim:lastrowofT} implies $T^{{\sf del}(1)}$ is a ballot tableau of shape $\lambda^{(1)}/\nu$ and content $\mu^{(1)} = (\mu_1^{k_1-1},\mu_2^{k_2})$.

By Corollary~\ref{Cor:rmvlastrow}, we have
\begin{equation}
\label{eq:firstiterationbound1.3}
c^{\lambda}_{\mu,\nu} \leq c^{\lambda^{(1)}}_{\mu^{(1)},\nu}
\end{equation}

We may continue to iterate the above process, constructing $\lambda^{(i+1)}$ and $\mu^{(i+1)}$ from $\lambda^{(i)}$ and $\mu^{(i)}$, so long as the hypotheses of Claim~\ref{Claim:lastrowofT} and Case 1.3 remain satisfied for $\lambda^{(i)}$ and $\mu^{(i)}$. By a similar argument to Case 1.2, these hypotheses are satisfied if
\begin{equation}\label{eqn:condition11}
    \mu^{(i)}\text{ is a fat hook, }\ell(\mu^{(i)}) > l_1
\end{equation}
and 
\begin{equation}\label{eqn:condition1b}
    \ell(\lambda^{(i)}) > \rho+2.
\end{equation}

If \eqref{eqn:condition11} and \eqref{eqn:condition1b} are satisfied, then Corollary~\ref{Cor:rmvlastrow} implies
\begin{equation}\label{eqn:increase1.3}
     c^{\lambda^{(i)}}_{\mu^{(i)},\nu} \leq c^{\lambda^{(i+1)}}_{\mu^{(i+1)},\nu}
\end{equation}
Let $m$ be the minimal index where (\ref{eqn:condition11}) or (\ref{eqn:condition1b}) is violated. At least one of the three following statements holds: 
\[\ell(\lambda^{(m)}) = \rho+2, \mu^{(m)} = (\mu_2^{k_2})\ \text{and } \ell(\mu^{(m)}) = l_1.\]

\noindent \emph{Case 1.3.1 ($\ell(\lambda^{(m)}) = \rho+2$):} Since $\ell(\lambda^{(i+1)}) = \ell(\lambda^{(i)})-1$ and \eqref{eq:case1.3rho}, we have $m = l_2-2$. Therefore $\lambda^{(m)} = (\lambda_1^{l_1},\lambda_2^{2})$ and thus has shortness $2$. Since $\mu^{(m)}$ is either a fat hook or a rectangle by construction, the skew Schur function $s_{\lambda^{(m)}/\mu^{(m)}}$ is multiplicity-free by Theorem~\ref{thm:TYGUT}. Therefore $c^{\lambda^{(m)}}_{\mu^{(m)},\nu} \leq 1$, and so \eqref{eq:firstiterationbound1.3} and \eqref{eqn:increase1.3} imply
\[c^{\lambda}_{\mu,\nu} \leq c^{\lambda^{(1)}}_{\mu^{(1)},\nu}\leq c^{\lambda^{(m)}}_{\mu^{(m)},\nu} \leq 1\text{ for all }\nu\text{ such that }\ell(\nu)\leq \rho+2.\]

\noindent \emph{Case 1.3.2 ($\mu^{(m)} = (\mu_2^{k_2})$):} In this case $m = k_1$. By the Case 1.3 hypotheses,
\[k_1 \geq l_2 > l_2-2.\]
This violates the minimality of $m$, since if $m=l_2 - 2$, then \eqref{eq:case1.3rho} implies $\ell(\lambda^{(m)}) = \rho+2$.

\noindent \emph{Case 1.3.3 ($\ell(\mu^{(m)}) = l_1$):} By the same argument as in Case 1.2.3, we have $c^{\lambda}_{\mu,\nu} = 0$. 

Thus ${\sf{m}}(\lambda/\mu) = \rho+3$ in Theorem~\ref{thm:necessaryshort3} (\RomanNumeralCaps{2}) as desired.

\noindent \emph{Case 2 $(\lambda_2 = \mu_2$, $k_1+k_2=l_1)$:} We follow the notation of Theorem~\ref{thm:suffshort3} Case 4. By Lemma~\ref{Lemma:switchingcol}, we can obtain $\tilde\lambda/\tilde\mu$ with $\tilde \lambda = (\tilde\lambda_1^{k_2},\tilde\lambda_2^{k_1},\tilde\lambda_3^{l_2})$ and $\tilde \mu = (\tilde\mu_1^{l_1})$ where $\tilde \mu_1 = \tilde \lambda_3 = \mu_2$ as shown in Figure~\ref{Figure:case2tilde}. In addition we have
\[c^{\lambda}_{\mu,\nu} = c^{\tilde \lambda}_{\tilde \mu,\nu}\text{ for all partitions }\nu,\]
and
\begin{equation}\label{eqn:Case4tilde}
    {\sf{m}}(\lambda/\mu) = {\sf{m}}(\tilde\lambda/\tilde\mu).
\end{equation}

\begin{figure}
    \begin{center}
    \begin{subfigure}{0.35\textwidth}
    \begin{tikzpicture}[scale = 1.3]
        \draw[black, thick] (0,0) -- (4,0) -- (4,4) -- (0,4) -- (0,0);
        \draw[black, thick] (0,1) -- (1.5,1) -- (1.5,2.5) -- (3,2.5) -- (3,4);
        \draw[black, thick] (1.5,0) -- (1.5,1) -- (4,1);
        \draw[gray, dashed] (3,1) -- (3,2.5);
        \draw (0.75,3) node{$\mu$};
        \draw (2.75, 0.5) node{$\lambda^c$};
        
        \draw (3.5,2) node{\footnotesize $l_1$};
        \draw (2.25,1.75) node{\footnotesize $k_2$};
        \draw (0.6,0.5) node{\footnotesize $l_2$};

    \end{tikzpicture}
    \caption{Original Case 2}
    \label{Case4original}
    \end{subfigure}
    \hspace{2cm}
    \begin{subfigure}{0.35\textwidth}
        \begin{tikzpicture}[scale = 1.3]
        \draw[black, thick] (0,0) -- (4,0) -- (4,4) -- (0,4) -- (0,0);
        \draw[black, thick] (0,1) -- (1.5,1) -- (1.5,4) ;
        \draw[black, thick] (1.5,0) -- (1.5,1) -- (2.5,1) -- (2.5,2.5) -- (4,2.5);
        \draw[gray, dashed] (2.5,2.5) -- (2.5,4);
        \draw (0.75,3) node{$\tilde \mu$};
        \draw (2.75, 0.5) node{$\tilde \lambda^c$};

        \draw (2,2) node{\footnotesize $l_1$};
        \draw (3.25,3.25) node{\footnotesize $k_2$};
        \draw (0.6,0.5) node{\footnotesize $l_2$};

    \end{tikzpicture}
    \caption{New Case 2}
    \label{Case4new}
    \end{subfigure}
    
\end{center}
    \caption{Case 2}
    \label{Figure:case2tilde}
\end{figure}

\begin{figure}
    \centering
    \begin{tikzpicture}[scale = 1.3]
        \draw[black, thick] (0,0) -- (4,0) -- (4,4) -- (0,4) -- (0,0);
        \draw[black, thick] (0,2.2) -- (1.5,2.2) -- (1.5,4) ;
        \draw[black, thick] (1.5,0) -- (1.5,2.2) -- (2.5,2.2) -- (2.5,2.7) -- (4,2.7);
        \draw (0.75,3) node{$\tilde \mu$};
        \draw (2.75, 1) node{$\tilde \lambda^c$};
        
        \draw[black,thick] (4,2.7) -- (4.3,2.7);
        \draw[black,thick] (4,4) -- (4.3,4);
        \draw[arrows=->, line width = .7pt] (4.2,4) -- (4.2,3.6);
        \draw[arrows=->, line width = .7pt] (4.2,2.7) -- (4.2,3.1);
        
        \draw[black,thick] (0,2.2) -- (-0.3,2.2);
        \draw[black,thick] (0,4) -- (-0.3,4);
        \draw[arrows=->, line width = .7pt] (-0.2,4) -- (-0.2,3.35);
        \draw[arrows=->, line width = .7pt] (-0.2,2.2) -- (-0.2,2.85);
        
        \draw[black,thick] (0,0) -- (-0.3,0);
        \draw[arrows=->, line width = .7pt] (-0.2,0) -- (-0.2,0.85);
        \draw[arrows=->, line width = .7pt] (-0.2,2.2) -- (-0.2,1.35);
        
        \draw (-0.2,3.1) node{\footnotesize $l_1$};
        \draw (4.2,3.35) node{\footnotesize $k_2$};
        \draw (-0.2,1.1) node{\footnotesize $l_2$};
        
        \draw[gray,dashed] (0,1.6) -- (1.5,1.6);
        \draw[black,thick] (1.5,1.6) -- (1.9,1.6);
        \draw[arrows=->, line width = .7pt] (1.75,1.6) -- (1.75,1.05);
        \draw[arrows=->, line width = .7pt] (1.75,0) -- (1.75,0.55);
        \draw (1.75,0.8) node{\footnotesize $l_1$-$2$};
        
    \end{tikzpicture}
    \caption{Case 2.1}
    \label{Case4.1}
\end{figure}
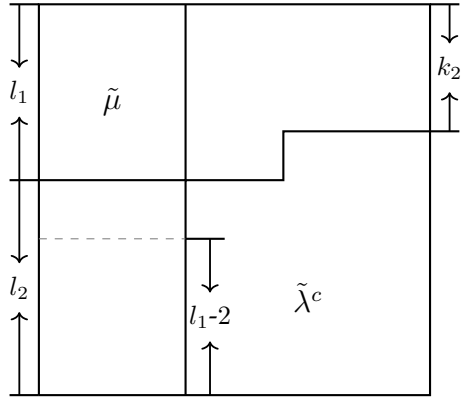

\noindent \emph{Case 2.1 ($\rho = l_2\geq l_1$):} By Theorem~\ref{thm:suffshort3}, we know that ${\sf{m}}(\lambda/\mu)\leq l_2+3$. Let $\nu \subset \tilde\lambda$ with $\ell(\nu) < l_2+3$. Then $\nu$ lies above the gray line in Figure~\ref{Case4.1}. Since $\lambda$ has shortness at least $3$, we know $l_1-2\geq 2$ and thus the bottom row of $\tilde\lambda/\nu$ has size $\tilde\lambda_3$. Since $\tilde\mu$ is a rectangle, by Lemma~\ref{Lemma:rowofT}, \[T(\ell(\tilde\lambda),c_1) = T(\ell(\tilde\lambda),c_2)\text{ for all } 1\leq c_1,c_2\leq \tilde\lambda_3.\]
for any ballot tableau $T$ of shape $\tilde\lambda/\nu$ and content $\tilde\mu$. By Lemma~\ref{Lemma:basicballot}, 
\[min\{ z : \text{column $z$ contains $l_1$}\}=1.\]
Therefore
\begin{equation}
\label{eq:bottomrowallthesame}
T(\ell(\tilde\lambda),c) = l_1 \text{ for all }c\in[\tilde\lambda_3].
\end{equation}

Define $\tilde \lambda^{(1)}/ \nu^{(1)} = (\tilde \lambda / \nu)^{{\sf del}(1)}$. Then $\tilde\lambda^{(1)} = (\tilde\lambda_1^{k_2},\tilde\lambda_2^{k_1},\tilde\lambda_3^{l_2-1})$ and $\nu^{(1)} = \nu$. Thus, \eqref{eq:bottomrowallthesame} implies $T^{{\sf del}(1)}$ is a ballot tableau of shape $\lambda^{(1)}/\nu$ and content $\tilde \mu^{(1)} = (\tilde\mu_1^{l_1-1})$. By Corollary~\ref{Cor:rmvlastrow}
\[c^{\tilde\lambda}_{\tilde\mu,\nu} \leq c^{\tilde\lambda^{(1)}}_{\tilde\mu^{(1)},\nu}.\]
By repeatedly applying the above argument, we can conclude that the last $l_1-2$ rows of $\tilde\lambda/\nu$ are filled by $\{3,\ldots,l_1\}$ where every row is filled by exactly one number. Set $\tilde \lambda^{(l_1-2)} / \tilde \nu^{(l_1-2)} = (\tilde \lambda / \nu)^{{\sf del}(l_1-2)}$ and set $\tilde\mu^{(l_1-2)} = (\tilde \mu_1^2)$. By Corollary~\ref{Cor:rmvlastrow}
\[c^{\tilde\lambda}_{\tilde\mu,\nu}\leq c^{\tilde \lambda^{(l_1-2)}}_{\tilde \mu^{(l_1-2)},\nu}.\]
Since $\tilde\mu^{(l_1-2)}$ is a rectangle of shortness two and $\tilde \lambda^{(l_1-2)}$ is a fat hook, we know by Theorem~\ref{thm:TYGUT} that the skew Schur function $s_{\tilde\lambda^{(l_1-2)}/\tilde\mu^{(l_1-2)}}$ is multiplicity-free and thus
\[c^{\tilde \lambda^{(l_1-2)}}_{\tilde \mu^{(l_1-2)},\nu} \leq 1.\]
Therefore 
\[c^{\tilde\lambda}_{\tilde\mu,\nu} \leq c^{\tilde \lambda^{(l_1-2)}}_{\tilde \mu^{(l_1-2)},\nu} \leq 1 \text{ for all }\nu\text{ such that }\ell(\nu)\leq \rho+2.\]
Thus $m(\lambda/\mu) = m(\tilde\lambda/\tilde\mu) = \rho+3$ as required in Theorem~\ref{thm:necessaryshort3} (\RomanNumeralCaps{5}).

\begin{figure}
    \centering
    \begin{tikzpicture}[scale = 1.3]
        \draw[black, thick] (0,0) -- (4,0) -- (4,4) -- (0,4) -- (0,0);
        \draw[black, thick] (0,1.5) -- (1.5,1.5) -- (1.5,4) ; 
        \draw[black, thick] (1.5,0) -- (1.5,1.5) -- (2.5,1.5) -- (2.5,2.7) -- (4,2.7);
        \draw (0.75,2.5) node{$\tilde \mu$};
        \draw (2.85, 1) node{$\tilde \lambda^c$};
        
        \draw[black,thick] (4,2.7) -- (4.3,2.7);
        \draw[black,thick] (4,4) -- (4.3,4);
        \draw[arrows=->, line width = .7pt] (4.2,4) -- (4.2,3.6);
        \draw[arrows=->, line width = .7pt] (4.2,2.7) -- (4.2,3.1);
        
        \draw[black,thick] (0,1.5) -- (-0.3,1.5);
        \draw[black,thick] (0,4) -- (-0.3,4);
        \draw[arrows=->, line width = .7pt] (-0.2,4) -- (-0.2,3);
        \draw[arrows=->, line width = .7pt] (-0.2,1.5) -- (-0.2,2.5);
        
        \draw[black,thick] (0,0) -- (-0.3,0);
        \draw[arrows=->, line width = .7pt] (-0.2,0) -- (-0.2,0.5);
        \draw[arrows=->, line width = .7pt] (-0.2,1.5) -- (-0.2,1);
        
        \draw (-0.2,2.75) node{\footnotesize $l_1$};
        \draw (4.2,3.35) node{\footnotesize $k_2$};
        \draw (-0.2,0.75) node{\footnotesize $l_2$};
        
        \draw[gray,dashed] (0,1.3) -- (1.5,1.3);
        \draw[black,thick] (1.5,1.3) -- (1.9,1.3);
        \draw[arrows=->, line width = .7pt] (1.75,1.3) -- (1.75,0.8);
        \draw[arrows=->, line width = .7pt] (1.75,0) -- (1.75,0.3);
        \draw (1.75,0.55) node{\footnotesize $l_2$-$1$};
        
    \end{tikzpicture}
    \caption{Case 2.2}
    \label{Case4.2}
\end{figure}
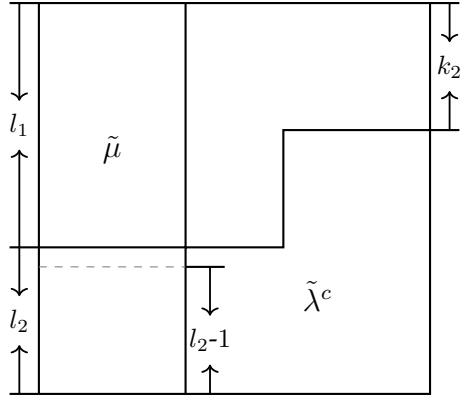

\noindent \emph{Case 2.2 ($\rho=l_1>l_2$):} Let $\nu \subset \tilde\lambda$ with $\ell(\nu) < l_1 + 1$. Then $\nu$ lies in the region above the gray line in Figure~\ref{Case4.2}. Since $\lambda$ has shortness at least $3$, we have $l_2-1\geq 2$ and thus the bottom row of $\tilde\lambda/\nu$ has size $\tilde\lambda_3$. By a similar argument as in Case 2.1, we know that the last $l_2-1$ rows of any ballot tableau $T$ of shape $\tilde \lambda/\nu$ and content $\tilde\mu$ must be filled by $\{l_1-l_2+2,\ldots,l_1\}$. In addition, every row is filled by the same value. By Corollary~\ref{Cor:rmvlastrow}, we conclude 
\[c^{\tilde\lambda}_{\tilde\mu,\nu}\leq c^{\tilde \lambda^{(l_2-1)}}_{\tilde \mu^{(l_2-1)},\nu}.\]
Since $\tilde \lambda^{(l_2-1)}$ is a fat hook of shortness $1$ and $\tilde \mu^{(l_2-1)}$ is a rectangle, Theorem~\ref{thm:TYGUT} implies that $s_{\tilde \lambda^{(l_2-1)/\tilde \mu^{(l_2-1)}}}$ is multiplicity-free. Therefore,
\[c^{\tilde \lambda^{(l_2-1)}}_{\tilde \mu^{(l_2-1)},\nu} \leq 1,\]
and thus
\[c^{\tilde\lambda}_{\tilde\mu,\nu} = c^{\tilde \lambda^{(0)}}_{\tilde \mu^{(0)},\nu}\leq c^{\tilde \lambda^{(l_2-1)}}_{\tilde \mu^{(l_2-1)},\nu} \leq 1 \text{ for all }\nu\text{ such that }\ell(\nu)\leq \rho+1.\]
By (\ref{eqn:Case4tilde}) we conclude that ${\sf{m}}(\lambda/\mu) = \rho+2$ in Theorem~\ref{thm:necessaryshort3} (\RomanNumeralCaps{6}).

\begin{figure}
    \begin{center}
    \begin{subfigure}{0.35\textwidth}
    \begin{tikzpicture}[scale = 1.3]
        \draw[black, thick] (0,0) -- (4,0) -- (4,4) -- (0,4) -- (0,0);
        \draw[black, thick] (0,1) -- (1.5,1) -- (1.5,2.5) -- (3,2.5) -- (3,4);
        \draw[black, thick] (2.5,0) -- (2.5,1) -- (4,1);
        \draw[gray, dashed] (3,2.5) -- (3,1);
        \draw[gray, dashed] (1.5,1) -- (2.5,1);
        \draw (0.75,3) node{$\mu$};
        \draw (3.3, 0.5) node{$\lambda^c$};
        \path[draw,fill = gray!20] (0,0) -- (2.5,0) -- (2.5,1) -- (0,1) -- cycle;
		\draw[black,thick] (4,4) -- (4.4,4);
		\draw[black,thick] (4,1) -- (4.4,1);
        \draw[arrows=->, line width = .7pt] (4.2,4) -- (4.2,2.8);
        \draw[arrows=->, line width = .7pt] (4.2,1) -- (4.2,2.3);
        \draw (4.2,2.55) node{\footnotesize $l_1$};
		\draw[arrows=->, line width = .7pt] (2.8,4) -- (2.8,3.5);
        \draw[arrows=->, line width = .7pt] (2.8,2.5) -- (2.8,3);
        \draw (2.8,3.25) node{\footnotesize $k_1$};
		\draw[black,thick] (0,0) -- (-0.4,0);
		\draw[black,thick] (0,1) -- (-0.4,1);
        \draw[arrows=->, line width = .7pt] (-0.2,0) -- (-0.2,0.25);
        \draw[arrows=->, line width = .7pt] (-0.2,1) -- (-0.2,0.75);
		\draw (-0.2,0.5) node{\footnotesize $l_2$};

    \end{tikzpicture}
    \caption{Original Case 3}
    \label{Case2original}
    \end{subfigure}
    \hspace{1.5cm}
    \begin{subfigure}{0.45\textwidth}
       \begin{tikzpicture}[scale = 1.3]
        \draw[black, thick] (0,0) -- (4,0) -- (4,4) -- (0,4);
        \draw[gray,dashed] (0,4)-- (0,0);
        \draw[black, thick] (-1,1) -- (1.5,1) -- (1.5,2.5) -- (3,2.5) -- (3,4);
        \draw[black, thick] (2.5,1) -- (4,1);
        \draw[gray,dashed] (2.5,0) -- (2.5,1);
        \draw[gray, dashed] (3,2.5) -- (3,1);
        \draw[black, thick] (1.5,0) -- (1.5,1) -- (2.5,1);
        \draw (0.5,2.7) node{$\tilde\mu$};
        \draw (3, 0.5) node{$\tilde\lambda^c$};
        \draw[black, thick] (0,0) -- (-1,0) -- (-1,4) -- (0,4);
        \path[draw, fill = gray!20] (-1,0) -- (1.5,0) -- (1.5,1) -- (-1,1) -- cycle;
		\draw[black,thick] (4,4) -- (4.4,4);
		\draw[black,thick] (4,1) -- (4.4,1);
        \draw[arrows=->, line width = .7pt] (4.2,4) -- (4.2,2.8);
        \draw[arrows=->, line width = .7pt] (4.2,1) -- (4.2,2.3);
        \draw (4.2,2.55) node{\footnotesize $l_1$};
		\draw[arrows=->, line width = .7pt] (2.8,4) -- (2.8,3.5);
        \draw[arrows=->, line width = .7pt] (2.8,2.5) -- (2.8,3);
        \draw (2.8,3.25) node{\footnotesize $k_1$};
		\draw[black,thick] (-1,0) -- (-1.4,0);
		\draw[black,thick] (-1,1) -- (-1.4,1);
        \draw[arrows=->, line width = .7pt] (-1.2,0) -- (-1.2,0.25);
        \draw[arrows=->, line width = .7pt] (-1.2,1) -- (-1.2,0.75);
		\draw (-1.2,0.5) node{\footnotesize $l_2$};

    \end{tikzpicture}
    \caption{New Case 3}
    \label{Case2new}
    \end{subfigure}
    
\end{center}
    \caption{Case 3}
    
\end{figure}
The remaining two cases correspond to Theorem~\ref{thm:necessaryshort3} (\RomanNumeralCaps{3}).

\noindent \emph{Case 3 ($\mu_2<\lambda_2<\mu_1$, $k_1+k_2=l_1$, $k_1\geq l_2$):} In this case $\rho = l_1$ and we interpret  $c^{\lambda}_{\mu,\nu}$ as number of ballot tableaux of shape $\lambda/\mu$ and content $\nu$.

Let $T_0,T_1$ be two distinct ballot tableaux of shape $\lambda/\mu$ with content $\nu$ such that $\ell(\nu) = {\sf{m}}(\lambda/\mu)$. We can shift all the rows in $T_0,T_1$ that are below the bottom row of $\mu$ to the left so that the new shape satisfies the hypotheses of Theorem~\ref{thm:necessaryshort3} Case 2. This shift is visualized in Figure~\ref{Case2original} and Figure~\ref{Case2new}. Let $\overline T_0,\overline T_1$ be the resulting tableaux of shape $\overline\lambda/\overline\mu$, as in Figure~\ref{Case2new}. For $i\in \{0,1\}$, $\overline T_i$ remains semistandard and the row reading word of $T_i$ is the same as $\overline T_i$. As a result, $\overline T_0,\overline T_1$ are two ballot tableaux whose contents are both $\nu$ as well. Therefore, by the definition of ${\sf{m}}(\lambda/\mu)$, 
\begin{equation}
\label{eq:firstcase3}
{\sf{m}}(\lambda/\mu)\geq {\sf{m}}(\overline\lambda/\overline\mu).
\end{equation}
Since $\overline\lambda/\overline\mu$ satisfies the hypotheses of Theorem~\ref{thm:necessaryshort3} Case 2, we know that 
\begin{equation}
\label{eq:secondcase3}
{\sf{m}}(\overline\lambda/\overline\mu) = \rho(\overline\lambda/\overline\mu) +2.
\end{equation}
Since we have $k_1\geq l_2$, we have
\begin{equation}
\label{eq:thirdcase3}
\rho(\overline\lambda/\overline\mu) = \rho(\lambda/\mu).
\end{equation}
Combining \eqref{eq:firstcase3},\eqref{eq:secondcase3}, and \eqref{eq:thirdcase3} we get 
\[{\sf{m}}(\lambda/\mu)\geq \rho(\lambda/\mu)+2.\]
By Theorem~\ref{thm:suffshort3} ${\sf{m}}(\lambda/\mu)\leq \rho(\lambda/\mu)+2$, and so we conclude that ${\sf{m}}(\lambda/\mu) = \rho+2$.

\begin{figure}
    \begin{center}
    \begin{subfigure}{0.35\textwidth}
    \begin{tikzpicture}[scale = 1.3]
        \draw[black, thick] (0,0) -- (4,0) -- (4,4) -- (0,4) -- (0,0);
        \draw[black, thick] (0,1) -- (1.5,1) -- (1.5,2) -- (3,2) -- (3,4);
        \draw[black, thick] (2,0) -- (2,1.5) -- (4,1.5);
        \draw[gray, dashed] (2,1.5) -- (2,2);
        \draw[gray, dashed] (1.5,0) -- (1.5,1);
        \draw[gray, dashed] (3,2) -- (3,1.5);
        \draw (0.75,3) node{$\mu$};
        \draw (3, 0.75) node{$\lambda^c$};

    \end{tikzpicture}
    \caption{Original Case 4}
    \label{Case3original}
    \end{subfigure}
    \hspace{2cm}
    \begin{subfigure}{0.4\textwidth}
        \begin{tikzpicture}[scale = 1.3]
        \draw[black, thick] (0,0) -- (4,0) -- (4,4) -- (0,4) -- (0,0);
        \draw[black, thick] (0,2.5) -- (2,2.5) -- (2,4) ;
        \draw[black, thick] (4,3) -- (2.5,3) -- (2.5,2) -- (1,2) -- (1,0);
        \draw (1,3.25) node{$\lambda^\vee$};
        \draw (2.75, 1) node{$rotate (\mu)$};
        
        \draw[gray, dashed] (0,1.4) -- (1,1.4);
        \draw[black,thick] (0,1.4) -- (-0.3,1.4);
        \draw[black,thick] (0,4) -- (-0.3,4);
        \draw[arrows=->, line width = .7pt] (-0.15,4) -- (-0.15,2.95);
        \draw[arrows=->, line width = .7pt] (-0.15,1.4) -- (-0.15,2.45);
        \draw (-0.23,2.7) node{\footnotesize$l_1$+$1$};

	   \draw[arrows=->, line width = .7pt] (1.15,2) -- (1.15,1.2);
        \draw[arrows=->, line width = .7pt] (1.15,0) -- (1.15,0.8);
		\draw (1.15,1) node{\footnotesize$k_1$};

        \draw[black,thick] (0,0) -- (0,-0.3);
        \draw[black,thick] (1,0) -- (1,-0.3);
        \draw[arrows=->, line width = .7pt] (0,-0.15) -- (0.15,-0.15);
        \draw[arrows=->, line width = .7pt] (1,-0.15) -- (0.85,-0.15);
        \draw (0.5,-0.15) node{\footnotesize$\lambda_1$-$\mu_1$};

    \end{tikzpicture}
    \caption{New Case 4}
    \label{Case3new}
    \end{subfigure}
    
\end{center}
    \caption{Case 4}
    
\end{figure}

\noindent \emph{Case 4 ($\mu_2<\lambda_2<\mu_1$, $k_1+k_2 > l_1$, $k_1\geq l_2$):} In this case $\rho = l_1$. By \eqref{eq:LRsymmetry-transpose}, it suffices to show
\[c^{\mu^\vee}_{\lambda^\vee,\nu} \leq 1 \text{ for all }\nu\text{ such that }\ell(\nu)\leq \rho+1.\]

Let $\nu\subset \mu^\vee$ with $\ell(\nu) < l_1$. We show there is at most one ballot tableau of shape $\mu^\vee / \nu$ and content $\lambda^\vee$. Any such $\nu$ lies above the gray dashed line in Figure~\ref{Case3new}. We have
\[\lambda^\vee = ((\lambda_1-\lambda_2)^{l_2})\ \text{and }\mu^\vee = (\lambda_1^{l_1+l_2-k_1-k_2}, (\lambda_1-\mu_2)^{k_2},(\lambda_1-\mu_1)^{k_1}).\]
We write $\nu$ as
\[\nu = (\nu_1,\nu_2,\ldots,\nu_{\ell(\nu)}).\]
Let $T$ be a ballot tableau of shape $\mu^\vee / \nu$ and content $\lambda^\vee$. Since $\rho=l_1$, there are $l_2-1$ rows below the gray dashed line in Figure~\ref{Case3new}. Since $\lambda^\vee$ is a rectangle, Lemma~\ref{Lemma:rowofT} implies
\[T(\ell(\lambda),c) = l_2 \text{ for all } c\in[\lambda_1-\mu_1].\]
Thus the second row from the bottom of $T$ cannot contain any entry equal to $l_2$. Again by Lemma~\ref{Lemma:rowofT}, the row must be entirely filled by $l_2-1$. Since $k_1\geq l_2>l_2-1$, we know that all rows below the gray line have the same size. We can then apply Lemma~\ref{Lemma:rowofT} $l_2-1$ times and conclude that the last $l_2-1$ rows of $T$ are filled by $\{2,\ldots,l_2\}$. Set
\[\lambda^{(l_2-1)} = ((\lambda_1-\lambda_2),(\mu_1-\lambda_2)^{l_2-1})\ \text{and }\mu^{(l_2-1)} = (\lambda_1^{l_1+l_2-k_1-k_2}, (\lambda_1-\mu_2)^{k_2},(\lambda_1-\mu_1)^{k_1-l_2+1}).\]
Since $k_1\geq l_2$, we have 
\begin{equation}\label{eqn:Case3}
    k_1-l_2+1> 0
\end{equation}
and $\mu^{(l_2-1)}$ is a partition.
By Corollary~\ref{Cor:rmvlastrow}, we have
\begin{equation}
\label{eq:Case4bound}
c^{\mu^\vee}_{\lambda^\vee,\nu}\leq c^{\mu^{(l_2-1)}}_{\lambda^{(l_2-1)},\nu}\text{ for all }\nu\text{ such that }\ell(\nu)\leq \rho+1.
\end{equation}

By (\ref{eqn:Case3}), there is least one row of size $\lambda_1-\mu_1$ in $\mu^{(l_2-1)}$. 

\noindent \emph{Case 4.1 ($\nu_{l_1+1} = \lambda_1-\mu_1$):}  Since $\ell(\mu^{(l_2-1)}) = l_1+1$, the first $\lambda_1-\mu_1$ columns in $\mu^{(l_2-1)}/\nu$ are empty. Let $\widetilde \mu^{(l_2-1)}/\widetilde \nu$ be the skew diagram that arises from deleting the first $\lambda_1-\mu_1$ columns in $\mu^{(l_2-1)}/\nu$. Now $\widetilde \mu^{(l_2-1)}$, $\widetilde \nu$, and $\lambda^{(l_2-1)}$ are all contained in $(\mu_1^{l_1+l_2-k_1})$. Fix $(\mu_1^{l_1+l_2-k_1})$ as our ambient rectangle. Then $\widetilde \mu^{(l_2-1)\vee}$ is a rectangle and $\lambda^{(l_2-1)}$ is a fat hook of shortness $1$. Notice by construction that $\lambda^{(l_2-1)}_{i}< \tilde\mu^{(l_2-1)}_i$ for all $i\in [l_1+l_2-k_1]$. Thus the skew partition $\tilde\mu^{(l_2-1)}/\lambda^{(l_2-1)}$ is basic unless $\lambda^{(l_2-1)}_1> \tilde\mu^{(l_2-1)}_2$. If the skew partition is not basic, the basic demolition will remove column $\tilde\mu^{(l_2-1)}_2+1$ through $\lambda^{(l_2-1)}_1$. Thus $(\lambda^{(l_2-1)})^{\sf{ba}}$ would be a fat hook of shortness $1$ and $((\tilde\mu^{(l_2-1)})^{\sf{ba}})^\vee$ would be a rectangle. Therefore by Theorem~\ref{thm:TYGUT}, the skew Schur function $s_{\widetilde \mu^{(l_2-1)}/\lambda^{(l_2-1)}}$ is multiplicity-free. As a result, by \eqref{eq:Case4bound},
\[c^{\mu^\vee}_{\lambda^\vee,\nu}\leq c^{\mu^{(l_2-1)}}_{\lambda^{(l_2-1)},\nu} = c^{\widetilde \mu^{(l_2-1)}}_{\lambda^{(l_2-1)},\tilde \nu} \leq 1,\]

\noindent \emph{Case 4.2 ($\nu_{l_1+1} < \lambda_1-\mu_1$):} Since for any ballot tableau $T$ of shape $\mu^\vee/\nu$ and content $\lambda^\vee$, $T(l_1+2,c) = 2$ for all $c\in [\lambda_1-\mu_1]$, we know that $T(l_1+1,c) = 1$ for all $c\in [\nu_{l_1+1}+1,\lambda_1-\mu_1]$. Therefore if there exists a box in the bottom row of $\mu^{(l_2-1)}$ such that the box above it is not in $\nu$, then any filling will violate semistandardness. As a result
\begin{equation}
\label{eq:Case4.2}
    \nu_{l_1}\geq \lambda_1-\mu_1.
\end{equation}
Let $x = (\lambda_1-\mu_1)-\nu_{l_1+1}$ be the number of entries in the bottom row of $\mu^{(l_2-1)}/\nu$. Remove the bottom row of $\mu^{(l_2-1)}$ and its content as well as any box in the $(l_1+1)$\textsuperscript{th} row of $\nu$ to obtain
\[\lambda^{(l_2)} = ((\lambda_1-\lambda_2-x),(\mu_1-\lambda_2)^{l_2-1})\]
\[\mu^{(l_2)} = (\lambda_1^{l_1+l_2-k_1-k_2}, (\lambda_1-\mu_2)^{k_2},(\lambda_1-\mu_1)^{k_1-l_2}),\]
\[\nu^{(1)} = (\nu_1,\nu_2,\ldots,\nu_{l_1}).\]
By Corollary~\ref{Cor:rmvlastrow}, we obtain
\[c^{\mu^\vee}_{\lambda^\vee,\nu}\leq c^{\mu^{(l_2)}}_{\lambda^{(l_2)},\nu^{(1)}}.\]
By (\ref{eq:Case4.2}), the first $\lambda_1-\mu_1$ columns in $\mu^{(l_2)}/\nu^{(1)}$ are empty. Let $\tilde\mu^{(l_2)}/\tilde\nu^{(1)}$ be the skew partition that arises from deleting the first $\lambda_1-\mu_1$ columns in $\mu^{(l_2)}/\nu^{(1)}$.
Then $\tilde\mu^{(l_2)}$ lies in the rectangle $(\mu_1^{l_1+l_2-k_1})$. Fix $(\mu_1^{l_1+l_2-k_1})$ as the ambient rectangle. Then $\tilde\mu^{(l_2)\vee}$ is a rectangle and $\lambda^{(l_2)}$ is either a fat hook of shortness $1$ or a rectangle. Notice by construction that $\lambda^{(l_2)}_{i}< \tilde\mu^{(l_2)}_i$ for all $i\in [l_1+l_2-k_1]$. The skew partition $\tilde\mu^{(l_2)}/\lambda^{(l_2)}$ is basic unless $\lambda^{(l_2)}_1> \tilde\mu^{(l_2)}_2$. If the skew partition is not basic, the basic demolition will remove column $\tilde\mu^{(l_2)}_2+1$ through $\lambda^{(l_2)}_1$. Thus $(\lambda^{(l_2)})^{\sf{ba}}$ would be a fat hook of shortness $1$ and $((\tilde\mu^{(l_2)})^{\sf{ba}})^\vee$ would be a rectangle. 
Therefore, by Theorem~\ref{thm:TYGUT}, the skew Schur function $s_{\mu^{(l_2)}/\lambda^{(l_2)}}$ is multiplicity-free. As a result,
\[c^{\mu^\vee}_{\lambda^\vee,\nu} \leq c^{ \mu^{(l_2)}}_{\lambda^{(l_2)},\nu^{(1)}} \leq 1.\]
We conclude
\[c^{\mu^\vee}_{\lambda^\vee,\nu} \leq 1 \text{ for all }\nu\text{ such that }\ell(\nu)\leq \rho+1.\]
Combining Case 3 with Case 4, we have ${\sf{m}}(\lambda/\mu) = \rho+2$ in Theorem~\ref{thm:necessaryshort3} (\RomanNumeralCaps{3}).
\end{proof}

\subsection{\texorpdfstring{$\lambda^\vee$}{} is a rectangle of shortness at least \texorpdfstring{$2$}{} and \texorpdfstring{$\sf{np}(\mu) > 2$}{}}

\begin{theorem}
\label{thm:necessarynotfathook}
Let $s_{\lambda/\mu}(x_1,\ldots,x_n)$ be a basic, n-sharp, tight skew Schur polynomial such that $\lambda^\vee$ is a rectangle of shortness at least $2$ and $\mu$ is not a fat hook, then the following are true:
\begin{enumerate}[label=(\Roman*)]
    \item If $\lambda_2 = \mu_q, l_2\geq l_1$ then $\sf{m}(\lambda/\mu) = \rho+2$.
    \item If $\lambda_2 = \mu_q, l_2< l_1$ then $\sf{m}(\lambda/\mu) = \rho+1$
    \item If $\lambda_2 = \mu_1, k_1\geq l_2$ then $\sf{m}(\lambda/\mu) = \rho+2$.
    \item If $\lambda_2 = \mu_1, k_1< l_2$ then $\sf{m}(\lambda/\mu) = \rho+1$
    \item If $\mu_q<\lambda_2<\mu_1$ then $\sf{m}(\lambda/\mu) = \rho+1$
\end{enumerate}
\end{theorem}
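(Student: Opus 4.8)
The plan is to combine the upper bounds already recorded in Theorem~\ref{thm:suffnotfathook} with matching lower bounds. Recall that ${\sf m}(\lambda/\mu)$ is the least $n$ for which $\lambda/\mu$ is $n$-sharp and $s_{\lambda/\mu}(x_1,\ldots,x_n)$ fails to be multiplicity-free, and that $n$-sharpness already forces $n\geq \rho+1$. Since $\lambda^\vee$ is a rectangle, $\lambda=(\lambda_1^{l_1},\lambda_2^{l_2})$ is a fat hook, and since $\mu$ is not a fat hook we have $q\geq 3$. For statements (II), (IV), (V), the corresponding parts of Theorem~\ref{thm:suffnotfathook} show $s_{\lambda/\mu}(x_1,\ldots,x_{\rho+1})$ is not multiplicity-free, so ${\sf m}(\lambda/\mu)\leq\rho+1$; together with the trivial lower bound ${\sf m}(\lambda/\mu)\geq\rho+1$ these three cases are immediate. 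The real content is in (I) and (III), where Theorem~\ref{thm:suffnotfathook} gives ${\sf m}(\lambda/\mu)\leq\rho+2$ and I must prove that $s_{\lambda/\mu}(x_1,\ldots,x_{\rho+1})$ \emph{is} multiplicity-free. By (\ref{eq:LRsymmetrybasic}) and Theorem~\ref{theorem:lrrule} this reduces to showing $c^\lambda_{\mu,\nu}=c^\lambda_{\nu,\mu}\leq 1$ for every $\nu$ with $\ell(\nu)\leq\rho+1$.

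For statement (I) ($\lambda_2=\mu_q$, $l_2\geq l_1$) I would first note that basicness and tightness force $\ell(\mu)=l_1$: otherwise the bottom $\mu_q$-rows of $\mu$ would produce empty rows of $\lambda/\mu$. Hence $\rho=l_2$, and the hypotheses of Lemma~\ref{Lemma:switchingcol} hold, so I replace $(\lambda,\mu)$ by $(\tilde\lambda,\tilde\mu)$ with $\tilde\mu=(\lambda_2^{l_1})$ a rectangle and $c^\lambda_{\mu,\nu}=c^{\tilde\lambda}_{\tilde\mu,\nu}$. Reading $c^{\tilde\lambda}_{\tilde\mu,\nu}=c^{\tilde\lambda}_{\nu,\tilde\mu}$ as the count of ballot tableaux of shape $\tilde\lambda/\nu$ and content $\tilde\mu$, the content is a rectangle (all parts equal to $\lambda_2$), so Lemma~\ref{Lemma:rowofT} and Lemma~\ref{Lemma:basicballot} force the bottom row of any such tableau to be constant and of full length $\lambda_2$. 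Because $\ell(\nu)\leq l_2+1\leq \ell(\tilde\lambda)=l_1+l_2$ and the bottom $l_2$ rows of $\tilde\lambda$ all have length $\lambda_2$, I iterate Corollary~\ref{Cor:rmvlastrow}, peeling $l_1-1$ forced bottom rows and shrinking the content from $(\lambda_2^{l_1})$ all the way down to $(\lambda_2)$. A tableau with single-valued content is a horizontal strip, hence unique, giving $c^\lambda_{\mu,\nu}\leq 1$.

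For statement (III) ($\lambda_2=\mu_1$, $k_1\geq l_2$) basicness forces $k_1\leq l_1$ and one computes $\rho=l_1$. In place of Lemma~\ref{Lemma:switchingcol} I would rotate using (\ref{eq:LRsymmetry-rotate}) in the ambient rectangle $(\lambda_1^{\ell(\lambda)})$, writing $c^\lambda_{\mu,\nu}=c^{\mu^\vee}_{\nu,\lambda^\vee}$, the number of ballot tableaux of shape $\mu^\vee/\nu$ and content $\lambda^\vee=((\lambda_1-\lambda_2)^{l_2})$, again a rectangle. Since $\ell(\nu)\leq\rho+1=l_1+1$ while $\ell(\mu^\vee)=l_1+l_2$, the bottom $l_2-1$ rows of $\mu^\vee$ lie strictly below $\nu$, and $k_1\geq l_2$ guarantees these rows all share the smallest length $\lambda_1-\mu_1=\lambda_1-\lambda_2$, which equals the part size of the content. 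Exactly as before, the rectangular content lets me peel these $l_2-1$ forced bottom rows via Corollary~\ref{Cor:rmvlastrow}, reducing the content to the single part $(\lambda_1-\lambda_2)$ and concluding $c^\lambda_{\mu,\nu}\leq 1$.

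The point I expect to be the crux is why one must peel all the way to single-valued content. After turning the subtracted shape (equivalently the content) into a rectangle, the complementary outer shape carries $q+1\geq 4$ distinct part sizes, so no intermediate reduction lands in a multiplicity-free case of Theorem~\ref{thm:TYGUT}, whose hypotheses demand that the relevant complement be a rectangle. This is precisely what distinguishes the present situation from the fat-hook analysis in Theorem~\ref{thm:necessaryshort3}, where peeling to a height-two rectangular content was enough to invoke Theorem~\ref{thm:TYGUT}; here the only self-contained endpoint is single-valued content, where multiplicity-freeness is automatic. The remaining work is bookkeeping: checking at each step that the current bottom row is uncovered by $\nu$, has length equal to the content's part size, and is therefore forced to absorb the entire multiplicity of the largest content value (so the content loses exactly one part per peel), and handling the degenerate cases $l_1=1$ in (I) and $l_2=1$ in (III), where no peeling is required and the single-value argument applies directly.
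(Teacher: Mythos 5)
Your proposal is correct and takes essentially the same route as the paper's proof: cases (II), (IV), (V) are dispatched by $n$-sharpness plus Theorem~\ref{thm:suffnotfathook}, case (I) is handled by rectangularizing $\mu$ via Lemma~\ref{Lemma:switchingcol} and then peeling $l_1-1$ forced bottom rows with Lemma~\ref{Lemma:rowofT} and Corollary~\ref{Cor:rmvlastrow}, and case (III) is handled by complementing so the content becomes the rectangle $\lambda^\vee$ and peeling $l_2-1$ rows. The only cosmetic difference is the endpoint: the paper invokes Theorem~\ref{thm:TYGUT} once the content has been reduced to a single row (a rectangle of shortness $1$), whereas you observe directly that single-part content forces a unique horizontal-strip filling; both are valid.
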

\begin{proof}
Similar to Theorem~\ref{thm:necessaryshort3}, it is enough to show (\ref{eq:equivtoshow-ns3}) holds in each of the five cases. Note that (\ref{eq:equivtoshow-ns3}) holds trivially for Theorem~\ref{thm:necessarynotfathook} (\RomanNumeralCaps{2}), (\RomanNumeralCaps{4}) and (\RomanNumeralCaps{5}) since $\lambda/\mu$ is $n$-sharp. 

\noindent \emph{Case 1 ($\lambda_2 = \mu_q, l_2\geq l_1$):} By Lemma~\ref{Lemma:switchingcol}, we can obtain $\tilde\lambda/\tilde\mu$ by setting
\[\tilde\lambda = (\tilde\lambda_1^{k_q}, \ldots,\tilde\lambda_q^{k_1},\tilde\lambda_{q+1}^{l_2})\text{ and }\tilde\mu = (\tilde\mu_1^{l_1}),\]
where $\tilde\lambda_i = \lambda_1+\mu_q-\mu_{q-i+1}$ for $i\in [q]$ and $\tilde\lambda_{q+1} = \tilde\mu_1 = \lambda_2$. This is visualized in Figure~\ref{case1aoriginal} and Figure~\ref{case1anew}. We then have
\[c^{\lambda}_{\mu,\nu} = c^{\tilde\lambda}_{\tilde\mu,\nu}\text{ for all partitions }\nu\]
Therefore we may equivalently to show that $s_{\tilde\lambda/\tilde\mu}(x_1,\ldots,x_{\rho+1})$ is multiplicity-free.

    \begin{figure}
    \begin{center}
    \begin{subfigure}{0.35\textwidth}
    \begin{tikzpicture}[scale=1.3]
        \draw[black, thick] (0,0) -- (4,0) -- (4,4) -- (0,4) -- (0,0);
        \draw[black, thick] (0,2.2) -- (1,2.2) -- (1,2.6) -- (2,2.6) -- (2,3) -- (2.7,3) -- (2.7,3.4) -- (3.4,3.4) -- (3.4,4);
        \draw[black, thick] (1,0) -- (1,2.2) -- (4,2.2);
        \draw (1,3.2) node{$\mu$};
        \draw (2.4, 1.1) node{$\lambda^c$};
        
        \draw[gray, dashed] (2,2.2) -- (2,2.6);
        \draw[gray, dashed] (2.7,2.2) -- (2.7,3);
        \draw[gray, dashed] (3.4,2.2) -- (3.4,3.4);
        \end{tikzpicture}
    \caption{Original diagram}
    \label{case1aoriginal}
    \end{subfigure}
    \hspace{2cm}
    \begin{subfigure}{0.35\textwidth}
        \begin{tikzpicture}[scale = 1.3]
        \draw[black, thick] (0,0) -- (4,0) -- (4,4) -- (0,4) -- (0,0);
        \draw[black, thick] (0,2.2) -- (1,2.2) -- (1,4);
        \draw[black, thick] (1,0) -- (1,2.2) -- (1.6,2.2) -- (1.6, 2.8) -- (2.3, 2.8) -- (2.3,3.2) -- (3,3.2) -- (3,3.6) -- (4,3.6);
        \draw (0.5,3.2) node{$\tilde\mu$};
        \draw (2.4, 1.1) node{$\tilde\lambda^c$};

        \draw[gray, dashed] (1.6,2.8) -- (1.6,4);
        \draw[gray, dashed] (2.3,3.2) -- (2.3,4);
        \draw[gray, dashed] (3,3.6) -- (3,4);
        \draw[gray, dashed] (0,2) -- (1,2);
        
        \draw[black, thick] (0,2) -- (-0.4,2);
        \draw[black, thick] (0,4) -- (-0.4,4);

        \draw[arrows=->, line width = .7pt] (-0.2,4) -- (-0.2,3.25);
        \draw[arrows=->, line width = .7pt] (-0.2,2) -- (-0.2,2.75);
        \draw (-0.27,3) node{$l_2$+$1$};
        
    \end{tikzpicture}
    \caption{New diagram}
    \label{case1anew}
    \end{subfigure}
    
\end{center}
    \caption{Theorem~\ref{thm:necessarynotfathook} (\RomanNumeralCaps{1})}
    
\end{figure}

Since $\rho = l_2$, by (\ref{eq:LRsymmetrybasic}), it is equivalent to show that there is at most one ballot tableau of shape $\tilde\lambda/\nu$ and content $\tilde\mu$ for any partition $\nu$ such that $\ell(\nu)\leq l_2+1$. Any such $\nu$ lies entirely in the region above the gray dashed line in Figure~\ref{case1anew}. Since $l_2+1 > l_1$, all rows in $\tilde\lambda/\nu$ below the gray line have the same size. By Lemma~\ref{Lemma:rowofT}, 
\[T(\ell(\tilde\lambda),c) = l_1 \text{ for all }c\in[\tilde\lambda_{q+1}].\]
Define $\tilde\lambda^{(1)}/\nu^{(1)} = (\tilde\lambda/\nu)^{{\sf{del}}(1)}$. Since $\lambda$ has shortness at least $2$, $l_1-1>0$ and thus $\ell(\nu)<\ell(\tilde\lambda)$,
$\tilde\lambda^{(1)} = (\tilde\lambda_1^{k_q}, \ldots,\tilde\lambda_q^{k_1}, \tilde\lambda_{q+1}^{l_2-1}), \nu^{(1)} = \nu$ and $\tilde\mu^{(1)} = (\tilde\mu_1^{l_1-1})$.
By Corollary~\ref{Cor:rmvlastrow}, we know that
\[c^{\tilde\lambda}_{\tilde\mu,\nu}\leq c^{\tilde\lambda^{(1)}}_{\tilde\mu^{(1)},\nu}.\]
We may continue to apply Lemma~\ref{Lemma:rowofT} and Corollary~\ref{Cor:rmvlastrow}, removing the bottom row of the skew diagram, as long as the bottom row of $\tilde\lambda^{(i)}$ and $\tilde\mu^{(i)}$ are of the same size and $\ell(\tilde\lambda^{(i)})>\ell(\nu)$. We have $\ell(\tilde\lambda) - \ell(\nu)\geq l_2-1 \geq l_1-1$ and $\ell(\tilde\mu) = l_1$. Thus we may iterate this process $l_1-1$ times and get
\[\tilde\lambda^{(l_1-1)} = (\tilde\lambda_1^{k_q}, \ldots\tilde\lambda_q^{k_1}, \tilde\lambda_{q+1}^{1})\text{ and }\tilde\mu^{(l_1-1)} = (\tilde\mu_1),\]
and
\[c^{\tilde\lambda}_{\tilde\mu, \nu}\leq c^{\tilde\lambda^{(a-1)}}_{\tilde\mu^{(a-1)}, \nu}.\]
Since $\tilde\mu^{(a-1)}$ is a one row rectangle, it has shortness $1$. As a result, by Theorem~\ref{thm:TYGUT}, the skew Schur function $s_{\tilde\lambda^{(a-1)}/\tilde\mu^{(a-1)}}$ is multiplicity-free and thus 
\[c^{\tilde\lambda^{(l_1-1)}}_{\tilde\mu^{(l_1-1)}, \nu}\leq 1.\]
Thus $s_{\tilde\lambda/\tilde\mu}(x_1,\ldots,x_{\rho+1})$ is multiplicity-free and ${\sf{m}}(\lambda/\mu) = \rho+1$ in Theorem~\ref{thm:necessarynotfathook} (\RomanNumeralCaps{1}).

\noindent \emph{Case 2 ($\lambda_2 = \mu_1, k_1\geq l_2$)} Here we have $\rho = l_1$. By \eqref{eq:LRsymmetry-transpose}, it is enough to show
\[c^{\mu^\vee}_{\lambda^\vee,\nu}\leq 1\text{ for all partition }\nu\subseteq \mu^\vee \text{ such that }\ell(\nu)\leq l_1+1.\]
Since $k_1\geq l_2$, we get $l_1+1 > l_1+l_2-k_1$. Therefore all $l_2-1$ rows in $\tilde\lambda/\nu$ below the horizontal gray dashed line in Figure~\ref{case2anecessary} have the same size for any $\nu$ such that $\ell(\nu)\leq l_1+1$. 
Write
\[\lambda^\vee =  ((\lambda_1-\mu_1)^{l_2})\text{ and }\mu^\vee =  (\lambda_1^{l_1+l_2-\ell(\mu)},(\lambda_1-\mu_q)^{k_q},\ldots,(\lambda_1-\mu_1)^{k_1}).\]
By applying Lemma~\ref{Lemma:rowofT} and Corollary~\ref{Cor:rmvlastrow} $l_2-1$ times, we get
\[c^{\mu}_{\lambda,\nu} \leq c^{\mu^{\vee(l_2-1)}}_{\lambda^{\vee(l_2-1)},\nu}\]
where
\[\lambda^{\vee(l_2-1)} = (\lambda_1-\mu_1)\text{ and }\mu^{\vee(l_2-1)} = (\lambda_1^{l_1+l_2-\ell(\mu)},(\lambda_1-\mu_q)^{k_q},\ldots,(\lambda_1-\mu_1)^{k_1-l_2+1}).\]
Since $\lambda^{\vee(l_1-1)}$ has shortness $1$, the skew Schur function $s_{\mu^{\vee(l_1-1)}/\lambda^{\vee(l_1-1)}}$ is multiplicity-free and thus 
\[c^{\lambda}_{\mu,\nu} = c^{\mu^\vee}_{\lambda^\vee,\nu} = c^{\mu^{\vee(l_1-1)}}_{\lambda^{\vee(l_1-1)},\nu} \leq 1\text{ for all }\nu\text{ such that }\ell(\nu)\leq \rho+1.\]
Thus $s_{\lambda/\mu}(x_1,\ldots,x_{\rho+1})$ is multiplicity-free and ${\sf{m}}(\lambda/\mu) = \rho+2$ in Theorem~\ref{thm:necessarynotfathook} (\RomanNumeralCaps{3}).
\begin{figure}
    \begin{center}
        \begin{tikzpicture}[scale = 1.3]
        \draw[black, thick] (0,0) -- (4,0) -- (4,4) -- (0,4) -- (0,0);
        \draw[black, thick] (4,3.2) -- (3,3.2) -- (3,2.6) -- (2,2.6) -- (2,1.9) -- (1.3,1.9) -- (1.3,1.2) -- (0.6,1.2) -- (0.6,0);
        \draw[black, thick] (0.6,4) -- (0.6,3) -- (0,3);
        \draw (2.5,1) node{$\mu^c$};
        \draw (0.3, 3.5) node{$\lambda$};
        \draw[gray, dashed] (0.6,3) -- (0.6,1.2);
        \draw[gray, dashed] (0,1) -- (0.6,1);
        
        \draw (0.3,1.5) node{$l_1$};
        
        \draw[black, thick] (0,1) -- (-0.4,1);
        \draw[black, thick] (0,4) -- (-0.4,4);

        \draw[arrows=->, line width = .7pt] (-0.2,4) -- (-0.2,2.75);
        \draw[arrows=->, line width = .7pt] (-0.2,1) -- (-0.2,2.25);
        \draw (-0.27,2.5) node{$l_1$+$1$};
        
        \draw[arrows=->, line width = .7pt] (0.8,1.2) -- (0.8,0.85);
        \draw[arrows=->, line width = .7pt] (0.8,0) -- (0.8,0.35);
        \draw (0.8,0.6) node{$k_1$};
        
        \end{tikzpicture}
      \caption{Theorem~\ref{thm:necessarynotfathook} (\RomanNumeralCaps{3}) with $\mu^\vee$}
    \label{case2anecessary}  
    \end{center}

\end{figure}
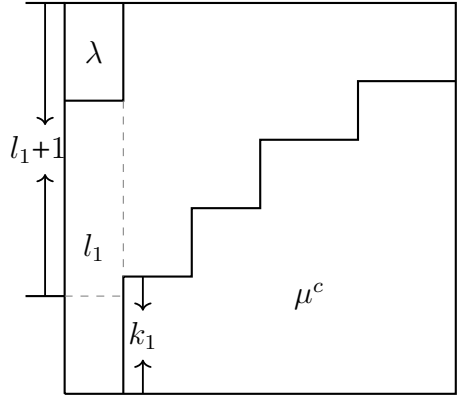
\end{proof}

\section*{Acknowledgments}
We thank Alexander Yong for helpful discussions. RH was partially supported by an AMS-Simons Travel Grant.

\end{document}